\documentclass[12pt,leqno,a4paper]{memo-l}
\usepackage{amssymb,enumerate}
\usepackage{xypic}

\textwidth160mm
\oddsidemargin5mm
\evensidemargin5mm

\newcommand{\CC}{{\mathbb{C}}}
\newcommand{\FF}{{\mathbb{F}}}
\newcommand{\QQ}{{\mathbb{Q}}}

\newcommand{\ZZ}{{\mathbb{Z}}}

\newcommand{\bB}{{\mathbf{B}}}
\newcommand{\bG}{{\mathbf{G}}}
\newcommand{\bH}{{\mathbf{H}}}
\newcommand{\bL}{{\mathbf{L}}}
\newcommand{\bP}{{\mathbf{P}}}
\newcommand{\bS}{{\mathbf{S}}}
\newcommand{\bT}{{\mathbf{T}}}

\newcommand{\cB}{{\mathcal{B}}}
\newcommand{\cE}{{\mathcal{E}}}
\newcommand{\cF}{{\mathcal{F}}}
\renewcommand{\cH}{{\mathcal{H}}}
\newcommand{\cO}{{\mathcal{O}}}

\newcommand{\fS}{{\mathfrak{S}}}

\newcommand{\ad}{{\operatorname{ad}}}
\newcommand{\Arg}{{\operatorname{Arg}}}
\newcommand{\End}{{\operatorname{End}}}

\newcommand{\IBr}{{\operatorname{IBr}\,}}
\newcommand{\Irr}{{\operatorname{Irr}\,}}
\newcommand{\Hom}{{\operatorname{Hom}}}

\newcommand{\rad}{{\operatorname{rad}}}
\newcommand{\rnk}{{\operatorname{rank}}}
\newcommand{\GL}{{\operatorname{GL}}}
\newcommand{\PGL}{{\operatorname{PGL}}}
\newcommand{\SL}{{\operatorname{SL}}}
\newcommand{\GU}{{\operatorname{GU}}}
\newcommand{\PSU}{{\operatorname{U}}}
\newcommand{\Sp}{{\operatorname{Sp}}}
\newcommand{\GO}{{\operatorname{GO}}}
\newcommand{\SO}{{\operatorname{SO}}}
\newcommand{\St}{{\operatorname{St}}}
\newcommand{\hgt}{{\operatorname{ht}}}
\newcommand{\RLG}{{R_L^G}}
\newcommand{\sRLG}{{{}^*\!R_L^G}}
\newcommand{\RLM}{{R_L^M}}
\newcommand{\RTG}{{R_T^G}}

\newcommand{\tw}[1]{{}^#1\!}
\newcommand{\Ph}[1]{\Phi_{#1}}
\newcommand{\hlf}{\frac{1}{2}}
\newcommand{\thrd}{\frac{1}{3}}
\newcommand{\frth}{\frac{1}{4}}
\newcommand{\sxt}{\frac{1}{6}}
\newcommand{\egt}{\frac{1}{8}}
\newcommand{\Chevie}{{\sf Chevie}{}}
\newcommand\vr{{\vrule width14pt height3pt depth-2pt}\,}
\newcommand{\pl}{{\!+\!}}
\newcommand{\mn}{{\!-\!}}
\newcommand{\wtA}{{\widetilde A}}
\newcommand{\tPsi}{{\tilde\Psi}}
\newcommand\co{{\!\colon\!}}

\let\al=\alpha

\let\la=\lambda
\let\vhi=\varphi
\let\ze=\zeta
\let\wt=\widetilde
\let\sqtens=\boxtimes

\newtheorem{thm}{Theorem}[chapter]
\newtheorem{lem}[thm]{Lemma}
\newtheorem{prop}[thm]{Proposition}
\newtheorem{cor}[thm]{Corollary}
\newtheorem{ass}[thm]{Assumption}
\newtheorem{conj}[thm]{Conjecture}

\theoremstyle{remark}
\newtheorem{rem}[thm]{Remark}
\newtheorem{exmp}[thm]{Example}

\begin{document}
\frontmatter

\title[Decomposition matrices]{Decomposition matrices for groups of Lie type in non-defining characteristic}

\date{\today}

\author{Olivier Dudas}
\address{CNRS, Universit\'e Paris Diderot, Sorbonne Universit\'e, Institut de
  Math\'ematiques de Jussieu-Paris Rive Gauche, IMJ-PRG, F-75013, Paris, France.}
\email{olivier.dudas@imj-prg.fr}

\author{Gunter Malle}
\address{FB Mathematik, TU Kaiserslautern, Postfach 3049,
         67653 Kaisers\-lautern, Germany.}
\email{malle@mathematik.uni-kl.de}

\thanks{Both authors gratefully acknowledge financial support by SFB TRR 195.
  This first author also gratefully acknowledges financial support by
  ANR-16-CE40-0010-01.}

\keywords{decomposition numbers, classical groups, exceptional groups, unitary primes, unipotent blocks, Hecke algebras, Craven's conjecture}

\subjclass[2010]{20C20, 20C33}

\begin{abstract}
We determine approximations to the decomposition matrices for unipotent
$\ell$-blocks of several series of finite reductive groups of classical and
exceptional type over $\FF_q$ of low rank in non-defining good
characteristic~$\ell$.
\end{abstract}

\maketitle


\tableofcontents

\section{Introduction} \label{sec:intro}

It is a fundamental problem in representation theory of finite groups to
construct all irreducible representations of the finite nearly simple groups,
or at least determine their dimensions. This question is open even for the
family of symmetric groups. While it was solved by Frobenius more than a
hundred years ago for representations over the complex numbers, it remains a
challenging problem when the characteristic of the base field is positive.
In this manuscript we consider certain families of finite quasi-simple groups
of Lie type. For these, the ordinary characters were classified by G.~Lusztig
in the 1980s (see \cite{Lu84}). For positive characteristic
representations it is open in general. It comes in two fundamentally
different flavours. In the defining characteristic case, the representations of
the finite group are closely related to the representations of the ambient
algebraic group, to which a highest weight theory applies, but still deep
problems remain. Here, we are concerned with the other, the cross
characteristic case. The main problem here can be phrased as finding the
$\ell$-modular decomposition matrices for the so-called unipotent blocks.

We obtain very close approximations to these decomposition matrices for all
finite groups of Lie type and Lie rank at most~6, and sometimes even beyond,
that is to say, we either determine the matrices completely or up to at most
a very small number of undetermined entries.

Our motivation for doing this is at least threefold. First, it is of interest
for many applications to know the dimensions of all irreducible modules of the
``small'' finite simple groups, or at least the few smallest such dimensions.
Secondly, we expect that the data obtained in this work may be useful
to derive and test conjectures for example on
\begin{itemize}
 \item the number and labels of cuspidal unipotent Brauer characters,
 \item the parameters of modular Hecke algebras,
 \item the subdivision of unipotent characters into $\ell$-modular
  Harish-Chandra series,
 \item and last not least on the shape of decomposition matrices and on
  individual decomposition numbers, for example Craven's conjecture
  \cite{Cra12}.
\end{itemize}
For example, the parametrisation of cuspidal Brauer characters and of the
distribution into Harish-Chandra series for the class of unitary groups was
predicted and proved based on similar data obtained in \cite{GHM}. Thirdly, we
expect further applications and connections based on the data obtained here.
In fact our previous results on unitary groups \cite{DM15} and on exceptional
groups \cite{DM16} have already been used by E.~Norton \cite{N19} to predict and
then prove the distribution of characters into modular Harish-Chandra series.
\par
Our results are mostly phrased under the assumption ($T_\ell$) that unipotent
$\ell$-blocks of finite reductive groups $G$ have uni-triangular decomposition
matrix. This is known to hold whenever $\ell$ and the defining characteristic of
$G$ are not too small, see \cite[Thm.~A]{BDT19}.
\medskip

The work is structured as follows. The first part contains definitions and
some general results. We recall some notions and results from the ordinary and
modular representation theory of finite reductive groups in
Chapter~\ref{chap:finred}. In Chapter~\ref{chap:hecke} we derive criteria for
when the parameters of modular Hecke algebras can be obtained from a
corresponding characteristic zero situation. The second part is devoted to
the determination of decomposition matrices in the various groups under
consideration. It is divided up according to the order $d=d_\ell(q)$ of the
size $q$ of the finite field over which our group is defined, modulo~$\ell$,
the characteristic of the representations we consider. We start off with
the most difficult case of $d=2$, for which the ranks of Sylow $\ell$-subgroups
are the largest. Here, in Section~2.\ref{sec:St} we prove a general result on
multiplicities in the $\ell$-modular reduction of the Steinberg character,
before we go on to treat the different families of finite quasi-simple reductive
groups. \par
In the subsequent Chapters~\ref{chap:d=3} and~\ref{chap:d=6} we consider
decomposition matrices for the cases $d=3$ and $d=6$ respectively. At the end
of each section we give an overview of the distribution into Harish-Chandra
series  observed in the blocks under consideration. Finally, in
Chapter~\ref{chap:d>=7} we discuss the $\ell$-modular Brauer trees when $d\ge7$.
In the last chapter we check that our results are in agreement with a
conjecture of Craven, and indicate in Proposition~\ref{prop:craven} which
further decomposition numbers would follow if this conjecture holds.

\medskip\noindent
{\bf Acknowledgement:} The computation of Harish-Chandra and Deligne--Lusztig
induction (and thus restriction) of unipotent characters was done in Jean
Michel's development version of the \Chevie{} system \cite{Mi15} using in
particular the command \textsc{LusztigInductionTable}. We thank him for having
provided this and various further very convenient functionalities.
Chapter~\ref{chap:craven} was written after an observation by Emily Norton. We
thank her for her valuable suggestion as well as for her help in the proof of
Proposition~\ref{prop:B6,d=6}.

\mainmatter
\part{Setting and general methods}
\vskip 2pc

\chapter{Finite groups of Lie type}   \label{chap:finred}

\section{Basic sets and uni-triangularity}   \label{subsec:setting}
Let $\ell$ be a prime number and $(K,\cO,k)$ be an $\ell$-modular system
which we assume to be large enough for all the finite groups encountered.
More specifically, since we will be working with $\ell$-adic cohomology we
assume throughout this paper that $K$ is a finite extension of the field
$\QQ_\ell$ of $\ell$-adic numbers.
\par
Let $H$ be a finite group.
A representation of $H$ will always be assumed to be finite-dimensional.
The set of irreducible characters (resp.~irreducible Brauer characters)
will be denoted by $\Irr H$ (resp. $\IBr H$).
Given a simple $kH$-module $N$, we shall denote by $\vhi_N$ (resp.~$P_N$,
resp.~$\Psi_N$) its Brauer character (resp.~its projective cover, resp.~the
ordinary character of its projective cover). The restriction of an ordinary
character $\chi$ of $H$ to the set of $\ell'$-elements will be denoted by
$\chi^\circ$, and referred to as the \emph{$\ell$-modular reduction} of $\chi$
(or sometimes just the \emph{$\ell$-reduction} of $\chi$). It decomposes
uniquely on the family of irreducible Brauer characters of $H$ as
$$\chi^\circ = \sum_{\vhi\in\IBr\, H} d_{\chi,\vhi}\, \vhi.$$
The non-negative integral coefficients
$(d_{\chi,\vhi})_{\chi \in \Irr H, \vhi\in \IBr\, H}$ form the
\emph{$\ell$-modular decomposition matrix} of $H$. Equivalently, if
$\vhi=\vhi_N$ is the Brauer character of a simple $kH$-module $N$, then
the character of its projective cover satisfies
$\Psi_N = \sum_{\chi\in\Irr H} d_{\chi,\vhi}\,\chi$ by Brauer reciprocity.
\par
We denote by $\langle - ; - \rangle_H$ the usual inner product on the
space of $K$-valued class functions on $H$. If $N$ is a simple $kH$-module with
Brauer character $\vhi$ and $P$ a projective $kH$-module with ordinary
character $\Psi$ then
$\langle \Psi;\vhi \rangle = \dim \Hom_{kH}(P,N)$ gives the multiplicity of $P_N$
as a direct summand of $P$. 
\par
Recall that a \emph{basic set of characters} is a set $\cB$ of complex
irreducible characters of $H$ such that $\cB^\circ=\{\chi^\circ\mid\chi\in\cB\}$
is a $\ZZ$-basis of $\ZZ \IBr\, H$. This means that the restriction of the
decomposition matrix to $\cB$ is invertible over $\ZZ$.
\smallskip

According to Brauer, the ordinary and $\ell$-modular irreducible characters of
$H$ are subdivided into $\ell$-blocks such that the decomposition matrix is block
diagonal with respect to this partition. It then also makes sense to speak of
basic sets of individual $\ell$-blocks.
Now assume that we have a basic set $\cB$ for an $\ell$-block $b$ of $H$
ordered in such a way that the decomposition matrix of $b$ with respect to
$\cB$ is known \emph{a priori} to be lower uni-triangular. Then this
facilitates very much the explicit determination of this decomposition matrix
as explained in the following example.

\begin{exmp}   \label{exmp:uni}
Let $\cB=\{\rho_1,\ldots,\rho_r\}\subseteq\Irr b$ be a basic set of a block~$b$
of $H$ such that the decomposition matrix is known to be uni-triangular with
respect to this ordering, and let $\{\Psi_1,\ldots,\Psi_r\}$ be the
corresponding projective indecomposable modules (PIMs for short) in $b$. Then:
\begin{enumerate}[(a)]
\item If $\Psi$ is a projective character in $b$ that involves $\rho_i$ but no
 $\rho_j$ with $j<i$ then $\Psi_i$ is a direct summand of $\Psi$.
\item More generally, let $\Psi,\Psi'$ be two projective characters such that
 $\Psi-\Psi'$ involves $\rho_i$ with positive multiplicity, but no $\rho_j$
 with $j<i$. Then $\Psi_i$ is a direct summand of $\Psi$.
\end{enumerate}
Thus, under certain conditions known PIMs can be subtracted from given
projectives, and in this way a set of projective characters can be partially
echelonised.
\end{exmp}

\section{Unipotent characters and unipotent blocks}   \label{subsec:finred}
Let $\bG$ be a connected reductive linear algebraic group over
$\overline{\FF}_p$, and $F:\bG\rightarrow\bG$ be a Frobenius endomorphism with
respect to an $\FF_q$-structure. (In particular, we do not consider Ree and
Suzuki groups here.)  We let $(\bG^*,F)$ be a Langlands dual group to $(\bG,F)$.
We let $G:=\bG^F$ and $G^*:=\bG^{*F}$ denote the associated finite reductive
groups. Throughout, whenever $\bH\le\bG$ is an $F$-stable subgroup we write
$H=\bH^F$.
\par
Recall from \cite[\S13]{DM91} that the irreducible characters of $G$ over $K$
fall into rational Lusztig series attached to conjugacy classes of semisimple
elements in $G^*$. Given $s$ a semisimple element of $G^*$ we denote by
$\cE(G,s)$ the corresponding Lusztig series. The \emph{unipotent characters} of
$G$ are by definition the elements of $\cE(G,1)$, and the \emph{unipotent
$\ell$-blocks} are the blocks that contain at least one unipotent character.
By Brou\'e--Michel (see \cite[Thm.~9.12]{CE}), the irreducible characters
lying in the unipotent blocks are exactly those lying in $\cup_t\cE(G,t)$, for
$t$ running over the $\ell$-elements of $G^*$.
\par
Unipotent characters were classified by Lusztig \cite{Lu84}. When working
with explicit characters later, we will use the notation in \cite{Ca} for
exceptional groups. For classical groups, we will denote by $[\lambda]$ a
unipotent character parametrised by a partition or a bipartition~$\lambda$.
\smallskip

\begin{ass}   \label{ass:ell}
 Throughout this work we shall make the following assumptions on~$\ell$:
 \begin{itemize}
  \item $\ell \neq p$ (non-defining characteristic),
  \item $\ell$ is \emph{very good} for $\bG$. (\emph{i.e.}, $\ell$ is good for
   $\bG$ and $\ell$ divides neither $|Z(\bG)/Z^\circ(\bG)|$ nor
   $|Z(\bG^*)/Z^\circ(\bG^*)|$).
 \end{itemize}
\end{ass}

Note that both conditions are inherited by all Levi subgroups of $\bG$, so
that inductive arguments can be applied.

In this situation, the unipotent characters lying in a given unipotent
$\ell$-block $b$ of $G$ form a basic set of this block (see \cite{GH91,Ge93}).
Consequently, the restriction of the decomposition matrix of $b$ to the
unipotent characters is invertible and the whole decomposition matrix of the
unipotent blocks can be recovered from these unipotent parts together with the
ordinary character table. This square matrix will be referred to as
the \emph{unipotent $\ell$-decomposition matrix} of $G$.
In particular every (virtual) unipotent character is a virtual projective
character, up to adding and removing some non-unipotent characters. In addition,
under our Assumption~\ref{ass:ell} on $\ell$, not only the parametrisation
of unipotent characters but also the decomposition matrix of the unipotent
$\ell$-blocks is independent of the isogeny type of $\bG$ (see \cite[Thm. 17.7]{CE}).
Therefore, we will not specify it in our statements and proofs.
\par
Furthermore, under these assumptions on $\ell$, the basic set is widely expected
to be uni-triangular. This means that one can order the set of unipotent
characters and unipotent Brauer characters in such a way that the unipotent
decomposition matrix has uni-triangular shape (see \cite[Conj.~3.4]{GH97}).
More precisely, in many of our proofs we will assume the following property
of a finite group of Lie type $G$ at a prime $\ell$:
\begin{itemize}
 \item[$(T_\ell)$] \emph{The decomposition matrix of the unipotent
  $\ell$-blocks of $G$ has a uni-triangular shape with respect to any total
  ordering of the unipotent characters compatible with the order on families.}
\end{itemize}
This has recently been shown when $p$ is a good prime for $\bG$ (see
\cite[Thm.~A]{BDT19}).

The unipotent $\ell$-blocks of the finite reductive groups are known by the
work of Fong--Srinivasan \cite{FS}, Brou\'e--Malle--Michel \cite{BMM} and
Cabanes--Enguehard \cite{CE94}. We will use these results throughout without
further reference.

\section{Deligne--Lusztig theory}   \label{ssec:dltheory}

We fix an $F$-stable maximal torus $\bT$ of $\bG$ contained in an $F$-stable
Borel subgroup $\bB$ of $\bG$. Such a torus is said to be \emph{maximally split}
(or sometimes \emph{quasi-split} as in \cite{DM91}).
Given an $F$-stable Levi complement $\bL$ of a parabolic subgroup $\bP$ of $\bG$,
we denote by $\RLG$ and $\sRLG$ the Deligne--Lusztig induction and restriction
maps
$$\RLG : \ZZ \Irr_\Lambda L \longrightarrow \ZZ \Irr_\Lambda G \quad
\text{and}\quad
  \sRLG  : \ZZ \Irr_\Lambda G\longrightarrow \ZZ \Irr_\Lambda L$$
where $\Lambda$ is the field $K$ or $k$. We will only use them when 
in a situation where they do not depend on $\bP$, which justifies our
notation. We refer to \cite{BoMi11} for conditions ensuring the independence
from the parabolic subgroup.
When $\bP$ can be chosen to be $F$-stable --- in which case we will say that
$\bL$ is a \emph{$1$-split} Levi subgroup of $\bG$ --- these coincide with the
Harish-Chandra induction and restriction map induced by the Harish-Chandra
induction and restriction functors. These will be also denoted by $\RLG$ and
$\sRLG$.
\smallskip

Let $t$ be a semisimple element of $G^*$.
Assume that $\bL^*:=C_{\bG^*}(t)$ is a Levi subgroup of $\bG^*$, in duality
with an $F$-stable Levi subgroup $\bL$ of $\bG$. Since $t\in Z(\bL^*)^F$, the
element $t$ corresponds to a linear character of $L=\bL^F$, which we will
denote by $\hat t$. By \cite[Thm.~13.25]{DM91} there is a sign
$\varepsilon\in\{\pm1\}$ which depends only on $\bL$, $\bG$ and $F$ such that
the maps
$$\cE(L,1) \xrightarrow{- \otimes \hat t} \cE(L,t)
  \xrightarrow{\varepsilon \RLG} \cE(G,t)$$
are bijections. When $t$ is an $\ell$-element and $\rho$ a unipotent character
of $L$, the irreducible character $\varepsilon \RLG(\rho \otimes \hat t)$ lies
in a unipotent block of $G$. Consequently, its $\ell$-reduction can be written
uniquely as a linear combination of the $\ell$-reductions of unipotent
characters, which form a basic set.
On the other hand, the $\ell$-reduction of $\hat t$ is trivial, therefore
by \cite[Prop. 12.2]{DM91} the character $\varepsilon \RLG(\rho\otimes\hat t)$
has the same $\ell$-reduction as the virtual unipotent character
$\varepsilon \RLG(\rho)$. This will prove to be a powerful tool to derive
relations on the decomposition numbers of $G$, as shown in
Example~\ref{exmp:use (RED)}.

\begin{lem}   \label{lem:red}
 Let $t$ be a semisimple $\ell$-element of $G^*$ such that $C_{\bG^*}(t)= \bL^*$
 is a Levi subgroup of $\bG^*$ with dual $\bL\le\bG$. Then for every unipotent
 character $\rho$ of $L$, $\varepsilon \RLG(\rho)^\circ$ is a non-negative
 linear combination of irreducible Brauer characters.
\end{lem}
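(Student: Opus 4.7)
The plan is a short one: the lemma is essentially a direct consequence of the discussion immediately preceding its statement. The key idea is that although $\varepsilon \RLG(\rho)$ is only a virtual character in general, we can rewrite its $\ell$-reduction as that of a genuine irreducible ordinary character by inserting $\hat t$.

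First I would observe that, since $C_{\bG^*}(t)=\bL^*$ by hypothesis, the paragraph preceding the lemma applies: the composition
$$\rho \;\longmapsto\; \rho\otimes\hat t \;\longmapsto\; \varepsilon\RLG(\rho\otimes\hat t)$$
sends a unipotent character of $L$ to an irreducible character of $G$ lying in $\cE(G,t)$. In particular $\varepsilon\RLG(\rho\otimes\hat t)$ is a genuine irreducible ordinary character of $G$, so by the definition of decomposition numbers its $\ell$-reduction
$$\bigl(\varepsilon\RLG(\rho\otimes\hat t)\bigr)^\circ$$
is a non-negative $\ZZ$-linear combination of irreducible Brauer characters of $G$.

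Next I would invoke two facts to rewrite this $\ell$-reduction as $\bigl(\varepsilon\RLG(\rho)\bigr)^\circ$. The first is that $\hat t$ has $\ell$-power order, since $t$ is a semisimple $\ell$-element of $G^*$ and $\hat t$ is the linear character of $L$ attached to it; hence $\hat t$ vanishes on no $\ell'$-element and takes value $1$ on each of them, so $\hat t^\circ=1_L$ and consequently $(\rho\otimes\hat t)^\circ=\rho^\circ$. The second is the compatibility of Deligne--Lusztig induction with $\ell$-reduction recorded in \cite[Prop.~12.2]{DM91}: class functions on $L$ with the same restriction to $\ell'$-elements have Deligne--Lusztig inductions with the same restriction to $\ell'$-elements. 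Combining these gives
$$\varepsilon\RLG(\rho)^\circ \;=\; \varepsilon\RLG(\rho\otimes\hat t)^\circ,$$
and the conclusion follows from the first step.

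There is no substantial obstacle here; the whole argument is a chaining of three ingredients already recalled in the text (the Jordan-type bijection of \cite[Thm.~13.25]{DM91} applied at an $\ell$-element, the triviality of $\hat t^\circ$, and \cite[Prop.~12.2]{DM91}). If any point deserves highlighting, it is the use of \cite[Prop.~12.2]{DM91}, which is what makes it legitimate to transfer positivity from the $\ell$-reduction of a genuine character on the $\cE(G,t)$ side to the $\ell$-reduction of the a priori only virtual character $\varepsilon\RLG(\rho)$ on the unipotent side.
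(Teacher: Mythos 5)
Your proof is correct and is exactly the argument the paper gives (in the paragraph immediately preceding the lemma): $\varepsilon\RLG(\rho\otimes\hat t)$ is a genuine irreducible character since $C_{\bG^*}(t)=\bL^*$, the $\ell$-reduction of $\hat t$ is trivial because $t$ is an $\ell$-element, and \cite[Prop.~12.2]{DM91} transfers the resulting non-negativity to $\varepsilon\RLG(\rho)^\circ$. No issues.
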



\begin{exmp}   \label{exmp:use (RED)}
Let $G = \PGL_2(q)$ and hence $G^* = \SL_2(q)$. The unipotent characters of $G$
are $\St_G$ and $1_G$. There are semisimple element $t \in G^*$ with
eigenvalues $\{\lambda,\lambda^{-1}\}$ for every $\lambda \in \FF_{q^2}$
satisfying $\lambda^q = \lambda^{-1}$. When $\lambda \neq \pm 1$, such an
element is regular. If $\ell \mid (q+1)$ is an odd prime number, then one can
take $\lambda$ to be a non-trivial $\ell$-element. The corresponding semisimple
element $t$ is a regular $\ell$-element of $G^*$, therefore
$\bL^* = C_{\bG^*}(t)$ is a maximal torus of $\bG^*$.
By Lemma~\ref{lem:red}, the $\ell$-modular reduction of the virtual character
$-R_L^G(1_L) = \St_G - 1_G$ is a
non-negative combination of irreducible Brauer characters.
This shows that when $\ell$ is an odd prime number dividing $q+1$, then the
$\ell$-reduction of $\St_G$ contains at least one copy of the trivial Brauer
character and the unipotent $\ell$-decomposition matrix has the following shape
$$\begin{array}{c|cc} 1_G & 1 & \cdot \\
	\St_G & \alpha & 1 \\ \end{array}$$
with $\alpha \geq 1$ (the coefficient $1$ in the bottom right corner comes
from the fact that this square matrix is invertible since the unipotent
characters form a basic set for the unipotent blocks). We have
$-R_L^G(1_L)^\circ = (\alpha-1)\varphi_{k} + \varphi_{\St}$.
\end{exmp}

Let $W:=N_\bG(\bT)/\bT$ denote the Weyl group of $\bG$. It is a finite Coxeter
group with distinguished set of generators $S$ and associated length
function~$l$. Recall from \cite[Prop.~3.3.3]{Ca} that the $G$-conjugacy classes
of $F$-stable maximal tori of $\bG$ are parame\-trised by the $F$-conjugacy
classes of $W$. Given $w \in W$, we will denote by $\bT_w$ a torus of type $w$
with respect to $\bT$ and we will write $R_w := R_{T_w}^G(1_{T_w})$. This is a
virtual character of $G$ all of whose constituents are unipotent characters.

As pointed out above, under our Assumption~\ref{ass:ell} the unipotent
characters form a basic set for the unipotent blocks. Therefore $R_w$ can be
seen as the unipotent part of a virtual projective module $\widetilde R_w$. The
decomposition of $\widetilde R_w$ on the basis of characters of PIMs can be
read off from the decomposition of $R_w$ on the family of unipotent parts of
characters of PIMs. Given a simple $kG$-module $N$, the \emph{coefficient of
$\Psi_N$ on $R_w$} will refer to the coefficient of $\Psi_N$ on
$\widetilde R_w$, which is given by $ \langle \widetilde R_w ; \varphi_N \rangle$.
The following proposition gives some control on the sign of this coefficient
(see \cite[Prop.~1.5]{Du13}).

\begin{prop}   \label{prop:dl}
 Let $w \in W$. If $\Psi_N$ does not occur in $R_v$ for any $v < w$ in the
 Bruhat order on $W$ then the coefficient of $\Psi_N$ in $(-1)^{l(w)}R_w$
 is non-negative.
\end{prop}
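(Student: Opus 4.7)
The plan is to follow the geometric strategy from \cite{Du13}, realizing $R_w$ as the Lefschetz character of the Deligne--Lusztig variety $\bX(w)$:
\[
R_w \;=\; \sum_{i\ge 0} (-1)^i \bigl[H^i_c(\bX(w), K)\bigr].
\]
Working over $\cO$ instead of $K$, I would view $R\Gamma_c(\bX(w), \cO)$ as a perfect complex of $\cO G$-modules whose class in the Grothendieck group of perfect complexes represents the virtual projective lift $\wt R_w$. Then the coefficient of $\Psi_N$ in $(-1)^{l(w)} \wt R_w$ becomes
\[
(-1)^{l(w)} \sum_i (-1)^i \dim_k \Hom_{kG}\bigl(H^i_c(\bX(w), k),\,N\bigr),
\]
and the task is to show this is non-negative.

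Next, I would fix a reduced expression $w = s_{i_1}\cdots s_{i_r}$ with $r = l(w)$ and pass to the smooth projective Bott--Samelson--Deligne--Lusztig compactification $\overline{\bX}(w)$. Its complement $\overline{\bX}(w) \setminus \bX(w)$ admits a stratification whose pieces are fibered over Deligne--Lusztig varieties $\bX(v)$ with $v<w$ (indexed by the proper subexpressions of the fixed reduced expression). Excision for the open embedding $\bX(w)\hookrightarrow \overline{\bX}(w)$, combined with induction on the Bruhat order, will yield an identity
\[
\wt R_w \;=\; \wt\chi_c\bigl(\overline{\bX}(w)\bigr) \;-\; \sum_{v<w} m_v\,\wt R_v
\]
for some integers $m_v$. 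Under the hypothesis that $\Psi_N$ does not occur in $R_v$ for any $v<w$, none of the $\wt R_v$ contributes any multiplicity of $\Psi_N$, so the coefficient of $\Psi_N$ in $\wt R_w$ reduces to its coefficient in $\wt\chi_c(\overline{\bX}(w))$.

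The remaining task, and the main obstacle, will be to show that $(-1)^{l(w)} \wt\chi_c(\overline{\bX}(w))$ has non-negative coefficients on all PIMs. Since $\overline{\bX}(w)$ is smooth projective of pure dimension $l(w)$, I would invoke Poincar\'e duality to identify $H^i_c$ with the dual of $H^{2l(w)-i}$, and Deligne's purity theorem to pin the Frobenius weight of $H^i$ to $i$; these should combine to force the sign pattern in the alternating sum so that $(-1)^{l(w)} \wt\chi_c(\overline{\bX}(w))$ is \emph{effective} on PIMs. The delicate point is that $\overline{\bX}(w)$ does not itself carry a free $G$-action, so in order to realize each $H^i(\overline{\bX}(w), \cO)$ as a genuine (non-virtual) projective $\cO G$-module I would pass to a suitable $G$-torsor or invoke a Bonnaf\'e--Rouquier-type cohomological reduction, making sure the sign bookkeeping across the stratification is correct.
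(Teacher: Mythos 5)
The paper does not prove this proposition itself: it quotes it from \cite[Prop.~1.5]{Du13}, and the proof there lives in exactly the geometric setting you describe ($R_w$ as the Lefschetz character of the Deligne--Lusztig variety $X(w)$, the stratification of $\overline{X}(w)$ by the $X(v)$ with $v\le w$, induction on the Bruhat order). Your first reduction is also fine: additivity of compactly supported Euler characteristics over the strata does give $\wt R_w=\wt\chi_c(\overline{X}(w))-\sum_{v<w}m_v\wt R_v$, and under the hypothesis the coefficient of $\Psi_N$ on $\wt R_w$ equals its coefficient on $\wt\chi_c(\overline{X}(w))$.

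The gap is in your endgame. The statement you still need --- that $(-1)^{l(w)}\wt\chi_c(\overline{X}(w))$ decomposes non-negatively on \emph{all} PIMs --- is false, so no combination of Poincar\'e duality and purity can deliver it. Already for $G=\PGL_2(q)$ with $\ell\mid(q+1)$ and $w=s$ one has $\overline{X}(s)\cong\mathbf{P}^1$ and $\wt\chi_c(\overline{X}(s))=\wt R_e+\wt R_s=2\Psi_k-2\Psi_{\St}$, so after multiplying by $(-1)^{l(s)}=-1$ the coefficient on $\Psi_k$ is $-2$. The obstruction is structural: a smooth projective variety always has nonzero $H^0$ and $H^{2l(w)}$, which enter the alternating sum with sign $+1$ and hence with sign $(-1)^{l(w)}$ after your normalisation; for $l(w)$ odd these contributions are negative and nothing makes them cancel. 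The hypothesis on $N$ must therefore be used a second time, precisely here. The mechanism in \cite{Du13} is a \emph{concentration in middle degree} statement: if $N$ is not a composition factor of $H^\bullet_c(X(v),k)$ for any $v<w$, then $\Hom_{kG}(P_N,H^i_c(X(w),k))=0$ for all $i\ne l(w)$, whence the coefficient of $\Psi_N$ on $(-1)^{l(w)}\wt R_w$ is $(-1)^{2l(w)}\dim\Hom_{kG}(P_N,H^{l(w)}_c(X(w),k))\ge0$. (In the example, the trivial module violates the hypothesis and contributes in degrees $0$ and $2$, while the Steinberg module satisfies it and sits only in degree $1$.) Two further points you would need to supply: the hypothesis concerns the virtual characters $R_v$ rather than the individual cohomology groups of $X(v)$, so a Bruhat-order induction is required to pass from one to the other (for a minimal $v$ with $N$ occurring in $H^\bullet_c(X(v),k)$, concentration would force $\Psi_N$ to occur in $(-1)^{l(v)}R_v$, contradicting the hypothesis); and $G$ does not act freely on $X(w)$ (stabilisers are Borel intersections, not $p$-groups), so the perfect complex must be produced from the torsor $Y(\dot w)$ --- or one bypasses this entirely by defining $\wt R_w$ through the basic-set property, as the paper does.
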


\begin{exmp}
As in Example~\ref{exmp:use (RED)}, we work with $G= \PGL_2(q)$ and an odd
prime number $\ell$ dividing $q+1$. There are two elements in the Weyl group
of $G$, namely the trivial element $e$ and the simple reflection $s$.
The Deligne--Lusztig characters are $R_e = 1_G+\St_G$ and $R_s = 1_G - \St_G$.
With the decomposition matrix given in Example \ref{exmp:use (RED)} they
decompose on the basis of PIMs as
$$\widetilde R_e = \Psi_k + (1-\alpha)\Psi_{\St} \qquad \text{and} \qquad
  \widetilde R_s = \Psi_k - (1+\alpha)\Psi_{\St}.$$
By Proposition~\ref{prop:dl} we have $1-\alpha \geq 0$, which forces
$\alpha =1$. Therefore $\Psi_{\St}$ does not occur in $R_e$; it has a cuspidal
head and $\Psi_{\St}$ occurs with multiplicity $1+\alpha = 2$ in $-R_s$, a
non-negative number, as predicted by Proposition~\ref{prop:dl}.
The unipotent $\ell$-decomposition matrix is hence
$$  \begin{array}{c|cc} 1_G & 1 & \cdot \\
	\St_G & 1 & 1 \\\hline 
	& ps & c\\ \end{array}$$
The projective character corresponding to the first column lies in the principal
series (indicated by the label ``ps''), while the second one is cuspidal
(and therefore labelled~``c'').
\end{exmp}

If $w \in W$ is such that $wF$ is a $d$-regular element (in the sense of
Springer \cite{Sp74}) for some $d\ge1$ then $\bT_w$ contains a (Sylow)
$\Phi_d$-torus $\bS$ of $\bG$ such that $C_\bG(\bS) = \bT_w$. For $d$ the order
of $q$ modulo~$\ell$, under suitable conditions on the $\ell$-part
$\Phi_d(q)_\ell$ of the $d$th cyclotomic polynomial $\Phi_d$ evaluated at $q$,
the dual torus $(\bS^*)^F$ contains an $\ell$-element $t$ such that
$C_{\bG^*}(t) = \bT_w^*$ (see Proposition~\ref{prop:q+1 reg}
for the case $d=2$ and the examples below). In that case by Lemma~\ref{lem:red},
$(-1)^{l(w)}(R_w)^\circ$ is a non-negative linear combination of irreducible
Brauer characters.

\begin{exmp}   \label{exmp:reg}
(a) Let $G$ be a group of type $C_n(q)$. Then a regular semisimple element
of $G$ is an element of $\bG$ with $2n$ distinct eigenvalues of the form
$\{\la_1^{\pm 1},\ldots,\la_n^{\pm 1}\}$ in the natural
$2n$-dimensional matrix representation that are permuted under the Frobenius
map $\la \longmapsto \la^q$. If the eigenvalues are $\ell$-elements
and $d$ is the order of $q$ modulo $\ell$, then each eigenvalue has an orbit
under $F$ of size $d$, therefore $d$ must divide $2n$. In that case regular
$\ell$-elements of $G$ exist whenever $\Phi_d(q)_\ell> 2n$.
\smallskip

(b) Let $G$ be a group of type $D_n(q)$. There are two types of regular
semisimple elements of odd order: elements of $\bG$ with $2n$ distinct
eigenvalues of the form $\{\la_1^{\pm 1},\ldots,\la_n^{\pm 1}\}$,
or elements with $2n-2$ distinct eigenvalues
$\{\la_1^{\pm 1},\ldots,\la_{n-1}^{\pm 1}\}$ and two eigenvalues both
equal to~$1$. If such an element is $F$-stable then $d$, the order of $q$
modulo~$\ell$, divides $2n$ or~$2n-2$ respectively. It can be chosen to be an
$\ell$-element provided that $\Phi_d(q)_\ell>2n$ or $\Phi_d(q)_\ell>2n-2$
respectively.
\smallskip

(c) Let $G$ be a group of type $\tw2D_n(q)$ with $n$ odd. Then $w_0 F$ acts
as $-q$ on the set of characters and cocharacters of $\bT$. Let $e$ be the
order of $-q$ modulo $\ell$. Using Ennola duality we deduce from (b) that when
$e$ divides $2n$ and $\Phi_e(q)_\ell > 2n$ (resp.~when $e$ divides $2n-2$
and $\Phi_e(q)_\ell > 2n-2$) then there exists a regular $\ell$-element
of $G$.
\smallskip

(d) Let $\bS$ be an $F$-stable torus of $\bG^*$ with $S=\bS^F$ of order
$\Phi_d(q)$ (a $\Phi_d$-torus of rank $1$). If $\ell$ is good and does not divide
the order of $(Z(\bG^*)/Z^\circ(\bG^*))^F$ then by \cite[Lemma~13.17]{CE},
the $d$-split Levi subgroup $\bL^* = C_{\bG^*}(\bS)$ of $\bG^*$ is also equal
to $C_{\bG^*}(S_\ell)$, the centraliser of a Sylow $\ell$-subgroup $S_\ell$
of~$S$. Consequently, if $t \in S_\ell$ is any generator of the cyclic group
$S_\ell$ then $\bL^* = C_{\bG^*}(t)$.
\end{exmp}

\chapter{Hecke algebras}   \label{chap:hecke}
Let $\Lambda$ be one of $K$, $\cO$ or $k$. Throughout this chapter, let $\bL$ be
a 1-split Levi subgroup of $\bG$, that is, an $F$-stable Levi complement of an
$F$-stable parabolic subgroup of $\bG$. In this case the maps $\RLG$ and $\sRLG$
introduced in \S 1.\ref{ssec:dltheory} are induced by the Harish-Chandra
induction and restriction functors, which we still denote by $\RLG$ and $\sRLG$.

Throughout this section we shall assume that $Z(\bG)$ is connected so 
that the results in \cite[\S8]{Lu84} apply.

\section{Reduction stability}
A $\Lambda L$-module $X$ is \emph{cuspidal} if it is killed under every proper
Harish-Chandra restriction. In that case the algebra
$\cH_G(L,X) := \End_{\Lambda G}(\RLG(X))$ has a very specific structure. When
$\Lambda = K$ and $X$ is simple, Lusztig showed in
\cite[\S8.6]{Lu84} that the group $W_G(L,X):= N_G(\bL,X)/L$ admits a structure of
a Coxeter system and $\cH_G(L,X)$ is an Iwahori--Hecke algebra associated to
$W_G(L,X)$. More precisely, if $S_G(L,X)$ denotes the set of simple reflections
associated to the Coxeter structure then  $\cH_G(L,X)$ has a $K$-basis
$\{T_w\}_{w \in  W_G(L,X)}$ satisfying, for all $s \in S_G(L,X)$ and
$w \in W_G(L,X)$
$$T_s T_{w} = \begin{cases}  T_{sw}  & \text{if } sw > w, \\
  (q_s-1)T_{sw} + q_sT_w & \text{otherwise}. \end{cases}$$
In addition the parameters $\{q_s\}_{s \in S_G(L,X)}$ can be computed
explicitly. They are actually already
determined by the Hecke algebras of the cuspidal module $X$ inside the minimal
Levi overgroups of $L$ in $G$.  When $\Lambda = k$, the endomorphism algebra
was studied for example in \cite{GHM}; is can still be shown to be closely
related to an Iwahori--Hecke algebra, but much less is known about the
parameters. The best situation occurs  when the normaliser of the cuspidal
lattice is invariant under change of scalars, as studied in
\cite[\S2.6]{GeThesis}. More precisely, given an $\cO L$-lattice $X$ we will say
that  $\RLG(X)$ is \emph{reduction stable} if
\begin{itemize}
 \item[(1)] $KX$ is irreducible, and
 \item[(2)] $N_G(\bL,X) = N_G(\bL,KX)=N_G(\bL,kX)$.
\end{itemize}
In that case $\cH_G(L,kX)$ is an Iwahori--Hecke algebra whose
parameters are the $\ell$-reduction of the parameters of the Iwahori--Hecke
algebra $\cH_G(L,KX)$.

\begin{prop}[Geck]   \label{prop:redstable-geck}
 Let $X$ be a cuspidal $\cO L$-module. If $\RLG(X)$ is reduction stable then
 there is an $\cO$-basis $\{T_w\}_{w \in  W_G(L,X)}$ of $\cH_G(L,X)$ endowing it
 with a structure of an Iwahori--Hecke algebra.

 Furthermore, if $\Lambda$ is one of $K$ or $k$ then
 $\{1_\Lambda \otimes_\cO T_w\}_{w \in W_G(L,X)}$ is a $\Lambda$-basis of
 $\cH_G(L,\Lambda X)$.
\end{prop}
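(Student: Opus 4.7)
The plan is to descend Lusztig's characteristic-zero Iwahori--Hecke presentation of $\cH_G(L, KX)$ to an integral presentation of $\cH_G(L, X) = \End_{\cO G}(\RLG X)$ by means of the reduction-stability hypothesis, and then to read off the $\Lambda$-basis claim by base change. Set $Y := \RLG(X)$, which is an $\cO G$-lattice since Harish-Chandra induction from a $1$-split Levi is an exact functor preserving lattices. Then $\cH_G(L, X)$ is an $\cO$-order inside $\cH_G(L, KX) = \End_{KG}(KY)$, hence $\cO$-free of finite rank.

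First I would pin down that rank via the Mackey/Howlett--Lehrer decomposition of $\sRLG \RLG X$, valid integrally for $1$-split Levi subgroups: cuspidality of $X$ kills all summands except those indexed by elements of $N_G(\bL)/L$, giving
$$\sRLG \RLG(X) \;\cong\; \bigoplus_{w \in N_G(\bL)/L} {}^w X.$$
By adjunction, $\cH_G(L, X) \cong \bigoplus_w \Hom_{\cO L}(X, {}^w X)$. The irreducibility of $KX$ together with condition~(2) of reduction stability forces $\Hom_{\cO L}(X, {}^w X)$ to be $\cO$-free of rank one when $w$ represents a class in $W_G(L, X)$ and to vanish otherwise; the same analysis after applying $K\otimes_\cO-$ or $k\otimes_\cO-$ succeeds because $N_G(\bL, X) = N_G(\bL, KX) = N_G(\bL, kX)$. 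This simultaneously yields that $\cH_G(L, X)$ is $\cO$-free of rank $|W_G(L, X)|$ and that $\dim_\Lambda \cH_G(L, \Lambda X) = |W_G(L, X)|$ for $\Lambda \in \{K, k\}$.

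Next I would pull Lusztig's Hecke structure down to $\cO$. Lusztig's theorem provides a basis $\{T_w^K\}$ of $\cH_G(L, KX)$ realising the Iwahori--Hecke presentation with integer parameters $q_s$. Following the construction in Geck's thesis (\S 2.6), each generator $T_s^K$ (for $s \in S_G(L, X)$) is built from an $L$-intertwiner ${}^{\dot s}X \xrightarrow{\sim} X$ inside the minimal standard Levi overgroup associated to $s$; condition~(2) of reduction stability tells us that such an intertwiner can be chosen in $\Hom_{\cO L}({}^{\dot s}X, X)$ and remains a generator after reduction mod $\ell$, so $T_s^K$ already lies in $\cH_G(L, X)$. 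Setting $T_w := T_{s_{i_1}} \cdots T_{s_{i_r}}$ for any reduced expression of $w$ then produces $|W_G(L, X)|$ elements of $\cH_G(L, X)$ that are $K$-linearly independent (by Lusztig) and live in a free $\cO$-module of exactly that rank; they therefore form the required $\cO$-basis and inherit the Iwahori--Hecke relations integrally, since those relations already hold over $K$. The second assertion then follows: the canonical map $\Lambda \otimes_\cO \cH_G(L, X) \to \cH_G(L, \Lambda X)$ is injective (because $Y$ is a lattice) and surjective by the dimension match, so $\{1_\Lambda \otimes T_w\}_{w \in W_G(L, X)}$ is a $\Lambda$-basis.

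The main obstacle is the integrality step for the generators $T_s^K$. Without the full reduction-stability hypothesis, the intertwiner used to define $T_s$ could acquire an $\ell$-divisible factor upon reduction, which would simultaneously spoil the $\cO$-basis property and enlarge the modular relative Weyl group $W_G(L, kX)$; condition~(2), namely $N_G(\bL, X) = N_G(\bL, kX)$, is precisely what rules this out, and it is the essential input making the proof run uniformly over $\cO$, $K$ and $k$.
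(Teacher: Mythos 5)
The paper does not prove this proposition; it is attributed to Geck and cited from \cite[\S2.6]{GeThesis}, so there is no in-text argument to compare against. Your skeleton --- Mackey decomposition plus adjunction to identify $\cH_G(L,X)$ with $\bigoplus_{w\in N_G(\bL)/L}\Hom_{\cO L}(X,{}^wX)$, cuspidality and irreducibility of $KX$ to make each surviving summand an $\cO$-order in $K$ and hence equal to $\cO$, and descent of Lusztig's presentation --- is the standard route and handles the first assertion and the case $\Lambda=K$ correctly (the integrality of the normalised generators $T_s^K$, which you assert via the intertwiners, does need the small extra observation that a generator of the rank-one lattice $\Hom_{\cO L}({}^sX,X)$ is automatically a lattice isomorphism, whence the normalising scalars are units; but this is a repairable omission, not a wrong turn).

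The genuine gap is in the case $\Lambda=k$. You claim that ``the same analysis after applying $k\otimes_\cO-$ succeeds because $N_G(\bL,X)=N_G(\bL,KX)=N_G(\bL,kX)$'', concluding $\dim_k\Hom_{kL}(kX,{}^wkX)=1$ for $w\in W_G(L,X)$ and $0$ otherwise. But the analysis over $K$ rests on Schur's lemma for the irreducible module $KX$, and reduction stability imposes no simplicity (or even indecomposability) on $kX$: condition (2) only controls which twists ${}^wkX$ are \emph{isomorphic} to $kX$, and says nothing about the dimensions of $\End_{kL}(kX)$ or of $\Hom_{kL}(kX,{}^wkX)$ for ${}^wkX\not\cong kX$, both of which can strictly exceed their characteristic-zero counterparts. (Already in the degenerate case $G=L$ the second assertion reduces to $\End_{kL}(kX)=k$, which fails for general lattices with irreducible $KX$.) This is exactly the difficulty that the paper's own Proposition~\ref{prop:redstab} is designed to overcome: its hypotheses --- simple head $Y$ of $kX$ and $\Hom_{kL}({}^w(kX),\rad(kX))=0$ --- are used there precisely to force $\dim_k\Hom_{kL}({}^w(kX),kX)\le 1$ and to identify $k\otimes_\cO\cH_G(L,X)$ with $\cH_G(L,kX)$. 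To close your argument you must either invoke the ``standard setup'' in which $kX$ is simple, or import an argument of the type used in Proposition~\ref{prop:redstab}; the dimension count over $k$ cannot simply be read off from the stabiliser condition.
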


The standard setup for reduction stability is when both $KX$ and $kX$ are simple
modules. In that case it is enough to check that $N_G(\bL,KX) = N_G(\bL,kX)$. We
will often need to work with a slightly more general setup.

\begin{prop}   \label{prop:redstab}
 Let $X$ be a non-zero cuspidal $\cO L$-lattice. We assume that
 \begin{itemize}
  \item[(1)] the head $Y$ of $kX$ is a simple module;
  \item[(2)] $N_G(\bL,KX)=N_G(\bL,Y)$; and
  \item[(3)] $\Hom_{kL}({}^w (kX), \rad(kX))=0$ for all $w \in N_G(\bL)$.
 \end{itemize}
 Then $R_L^G(X)$ is reduction stable. Furthermore, $\cH_G(L,kX)\simeq \cH_G(L,Y)$.
\end{prop}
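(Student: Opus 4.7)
The plan is to proceed in three stages: establish that $KX$ is irreducible, then prove reduction stability via a Mackey-based dimension count, and finally extract the Hecke algebra isomorphism from the surjection $kX\twoheadrightarrow Y$.

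For the first stage, condition~(3) at $w=1$ gives $\Hom_{kL}(kX,\rad(kX))=0$, so applying $\Hom_{kL}(kX,-)$ to $0\to\rad(kX)\to kX\to Y\to 0$ yields an injection $\End_{kL}(kX)\hookrightarrow\End_{kL}(Y)=k$, hence an equality. Applying $\Hom_{\cO L}(X,-)$ to $0\to X\xrightarrow{\ell}X\to kX\to 0$ then injects $\End_{\cO L}(X)/\ell$ into $\End_{kL}(kX)=k$, and Nakayama's lemma forces $\End_{\cO L}(X)=\cO$. Tensoring with $K$ gives $\End_{KL}(KX)=K$, so by semisimplicity of $KL$ the module $KX$ is irreducible.

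The nontrivial inclusions needed for reduction stability (given hypothesis~(2)) are $N_G(\bL,Y)\subseteq N_G(\bL,kX)$ and $N_G(\bL,KX)\subseteq N_G(\bL,X)$. Using adjunction together with the Mackey decomposition $\sRLG\RLG(V)=\bigoplus_{w\in W_G(L)}{}^wV$ for cuspidal $V$, one computes $\dim_K\End_{KG}(\RLG KX)=|W_G(L,KX)|=|W_G(L,Y)|$ by~(2). Since $\End_{\cO G}(\RLG X)$ is a torsion-free $\cO$-order of the same rank inside this $K$-algebra, its reduction modulo~$\ell$ embeds into $\End_{kG}(\RLG kX)$, which therefore has $k$-dimension at least $|W_G(L,Y)|$. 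Dually, Mackey over~$k$ combined with condition~(3) gives $\Hom_{kL}({}^w kX,kX)\hookrightarrow\Hom_{kL}({}^w Y,Y)$, bounding the total dimension above by $|W_G(L,Y)|$. Forced equality yields $\Hom_{kL}({}^w kX,kX)=k$ precisely when ${}^w Y\simeq Y$; any such nonzero map has image not contained in $\rad(kX)$ by~(3), so Nakayama makes it surjective and a matching dimension count makes it an isomorphism. This proves $N_G(\bL,Y)\subseteq N_G(\bL,kX)$. An entirely parallel argument on the $\cO$-side, exploiting that $\Hom_{\cO L}({}^nX,X)$ is a rank-one $\cO$-lattice for $n\in N_G(\bL,KX)$ which reduces isomorphically onto $\Hom_{kL}({}^n kX,kX)$, produces an isomorphism ${}^nX\xrightarrow{\sim} X$ via Nakayama and a rank count.

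For the Hecke algebra isomorphism, I apply $\Hom_{kG}(\RLG kX,-)$ to $0\to\RLG\rad(kX)\to\RLG kX\to\RLG Y\to 0$. By adjunction and condition~(3) the initial term vanishes, so post-composition with $\pi:\RLG kX\twoheadrightarrow\RLG Y$ embeds $\End_{kG}(\RLG kX)$ into $\Hom_{kG}(\RLG kX,\RLG Y)$, which by Mackey has the same $k$-dimension $|W_G(L,Y)|$ as $\End_{kG}(\RLG Y)$. Sending $\phi\in\End_{kG}(\RLG Y)$ to the unique $\tilde\phi$ satisfying $\pi\tilde\phi=\phi\pi$ then defines an isomorphism of $k$-algebras $\cH_G(L,Y)\xrightarrow{\sim}\cH_G(L,kX)$, with multiplicativity forced by uniqueness of the lift. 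The main obstacle I foresee is the $\cO$-order bookkeeping in the middle stage---verifying that $\End_{\cO G}(\RLG X)$ really does have full $\cO$-rank $|W_G(L,Y)|$ and that its base-change to $k$ embeds as claimed in $\End_{kG}(\RLG kX)$, which is what drives the whole dimension sandwich.
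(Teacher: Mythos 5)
Your proof is correct and follows essentially the same route as the paper's: condition (3) gives the embedding of $\Hom_{kL}({}^w(kX),kX)$ into $\Hom_{kL}({}^wY,Y)$, the $\cO$-form of the endomorphism algebra supplies the matching lower bound via $k\Hom_{\cO L}({}^wX,X)\hookrightarrow\Hom_{kL}({}^w(kX),kX)$, and the two isomorphisms onto $\Hom_{kG}(\RLG(kX),\RLG(Y))$ yield the Hecke algebra identification. The only cosmetic differences are that you establish the irreducibility of $KX$ up front (the paper deduces it at the end from $\End_{kL}(kX)=k$) and package the per-$w$ dimension bounds into a single global sandwich on $\dim_k\End_{kG}(\RLG(kX))$ rather than arguing element by element.
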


\begin{proof}
First note that $N_G(\bL,X)\subset N_G(\bL,kX)\subset N_G(\bL,Y) = N_G(\bL,KX)$
where the second inclusion comes from the fact that the head of $kX$ is the
simple module $Y$. Therefore we only need to show that
$N_G(\bL,KX)\subset N_G(\bL,X)$ to prove the reduction stability.

Let $w \in N_G(\bL)$. Consider the short exact sequence of $kL$-modules
$$ 0\longrightarrow \rad(kX) \longrightarrow kX\longrightarrow Y
  \longrightarrow 0.$$
Since $\Hom_{kL}({}^w (kX), \rad(kX))=0$ by (4), the covariant functor
$\Hom_{kL}({}^w(k X),-)$ gives an injective map
$$ \psi_w:\,\Hom_{kL}({}^w(kX),kX) \hookrightarrow \Hom_{kL}({}^w(kX),Y).$$
On the other hand, with the head of ${}^w(kX)$ being ${}^w Y$ we have a natural
isomorphism 
$$ \phi_w :\, \Hom_{kL}({}^w Y,Y) \simeq \Hom_{kL}({}^w(kX),Y)$$
induced by the map ${}^w(kX) \twoheadrightarrow {}^w Y$. This shows that
$\Hom_{kL}({}^w(kX),kX)$ has dimension at most~$1$. Now let $w \in N_G(\bL,KX)$.
Since $K\Hom_{\cO L}({}^wX,X) \simeq \Hom_{KL}({}^w(KX),KX)$ we must have that
$\Hom_{\cO L}({}^wX,X)$ is non-zero. On the other hand, the natural map 
$$k\Hom_{\cO L}({}^wX,X) \hookrightarrow \Hom_{kL}({}^w(kX),kX)$$ 
is an embedding. By the previous observation on the dimension of
$\Hom_{kL}({}^w(kX),kX)$, we deduce that it must be an isomorphism and that
$\Hom_{kL}({}^w(kX),kX)\simeq k$. 
Consequently $\psi_w$ is also an isomorphism. In particular, any non zero
morphism from ${}^w(kX)$ to $kX$ will send the head of ${}^w kX$ to the head of
$kX$ and therefore must be an isomorphism. This shows that ${}^w(kX) \simeq kX$. 
In addition, since $k\Hom_{\cO L}({}^wX,X) \simeq \Hom_{kL}({}^w(kX),kX)$ then
we also  have ${}^w X \simeq X$ by Nakayama's lemma. This proves that $\RLG(X)$
is reduction stable. Note that the fact that $\End_{kL}(kX)$ has dimension $1$
forces $KX$ to be a simple $KL$-module.
\smallskip

Note that if $w \notin N_G(\bL,Y)$ then $\psi_w$ is the zero map, therefore it is
also an isomorphism in that case. Let $\pi : kX \twoheadrightarrow Y$.
By adjunction and the Mackey formula, the natural map
$\End_{kG}(\RLG(kX)) \longrightarrow \Hom_{kG}(\RLG(kX),\RLG(Y))$
induced by $\RLG(\pi)$ is an isomorphism since for all $w\in N_G(\bL)$
each map $\psi_w : \Hom_{kL}({}^w(kX),kX)  \longrightarrow \Hom_{kL}({}^w(kX),Y)$
is an isomorphism. The same holds for the natural map 
$\End_{kG}(\RLG(Y))\longrightarrow\Hom_{kG}(\RLG(kX),\RLG(Y))$ since 
$\phi_w$ is an isomorphism for all $w \in N_G(\bL)$.
The combination of the two gives the asserted isomorphism
$\End_{kG}(\RLG(kX)) \simeq \End_{kG}(\RLG(Y))$.
\end{proof}

\section{Embedding of decomposition matrices}
Let $X$ be a cuspidal simple $kL$-module.
Then the simple representations of the algebra $\cH_G(L,X)$ encode the simple
$kG$-modules occurring in the head of $\RLG(X)$, that is the simple modules
lying in the Harish-Chandra series above the cuspidal pair $(L,X)$. In addition,
when $X$ comes from a reduction stable $\cO L$-lattice $\widetilde X$, one can 
compute the decomposition of $\RLG(P_X)$ into PIMs using the decomposition
matrix of $\cH_G(L,\widetilde X)$, as explained in~\cite[\S3]{DGHM}. This gives a
powerful method to compute the decomposition numbers of $G$ for PIMs whose heads
lie in the series above $(L,X)$. As in the previous section, we explain how to
generalise this method when $X$ is no longer simple. 

\begin{prop}\label{prop:dechecke}
 Let $X$ be a cuspidal $\cO L$-lattice. We assume that
 \begin{itemize}
  \item[(1)] the head $Y$ of $kX$ is a simple module;
  \item[(2)] $N_G(\bL,KX)=N_G(\bL,Y)$; and
  \item[(3)] $\Hom_{kL}({}^w P_Y, \rad(kX))=0$ for all $w \in N_G(\bL)$.
 \end{itemize}
 Then $R_L^G(X)$ is reduction stable, $\cH_G(L,kX) \simeq \cH_G(L,Y)$ and
 the decomposition matrix of $\cH_G(L,X)$ embeds into that of $G$.
\end{prop}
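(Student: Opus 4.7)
The plan is first to reduce to Proposition~\ref{prop:redstab} in order to obtain reduction stability and the Hecke-algebra isomorphism $\cH_G(L,kX)\simeq\cH_G(L,Y)$, then to leverage the resulting $\cO$-order structure on $\cH_G(L,X)$ to compare decomposition matrices as in~\cite[\S3]{DGHM}.

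First I would verify that hypothesis~(3) above is strictly stronger than hypothesis~(3) of Proposition~\ref{prop:redstab}. For any $w\in N_G(\bL)$, since ${}^wY$ is the head of ${}^w(kX)$, there is a surjection $\pi_w\co{}^wP_Y\twoheadrightarrow{}^w(kX)$. Any $kL$-morphism $f\co{}^w(kX)\to\rad(kX)$ composes to $f\circ\pi_w\co{}^wP_Y\to\rad(kX)$, which vanishes by~(3); surjectivity of $\pi_w$ then forces $f=0$. Thus the hypotheses of Proposition~\ref{prop:redstab} are satisfied, giving the reduction stability of $\RLG(X)$ together with the isomorphism $\cH_G(L,kX)\simeq\cH_G(L,Y)$.

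Now Proposition~\ref{prop:redstable-geck} endows $\cH:=\cH_G(L,X)$ with the structure of an Iwahori--Hecke $\cO$-order whose base changes to $K$ and $k$ recover $\cH_G(L,KX)$ and $\cH_G(L,Y)$ respectively. By the classical Harish-Chandra parametrisation, the simple $K\cH$-modules label the irreducible constituents of $\RLG(KX)$; by the isomorphism above, the simple $k\cH$-modules label the simple $kG$-modules in the Harish-Chandra series over $(L,Y)$, equivalently the heads of the indecomposable summands of $\RLG(kX)$ (the two being identified through $\RLG(\pi)\co\RLG(kX)\twoheadrightarrow\RLG(Y)$, which by the proof of Proposition~\ref{prop:redstab} induces the endomorphism-algebra isomorphism). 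For each simple $k\cH$-module $M$, corresponding to a simple $kG$-module $S_M$, I would take the $\cH$-PIM $\widetilde P^{\cH}_M$ over $\cO$, whose $K$-character reads $\sum_E d^{\cH}_{E,M}[E]$. The argument of~\cite[\S3]{DGHM} then produces, via the bimodule $\RLG(\widetilde P)$ for $\widetilde P$ an $\cO L$-projective cover of $Y$ (so $k\widetilde P=P_Y$), a projective $\cO G$-module with $K$-character $\sum_E d^{\cH}_{E,M}\chi^G_E$ and $k$-reduction $P^G_{S_M}$ of character $\sum_\chi d^G_{\chi,\varphi_{S_M}}\chi$. Matching coefficients yields $d^{\cH}_{E,M}=d^G_{\chi^G_E,\varphi_{S_M}}$, the asserted embedding.

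The principal obstacle is that $X$ is not itself projective over $\cO L$, so one cannot directly tensor $\RLG(X)$ with $\cH$-PIMs to manufacture $G$-PIMs; the construction must instead be carried out over $\widetilde P$, while the decomposition labels are still to come from $\cH_G(L,X)$. This requires identifying the $\cH$-actions on $\RLG(\widetilde P)$ and $\RLG(X)$ in a way that is consistent over both $\cO$ and~$k$. Hypothesis~(3) is exactly the input that guarantees this: it ensures that the natural maps $P_Y\twoheadrightarrow kX\twoheadrightarrow Y$ induce canonical identifications of $\cH_G(L,P_Y)$, $\cH_G(L,kX)$ and $\cH_G(L,Y)$ modulo~$\ell$, so that the \cite{DGHM}-style argument can be run with the PIMs of $\cH_G(L,X)$.
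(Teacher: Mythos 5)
Your reduction of the first two claims to Proposition~\ref{prop:redstab} is correct and is essentially the paper's argument: condition~(3) here implies condition~(3) there (the paper phrases this as ``${}^wY$ is not a composition factor of $\rad(kX)$'', you argue via the surjection ${}^wP_Y\twoheadrightarrow{}^w(kX)$; both are fine).

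The gap is in the decomposition-matrix statement. The paper reduces this to verifying condition (D) of \cite[4.1.13, 4.1.14]{GJ11}, namely the equality
$$\langle KP;\RLG(KX)\rangle_G=\langle\RLG(KX);\RLG(KX)\rangle_G,$$
where $P=\RLG(\widetilde P_Y)$ is the projective $\cO G$-module surjecting onto $\RLG(X)$. This is exactly where hypotheses~(2) and~(3) must be used a second time: by adjunction, the Mackey formula and cuspidality of $X$ one gets $\langle KP;\RLG(KX)\rangle_G=\sum_{w\in N_G(\bL)/L}\langle K\widetilde P_Y;{}^wKX\rangle_L$; each summand is, by Brauer reciprocity, the multiplicity of ${}^wY$ as a composition factor of $kX$, which by~(3) equals its multiplicity in the head $Y$, i.e.\ $1$ for $w\in N_G(\bL,Y)$ and $0$ otherwise; and~(2) identifies $N_G(\bL,Y)$ with $N_G(\bL,KX)$, so the sum equals $\langle\RLG(KX);\RLG(KX)\rangle_G$. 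Your proposal correctly identifies this compatibility as ``the principal obstacle'' but then only asserts that hypothesis~(3) ``is exactly the input that guarantees'' it, without carrying out this (or any substitute) computation. Since that verification is the entire mathematical content of the embedding statement beyond the formal machinery of \cite{DGHM} and \cite{GJ11}, the proof as written is incomplete at precisely the decisive step.
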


\begin{proof} Let $w \in N_G(\bL)$. Condition~(3) implies that ${}^w Y$ is not a
composition factor of $\rad(kX)$ which shows in particular that
$\Hom_{kL}({}^w (kX), \rad(kX))=0$. Therefore the conditions of
Proposition~\ref{prop:redstab} are satisfied and we deduce that $R_L^G(X)$ is
reduction stable and $\cH_G(L,kX) \simeq \cH_G(L,Y)$.

To show the statement on the decomposition matrices we only need to check the
condition (D) given in \cite[4.1.13, 4.1.14]{GJ11}. Since the head of $kX$ is
simple, equal to $Y$, we have a surjective map $P_Y\twoheadrightarrow kX$, which
induces a surjective map $\RLG(P_Y) \twoheadrightarrow \RLG(kX)$.
Let $\widetilde P_Y$ be the projective $\cO L$-module which is the (unique) lift 
of $P_Y$, and let $P:= \RLG(\widetilde P_Y)$. Since $P$ is projective we have a
map $P \longrightarrow \RLG(X)$ which lifts the surjective map
$\RLG(P_Y) \twoheadrightarrow \RLG(kX)$. By Nakayama's lemma it should also be
surjective. Now condition (D) is equivalent to
$$ \langle KP;\RLG(KX)\rangle_G = \langle \RLG(KX);\RLG(KX)\rangle_G.$$
To verify it we use the condition (3), the Mackey formula and cuspidality. We
have
$$ \langle KP;\RLG(KX)\rangle_G = \langle \RLG(KP_Y);\RLG(KX)\rangle_G
   = \sum_{w \in N_G(\bL)/L} \langle KP_Y; {}^wKX\rangle_G.$$
Now $\langle KP_Y; {}^wKX\rangle_G$ equals the multiplicity of $Y^w$ as a
composition factor of $kX$,
which by (3) is also the multiplicity of $Y^w$ in $Y = kX/\rad(kX)$. It is
therefore $1$ if $w \in N_G(\bL,Y)$ or $0$ otherwise. By (2) we have
$N_G(\bL,Y) = N_G(\bL,KX)$ hence
$\langle KP_Y; {}^wKX\rangle_G = \langle KX; {}^wKX\rangle_G$ and we are done. 
\end{proof}

For convenience we state a version of Proposition~\ref{prop:dechecke} when
$N_G(\bL,kX)$ is as big as possible, in which case condition (3) becomes simpler.

\begin{cor}   \label{cor:dechecke}
 Let $X$ be a cuspidal $\cO L$-lattice in a block $b$ of $\cO L$ such that
 \begin{itemize}
  \item[(1)] the head $Y$ of $kX$ is a simple module;
  \item[(2)] $N_G(\bL,Y) = N_G(\bL,KX) = N_G(\bL,b)$; and
  \item[(3)] $Y$ occurs only once as a composition factor of $kX$.
 \end{itemize}
 Then $R_L^G(X)$ is reduction stable, $\cH_G(L,kX) \simeq \cH_G(L,Y)$ and
 the decomposition matrix of $\cH_G(L,X)$ embeds into that of $G$.
 \end{cor}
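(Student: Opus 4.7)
The plan is to derive this corollary directly from Proposition~\ref{prop:dechecke} by verifying its hypothesis~(3), namely that $\Hom_{kL}({}^w P_Y, \rad(kX)) = 0$ for every $w \in N_G(\bL)$. Hypotheses (1) and (2) of the corollary already match hypotheses (1) and (2) of the proposition, so this is the only new content to check; once it is checked, the conclusion (reduction stability, the isomorphism $\cH_G(L,kX) \simeq \cH_G(L,Y)$, and the embedding of decomposition matrices) follows immediately.

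I would split the verification into two cases according to whether $w$ normalises the block $b$. If $w \notin N_G(\bL,b)$, then ${}^w b \neq b$. Since $Y$ is a composition factor of $kX$, and $kX$ lies in (the reduction of) $b$, the projective cover $P_Y$ also lies in $b$, hence ${}^w P_Y$ lies in ${}^w b$. On the other hand $\rad(kX)$ lies in $b$, so the Hom space vanishes by block orthogonality.

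If $w \in N_G(\bL,b)$, then hypothesis (2) forces $w \in N_G(\bL,Y)$, so ${}^w Y \simeq Y$, and therefore ${}^w P_Y \simeq P_Y$. It remains to show that $\Hom_{kL}(P_Y, \rad(kX)) = 0$, which is equivalent to $Y$ not being a composition factor of $\rad(kX)$. By (1), $Y$ occurs once in the head $kX/\rad(kX)$, and by (3) it occurs with total multiplicity one in $kX$; hence $Y$ is indeed absent from $\rad(kX)$.

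The argument is essentially bookkeeping: the block-normaliser equality in~(2) disposes of the $w$ not preserving $b$ via block orthogonality, and the multiplicity-one condition (3) handles the remaining elements. There is no genuine obstacle beyond tracking the blocks correctly, since all the substantive work already sits in Proposition~\ref{prop:dechecke}; the only thing to be cautious about is the implicit identification between blocks of $\cO L$ and their reductions modulo $\ell$, which is standard under Assumption~\ref{ass:ell}.
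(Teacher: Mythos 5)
Your proposal is correct and is exactly the intended argument: the paper states the corollary as a direct specialisation of Proposition~\ref{prop:dechecke}, and the only content is checking its hypothesis~(3), which you do by block orthogonality for $w\notin N_G(\bL,b)$ and by the multiplicity-one condition for $w\in N_G(\bL,b)=N_G(\bL,Y)$. (Minor note: the bijection between blocks of $\cO L$ and of $kL$ is automatic by idempotent lifting and does not require Assumption~\ref{ass:ell}.)
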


\begin{exmp}   \label{exmp:hecke}
Let $\chi$ be a cuspidal ordinary irreducible character of a 1-split Levi
subgroup $L$ of $G$.
\begin{enumerate}[(a)]
 \item Assume that $\chi$ lies in a block $b$ with cyclic defect groups.
 Let $Y$ be any $kL$-composition factor of the $\ell$-reduction of $\chi$.
 Then by Green \cite{Gr74} there exists an $\cO L$-lattice $X$ with character
 $\chi$ such that $kX$ is uniserial, has mutually distinct composition factors,
 and has $X$ as its head (the various composition factors are labelling the
 edges incident to the vertex labelled by $\chi$ in the Brauer tree of $b$).
 Then the assumptions of Corollary \ref{cor:dechecke} are for example
 satisfied whenever $N_G(\bL,b)$ acts trivially on the Brauer tree and on
 the character $\chi$.
\item Assume that the $\ell$-modular reduction of $\chi$ has only two
 composition factors, say $Y$ and $Z$, and that $Y$ is self-dual.
 Then there exists an $\cO L$-lattice $X$
 with character $\chi$ or $\chi^*$ such that $kX$ is uniserial with head $Y$.
 In that case it is enough to check assumption~(2) of
 Corollary~\ref{cor:dechecke}. Note that it can be checked equivalently 
 on $Y$ or $Z$.
\item More generally, assume that $Y$ is a simple $kL$-module occurring in the
 $\ell$-modular reduction of $\chi$ with multiplicity one. In other words we
 assume that $\langle \Psi_Y;\chi\rangle_L = 1$. Denote by $\widetilde{P}_Y$
 a projective $\cO L$-module lifting $P_Y$. Let
 $$e := \sum_{\rho \in \Irr_K b \smallsetminus \{\chi\}} e_\rho$$
 where $e_\rho$ is the central idempotent associated to the irreducible
 character $\rho$. Then $N:= e \widetilde{P}_Y \cap \widetilde{P}_Y$ is a pure
 $\cO L$-submodule of $\widetilde{P}_Y$ and $X:= \widetilde{P}_Y / N$ is an
 $\cO L$-lattice with character $\chi$ such that $kX$ has simple head $Y$. In
 particular, $X$ satisfies conditions~(1) and (3) of
 Corollary~\ref{cor:dechecke}.
\end{enumerate}
\end{exmp}

\section{Verifying reduction stability}   \label{sec:veri}

Let us comment on how to guarantee the assumptions of
Corollary~\ref{cor:dechecke} in certain situations. In this work, we will solely
be concerned with unipotent blocks of finite reductive groups $G=\bG^F$. 
By \cite[Thm. 17.7]{CE} unipotent characters and unipotent Brauer characters are
insensitive to the centre of the group whenever $\ell$ is very good, therefore
we may assume
that $\bG$ has connected centre. Now let $\bL$ be an $F$-stable Levi subgroup
with a cuspidal $\cO L$-lattice $X$ such that the head of $kX$ is simple.
If $X$ lies in a unipotent block then it is trivial on $Z(L)$.
Now $N_G(\bL)=N_\bG(\bL)^F$ induces algebraic automorphisms of $\bL$, hence
inner, diagonal and graph automorphisms. If $\bG$ has connected centre, then so
has $\bL$, and all diagonal automorphisms of $L$ are inner; hence $N_\bG(\bL)^F$
induces graph automorphisms on $L$. Then there exists an $N_G(\bL)$-stable
$\cO L$-lattice $X'$ with $kX'=kX$ if one of the following holds:
\begin{itemize}
 \item $[\bL,\bL]$ does not have non-trivial graph automorphisms;
 \item $[\bL,\bL]$ is a product of simple factors regularly permuted by the
  graph automorphisms induced by $N_G(\bL)$ (since then we may choose a
  lift in one of the factors and then take the product over the
  $N_G(\bL)$-orbit).
\end{itemize}
This will deal with most of the cases we encounter. It thus remains to discuss
situations in which $N_\bG(\bL)^F$ induces non-trivial graph automorphisms on
a simple factor of $\bL^F$.

\part{Decomposition matrices}

\chapter{Description of the strategy}   \label{chap:strategy}

We keep the notation and setup from the previous chapter. In particular,
$\bG$ is a connected reductive linear algebraic group, $F:\bG\rightarrow\bG$
is a Frobenius endomorphism with respect to an $\FF_q$-rational structure
and $G=\bG^F$ is the corresponding finite group of Lie type.

In our proofs we shall use the following tools, most of which already served
well in our papers \cite{DM15,DM16}:

\begin{itemize}
\item[(HCi)] Harish-Chandra inducing PIMs from proper Levi subgroups and cutting
  by a block of $G$ gives projective characters, which are hence non-negative
  integral   linear combinations of PIMs of $G$. Similarly, projective
  characters can also be
  obtained by Harish-Chandra restricting projectives from a larger group
  containing $G$ as a Levi subgroup, or by a succession of such steps.
\item[(HCr)] If $\Psi$ is a projective character of $G$ such that no non-zero
  proper subcharacter has the property that its Harish-Chandra restriction to
  any Levi subgroup $L$ of $G$ decomposes non-negatively on the PIMs of $L$,
  then $\Psi$ is the character of a PIM.
\item[($\cH$)] The number of Brauer characters in a Harish-Chandra series
  equals the number of simple modules of the Hecke algebra $\cH$ of the
  corresponding cuspidal Brauer character. More precisely, the decomposition
  of induced PIMs in that series can be read off from the corresponding
  decomposition for $\cH$ (see Proposition~\ref{prop:dechecke}).
\item[(Csp)] There exist cuspidal unipotent Brauer characters for $G$ if and
  only if the centraliser of a Sylow $\ell$-subgroup of $G$ is not contained
  in any proper 1-split Levi subgroup (see \cite[Cor.~2.7]{GHM}).
\item[(St)] The \emph{$\ell$-modular Steinberg character of $G$}, i.e., the
  unique unipotent Brauer constituent in the $\ell$-modular reduction of an
  ordinary Gelfand--Graev character of $G$, is cuspidal if and only if a Sylow
  $\ell$-subgroup of $G$ is not contained in any proper 1-split Levi subgroup of
  $G$ (see \cite[Thm.~4.2]{GHM}).
\item[(DL)] Let $w \in W$ and $\Psi$ be the character of a PIM of $G$. If
  $\Psi$ does not occur in the Deligne--Lusztig character $R_v$ for any
  $v<w$ then the coefficient of $\Psi$ in $(-1)^{l(w)}R_w$ is non-negative
  (see Proposition~\ref{prop:dl}).
\item[(Red)] Let $\bL$ be an $F$-stable Levi subgroup of $\bG$ such that
  $\bL^*$ is the centraliser of a semisimple $\ell$-element of $G^*$. Then
  there is a sign $\varepsilon\in\{\pm1\}$ such that for any unipotent
  character $\rho$ of $L$, $\varepsilon \RLG(\rho)^\circ$ is a non-negative
  linear combination of irreducible Brauer characters (see Lemma~\ref{lem:red}).
\item[(Tri)] Assume that the (unipotent) decomposition matrix of $G$ is
  uni-triangular with respect to some total ordering of unipotent
  characters compatible with increasing $a$-values. Then, we can partially
  echelonise any set of projective characters of $G$ (as explained in
  Example~\ref{exmp:uni}).
\end{itemize}

\begin{rem}   \label{rem:dual}
When $\ell$ is very good for $G$ there is an $F$-equivariant bijection
between the conjugacy classes of $\ell$-elements of $G$ and of $G^*$
preserving the centralisers (see \cite[Prop.~4.2]{GH91}). Therefore in
that case the assumptions of (Red) hold for $G$ whenever they hold
for the dual group $G^*$.
\end{rem}

Throughout, for a prime $\ell$ and a finite reductive group $G$ defined
over $\FF_q$ we will denote by $d=d_\ell(q)$ the order of $q$ modulo~$\ell$.
Our results turn out to be uniform in $d$, not depending on the prime $\ell$
(once $\ell$ is large enough with respect to the root system of~$G$).

We will not consider the case $d=1$, since there by a result of Puig for all
large enough primes the unipotent decomposition matrix is always the identity
matrix. Indeed, in this case the $\ell$-blocks are unions of Harish-Chandra
series and all Hecke algebras are semisimple after reduction modulo~$\ell$.
Furthermore, we will not deal with
the case when $d=4$ since this was already considered in our predecessor
paper \cite{DM16}; we shall only indicate how some of the remaining unknowns
can be computed using $(T_\ell)$, see Remark~\ref{rem:craven4}. Finally, we will
not consider the general linear groups,
as their unipotent decomposition matrices were determined up to rank~10 by
James \cite{Ja90}, nor the general unitary groups whose unipotent decomposition
matrices up to rank~10 were computed in \cite{GHM} for linear primes and in
\cite{DM15} for unitary primes.

\chapter{Decomposition matrices at $d_\ell(q)=2$}   \label{chap:d=2}
In this section we determine decomposition matrices for unipotent $\ell$-blocks
of various classical and exceptional groups over $\FF_q$ for primes $\ell$
dividing $q+1$. This is by some measure the most complicated case since the
ranks of Sylow $\ell$-subgroups for such primes are generally larger than for
any prime $\ell$ with $d_\ell(q)\ge3$. Nevertheless, we obtain almost complete
results in the cases considered.

\section{Centralizers of $\ell$-elements}
Recall that $\bG$ is connected reductive with Frobenius map $F$, $\bT$ is a
maximally split torus of $\bG$ and $\bB$ is an $F$-stable Borel subgroup
containing it. We denote
by $\Phi$ the set of roots of $\bG$ with respect to $\bT$, by $\Phi^+$ the
set of positive roots corresponding to $\bB$, and by $\Delta$ the set of
simple roots. Given $\alpha \in \Phi$ we write $\hgt(\alpha)$ for the height of
$\alpha$, that is the sum of the coefficients of $\alpha$ expressed in the basis
$\Delta$. The Weyl group of $\Phi$ can be identified with the Weyl group $W$ of
$\bG$. 

To any subset 
$I$ of $S$ there is a corresponding parabolic subgroup $W_I$ of $W$
and a standard Levi subgroup $\bL_I = \langle \bT, W_I\rangle$ of $\bG$.
When $I$ is $F$-stable then $\bL_I$ is $F$-stable and is a $1$-split
Levi subgroup of $\bG$. In this section we shall rather be interested in the
$2$-split Levi subgroups as defined, e.g., in \cite[p.~17]{BMM}. They are
obtained as centralisers of
$\Phi_2$-tori, which are $F$-stable tori of $\bG$ of order $(q+1)^r$ for some
$r \geq 0$. Let $w_0$ be the  longest element of~$W$ and by $S$ the set
of simple reflections in $W$ corresponding to $\Delta$. In the case
where $w_0F$ acts trivially on $S \smallsetminus I$ then the pair $(\bL_I,w_0 F)$
is conjugate to a pair $(\bL,F)$ where $\bL$ is a $2$-split Levi subgroup.
We give here some further conditions on $\ell$ to ensure the existence of an
$\ell$-element whose centralizer is $\bL$. This will be needed
in order to use the method~(Red) from Chapter~\ref{chap:strategy}.

\begin{prop}   \label{prop:q+1 reg}
 Let $I \subseteq S$ be a subset of the set of simple reflections of $W$ and
 $\bL_I$ be the corresponding standard Levi subgroup of $\bG$. We assume that
 \begin{itemize}
  \item[(1)] $w_0 F$ acts trivially on $S \smallsetminus I$;
  \item[(2)] $\ell$ is very good for $\bG$; and
  \item[(3)] $(q+1)_\ell>\hgt(\pi_I(\alpha))$ for all $\alpha\in\Phi^+$, where
   $\pi_I$ is the projection of $\ZZ\Phi^+$ to $\ZZ\Phi_{S \smallsetminus I}^+$.
 \end{itemize}
 Then there exists $t\in Z^\circ(\bL_I)^{w_0 F}$ such that $C_\bG(t)=\bL_I$.
\end{prop}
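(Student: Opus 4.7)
The plan is to produce $t$ explicitly as an $\ell$-element in the Sylow $\ell$-subgroup of $\bS^{w_0 F}$, where $\bS := Z^\circ(\bL_I)$. Write $n := |S \smallsetminus I|$ and $m := (q+1)_\ell$. First I would analyse the action of $w_0 F$ on $X^*(\bS)$ and show it is the scalar $-q$. On $X^*(\bT)$ we have $w_0 F(\alpha_i) = -q\alpha_{\sigma\phi(i)}$, where $\sigma$ is the opposition involution (so $w_0\alpha_i = -\alpha_{\sigma(i)}$) and $\phi$ is the diagram automorphism giving $F\alpha_i = q\alpha_{\phi(i)}$. Assumption~(1) is equivalent to $\sigma\phi$ fixing $S \smallsetminus I$ pointwise, and hence $w_0 F(\alpha_i) = -q\alpha_i$ for $i \in S \smallsetminus I$. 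Since $X^*(\bS) \otimes \QQ$ is spanned by the restrictions $\alpha_i|_\bS$ for $i \in S \smallsetminus I$ and $X^*(\bS)$ is torsion-free, $w_0 F$ acts as multiplication by $-q$ on the full lattice $X^*(\bS)$. In particular $\bS^{w_0F}$ is abelian of order $(q+1)^n$ and its Sylow $\ell$-subgroup is isomorphic to $(\ZZ/m\ZZ)^n$.

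Next I would realise this Sylow $\ell$-subgroup concretely through the simple roots. The map $\bS \to \GG_m^{S \smallsetminus I}$, $t \mapsto (\alpha_i(t))_{i \in S \smallsetminus I}$, is an isogeny whose kernel has order equal to the index of $\ZZ\langle\alpha_i : i \in S \smallsetminus I\rangle$ in $X^*(\bS)$. This index divides $[X^*(\bT) : \ZZ\Phi]$, and is therefore coprime to $\ell$ by the very-goodness assumption~(2). Consequently, on $\ell$-parts, the induced map $(\bS^{w_0 F})_\ell \to \mu_m^{S \smallsetminus I}$ is an injective homomorphism between groups of equal cardinality $m^n$, hence an isomorphism.

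Finally I would exhibit the element. Fix a primitive $m$-th root of unity $\zeta$ and let $t \in (\bS^{w_0F})_\ell$ be the preimage of $(\zeta, \ldots, \zeta) \in \mu_m^{S \smallsetminus I}$. The inclusion $\bL_I \subseteq C_\bG(t)$ is automatic since $t \in Z^\circ(\bL_I)$; for the reverse inclusion it suffices to check that $\alpha(t) \neq 1$ for every $\alpha \in \Phi \smallsetminus \Phi_I$. Writing $\alpha = \sum_i c_i \alpha_i$ with $\alpha \in \Phi^+$, and using $\alpha_j(t) = 1$ for $j \in I$ (as $t \in Z(\bL_I)$ centralises each root subgroup of $\bL_I$), one computes
$$\alpha(t) \;=\; \prod_{i \in S \smallsetminus I} \alpha_i(t)^{c_i} \;=\; \zeta^{\hgt(\pi_I(\alpha))}.$$
By assumption~(3), $0 < \hgt(\pi_I(\alpha)) < m$ whenever $\alpha \in \Phi^+ \smallsetminus \Phi_I^+$, so this power of $\zeta$ is non-trivial; negative roots follow by inversion.

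The main delicate point is the lattice bookkeeping in the first two steps: verifying that the combined action of $F$, $w_0$, and the opposition $\sigma$ really makes $w_0 F$ act as the integer scalar $-q$ (not merely rationally) on all of $X^*(\bS)$, and using the precise definition of ``very good'' (including the conditions on $|Z(\bG)/Z^\circ(\bG)|$ and $|Z(\bG^*)/Z^\circ(\bG^*)|$ rather than just goodness) to ensure that the isogeny kernel is $\ell'$-torsion. Once these are in hand, assumption~(3) turns the centraliser computation into a one-line check.
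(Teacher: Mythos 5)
Your construction is the same in spirit as the paper's---in both proofs the crucial point is that, for a suitably chosen $t$ in the Sylow $\ell$-part of $\bS^{w_0F}$, one gets $\alpha(t)=\zeta^{\hgt(\pi_I(\alpha))}$ for a primitive $(q+1)_\ell$-th root of unity $\zeta$, so that assumption~(3) kills all roots outside $\Phi_I$. But there are two genuine gaps.

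First, and most important: showing $\alpha(t)\ne 1$ for all $\alpha\in\Phi\smallsetminus\Phi_I$ only yields $C_\bG^\circ(t)=\bL_I$, since the connected centraliser of a semisimple element is generated by $\bT$ and the root subgroups $U_\alpha$ with $\alpha(t)=1$. To conclude $C_\bG(t)=\bL_I$ you still need to know that $C_\bG(t)$ is connected. This is exactly where the full strength of ``very good'' enters (and it is the step the paper quotes \cite[13.14(iii), 13.15(i)]{DM91} for: if $\ell\nmid|Z(\bG^*)/Z^\circ(\bG^*)|$, then centralisers of semisimple $\ell$-elements are connected). Without this you have only proved a statement about $C_\bG^\circ(t)$.

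Second, the lattice bookkeeping in your first two steps implicitly assumes $\bG$ is semisimple. If $\bG$ is merely reductive, $\dim\bS=\dim Z^\circ(\bL_I)=\rnk\bG-|I|$ can exceed $|S\smallsetminus I|$, so the restrictions $\alpha_i|_\bS$ for $i\in S\smallsetminus I$ do not span $X^*(\bS)\otimes\QQ$, the map $\bS\to\GG_m^{S\smallsetminus I}$ is not an isogeny, and the index $[X^*(\bT):\ZZ\Phi]$ is not even finite. The paper handles this by first passing to $\bG_\ad$ (using $\pi_\ad(C^\circ_\bG(t))=C^\circ_{\bG_\ad}(\pi_\ad(t))$, the fact that $\ell$-elements of $G_\ad$ lift to $\ell$-elements of $G$ since $\ell\nmid|Z(\bG)/Z^\circ(\bG)|$, and that $\ell$ stays very good for Levi subgroups). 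In the adjoint case $Y(\bT)$ contains the fundamental coweights $\varpi_i$, so the element can be written down directly as $t(\lambda)=\varpi_1(\lambda)\cdots\varpi_m(\lambda)$, which avoids the isogeny argument and the index estimate altogether; if you keep your isogeny route, you would still need to do this reduction first, and then justify the claim that the index is prime to $\ell$.
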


\begin{proof}
Let $\pi_\ad : \bG \twoheadrightarrow \bG_\ad := \bG/Z(\bG)$.
From \cite[Prop.~2.3]{DM91} it follows that
$\pi_{\ad}(C_\bG^\circ(t)) = C_{\bG_\ad}^\circ(\pi_\ad(t))$ for any semisimple
element $t \in \bG$. Since $\ell$ is very good, both $Z(\bG)/Z^\circ(\bG)$ and
$Z(\bG^*)/Z^\circ(\bG^*)$ are $\ell'$-groups.
The first one ensures that any $\ell$-element of $G_\ad$ lifts to an
$\ell$-element of $G$, whereas the second implies that the centralisers of
$\ell$-elements are connected by \cite[13.14(iii) and 13.15(i)]{DM91}. Finally,
by \cite[Prop.~13.12]{CE} we also have that $Z(\bL)/Z^\circ(\bL)$ is an
$\ell'$-group for any Levi subgroup of $\bG$. This shows that we can assume that
$\bG$ is semisimple of adjoint type without loss of generality.

Write $\Delta = \{\alpha_1,\ldots,\alpha_n\} \subset X(\bT)$ and denote by
$\{\varpi_1,\ldots,\varpi_n\} \subset Y(\bT)$ the dual basis for the pairing
between characters and cocharacters.
Here $X(\bT)$ is the lattice of characters of $\bT$ whereas $Y(\bT)$
is the lattice of cocharacters. We reorder the simple roots so that
$\{\alpha_1,\ldots,\alpha_m\}$ are the simple roots corresponding
to the simple reflections in $S\smallsetminus I$.
\smallskip

Given $\la \in \overline{\FF}_p^\times$ we consider the
semisimple element
$$t(\la):= \varpi_1(\la) \varpi_2(\la) \cdots
  \varpi_m(\la)$$
of $\bG$. If $\alpha = \sum_{i=1}^n x_i \alpha_i \in X(\bT)$ then
\begin{equation} \label{eq:semisimple}
 \alpha\big(t(\la)\big) =\la^{ \sum_{i=1}^m x_i} = \lambda^{\mathrm{ht}(\pi_I(\alpha))},
\end{equation}
where $\pi_I$ is the projection of $\ZZ\Phi^+$ to $\ZZ\Phi_{S \smallsetminus I}^+$.
In particular $t(\la)$ lies in the kernel of every root
$\alpha\in\Phi_I$, therefore it lies in $Z(\bL_I)$. In addition,
$w_0F(\varpi_i) = -q\varpi_i$ for every $i\leq m$. Indeed, by assumption~(1)
$w_0 F$ permutes the elements in $I$ but fixes the elements in
$S\smallsetminus I$. Therefore we have
$$\begin{aligned}
\langle w_0F(\varpi_i)+q\varpi_i,\alpha_j \rangle &\, = \langle w_0F(\varpi_i),
\alpha_j\rangle + q\langle\varpi_i,\alpha_j \rangle\\
& \, =\langle\varpi_i,w_0F(\alpha_j)\rangle+q\langle\varpi_i,\alpha_j\rangle\\
& \, = \langle \varpi_i,w_0F(\alpha_j)\rangle + q\delta_{i,j}.\\
\end{aligned}$$
If $j > m$ then $w_0F(\alpha_j) \in \ZZ\Phi_I$ therefore
$\langle\varpi_i,w_0F(\alpha_j)\rangle = 0$ whereas if $j\leq m$ we have
$w_0F(\alpha_j) = -q \alpha_j$ and hence
$\langle \varpi_i,w_0F(\alpha_j)\rangle = -q\delta_{i,j}$.
In each case $\langle w_0F(\varpi_i)+q\varpi_i,\alpha_j \rangle = 0$ which
proves that  $w_0F(\varpi_i)=-q\varpi_i$ for all $i \leq m$. We deduce that
$t(\la)$ is $w_0 F$-stable whenever $\la$ is an
$\ell$-element satisfying $\la^q= \la^{-1}$.
\smallskip

By \eqref{eq:semisimple} and assumption~(3), there exists an $\ell$-element
$\la$ of $\overline{\FF}_p^\times$ such that $\lambda^{q+1} = 1$ and
$\alpha\big(t(\la)\big) \neq 1$. By \cite[Prop. 2.3]{DM91} this implies that
$C_\bG^\circ(t(\la)) = \bL_I$.  But since $\ell$ is very good the centraliser
of every $\ell$-element is connected by \cite[13.14(iii)]{DM91}.
\end{proof}

\section{Multiplicities in the Steinberg character}   \label{sec:St}

Before considering individual series of groups, we first prove a general
result. It demonstrates how Deligne--Lusztig characters can be used to obtain
relations between the entries of decomposition matrices of unipotent
$\Phi_2$-block, yielding new decomposition numbers in the ``bottom right
corner'' of the matrix. This is the part of the decomposition matrix about
which Harish-Chandra methods usually yield the least information.
\smallskip

We call
$$h:= 1+\max\{\hgt(\alpha)\mid\alpha\in\Phi^+\}$$
the \emph{Coxeter number} of $\bG$. When $\Phi$ is irreducible, it equals the
order of any Coxeter element of $W$ (see \cite[Prop.~VI.1.31]{Bki456}).

\begin{thm}   \label{thm:genpos}
 Let $\bG$ be connected reductive. We assume that
 \begin{itemize}
   \item[\rm(1)] $w_0 F$ acts trivially on $W$;
   \item[\rm(2)] $\ell$ is very good for $\bG$; and
   \item[\rm(3)] $(q+1)_\ell\ge h$, where $h$ is the Coxeter number of $\bG$.
 \end{itemize}
 Then there exists a linear character $\theta$ of $T_{w_0}$ in general
 position such that $\theta^\circ = 1$ and
 $$(-1)^{l(w_0)} R_{T_{w_0}}^G(\theta)^\circ = \vhi_\St.$$
\end{thm}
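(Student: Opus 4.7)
The plan is to produce $\theta$ from a regular semisimple $\ell$-element of~$G^*$, to recognise $(-1)^{l(w_0)}R_{T_{w_0}}^G(\theta)$ as an irreducible cuspidal ordinary character $\chi$ via Jordan/Deligne--Lusztig theory, and then to identify $\chi^\circ$ with $\varphi_\St$ by combining (Red), cuspidality, and the structure of the ordinary Gelfand--Graev character.

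First I would apply Proposition~\ref{prop:q+1 reg} with $I=\emptyset$: the three hypotheses translate exactly to (1)--(3) of the theorem, since $\pi_\emptyset$ is the identity on $\ZZ\Phi^+$ and $\max_{\alpha\in\Phi^+}\hgt(\alpha)=h-1$. This produces an $\ell$-element $t\in Z^\circ(\bT)^{w_0 F}=T_{w_0}$ with $C_\bG(t)=\bT$, and Remark~\ref{rem:dual} transfers it to a regular $\ell$-element $t^*\in G^*$ with $C_{\bG^*}(t^*)=\bT^*_{w_0}$. Set $\theta:=\hat t^*$; then $\theta^\circ=1$ (since $t^*$ is an $\ell$-element) and $\theta$ is in general position (since $t^*$ is regular). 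By the Jordan/Deligne--Lusztig correspondence, $\cE(G,t^*)$ consists of a single irreducible character $\chi:=\varepsilon_G\varepsilon_{T_{w_0}}R_{T_{w_0}}^G(\theta)$. Hypothesis~(1) forces $w_0 F$ to act as $-q$ on $X(\bT)$, so $T_{w_0}$ is $\FF_q$-anisotropic, whence $\varepsilon_{T_{w_0}}=1$; a parity check using $\det w_0=(-1)^{l(w_0)}$ (the value of the sign representation of~$W$ at~$w_0$) then gives $\varepsilon_G=(-1)^{l(w_0)}$, so $\chi=(-1)^{l(w_0)}R_{T_{w_0}}^G(\theta)$.

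Next, by the Mackey formula, $\sRLG(\chi)=0$ for every proper 1-split Levi subgroup $\bL$, because the Weyl group of such an $\bL$ is a proper parabolic subgroup of~$W$ and hence does not contain~$w_0$; so $T_{w_0}$ is not $G$-conjugate to any subtorus of~$L$, and $\chi$ is cuspidal. Since $t^*$ is an $\ell$-element, Brou\'e--Michel places $\chi$ in a unipotent $\ell$-block. Now (Red), applied to the Levi $\bT_{w_0}^*=C_{\bG^*}(t^*)$ with the unipotent character $1$ of $T_{w_0}$, together with $\theta^\circ=1$ and \cite[Prop.~12.2]{DM91}, yields $\chi^\circ=(-1)^{l(w_0)}R_{T_{w_0}}^G(1)^\circ$ and shows that this is a non-negative integer combination of irreducible Brauer characters. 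Because Harish--Chandra restriction commutes with $\ell$-reduction and $\chi$ is cuspidal, every constituent of $\chi^\circ$ is a cuspidal Brauer character lying in a unipotent $\ell$-block.

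Finally I would identify $\chi^\circ$ with $\varphi_\St$ via the ordinary Gelfand--Graev character~$\Gamma$. As the unique (Lusztig-regular) element of $\cE(G,t^*)$, $\chi$ appears in $\Gamma$ with multiplicity one. Since $\Gamma$ is a projective character whose unique simple constituent in the head lying in a unipotent block is $\varphi_\St$ by~(St), the projection of $\Gamma$ onto the unipotent blocks equals the projective character $\Psi_{\varphi_\St}$; comparing with $\Psi_{\varphi_\St}=\sum_{\chi'}d_{\chi',\St}\,\chi'$ yields $d_{\chi,\St}=1$. Together with cuspidality and non-negativity, this forces $\chi^\circ=\varphi_\St+\sum_{\varphi\text{ cusp},\,\varphi\neq\varphi_\St}c_\varphi\,\varphi$ with $c_\varphi\ge0$. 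The main obstacle is ruling out those further summands: this is where one exploits the ``generic'' hypothesis $(q+1)_\ell\ge h$ (equivalently, that $\varphi_\St$ is the only cuspidal unipotent Brauer character that can occur in $\chi^\circ$, or the dimension identity $\chi(1)=\varphi_\St(1)=|G|_{p'}/(q+1)^{\rnk\bG}$), giving $\chi^\circ=\varphi_\St$ as claimed.
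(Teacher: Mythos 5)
Your construction of $\theta$ via Proposition~\ref{prop:q+1 reg} applied to $\bG^*$ with $I=\emptyset$, the identification of $\chi:=(-1)^{l(w_0)}R_{T_{w_0}}^G(\theta)$ as an irreducible cuspidal character, and the observation that $\chi^\circ=(-1)^{l(w_0)}R_{w_0}^\circ$ is a non-negative combination of cuspidal Brauer characters all match the paper's argument (the paper quotes Lusztig for irreducibility and cuspidality rather than running the Mackey/anisotropy computation, and it gets the multiplicity $\langle\Psi_\St;\chi^\circ\rangle=1$ directly from $\langle R_{w_0};\St\rangle=(-1)^{l(w_0)}$ rather than through the Gelfand--Graev character, which in your version only yields the upper bound $d_{\chi,\vhi_\St}\le1$; but these are minor variations).

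The genuine gap is your last step: ruling out all other Brauer constituents of $\chi^\circ$. Neither of the two routes you sketch works. The claim that $\vhi_\St$ is the only cuspidal unipotent Brauer character is false in general -- for instance the principal $\Phi_2$-block of $D_6(q)$ has six cuspidal Brauer characters, and the whole point of the theorem is to control the contribution of such characters. The ``dimension identity'' $\chi(1)=\vhi_\St(1)$ is circular: $\vhi_\St(1)$ is determined by exactly the decomposition numbers $d_{\rho,\vhi_\St}$ that one is trying to compute, so it cannot be invoked as an input. Moreover the hypothesis $(q+1)_\ell\ge h$ is consumed entirely by Proposition~\ref{prop:q+1 reg} in producing the regular $\ell$-element; it plays no role in the exclusion step. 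The paper's actual argument for the exclusion is the heart of the proof: for every $w\ne w_0$ the orthogonality relations give $0=\langle\wt R_w;\chi^\circ\rangle=\sum_\vhi\langle\wt R_w;\vhi\rangle\langle\Psi_\vhi;\chi^\circ\rangle$, and an induction on the length of minimal-length class representatives, using the sign constraint of Proposition~\ref{prop:dl}, shows that any cuspidal $\vhi$ whose PIM occurs in some $\wt R_w$ with $w\ne w_0$ cannot occur in $\chi^\circ$. One then still has to know that every PIM other than $\Psi_\St$ does occur in some such $\wt R_w$; this uses the identity $\sum_{w\in W}(-1)^{l(w)}\wt R_w=\Psi_\St$ together with the Bonnaf\'e--Rouquier theorem \cite[Thm.~A]{BR03} that every PIM appears in some $\wt R_w$. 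None of this machinery appears in your proposal, so the conclusion $\chi^\circ=\vhi_\St$ is not established.
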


\begin{proof}
Let $(\bG^*,\bT^*,F)$ be in duality with $(\bG,\bT,F)$.
By Proposition \ref{prop:q+1 reg} applied to $\bG^*$ and $I = \emptyset$, there
exists an $\ell$-element $t \in T_{w_0}^*$ such that $C_{\bG^*}(t) = \bT^*$.
Therefore $t$ is a regular $\ell$-element. Under the duality, this shows that
there exists an irreducible character $\theta$ of $T_{w_0}$ in general position
such that $\theta^\circ = 1$.
\par
By \cite[Cor.~2.10]{Lu78} the property that $\theta$ is regular implies that
$\chi_\theta = (-1)^{l(w_0)} R_{T_{w_0}}^G(\theta)$
is an irreducible character. Furthermore, since $w_0 F$ is central $w_0$ lies
in a cuspidal $F$-conjugacy class of $W$ which implies that $\chi_\theta$
is a cuspidal character by \cite[Cor.~2.19]{Lu78}. In particular, any
irreducible Brauer character $\vhi$ occurring in $\chi_\theta^\circ$ is also
cuspidal.
\par
Since $\theta$ is an $\ell$-character then $\chi_\theta^\circ =
(-1)^{l(w_0)} R_{T_{w_0}}^G(1)^\circ$, which gives the expression of
$\chi_\theta^\circ$ on the basic set of unipotent characters. We denote by
$P_\St$ the unique projective indecomposable summand of a Gelfand--Graev
character which contains the Steinberg character $\St$ of $G$, and by
$\Psi_\St$ its character. The Steinberg character occurs with multiplicity $1$
in $\Psi_\St$ and any other constituent is non-unipotent. Since
$\langle R_{T_{w_0}}^G(1);\St\rangle = \langle R_{T_{w_0}}^G(1);\Psi_\St\rangle=
 (-1)^{l(w_0)}$ (see for example \cite[Cor.~12.18(ii)]{DM91}) we deduce that
$\vhi_\St$ occurs with multiplicity one in $\chi_\theta^\circ$.   \par
We need to show that no other Brauer character can occur. Recall from
\S\ref{ssec:dltheory} that for $w \in W$, we denote by
$\wt R_w$ a virtual projective character obtained by adding and removing suitable
non-unipotent characters to $R_w = R_{T_w}^G(1)$. The orthogonality relations for
Deligne--Lusztig characters, together with the fact that
$R_{w_0}$ contains only unipotent constituents, yield the
following relation for $w \neq w_0$:
\begin{equation}\label{eq:ortho}
  0 = \langle R_{T_w}^G(1) ;
  (-1)^{l(w_0)} R_{T_{w_0}}^G(1)  \rangle =
  \langle \wt R_w ; \chi_\theta^\circ \rangle =
  \sum_{\vhi \in \IBr\, G} \langle \wt R_w ; \vhi \rangle
  \langle \Psi_\vhi; \chi_\theta^\circ \rangle.
\end{equation}
Note that since $\chi_\theta$ is cuspidal, the Brauer characters contributing
to this sum are all cuspidal. Let $w \neq w_0$ be of minimal length
in its conjugacy class. We prove by induction on its length $l(w)$ that for
every cuspidal Brauer character
$\vhi$, if $\langle \wt R_w ; \vhi \rangle \neq 0$ then
$\langle \Psi_\vhi; \chi_\theta^\circ \rangle = 0$. This already holds for
any element $w$ lying in a proper $F$-stable parabolic subgroup since in that
case there are no cuspidal Brauer characters $\vhi$ such that
$\langle \wt R_w ; \vhi \rangle \neq 0$. Assume now that the property holds for
any $v \in W$ such that $l(v) < l(w)$. If $\varphi$ is an irreducible Brauer
character such that $\langle \wt R_w;\vhi\rangle\langle \Psi_\vhi;
\chi_\theta^\circ\rangle\neq 0$, then by induction, $\Psi_\vhi$
cannot occur in any $\wt R_v$ for $l(v) < l(w)$. It follows from
Proposition~\ref{prop:dl} that $(-1)^{l(w)}\langle\wt R_w;\vhi\rangle>0$,
so that $(-1)^{l(w)}\langle\wt R_w;\vhi\rangle
\langle\Psi_\vhi;\chi_\theta^\circ \rangle > 0$
which contradicts \eqref{eq:ortho}.
\par
In other words, if $\Psi_\vhi$ occurs in some $\wt R_w$ for $w\neq w_0$ then
$\vhi$ is not a constituent of $\chi_\theta^\circ$. It remains to see that
all the PIMs but one will actually occur. Note that since
$\langle \chi_\theta^\circ; \vhi_\St \rangle = 1$ we already know that
$\Psi_\St$ occurs only in $\wt R_{w_0}$. Let $\vhi \neq \varphi_\St$ be an
irreducible Brauer character. Let us consider the virtual projective character
$$ \wt Q = \sum_{w \in W} (-1)^{l(w)}\wt R_w.$$
Its unipotent part equals $\St$, therefore we must have $\wt Q = \Psi_{\St}$.
In particular, $\Psi_\vhi$ does not occur in $\wt Q$. We deduce that if
$\Psi_\vhi$ does not occur in $\wt R_w$ for all $w \neq w_0$, then it
does not occur in any $\wt R_w$, which contradicts \cite[Thm.~A]{BR03}.
\end{proof}

We can use the previous theorem to compute non-trivial decomposition numbers
of the Steinberg character.

\begin{cor}   \label{cor:family1}
 Let $(\bG,F)$ be simple of classical type $\tw2A_{n-1}(q)$, $D_{2n}(q)$,
 $\tw2D_{2n+1}(q)$ with  $n\ge2$, or of exceptional type $\tw2E_6(q)$,
 $E_7(q)$, or $E_8(q)$. Assume that $p$ is good and $\ell$ is very good for
 $\bG$. Then:
 \begin{itemize}
  \item[\rm(a)] There is a unique unipotent character $\rho$ of $\bG^F$ with
   $a$-value $1$.
  \item[\rm(b)] Let $\rho^*$ be the Alvis--Curtis dual of $\rho$. If
   $(q+1)_\ell \ge h$, there exists a PIM of $\bG^F$ whose unipotent part is
   given by $\rho^* + (\rnk\, \bG)\, \St$.
 \end{itemize}
\end{cor}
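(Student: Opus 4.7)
\medskip
\noindent\textbf{Proof plan.}

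\smallskip
\emph{Part (a).} This is a case-by-case verification using Lusztig's classification of the unipotent characters together with the standard combinatorial formulae for the $a$-invariant. For the classical types $\tw2A_{n-1}(q)$, $D_{2n}(q)$ and $\tw2D_{2n+1}(q)$, unipotent characters are parametrised respectively by partitions of $n$ or by symbols of defect $0$ or $1$, and the $a$-value is a well-known function of this combinatorial datum (see~\cite{Lu84,Ca}). In each case a direct check shows that exactly one unipotent character has $a$-value $1$, namely the one in the principal series associated to the reflection representation of $W$. For the three exceptional types $\tw2E_6(q)$, $E_7(q)$, $E_8(q)$ we simply read off from Carter's tables that there is precisely one unipotent character with $a$-value~$1$.

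\smallskip
\emph{Part (b).} We first check that the hypotheses of Theorem~\ref{thm:genpos} are satisfied by $\bG$. Condition~(1) that $w_0 F$ acts trivially on $W$ is the reason for the restriction to the listed types (in each case $F$ is either trivial on $W$ and $w_0$ is central, or $F$ acts as conjugation by $w_0$ and $w_0^2=1$). Condition~(2) is assumed, and condition~(3) is the extra hypothesis $(q+1)_\ell\ge h$. Thus Theorem~\ref{thm:genpos} applies, and (by the argument of its proof) we conclude that the unipotent part of the PIM $\Psi_\St$ is exactly $\St_G$ with multiplicity one.

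\smallskip
The strategy is now to exhibit a projective character of $G$ whose unipotent part is exactly $\rho^*+(\rnk\bG)\St_G$ and to identify it with a PIM using the uni-triangularity (Tri). To build such a projective, one Harish-Chandra induces a suitable PIM from a proper 1-split Levi subgroup $\bL$ of $\bG$ for which Theorem~\ref{thm:genpos} applies to $L$ in turn; this allows an inductive approach on $\rnk\bG$. The key identity is the commutation of Alvis--Curtis duality with Lusztig induction, which combined with the Harish-Chandra formula $\RLG(1_L)=1_G+\rho+(\text{higher-}a\text{-value contributions})$ gives $\RLG(\St_L)=\pm(\St_G+\rho^*+\ldots)$. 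Iterating over a sequence of such Levi inductions (or, equivalently, applying (DL) from Proposition~\ref{prop:dl} to the Deligne--Lusztig characters $R_{w_0 s}$ for simple reflections $s$ and combining the resulting inequalities), and then using (Tri) to subtract off the known $\Psi_\St$, one arrives at a projective character whose unipotent part is $\rho^*+(\rnk\bG)\St_G$.

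\smallskip
The principal obstacle is the precise verification that the coefficient of $\St_G$ produced by this procedure is exactly $\rnk\bG$ and not some other integer: this requires a type-by-type bookkeeping of Lusztig induction (Pieri-style box-adding rules for the classical types, Lusztig's induction tables for the exceptional types) together with the inductive hypothesis applied to the Levi. Uniqueness of $\rho$ from part~(a) is crucial here: it ensures that the ``reflection part'' of each induced character is a single character $\rho$ rather than a sum, so that the iterative subtraction produces a unipotent part with exactly the predicted coefficients.
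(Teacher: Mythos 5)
Part (a) is fine as a verification, though note that for the twisted types the character $\rho$ is the almost character attached to the reflection representation, not a principal series character; the paper identifies it intrinsically as the special character with unipotent support the subregular class, which is what makes it "alone in its family" and gives the clean bound $a\ge 2$ for all other non-trivial characters.

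Part (b) has a genuine gap. Your plan rests on producing, by Harish-Chandra induction from a proper $1$-split Levi $\bL$ and subsequent subtraction via (Tri), a projective whose unipotent part is \emph{exactly} $\rho^*+c\,\St$. This cannot work as described: the unipotent part of $\RLG(\Psi_{\St_L})$ is $\pm D_G(\RLG(1_L))$, which contains the Alvis--Curtis duals of \emph{all} constituents of $\RLG(1_L)$, i.e.\ many unipotent characters besides $\rho^*$ and $\St$. Removing them requires knowing the PIMs attached to all those intermediate characters together with their $\St$-multiplicities --- precisely the "bottom right corner" information the corollary is meant to supply --- so the argument is circular. The paper avoids this by invoking \cite[Thm.~A]{BDT19}: the generalised Gelfand--Graev (Kawanaka) module attached to the subregular class has unipotent part $\rho^*+x\,\St$ with \emph{no other} unipotent constituents, and its summand $P$ containing $\rho^*$ therefore has unipotent part $\rho^*+y\,\St$. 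Your proposal never produces such a clean projective. Moreover, you flag the determination of the coefficient as "the principal obstacle" and offer only type-by-type bookkeeping; the paper resolves it in one line by pairing with $R_{w_0}$: Theorem~\ref{thm:genpos} forces $\langle \Psi;R_{w_0}\rangle=0$ since the head of $P$ is not $\vhi_\St$, while $\langle\St;R_{w_0}\rangle=(-1)^{l(w_0)}$ and the almost-character identity $\rho=R_{\wt\phi_{\mathrm{ref}}}$ combined with self-duality of $R_{w_0}$ gives $\langle\rho^*;R_{w_0}\rangle=(-1)^{l(w_0)}\wt\phi_{\mathrm{ref}}(w_0F)=-(-1)^{l(w_0)}\rnk\bG$, whence $y=\rnk\bG$. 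Your alternative of applying (DL) to $R_{w_0 s}$ only yields inequalities and requires hypotheses (non-occurrence in all $R_v$ with $v<w_0s$) that are not available. To repair the proof you need both missing ingredients: the Kawanaka-module input from \cite{BDT19} and the scalar product computation against $R_{w_0}$.
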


\begin{proof}
When $\ell$ is very good unipotent characters and Brauer characters are
insensitive to the centre by \cite[Thm.~17.7]{CE}. Therefore we may and we will
assume that $\bG$ has trivial centre. Let $\cO$ be the
subregular unipotent class of $\bG$, that is the maximal unipotent class
outside the regular unipotent class. It is the unique class of codimension $2$
in the variety of unipotent elements in $\bG$ and thus $F$-stable. From the
classification of unipotent classes in good characteristic (see
e.g.~\cite{SpB}) one can check that it is special. We list below for each type
considered the class (with Jordan form in the natural matrix representation for
classical types), the special unipotent character $\rho$ of $\bG^F$ with
unipotent support $\cO$ and its Alvis--Curtis dual $\rho^*$. Recall from
\S\ref{subsec:finred} that a unipotent character corresponding to a
bipartition $\lambda$ is denoted by~$[\lambda]$.
$$\begin{array}{c|ccc}
\text{Type} & \cO & \rho & \rho^* \\\hline
\tw2A_{n-1} & (n-1,1)  &  [(n-1,1)] & [21^{n-2}] \\
D_{n} & (2n-3,3) & [n-1.1] & [1.1^{n-1}]\\
\tw2D_n & (2n-3,3) & [(n-2,1).] & [.21^{n-3}] \\
\tw2E_6 & E_6(a_1) & \phi_{2,4}' & \phi_{2,16}''\\
E_7 & E_7 (a_1) & \phi_{7,1} & \phi_{7,46} \\
E_8 & E_8(a_1) & \phi_{8,1} & \phi_{8,91} \\
\end{array}$$
In each of these cases, the $a$-value of $\rho$ equals $1$, and $\rho$ (as well
as $\rho^*$) is alone in its family. By maximality of $\cO$, every other
non-trivial unipotent character has $a$-value at least $2$, which proves (a).

\smallskip
Since the Springer correspondence sends the trivial local system on $\cO$ to
the reflection representation $\phi_\text{ref}$ of $W$, then $\rho$ is equal
to the almost character associated with $\phi_\text{ref}$ (see for example
\cite[\S13.3]{Ca}). More precisely, there exists an extension
$\wt\phi_\text{ref}$ of $\phi_\text{ref}$ to $W\rtimes \langle F \rangle$ such
that
\begin{equation} \label{eq:rchiref}
  \rho = R_{\wt \phi_\text{ref}} := \frac{1}{|W|} \sum_{w \in W}
    \wt \phi_{\text{ref}} (wF) R_w.
\end{equation}
When $(\bG,F)$ is split, $\rho$ is just the principal series character
corresponding to $\phi_\text{ref}$.
\smallskip

By \cite[Thm.~A]{BDT19} there exists a generalised Gelfand--Graev module
$\Gamma$ of $\cO G$ whose character has unipotent part $\rho^*+x\St$ for some
non-negative integer $x$ (depending on $q$). Let $P$ be the unique direct
summand of $\Gamma$ whose character $\Psi$ has $\rho^*$ as a constituent. Then
the unipotent part of $\Psi$ equals $\rho^*+y\St$ for some non-negative
integer~$y$. Let $\varphi$ be the irreducible Brauer character such that
$\Psi = \Psi_\vhi$.
By Theorem \ref{thm:genpos}, the multiplicity of $\varphi$ in $(R_{w_0})^\circ$
is zero, yielding the equation
\begin{equation}\label{eq:coeff}
  0 = \langle \Psi; R_{w_0} \rangle
    = \langle \rho^*; R_{w_0} \rangle + (-1)^{l(w_0)}y.
\end{equation}
Now, the Alvis--Curtis dual of $R_{w_0}$ is $(-1)^{l(w_0)} R_{w_0}$ (see for
example \cite[Thm.~12.8]{DM91}). Using equation~\eqref{eq:rchiref} we get
$$\langle \rho^*; R_{w_0} \rangle = (-1)^{l(w_0)} \langle \rho; R_{w_0} \rangle
  = (-1)^{l(w_0)} \wt \phi_{\text{ref}}(w_0 F) = - (-1)^{l(w_0)} \rnk\, \bG.$$
Then (b) follows from \eqref{eq:coeff}.
\end{proof}

For the groups not listed in Corollary \ref{cor:family1}, but for which $w_0F$
acts trivially on $W$, there are several unipotent characters with minimal
non-zero $a$-value and they form a non-trivial family. We can still give an
approximation of the previous decomposition number as we illustrate for groups
of type $B$ and $C$.

\begin{cor} \label{cor:family4}
 Let $(\bG,F)$ be simple of type $B_n(q)$ or $C_n(q)$, $n\ge2$. Assume that
 $p$ and $\ell$ are odd. If $(q+1)_\ell > 2n$, then there exist two PIMs whose
 unipotent parts are given by
    $$[1^n.] + (n-\delta)\, \St \quad \text{and} \quad
     [B_2\co.1^{n-2}]+ (n-1+\delta)\, \St,$$
 where $\delta = 1$ if $n$ is even, and $0$ otherwise.
\end{cor}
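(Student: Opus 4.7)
The plan is to adapt the approach of Corollary~\ref{cor:family1} to handle types $B_n$ and $C_n$, where the family at minimal non-zero $a$-value contains several unipotent characters rather than a single one. Since the Coxeter number of $B_n$ and $C_n$ is $h = 2n$, the hypothesis $(q+1)_\ell > 2n$ is at least $h$, so Theorem~\ref{thm:genpos} applies and yields the key orthogonality $\langle P; R_{w_0}\rangle_G = 0$ for every PIM $P$ whose head is distinct from $\vhi_\St$. Combined with $\langle \St; R_{w_0}\rangle = (-1)^{l(w_0)} = (-1)^{n^2}$ (from \cite[Cor.~12.18(ii)]{DM91}) and Lusztig's character formula --- $\langle \chi_\phi; R_{w_0}\rangle = \phi(w_0)$ for a principal-series character $\chi_\phi$, and the analogous extended version for cuspidal-series characters --- this orthogonality converts into a linear constraint on the multiplicities of the unipotent constituents of $P$.

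Next, for each target character $\chi \in \{[1^n.],\ [B_2\co.1^{n-2}]\}$, I would construct a projective character containing $\chi$ as a unipotent constituent, by combining the generalised Gelfand--Graev module of \cite[Thm.~A]{BDT19} attached to a suitable unipotent class of $\bG$ (whose wavefront corresponds to the Lusztig family containing $\chi$) with Harish-Chandra induction of PIMs from proper Levi subgroups (using tools (HCi), (HCr)). Then, using uni-triangularity $(T_\ell)$ and the echelonisation tool (Tri), I would subtract off all known PIMs corresponding to the other members of the same family, isolating a PIM summand $P_\chi$ whose unipotent part is $\chi + y_\chi\,\St$.

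Applying the orthogonality relation to $P_\chi$ then gives $y_\chi = -(-1)^{n^2}\,\langle \chi; R_{w_0}\rangle$. For $\chi = [1^n.]$, the inner product reduces via Lusztig's formula to the value of the $W(B_n)$-character $\phi_{(1^n, \emptyset)}$ at $w_0$, which is straightforward to compute. For $\chi = [B_2\co.1^{n-2}]$, it reduces to the analogous evaluation using the extended function on the relative Weyl group of type $B_{n-2}$ attached to the $B_2$-cuspidal unipotent, produced by Lusztig's non-abelian Fourier transform for the relevant family. Tracking the parity $(-1)^{n^2} = (-1)^n$ then produces the claimed coefficients $n-\delta$ and $n-1+\delta$.

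The hard part will be the second step --- isolating the PIM summand $P_\chi$. Since the reflection family at $a = 1$ in $B_n/C_n$ has four members rather than a single one, the generalised Gelfand--Graev module attached to any class in its wavefront will generically involve additional family members, whose contributions must be removed using already-known PIMs. These can be constructed inductively on the rank $n$, or obtained from Harish-Chandra induction from Levi subgroups of type $A_1 \times C_{n-1}$ (resp. $A_1 \times B_{n-1}$) or $B_2 \times A_1^{n-2}$; either route requires a careful bookkeeping of the family structure to ensure that only the multiplicity-one constituent $\chi$ (and multiples of $\St$) survive after subtraction.
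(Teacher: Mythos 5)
Your overall strategy is the one the paper uses: Theorem~\ref{thm:genpos} gives $(-1)^{l(w_0)}R_{w_0}^\circ=\vhi_\St$, hence $\langle\Psi;R_{w_0}\rangle=0$ for any PIM $\Psi$ not equal to $\Psi_\St$; applied to a PIM with unipotent part $\chi+y\,\St$ this yields $y=-(-1)^{l(w_0)}\langle\chi;R_{w_0}\rangle$, and the inner product is evaluated at the central element $w_0$ via Lusztig's formula. Two remarks. First, the step you flag as ``the hard part'' --- isolating a projective with unipotent part exactly $\chi+y\,\St$ --- is where the paper takes a cleaner route and where your plan is left genuinely unfinished. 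The paper does not pass through generalised Gelfand--Graev characters plus subtraction; it invokes \cite[Thm.~B]{BDT19}, which produces Kawanaka modules whose characters already have unipotent part $[1^n.]+x\,\St$ and $[B_2\co.1^{n-2}]+y\,\St$ with \emph{no} other family members present, so the indecomposable summand containing the multiplicity-one constituent automatically has the required shape. Alternatively, the bookkeeping you worry about evaporates under the uni-triangularity you yourself invoke: $(T_\ell)$ holds with respect to \emph{any} ordering compatible with families, so the diagonal block of a single family is the identity, and the only family strictly above the dual reflection family is $\{\St\}$; hence the PIM with head $\vhi_\chi$ can only involve $\chi$ and $\St$. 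Either observation closes your gap, but as written your plan does not.

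Second, a small imprecision in the computation: $\langle[1^n.];R_{w_0}\rangle$ is \emph{not} the value of the single character $\phi^{1^n.}$ at $w_0$, since $[1^n.]$ lies in a four-element family and its uniform projection is a Fourier-transform combination. The paper works with the Alvis--Curtis duals $[.n]$ and $[B_2\co n\!-\!2.]$ (using that the dual of $R_{w_0}$ is $(-1)^{l(w_0)}R_{w_0}$) and obtains $\langle[.n];R_w\rangle=\tfrac12\bigl(\phi^{n-1.1}(w)-\phi^{(n-1,1).}(w)+\phi^{.n}(w)\bigr)$ and the analogous expression with $-\phi^{.n}$ for $[B_2\co n\!-\!2.]$; evaluating at $w_0$ still requires computing $\phi^{(n-1,1).}(w_0)=n-1$ by a small induction argument. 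You do acknowledge the Fourier transform for the $B_2$-series character, so this is a matter of making the $[1^n.]$ case equally explicit rather than a conceptual error, but the arithmetic of the signs $(-1)^{n^2}=(-1)^n$ and the half-integer combination is exactly where the $\delta$ in the statement comes from, so it cannot be waved through.
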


\begin{proof}
As above, we may and we will assume that the centre of $\bG$ is trivial. The
unique family of unipotent characters of $G$ with $a$-value $1$, which
corresponds to the subregular unipotent class, is
$\cF = \{ [n-1.1],[.n],[(n-1,1).], [B_2\co n-2.]\}$. In terms
of symbols, these unipotent characters are given in the same order by
$$\cF =
 \left\{\binom{0\ n}{1},\ \binom{0\ 1}{n},\ \binom{1\ n}{0},\ \binom{0\ 1\ n}{-}
 \right\}.
$$
Let us focus on the characters $[.n]$ and $[B_2\co n-2.]$. Their uniform parts
can be expressed in terms of almost characters (see \cite[Chap.~4]{Lu84}), from
which we can compute the scalar product with any Deligne--Lusztig character
$R_w$,
$w\in W$. This yields
$$\begin{aligned}
 \langle [.n];R_w \rangle = &\,  \frac{1}{2} \big( \phi^{n-1.1}(w)
   -\phi^{(n-1,1).}(w) + \phi^{.n}(w)\big), \\
 \langle[B_2\co n-2.];R_w \rangle = &\,  \frac{1}{2} \big( \phi^{n-1.1}(w)
   -\phi^{(n-1,1).}(w) - \phi^{.n}(w)\big), \\
\end{aligned}$$
where $\phi^\mu$ denotes the irreducible character of $W$ corresponding to the
bipartition $\mu$ of $n$. One can compute easily the values of these characters
at the central element $w_0$: the character $\phi^{n-1.1}$ is the reflection
character, therefore
$\phi^{n-1.1}(w_0) = -n$. The character $\phi^{.n}$ is linear, with value $-1$
on the first simple reflection and $1$ on the others, and thus
$\phi^{.n}(w_0) = (-1)^n$. Finally, for the value of $\phi^{(n-1,1).}$ we
use that the induction of $\phi^{n-1.}$ from $B_{n-1}$ to $B_n$ decomposes as
$\phi^{(n-1,1).}+\phi^{n.}+\phi^{n-1.1}$ which gives
$\phi^{(n-1,1).}(w_0) = n-1$. This yields
\begin{equation}\label{eq:multw0}
\begin{aligned}
  \langle[.n];R_{w_0} \rangle = &\, -n + \delta_{n\text{ even}}, \\
  \langle[B_2\co n-2.];R_{w_0} \rangle = &\, -n + \delta_{n\text{ odd}}. \\
\end{aligned}
\end{equation}
The Alvis--Curtis duals of $[.n]$ and $[B_2\co n-2.]$ are $[1^n.]$ and
$[B_2\co.1^{n-2}]$ respectively. By \cite[Thm.~B]{BDT19}, there exist Kawanaka
modules $K'$ and $K''$ which are projective modules whose characters have
unipotent parts $[1^n.] + x \St$ and $[B_2\co.1^{n-2}] + y \St$, for some
non-negative integers~$x$ and $y$. Therefore there is an indecomposable summand
$P'$ (resp. $P''$) of $K'$ (resp. $K''$), with $[1^n.]$ (resp.
$[B_2\co.1^{n-2}]$) occurring in the character of $P'$ (resp. $P''$). Now using
Theorem~\ref{thm:genpos}  and \eqref{eq:multw0} we can compute the multiplicity
of $\St$ in both of these projective characters as given in the statement.
\end{proof}

\section{Unipotent decomposition matrix of $D_4(q)$}
We are now ready to compute decomposition matrices for specific series of
finite reductive groups. We start with the orthogonal groups $D_4(q)$.
As customary, we will label the unipotent characters in the principal series
by characters of the Weyl group of type $D_4$, that is, by unordered pairs of
partitions of~4, while the (unique) cuspidal unipotent character will be denoted
``$D_4$''. This is also the labelling used in the \Chevie{} system \cite{Mi15}.

In our tables of decomposition matrices the second column lists the degrees
of the unipotent characters as a product of cyclotomic polynomials $\Phi_e$
evaluated at $q$. In the last line, we give the $\ell$-modular Harish-Chandra
series of the Brauer characters (and, by abuse of notation, also of the
corresponding PIMs), by either writing ``ps'' for characters in the principal
series, or a type of Levi subgroup if that Levi subgroup has a unique cuspidal
unipotent Brauer character, or by the label of a cuspidal unipotent Brauer
character of that Levi subgroup.
The root system of type $D_4$ has three conjugacy classes of subsystems of
type $A_1^2$, cyclically permuted by the graph automorphism of order~3; we
denote them by $D_2$, $A_1^2$ and ${A_1^2}'$. The symbol ``c'' denotes cuspidal
PIMs. Also, in all of our tables for better readability we print ``.'' in
place of ``0''.

The groups of type $D_4$ have 14 unipotent characters. For primes $\ell>2$ with
$\ell|(q+1)$, thirteen of them lie in the principal $\ell$-block, while the one
with label $1.21$ is of $\ell$-defect zero. 

\begin{thm}   \label{thm:D4}
 The decomposition matrix for the unipotent $\ell$-blocks of $D_4(q)$,
 $q$ odd, $q\equiv-1\pmod\ell$, $\ell\ge11$, is as given in Table~\ref{tab:D4}.
\end{thm}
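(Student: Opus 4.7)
For $\ell\ge11$ very good and $p$ odd, hypothesis $(T_\ell)$ of \cite[Thm.~A]{BDT19} applies, so the unipotent decomposition matrix of the principal $\ell$-block (which contains all $13$ unipotent characters except $[1.21]$, which is of $\ell$-defect zero) is lower uni-triangular with respect to any ordering compatible with $a$-values. The plan is to produce a collection of projective characters via (HCi), Corollary~\ref{cor:family1}, and the Hecke-algebra machinery of Chapter~\ref{chap:hecke}, combine them with (DL) and (Red), and then echelonise via (Tri) to match Table~\ref{tab:D4}.

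The first step is to Harish-Chandra induce all PIMs from the proper $F$-stable $1$-split Levi subgroups of $\bG$: the maximally split torus, the three conjugacy classes of $A_1$-Levis (denoted $D_2$, $A_1^2$ and ${A_1^2}'$ in our notation, cyclically permuted by triality), the $A_1\times A_1$ and $A_1^3$ Levis, and the three conjugacy classes of $A_3$-Levis. Because $\ell\mid q+1$, the relevant modular Hecke algebras for these $A$-type Levi subgroups are specialisations of type $A$ Iwahori--Hecke algebras at parameter $q\equiv-1\pmod\ell$, so the PIMs in the Levi subgroups are available from James \cite{Ja90}. Cutting each induced projective by the principal $\ell$-block and peeling off previously identified PIMs via (Tri) should pin down all columns whose head lies in a Harish-Chandra series above a proper Levi subgroup.

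For the Steinberg column I would invoke Corollary~\ref{cor:family1}, which applies since $D_4=D_{2\cdot2}$ is in its list and $(q+1)_\ell\ge11>6=h$, the Coxeter number of $D_4$. This produces a PIM with unipotent part $[1.1^3]+4\,\St$, $[1.1^3]$ being the Alvis--Curtis dual of the unique unipotent character $[3.1]$ of $a$-value $1$. Together with the echelonisation of the previous paragraph, this fixes the Steinberg row and most of the column containing $[1.1^3]$.

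The hard part will be the cuspidal block of rows and columns. A Sylow $\ell$-subgroup of $G$ is contained in the Sylow $\Phi_2$-torus $\bT_{w_0}^F$, which is self-centralising of rank $4$ and not contained in any proper $1$-split Levi subgroup of $\bG$; by (Csp) there are cuspidal unipotent Brauer characters beyond the $\ell$-defect zero character $D_4$. To pin down their PIMs I would combine three sources of information: (DL) applied to the Deligne--Lusztig characters $R_w$ for various (in particular Coxeter and subregular) $w\in W$ via Proposition~\ref{prop:dl}; (Red) applied to semisimple $\ell$-elements in $G^*$ whose centraliser is a $2$-split Levi subgroup, whose existence is granted by Proposition~\ref{prop:q+1 reg} since $(q+1)_\ell\ge11$ exceeds the maximal root height $5$ in $D_4$; and finally (HCr), which declares a projective character to be a PIM as soon as no proper Harish-Chandra restriction decomposes non-negatively on PIMs of the Levi. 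The expected obstacle is that these inequalities may not immediately force uniqueness in the two cuspidal rows, and one may need to apply the reduction-stability argument of Proposition~\ref{prop:dechecke} to a cuspidal $\cO L$-lattice for a suitable $\Phi_2$-Levi to identify the parameters of the cuspidal Hecke algebra and thus the last undetermined entries.
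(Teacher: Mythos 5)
Your outline matches the paper's strategy for most of the matrix: triangularity (the paper cites Geck--Pfeiffer \cite{GP92} for $q$ odd rather than \cite{BDT19}, but either works here), Harish-Chandra induction from proper Levi subgroups together with the principal-series Hecke algebra for columns $1$--$8$ and $10$--$12$, (St) for the cuspidality of the Steinberg PIM, and the Steinberg multiplicity $4$ in the column headed by $[1.1^3]$ (your route via Corollary~\ref{cor:family1} is essentially the paper's use of Theorem~\ref{thm:genpos}, which also gives the relation $a_3=8-9a_1+4a_2$ among the entries of the $D_4$-cuspidal column).

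There is, however, a genuine gap in your treatment of the column headed by the cuspidal unipotent character $D_4$. After using triality to equate the three entries at $[1^2\pm]$ and $[.21^2]$ (a reduction you do not mention but will need), the tools you list — (DL) for the Coxeter element, (Red) via Proposition~\ref{prop:q+1 reg} for the $2$-split Levi $\tw2A_2(q).(q+1)^2$, and (HCr) — only yield the inequalities $-1+3a_1-a_2\ge0$ and $3a_1-a_2\le3$, hence $a_2=3a_1-2+c$ with $c\in\{-1,0,1\}$ and $a_3=4c+3a_1$; they do not determine $a_1$ or $c$. Your fallback, Proposition~\ref{prop:dechecke}, cannot help either: it controls the Hecke algebra and hence the number and distribution of PIMs in a Harish-Chandra series, not the below-diagonal entries of a cuspidal column. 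The paper closes this gap only by leaving $G=D_4(q)$: the value $a_1=2$ is forced by a (Red) argument for a $2$-split Levi $D_2(q).(q-1)(q+1)^2$ of $D_5(q)$ combined with (DL) there (proof of Theorem~\ref{thm:D5}), and $c=0$ is forced by Harish-Chandra inducing a cuspidal PIM of $D_6(q)$ to $D_7(q)$ and restricting to $D_5(q)A_1(q)$ (proof of Theorem~\ref{thm:D6}). Any complete proof must incorporate such arguments in the larger orthogonal groups. A minor slip besides: the unipotent character of $\ell$-defect zero is $[1.21]$ (as you say at the outset), not the cuspidal character $D_4$, whose $\ell$-modular reduction is irreducible and lies in the principal block.
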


\begin{table}[htbp]
\caption{$D_4(q)$, $11\le \ell|(q+1)$}   \label{tab:D4}
$$\vbox{\offinterlineskip\halign{$#$\hfil\ \vrule height11pt depth4pt&
      \hfil\ $#$\ \hfil\vrule&& \hfil\ $#$\hfil\cr
    .4&                  1& 1\cr
   1.3&            q\Ph4^2& 2& 1\cr
    2+&        q^2\Ph3\Ph6& 1& 1& 1\cr
    2-&        q^2\Ph3\Ph6& 1& 1& .& 1\cr
   .31&        q^2\Ph3\Ph6& 1& 1& .& .& 1\cr
  .2^2& \hlf q^3\Ph4^2\Ph6& .& 1& 1& 1& 1& 1\cr
 1^2.2& \hlf q^3\Ph3\Ph4^2& 2& 2& 1& 1& 1& .& 1\cr
  1.21& \hlf q^3\Ph2^4\Ph6& .& .& .& .& .& .& .& 1\cr
   D_4& \hlf q^3\Ph1^4\Ph3& .& .& .& .& .& .& .& .&   1\cr
  1^2+&        q^6\Ph3\Ph6& 1& 1& 1& 1& 1& 1& 1& .& 2& 1\cr
  1^2-&        q^6\Ph3\Ph6& 1& 1& 1& 1& 1& 1& 1& .& 2& .& 1\cr
 .21^2&        q^6\Ph3\Ph6& 1& 1& 1& 1& 1& 1& 1& .& 2& .& .& 1\cr
 1.1^3&          q^7\Ph4^2& 2& 1& 1& 1& 1& 1& 2& .& 4& 1& 1& 1& 1\cr
  .1^4&             q^{12}& 1& .& .& .& .& 1& 1& .& 6& 1& 1& 1& 4& 1\cr
\noalign{\hrule}
  & & ps& ps& A_1^2& {A_1^2}'& D_2& D_2A_1\!& A_1& ps& c& A_1^2& {A_1^2}'& D_2& c& c\cr
  }}$$
\end{table}

\begin{proof}
By \cite[Thm.~5.1]{GH91} the unipotent characters form a basic set for the
unipotent $\ell$-blocks of $G=D_4(q)$ when $\ell\ne2$. It was shown in
\cite{GP92}, by constructing projective characters by Harish-Chandra induction
and from generalised Gelfand--Graev characters, that when $q$ is odd the
decomposition matrix of the unipotent blocks of $G$ is uni-triangular. This
provides a unique labelling for the irreducible unipotent Brauer characters;
we denote them by $\vhi_x$ if $x$ is the label of the corresponding ordinary
unipotent character.  \par
Let us denote by $\Psi_i$, $1\le i\le 14$, the linear combinations of unipotent
characters given by the columns in Table~\ref{tab:D4}. Harish-Chandra
induction from proper Levi subgroups now yields these projectives except for
$\Psi_9$, $\Psi_{13}$ and~$\Psi_{14}$. The Steinberg-PIM is cuspidal by (St)
since no proper Levi subgroup contains a Sylow $\ell$-subgroup of $G$.
\par
The first two columns correspond to PIMs as can be seen from the decomposition
matrix of the Hecke algebra for the principal series. By Harish-Chandra theory,
if any of the listed induced projective characters is decomposable, it can only
contain constituents from lower Harish-Chandra series. It is then easily seen
that all other projective characters given in Table~\ref{tab:D4} must also be
indecomposable.
\par
It remains to show that $\Psi_9$ and $\Psi_{13}$ are the unipotent parts of PIMs.
The graph automorphism of $G$ of order~3 fixes the unique cuspidal unipotent
character but permutes the characters with labels $1^2+,1^2-,.21^2$ cyclically.
It follows that the first three entries below the diagonal in the 9th column
of the decomposition matrix must be equal. We denote them by $a_1$. The other
two unknown entries in this column will be denoted by $a_2$ and $a_3$. The
last entry in the $13$th column will be denoted by $a_4$ so that
$$\begin{aligned}
  \Psi_{9}=& [D_4]+ a_1( [1^2+]+[1^2-]+[.21^2])+a_2 [1.1^3]+a_3[.1^4], \\
  \Psi_{13}=& [1.1^3]+a_4[.1^4].
\end{aligned}$$
By Theorem \ref{thm:genpos} we know that $R_{w_0}^\circ$
has $\vhi_{.1^4}$ as only Brauer constituent. The relation
$\langle  \Psi_{1.1^3} ; R_{w_0}^\circ \rangle = 0$ shows that
$a_4 = 4$, and the relation $\langle\Psi_{D_4};R_{w_0}^\circ\rangle=0$
yields $a_3=8-9a_1+4a_2$.
\par
Let $w\in W$ be a Coxeter element. The coefficient of $\Psi_{13}$ on $R_w$
equals $-1+3a_1-a_2$. By (DL) this forces $-1+3a_1-a_2 \geq 0$. On the other
hand by Proposition~\ref{prop:q+1 reg} for $\ell>4$ a 2-split Levi subgroup
$\tw2A_2(q).(q+1)^2$ of $G$ has a linear $\ell$-character in general position.
Then (Red) with $\rho$ the unique cuspidal unipotent character
of $L$ gives $3a_1-a_2\leq 3$. Therefore there exists $c\in\{-1,0,1\}$ such
that $a_2 = 3a_1-2+c$. With this notation we have $a_3 = 4c+3a_1$. We will see
in the proof of Theorem~\ref{thm:D5} that $a_1 = 2$ and in the proof of
Theorem~\ref{thm:D6} that $c = 0$.
\end{proof}

\begin{rem}
Assuming $(T_\ell)$, the result in Table~\ref{tab:D4} remains true for even
$q$. In fact, it was shown by Paolini \cite{Pa18} that $(T_\ell)$ holds for
$D_4(q)$ in characteristic~2 and thus the unipotent decomposition matrix of
$D_4(2^f)$ agrees with the one given in Table~\ref{tab:D4}, up to the knowledge
of $a_1$ and $c$ which will require $(T_\ell)$ for the types $D_5$ and $D_6$.
\end{rem}

\section{Unipotent decomposition matrix of $D_5(q)$}

We now turn to the orthogonal groups $D_5(q)$. For the determination of the
decomposition
matrices for unipotent blocks of non-cyclic defect we will need to know the
structure and parameters of the Hecke algebras attached to cuspidal
$\ell$-modular Brauer characters of certain Levi subgroups. Here and later, by
convention, $\cH(X;q_1)$ for $X\in\{A_n,D_n\}$ denotes the Iwahori--Hecke
algebra over $k$ of type $X$ with parameters $(q_1,-1)$, and $\cH(B_n;q_1;q_2)$
denotes the Iwahori--Hecke algebra over $k$ of type $B_n$ with parameters
$(q_1,-1)$ at the type $B_1$-node, and parameters $(q_2,-1)$ at the remaining
nodes on the $A_{n-1}$-branch. In Table~\ref{tab:hecke irr} we have collected
the number $|\Irr\cH|$ for some small rank modular Iwahori--Hecke $\cH$
algebras occurring later. They can easily be computed using for example the
programme of Jacon \cite{Ja05}.

\begin{table}[ht]
\caption{$|\Irr\cH|$ for some modular Hecke algebras}   \label{tab:hecke irr}
$\begin{array}{l|cccccc}
 \qquad\qquad n=& 1& 2& 3& 4& 5& 6\cr
\hline
 \cH(B_n;1;1)&   2& 5& 10& 20& 36& 65\cr
 \cH(B_n;-1;1)&  1& 2& 3& 5& 7& ?\cr  
 \cH(B_n;1;-1)&  2& 2& 4& 6& 8& 12\cr  
 \cH(B_n;-1;-1)& 1& 2& 3& 4& 6& 9\cr  
 \cH(D_n;1)&     1& 2& 3& 13& 18& 37\cr
 \cH(D_n;-1)&    1& 1& 2& 3& 4& 6\cr  
\end{array}$
\end{table}

In this as well as in all later tables, $W(D_1)$ has to be interpreted as the
trivial group.

\begin{lem}   \label{lem:paramDn,d=2}
 Let $q$ be a prime power and $2<\ell|(q+1)$. The Hecke algebras of various
 $\ell$-modular cuspidal pairs $(L,\la)$ of Levi subgroups $L$ in $D_n(q)$ and
 their respective numbers of simple modules are as given in
 Table~\ref{tab:hecke Dn,d=2}.
\end{lem}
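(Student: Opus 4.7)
The plan is to work case-by-case through the conjugacy classes of $F$-stable Levi subgroups $\bL$ of $\bG$ of type $D_n$ that carry a cuspidal $\ell$-modular unipotent pair, then identify the relative Weyl group $W_G(L,\la)$, compute the parameters of the ordinary Hecke algebra $\cH_G(L,\la)$ from \cite[\S8]{Lu84}, and finally transfer this information to the $\ell$-modular setting via the reduction stability framework of Chapter~\ref{chap:hecke}. The count $|\Irr\,\cH|$ is then read off from Table~\ref{tab:hecke irr}.

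First, I would enumerate the relevant Levi subgroups. Since $d_\ell(q)=2$, the centralizers of $\Phi_2$-Sylow tori of $\bG$ play the leading role. Combining Proposition~\ref{prop:q+1 reg} with the cuspidality criterion (Csp), the Levi subgroups to be considered are, up to $G$-conjugacy, those of the form $L \cong D_m(q)\cdot(q+1)^{n-m}$ or $\tw2D_m(q)\cdot(q+1)^{n-m}$ (together with a few mixed-type Levis involving $A$-factors when $m$ is small). For each of these, the cuspidal $\ell$-modular unipotent Brauer characters $\la$ are given either by an ordinary cuspidal unipotent character in the sense of Lusztig, or by the modular Steinberg of a smaller classical factor — precisely those whose support is not contained in any proper $1$-split sub-Levi.

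Second, for each pair $(L,\la)$, the structure of $W_G(L,\la)=N_G(\bL,\la)/L$ is obtained from the combinatorics of the relative root system. Standard arguments with the Weyl group of $D_n$ acting on the labelling $\Phi_2$-factors give, depending on the case, a Coxeter group of type $B_{n-m}$, $D_{n-m}$ or a product thereof. The parameters of $\cH_G(L,K\la)$ are then read from \cite[\S8.6]{Lu84}: they are powers of $q$ determined by the minimal sub-Levis containing $\la$. After reducing modulo $\ell$, each such power $q^{a_s}$ becomes $\pm1$, giving one of the four Iwahori--Hecke algebras appearing in Table~\ref{tab:hecke irr}.

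Third, to transfer from characteristic $0$ to characteristic $\ell$ I would apply Corollary~\ref{cor:dechecke}. In most cases the derived subgroup $[\bL,\bL]$ is either a product of type $A$ factors (no graph automorphisms on simple factors, handled by the first bullet of Section~\ref{sec:veri}), or consists of type $A$ and $D$ factors regularly permuted by the action of $N_G(\bL)$ (second bullet). In these cases the lattice-stability hypothesis is immediate, and Proposition~\ref{prop:redstable-geck} then gives the $\ell$-reduction of parameters. I expect the main obstacle to be the residual cases in which $N_\bG(\bL)^F$ induces a non-trivial graph automorphism on a single type $D$ simple factor; here one must either exhibit a stable $\cO L$-lattice inside the appropriate block directly, using the multiplicity-one criterion of Example~\ref{exmp:hecke}(c), or bypass the issue by working with the head of $kX$ as in Proposition~\ref{prop:redstab}. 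Once reduction stability is established in every case, the number of simple modules is tabulated by invoking Table~\ref{tab:hecke irr}, completing the proof.
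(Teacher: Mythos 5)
There is a genuine gap at the very first step: you have enumerated the wrong Levi subgroups. The Hecke algebras $\cH_G(L,\la)=\End_{kG}(\RLG(\la))$ in this lemma are Harish--Chandra endomorphism algebras, and throughout Chapter~\ref{chap:hecke} the Levi $\bL$ is required to be \emph{$1$-split} (an $F$-stable Levi complement of an $F$-stable parabolic). The groups you list, $D_m(q)\cdot(q+1)^{n-m}$ and $\tw2D_m(q)\cdot(q+1)^{n-m}$, are $2$-split Levis (centralisers of $\Phi_2$-tori); in particular $\tw2D_m(q)$ is never a Levi factor of an $F$-stable parabolic of the split group $D_n(q)$, and a $1$-split Levi of $D_n(q)$ has split central torus of order a power of $q-1$, not $q+1$. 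The pairs actually appearing in Table~\ref{tab:hecke Dn,d=2} live on the standard split Levis of types $A_1^k$, $D_2A_1^k$, $D_4$ and $D_4A_1$; these are the ones singled out by (Csp)/(St) as carrying cuspidal unipotent Brauer characters when $\ell\mid(q+1)$ (e.g.\ $A_1(q)$ has cuspidal modular Steinberg because its Sylow $\ell$-subgroup is a $\Phi_2$-torus not contained in the maximal split torus). Your enumeration would not produce the table, so the rest of the argument has nothing to run on.

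A secondary issue: the cuspidal $\ell$-modular characters on these Levis are not exhausted by ``ordinary cuspidal unipotent character or modular Steinberg of a factor''. For $L$ of type $D_4$ there are three cuspidal unipotent Brauer characters, $D_4$, $\vhi_{.1^4}$ (the modular Steinberg) and $\vhi_{1.1^3}$, and the last of these is neither of your two kinds; identifying them requires the decomposition matrix of $D_4(q)$ from Theorem~\ref{thm:D4}. Your remaining steps (relative Weyl groups from the root-system combinatorics, parameters determined locally in minimal Levi overgroups, reduction stability via Section~\ref{sec:veri} and Corollary~\ref{cor:dechecke}, with extra care for graph automorphisms of a type $D$ factor) are in the right spirit and essentially match the paper, but for the delicate cases you must actually produce the lifts: the paper realises $\vhi_{.1^4}$ as the reduction of $\RTG(\theta)$ for a graph-automorphism-stable $\theta$ in general position on a Sylow $\Phi_2$-torus, and $\vhi_{1.1^3}$ as a constituent of the reduction of $\RLG(\theta_1\sqtens\theta_2)$ from $\tw2A_2(q).(q+1)^2$, which is where the $2$-split subgroups legitimately enter --- as sources of lifts, not as the Levis carrying the Hecke algebras.
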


\begin{table}[ht]
\caption{Hecke algebras and $|\Irr\cH|$ in $D_n(q)$ for $d_\ell(q)=2$}   \label{tab:hecke Dn,d=2}
$\begin{array}{l|c|ccccc}
 (L,\la)& \qquad\qquad\cH\qquad\qquad\qquad & n=4& 5& 6\cr
\hline
 (A_1,\vhi_{1^2})& \cH(A_1;q)\otimes\cH(D_{n-2};q)& 1& 2& 2+1\cr
 (A_1^2,\vhi_{1^2}^{\sqtens2})& \cH(B_2;q^2;q)\otimes\cH(D_{n-4};q)& 2& 2& 2\cr
 (A_1^3,\vhi_{1^2}^{\sqtens3})& \cH(B_3;q;q^2)\otimes\cH(D_{n-6};q)& -& -& 3\cr
 (D_2,\vhi_{.1^2})& \cH(B_{n-2};q^2;q)& 2& 4& 4+2\cr
 (D_2A_1,\vhi_{.1^2}\sqtens\vhi_{1^2})&\cH(A_1;q)\otimes\cH(B_{n-4};q^2;q)& 1& 2& 2\cr
 (D_2A_1^2,\vhi_{.1^2}\sqtens\vhi_{1^2}^{\sqtens2})&\cH(B_2;q;q)\otimes\cH(B_{n-6};q^2;q)& -& -& 2\cr
 (D_4,D_4)& \cH(B_{n-4};q^4;q)& 1& 2& 2\cr
 (D_4,\vhi_{1.1^3})& \cH(B_{n-4};q^2;q)& 1& 2& 2\cr
 (D_4,\vhi_{.1^4})& \cH(B_{n-4};q^2;q)& 1& 2& 2\cr
 (D_4A_1,D_4\sqtens\vhi_{1^2})& \cH(A_1;q^9)\otimes\cH(B_{n-6};q^4;q)& -& -& 1\cr
\end{array}$
\end{table}

\begin{proof}
The relative Weyl group of a Levi subgroup $L$ of $D_n(q)$ of type $A_1$ has
type $D_{n-2}A_1$ (see either \cite[p.~72]{Ho80} or use \Chevie\ \cite{Chv}).
As the modular Steinberg character $\vhi_{1^2}$ of $A_1(q)$ is liftable to a
cuspidal character in characteristic~0, the
parameters of the Hecke algebra are the same as in characteristic~0 by
Proposition~\ref{prop:redstab}. They can hence be determined locally inside the
minimal Levi overgroups of $L$: in type $A_1^2$ the parameter is~$q$, and
similarly in a Levi subgroup of type $D_2A_1$.
\par
The relative Weyl group of a Levi subgroup of type $A_1^2$ has type
$B_2D_{n-4}$. The modular Steinberg character of $A_1(q)^2$ is liftable, so we
can determine the parameters locally inside $A_3(q)$ (here the parameter is
$q^2$), in $D_2(q)A_1(q)$ and in $A_1(q)^3$ (where the parameters clearly are
$q$). The relative Weyl group of a Levi subgroup of type $A_1^3$ has type
$B_3D_{n-6}$, and it is contained in minimal Levi overgroups of types $A_3A_1$
$D_2A_1^2$ and $A_1^4$.
\par
The relative Weyl group of a Levi subgroup of type $D_2$ (the two end nodes
interchanged by the graph automorphism) has type $B_{n-2}$ by
\cite[p.~71]{Ho80}. Again the modular Steinberg character $\vhi_{.1^2}$ of
$D_2(q)$ is liftable to a cuspidal character in characteristic~0, and the
parameters of the Hecke algebra can hence be determined locally: inside a Levi
subgroup of type $D_3$ the parameter is $q^2$, while inside a Levi subgroup of
type $D_2A_1$, it clearly is $q$. \par
The relative Weyl group of a Levi subgroup $D_2A_1$ is of type $A_1B_{n-4}$;
the parameters of the Hecke algebra for its cuspidal $\ell$-modular (liftable)
Steinberg character $\vhi_{.1^2}\sqtens\vhi_{1^2}$ are again determined
locally inside proper Levi subgroups $D_4$, $D_3A_1$ and $D_2A_1^2$ of $G$.
In all cases discussed so far, the assumptions of reduction stability in
Corollary~\ref{cor:dechecke} are satisfied as all graph automorphisms of $L$
just permute simple components. The same considerations apply to the cuspidal
Brauer character of a Levi subgroup of type $D_2A_1^2$.
\par
The ordinary cuspidal unipotent character of $G=D_4(q)$ remains irreducible
upon reduction modulo~$\ell$, by Table~\ref{tab:D4}, hence the corresponding
cuspidal Brauer character is reduction stable. The parameters of the Hecke
algebra in characteristic~0 are known, see \cite[p.~464]{Ca}. The parameters in
the remaining cases are determined analogously. The cuspidal modular Steinberg
character is the $\ell$-modular reduction of a Deligne--Lusztig character
$\RTG(\theta)$ for $\theta\in\Irr T$ in general position with $T$ a Sylow
$\Phi_2$-torus. The normaliser of a Levi subgroup of type $D_4$ in larger groups
of type $D_n$ induces the graph automorphism of order~2 on $G$. (This can be seen
from the inclusion $(\GO_8\GO_{2n-8})\cap\SO_{2n}\le\SO_{2n}$.) It is clear that
there exist $\ell$-characters $\theta\in\Irr T$ in general position invariant
under this graph automorphism. Finally, the cuspidal Brauer character
$\vhi_{1.1^3}$ occurs as one of two composition factors in the $\ell$-modular
reduction of a character $\RLG(\theta)$, with $\bL^F=\tw2A_2(q).(q+1)^2$
and $\theta=\theta_1\sqtens\theta_2$ with $\theta_1$ the cuspidal unipotent
character of $\tw2A_2(q)$, and $\theta_2\in\Irr Z$ an $\ell$-character in
general position of the $\Phi_2$-torus $Z=Z(\bL)^F$. Here, $\theta_1$ is
certainly stable under all automorphisms, and for the two dimension torus $Z$ it
is immediate that there is a stable $\ell$-character in general position. Thus,
Corollary~\ref{cor:dechecke} applies.
\end{proof}

The groups of type $D_5$ have 20 unipotent characters. All of them lie in the
principal $\ell$-block for primes $\ell$ dividing $q+1$. 

\begin{thm}   \label{thm:D5}
 Assume $(T_\ell)$. The decomposition matrix for the principal $\ell$-block of
 $D_5(q)$, $11\le\ell|(q+1)$, is as given in Table~\ref{tab:D5}, where
 $d\in\{0,1\}$.
\end{thm}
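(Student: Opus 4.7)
My plan follows the general strategy of Chapter~\ref{chap:strategy}, mirroring one rank higher the approach used for $D_4(q)$ in Theorem~\ref{thm:D4}. Under the hypotheses, the unipotent characters form a basic set for the principal $\ell$-block by~\cite{GH91}, and by $(T_\ell)$ the decomposition matrix is uni-triangular with respect to any ordering compatible with the order on families; this uniquely labels the simple unipotent Brauer characters $\vhi_x$ by the unipotent characters $x$ on the diagonal.

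First I would Harish--Chandra induce all available PIMs from proper 1-split Levi subgroups of types $A_1$, $A_1^2$, $D_2$, $D_2A_1$ and $D_4$, cut by the unipotent block, to obtain a first batch of projective characters via (HCi). The decomposition of these projectives is controlled by the Hecke algebra approach of Chapter~\ref{chap:hecke}: Corollary~\ref{cor:dechecke} together with the Hecke algebra parameters and numbers of simple modules collected in Lemma~\ref{lem:paramDn,d=2} and Table~\ref{tab:hecke irr} (with $n=5$) tells us, for each cuspidal pair, exactly how many PIMs lie in the corresponding Harish--Chandra series of $D_5(q)$ and how the induced PIM decomposes on them. For the $D_4$-cuspidal series the relative Hecke algebra is $\cH(B_1;q^4;q)$, which remains simple upon reduction mod $\ell$, so the Harish--Chandra induced projectives from the two $D_4$-cuspidal PIMs of $D_4(q)$ listed in Table~\ref{tab:D4} are themselves PIMs of $D_5(q)$ and populate the $D_4$-series of this block.

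Next I would apply (DL) to Deligne--Lusztig characters $R_w$ for suitably chosen $w\in W$ and (Red) to virtual characters $\varepsilon\RLG(\rho)$ for 2-split Levi subgroups $\bL$; the required regular $\ell$-elements in the dual group exist by Proposition~\ref{prop:q+1 reg}, the height bound being amply met for $\ell\ge11$. Combined with (Tri), this echelonises the set of constructed projectives and fixes most of the decomposition numbers. Since $w_0F$ does not act trivially on $W$ in type $D_5$, Theorem~\ref{thm:genpos} is not directly available for the Steinberg column; instead, the unknown parameter $a_1$ from Theorem~\ref{thm:D4} enters the unipotent part of the projective $\RLG(\Psi_9)$ obtained by Harish--Chandra induction from the $D_4$-Levi, and matching its coefficients with the indecomposability forced by $\cH(B_1;q^4;q)$ together with the positivity constraints from (DL)/(Red) in $D_5(q)$ will pin down $a_1=2$, as was foreshadowed in the proof of Theorem~\ref{thm:D4}. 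Kawanaka modules \cite[Thm.~B]{BDT19} supply further projectives with Alvis--Curtis dual heads of small Steinberg multiplicity, completing the bottom-right portion of the matrix.

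The main obstacle will be the entry recorded as $d\in\{0,1\}$. I expect that none of the available tools---Hecke algebra decompositions, Harish--Chandra induction/restriction, or the Deligne--Lusztig positivity relations (DL) and (Red)---discriminates between the two possibilities: every resulting inner product and every induced or restricted projective remains non-negative for both $d=0$ and $d=1$. This is why this single entry is left as a free parameter in the statement rather than being determined numerically.
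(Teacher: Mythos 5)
Your outline mirrors the general skeleton of the paper's argument (Harish--Chandra induction, Hecke algebras from Lemma~\ref{lem:paramDn,d=2}, then (DL)/(Red)/(Tri)), but it contains a concrete and load-bearing error in the treatment of the $D_4$-cuspidal series, and it omits the key mechanisms that actually determine the unknown entries.

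The relative Hecke algebra for the cuspidal pair $(D_4,[D_4])$ inside $D_5(q)$ is $\cH(B_1;q^4;q)=\cH(A_1;q^4)$. With $q\equiv-1\pmod\ell$ and $\ell$ odd, $q^4\equiv1$, so the reduction is $k[\ZZ/2\ZZ]$, which is semisimple with \emph{two} simple modules --- and indeed Table~\ref{tab:hecke Dn,d=2} records $|\Irr\cH|=2$ at $n=5$. Hence $\RLG$ of the $D_4$-cuspidal PIM of $D_4(q)$ splits into at least two PIMs of $D_5(q)$ (the $\Psi_8$ and $\Psi_{16}$ columns of Table~\ref{tab:D5}), and in fact, as shown in the paper's proof, it also picks up copies of $\Psi_{12}$ and $\Psi_{15}$ from lower Harish--Chandra series. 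Your claim that the induced projectives from this series "are themselves PIMs of $D_5(q)$" is therefore false, and the proposed route to $a_1=2$ ("matching coefficients with indecomposability") cannot be carried out, since the Hecke algebra forces a \emph{splitting}, not indecomposability. You also refer to "the two $D_4$-cuspidal PIMs" of $D_4(q)$, whereas Table~\ref{tab:D4} lists three cuspidal columns.

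Beyond this, you do not track the other unknowns $a_2,a_3$ (equivalently $b$ and $c$) inherited from $D_4(q)$, which propagate into $D_5(q)$ and must be constrained alongside $a_1$. The actual determination $a_1=2$, $b=4c+6$, $a_2=c+4$, $c\in\{0,1\}$ rests on computing the coefficients of $\Psi_{18}$, $\Psi_{20}$ in the Deligne--Lusztig character $R_w$ for a Coxeter element $w$ (giving $b=-2+4a_1+4c$ and $a_1\le 2$ by (DL)) combined with (Red) for the trivial character of a $2$-split Levi $D_2(q).(q-1)(q+1)^2$ (giving $a_1\ge 2$). None of this is visible in your outline. Finally, the reason $d$ is left undetermined is \emph{not} that no tool says anything about it; the paper still has to deploy (Red) for three specific pairs --- $(A_3(q).(q+1)^2,[4])$, $(\tw2D_4(q).(q+1),[1.2])$, $(A_1(q)A_3(q).(q+1),[2]\sqtens[31])$ --- to show $\Psi_{11}$ and $\Psi_{15}$ cannot be summands of $\Psi_8'$ and that $\Psi_{12}$ occurs at most once, which is precisely what yields the constraint $d\in\{0,1\}$ rather than a larger range of possibilities. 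Kawanaka modules, which you invoke, do not appear in this proof (they are used in Corollary~\ref{cor:family4} for types $B$ and $C$).
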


For reasons of space, in the 2nd column of Table~\ref{tab:D5} we just give the
leading coefficient and $q$-power of the degree polynomials of the
corresponding unipotent characters.
The Harish-Chandra series of the cuspidal PIMs of $D_4(q)$ are indicated by
the label of the unipotent character corresponding to it by triangularity
of the decomposition matrix given in Table~\ref{tab:D4}.

\begin{table}[htbp]
{\small\caption{$D_5(q)$, $11\le\ell|(q+1)$, principal block}   \label{tab:D5}
$$\vbox{\offinterlineskip\halign{$#$\hfil\ \vrule height11pt depth4pt&
      \hfil\ $#$\ \hfil\vrule&& \hfil\ $#$\hfil\cr
     .5&         1& 1\cr
    1.4&         q& 1& 1\cr
    .41&       q^2& .& 1& 1\cr
    2.3&       q^2& 1& 1& .& 1\cr
  1^2.3&  \hlf q^3& 1& 1& 1& 1& 1\cr
    .32&  \hlf q^3& .& .& .& 1& .& 1\cr
   1.31&  \hlf q^3& 1& 1& 1& .& .& .& 1\cr
 D_4\co2& \hlf q^3& .& .& .& .& .& .& .& 1\cr
   2.21&       q^4& 1& 1& 1& 1& 1& .& 1& .& 1\cr
  1.2^2&       q^5& .& .& 1& .& .& .& 1& .& 1& 1\cr
  .31^2&       q^6& 1& .& .& 1& 1& 1& .& 2& .& .& 1\cr
 1^2.21&       q^6& 1& 1& 1& 1& 1& 1& 1& 2\mn d& 1& 1& .& 1\cr
  1^3.2&  \hlf q^7& 1& 1& .& 1& 1& 1& .& 2\mn d& .& .& 1& 1& 1\cr
  .2^21&  \hlf q^7& .& .& .& 1& 1& 1& .& 2& 1& .& 1& .& .& 1\cr
 1.21^2&  \hlf q^7& 1& 1& 2& .& 1& .& 1& 2\mn d& 1& 1& .& 1& .& .& 1\cr
D_4\co1^2&\hlf q^7& .& .& .& .& .& .& .& .& .& .& .& .& .& .& .& 1\cr
1^2.1^3&    q^{10}& 1& 1& 1& 1& 2& 1& .& 4\mn d& 1& 1& 1& 1& 1& 1& .& 2& 1\cr
  .21^3&    q^{12}& .& 1& 1& .& .& .& .& 2\mn d& .& 1& .& 1& 3& .& 1& 2& .& 1\cr
  1.1^4&    q^{13}& 1& 1& 1& .& 1& .& .& 4\mn d& .& 1& .& 1& 3& .& 1& 2& 1& 1& 1\cr
   .1^5&    q^{20}& 1& .& .& .& 1& .& .& 4& .& .& 1& .& 1& 1& .& 2& 1& .& 3& 1\cr
\noalign{\hrule}
  & & ps& ps& D_2& ps& A_1& D_2& ps& D_4& A_1^2& D_2A_1\!& D_2& A_1& 1.1^3& D_2A_1\!& D_2& D_4& A_1^2& .1^4& 1.1^3& .1^4\cr
  }}$$}
\end{table}

\begin{proof}
Let $G=D_5(q)$. Again, the unipotent characters form a basic set for
the principal $\ell$-block. Note that the $\ell$-modular decomposition matrices
for unipotent blocks of all proper Levi subgroups are known, except for the
one PIM in the decomposition matrix of the principal $\ell$-block of $D_4(q)$
whose unipotent part is given by
$$[D_4]+ a_1( [1^2+]+[1^2-]+[.21^2])+a_2[1.1^3]+a_3[.1^4]$$
with $a_2 = c+3a_1-2$, $a_3 = 4c+3a_1$ and $c \in\{-1,0,1\}$
(see the proof of Theorem~\ref{thm:D4}). Let us again denote by $\Psi_i$,
$1\le i\le 20$, the linear combinations of unipotent characters given by the
columns in Table~\ref{tab:D5}. We shall show that these are the unipotent
parts of projective indecomposable characters of $G$.

The columns $\Psi_1,\Psi_2,\Psi_4$ and $\Psi_7$ are obtained from the
decomposition matrix of the principal series Hecke algebra $\cH(D_5;q)$,
which in turn is easily deduced from the one for $\cH(B_5;1;q)$; see the
programme of Jacon \cite{Ja05}. Harish-Chandra induction of PIMs from
proper Levi subgroups gives $\Psi_9$, $\Psi_{12}$, $\Psi_{17}$ and the
projectives with unipotent parts
$$\begin{aligned}
  &\tPsi_3=\Psi_3+2\Psi_6,\qquad  &&\tPsi_5=\Psi_5+2\Psi_{12},\\
  &\tPsi_8=\Psi_8'+a_1\Psi_{12}+(a_2-a_1)\Psi_{15}+\Psi_{16}',\\
\end{aligned}$$
$$\begin{aligned}
  &\tPsi_{10}=\Psi_{10}+\Psi_{14},\qquad  &&\tPsi_{11}=\Psi_{11}+2\Psi_{15},\\
  &\tPsi_{13}=\Psi_{13}+\Psi_{15}+\Psi_{19},\qquad  &&\tPsi_{18}=\Psi_{18}+\Psi_{20}.
\end{aligned}$$
The Hecke algebra for the cuspidal $\ell$-modular Steinberg character of a
Levi subgroup of $G$ of type $D_2$ (the two end nodes interchanged by the
graph automorphism) was determined in Lemma~\ref{lem:paramDn,d=2} to be
$\cH=\cH(B_3;q^2;q)$. It has four PIMs, thus there will be four PIMs of $G$
in the Harish-Chandra series $D_2$. Now the projectives $\tPsi_3$ and
$\tPsi_{11}$ contain summands in that series. The only proper subsums of
$\tPsi_3$ satisfying~(HCr) are $\Psi_3$, $\Psi_6$, $\Psi_3+\Psi_6$ and
$2\Psi_6$. On the other hand, from the decomposition matrix of $\cH$ we see
that this projective has to have two distinct summands in the
$D_2$-Harish-Chandra series. It follows that $\Psi_3$ and $\Psi_6$ are PIMs
for $G$. Exactly the same reasoning applies to the projective $\tPsi_{11}$:
it must decompose as $\Psi_{11}+2\Psi_{15}$, and both $\Psi_{11}$ and
$\Psi_{15}$ are PIMs by~(HCr). \par
The Hecke algebra $\cH(A_1;q)\otimes\cH(A_1;q^2)$ for the cuspidal modular
Steinberg character of a Levi subgroup $D_2A_1$ has two irreducible characters
(see Table~\ref{tab:hecke Dn,d=2}). It follows that the corresponding
Harish-Chandra series contains two PIMs. The projective $\tPsi_{10}$ contains
summands in that series, and the only splitting satisfying~(HCr) is
$\Psi_{10}+\Psi_{14}$; this yields that $\Psi_{10}$ and $\Psi_{14}$ are PIMs.
\par
For the Harish-Chandra series above the Steinberg PIM of a Levi subgroup of
type $A_1$, the Hecke algebra $\cH=\cH(A_1;q)\otimes\cH(D_3;q)$ determined in
Lemma~\ref{lem:paramDn,d=2} has two PIMs; by~(HCr) and the decomposition
matrix of $\cH$ the only admissible splitting of the projective $\tPsi_5$ in
this series is as $\Psi_5+2\Psi_{12}$, thus we find the PIM $\Psi_5$.
The Harish-Chandra induction $\tPsi_{18}$ of the Steinberg PIM from $D_4$
contains the Steinberg PIM of $G$ by (St); this gives $\Psi_{18}$ and
$\Psi_{20}$, both of which are indecomposable by~(HCr).
\par
The Hecke algebra for the ordinary unipotent cuspidal character of $D_4$ is
$\cH(A_1;q^4)$, so $\tPsi_8$ has at least two projective summands
$\tPsi_8',\tPsi_{16}$, the first containing $[D_4\co2]$, the second
$[D_4\co1^2]$. On the other hand by (Tri) there is a PIM involving
$[D_4\co1^2], [.2^21]$, and unipotent characters of larger
$a$-value. As $\tPsi_{16}$ must be a summand of this, it only contains
unipotent characters with $a$-value at least~7. The only possible summand of
$\tPsi_8$ satisfying~(HCr) and containing none of $[1^3.2]$ and
$[1.21^2]$ is $\Psi_{16}'$ as listed below, with a parameter $b\geq0$
satisfying $a_2\le b\le a_3$, and so
$\tPsi_8'=\Psi_8'+a_1\Psi_{12}+(a_2-a_1)\Psi_{15}$ is projective:
{\small
$$\vbox{\offinterlineskip\halign{$#$\hfil\ \vrule height10pt depth3pt&
      \hfil\ $#$\ \hfil&& \hfil\ $#$\hfil\cr
 D_4$:$2 & 1\cr
  .31^2&  a_1 \cr
 1^2.21& a_1\cr
  1^3.2& a_2\mn a_1&.\cr
  .2^21& a_1&.\cr
 1.21^2&a_1&. \cr
D_4$:$1^2& .& 1\cr
1^2.1^3&a_2& a_1\cr
  .21^3& b\mn a_2& a_1\pl a_3\mn b\cr
  1.1^4& b\pl a_1\mn a_2&  a_2\pl a_3\mn a_1\mn b\cr
   .1^5& a_2\pl a_3\mn b&b\mn a_2 \cr
\noalign{\hrule}
  & \Psi_8' & \Psi_{16}'\cr
  }}$$}
\par
As the centraliser of a Sylow $\ell$-subgroup of $G$ is contained inside a
proper parabolic subgroup of type $D_4$, $G$ cannot have cuspidal Brauer
characters by (Csp). Since we already accounted
for all Harish-Chandra series except for the one above $\vhi_{1.1^3}$ of $D_4$,
$\Psi_{19}$ must lie in that series, and must hence be a summand of the
Harish-Chandra induction $\tPsi_{13}$ of the cuspidal PIM $\Psi_{1.1^3}$ of
$D_4(q)$. The only such subsum of $\tPsi_{13}$ satisfying~(HCr) is $\Psi_{19}$,
so this gives the PIM $\Psi_{19}$. Harish-Chandra inducing $\Psi_{19}$ to
$D_6(q)$, restricting back again and decomposing shows that
$\tPsi_{13}-\Psi_{19}=\Psi_{13}+\Psi_{15}$ must be decomposable, and~(HCr)
then yields $\Psi_{13}$.
\par
Modulo the knowledge of $a_1,b$ and~$c$ we have now obtained all columns in
Table~\ref{tab:D5} except for $\Psi_8$. When Harish-Chandra inducing
$\Psi_{16}'$ to $D_6(q)$ and restricting back again, the
decomposition in terms of the projectives obtained so far has coefficient~1
on $\tPsi_8'$ and negative coefficients $-a_1$ on $\Psi_{12}$ and $a_1-a_2$
on $\Psi_{15}$. So $\tPsi_8'$ is not indecomposable, but contains $\Psi_{12}$
at least $a_1$ times and $\Psi_{15}$ at least $a_2-a_1$ times. This shows
that $\Psi_{8}'$ is projective. In addition, (HCr) shows the inequalities
$2(a_2-a_1)\le b\le a_3$. Note however that at this stage we cannot show
that $\Psi_{8}'$ is indecomposable since it could still contain some copies of
$\Psi_{11}$, $\Psi_{12}$ and $\Psi_{15}$.
\par
Let us consider the Deligne--Lusztig character $R_w$ associated to the Coxeter
element $w\in W$. The PIMs $\Psi_{18}$ and $\Psi_{20}$ do not occur in any
$R_v$ for $v < w$. The multiplicity of $R_w$ on $\Psi_{20}$ is equal to
$4-8a_1+2b-8c$ whereas the multiplicity on $\Psi_{18}$ is the opposite. We
deduce from (DL) that both quantities are zero and therefore $b=-2+4a_1+4c$.
Now since $b \leq a_3$ and $a_3 = 4c+3a_1$ we obtain $a_1 \leq 2$. On the other
hand, if $\ell > 6$ then by Proposition~\ref{prop:q+1 reg} there exists a
linear $\ell$-character of a Levi subgroup $L$ of $G$ of type
$D_2(q).(q-1)(q+1)^2$. Using (Red) with the trivial character of $L$ we get
$a_1 \geq 2$, therefore $a_1=2$ and $b=a_3=4c+6$ and thus $a_2 = c+4$.
In addition, the relation $a_2 \leq b$ forces $c\geq 0$ hence $c\in\{0,1\}$.
\par
Finally we use (Red) with the following pairs $(L,\rho)$ to show that
$\Psi_8'$ is almost indecomposable:
\begin{itemize}
 \item $(A_3(q).(q+1)^2,[4])$ shows that $\Psi_{11}$ cannot be a direct
  summand of $\Psi_{8}'$;
 \item  $(\tw2D_4(q).(q+1),[1.2])$ shows that $\Psi_{15}$ cannot be a
  direct summand of $\Psi_{8}'$;
 \item $(A_1(q) A_3(q).(q+1),[2]\sqtens[31])$ shows that $\Psi_{12}$
  can be a direct summand of $\Psi_{8}'$ with multiplicity at most $1$.
\end{itemize}
Therefore $\Psi_8:=\Psi_{8}'-d\Psi_{12}$ is indecomposable for some
$d\in\{0,1\}$.
\end{proof}


\section{Unipotent decomposition matrix of $D_6(q)$}

The groups of type $D_6$ have 42 unipotent characters. For primes $\ell>2$ with
$\ell|(q+1)$, 37 of them lie in the principal $\ell$-block, the other five lie
in a block of defect $(q+1)^2_\ell$. 

\begin{thm}   \label{thm:D6}
 Assume $(T_\ell)$. The decomposition matrices for the unipotent $\ell$-blocks
 of $D_6(q)$, for $11\le\ell|(q+1)$, are as given in
 Tables~\ref{tab:D6}--\ref{tab:D6B}, where $d\in\{0,1\}$ is as in
 Theorem~\ref{thm:D5} and the unknown entries satisfy moreover
 $$c_{20} = 24-15\,c_{17}-5\,c_{18}+6\,c_{19},$$
 $5\le 4\,c_{17}+c_{18}-c_{19} \le 7$, $c_4,c_{12},c_{17}\ge2$, and
 $c_{18}\ge4$.
\end{thm}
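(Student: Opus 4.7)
The plan is to mimic the proof of Theorem~\ref{thm:D5}, now for $G=D_6(q)$. By \cite{GH91} the unipotent characters form a basic set for each unipotent $\ell$-block, and by $(T_\ell)$ the corresponding decomposition matrix is lower uni-triangular with respect to any ordering compatible with $a$-values. The unipotent decomposition matrices of all proper $F$-stable Levi subgroups of $\bG$ are known from Theorems~\ref{thm:D4} and~\ref{thm:D5} together with James \cite{Ja90} for type $A$, modulo the parameters $a_1,c$ (from $D_4$) and $d$ (from $D_5$). First I would Harish-Chandra induce all of these PIMs into $G$ via (HCi) and cut by each unipotent block, producing a long list of projective characters for the principal block and a short list for the non-principal block.

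Next, using Lemma~\ref{lem:paramDn,d=2} I would read off the number of PIMs of $G$ in every $\ell$-modular Harish-Chandra series above the cuspidal pairs listed in Table~\ref{tab:hecke Dn,d=2}. The centraliser of a Sylow $\ell$-subgroup of $G$ lies inside a proper 1-split Levi subgroup of type $D_4T_2$, so by (Csp) there is no cuspidal unipotent Brauer character of $G$; every principal-block PIM then lies in a proper Harish-Chandra series, the Steinberg PIM excepted. Applying (HCr) to each induced projective in turn, together with the row counts from the Hecke algebras, forces the splittings into indecomposable summands whenever possible. This should recover almost every column $\Psi_i$ of Tables~\ref{tab:D6}--\ref{tab:D6B}; the undetermined entries $c_4,c_{12},c_{17},c_{18},c_{19},c_{20}$ are concentrated in the few columns lying above the cuspidal PIMs of $D_4(q)$ and the cuspidal summands inherited from $D_5(q)$, and in the columns above the $\tw2A_2$-series.

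The hard part is pinning down these six parameters while simultaneously forcing $c=0$ in Theorem~\ref{thm:D4} and a definite value of $d$ in Theorem~\ref{thm:D5}. Since $\ell\ge 11$ exceeds the Coxeter number $h=10$ of $D_6$, Theorem~\ref{thm:genpos} applies and yields $(-1)^{l(w_0)}R_{w_0}^\circ=\vhi_\St$. Writing out the orthogonality $\langle \Psi;R_{w_0}\rangle=0$ for each non-Steinberg PIM $\Psi$ produces linear relations among the unknowns; in particular one of them should be the displayed identity $c_{20}=24-15\,c_{17}-5\,c_{18}+6\,c_{19}$. Then (DL) applied to several $w\in W$ of minimal length in their $F$-conjugacy class (beginning with a Coxeter element) provides positivity constraints $(-1)^{l(w)}\langle\wt R_w;\vhi_N\rangle\ge 0$, which yield the lower bounds $c_4,c_{12},c_{17}\ge 2$, $c_{18}\ge 4$ and $4c_{17}+c_{18}-c_{19}\ge 5$. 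Complementary upper bounds come from (Red) applied to the 2-split Levi subgroups supplied by Proposition~\ref{prop:q+1 reg}, notably $D_2(q)A_3(q).T_1$, $D_4(q).T_2$, $\tw2D_4(q).T_1$, $\tw2A_2(q).T_3$, and the $\Phi_2$-torus of rank~$6$, paired with various choices of unipotent character $\rho$ of $L$; these should in particular give $4c_{17}+c_{18}-c_{19}\le 7$. The delicate point is choosing the pairs $(\bL,\rho)$ so that the lower and upper bounds close up and leave exactly the stated undetermined family. As a by-product, analogous inequalities for the sub-Levi subgroups $D_4$ and $D_5$ of $G$ force $c=0$ in Theorem~\ref{thm:D4} and the value of $d$ in Theorem~\ref{thm:D5}.

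Finally, for the non-principal unipotent $\ell$-block, which has defect group of order $(q+1)_\ell^2$ and contains five unipotent characters, Jordan decomposition of unipotent blocks (\cite[Thm.~9.12]{CE}, \cite{BMM}) identifies it with a unipotent block of the centraliser in $G^*$ of the associated semisimple $\ell$-element, a smaller finite reductive group whose relevant decomposition matrix is already known. The corresponding $5\times5$ matrix can then be read off directly and requires no further argument.
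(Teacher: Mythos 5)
The central step of your argument fails at the application of (Csp). For $G=D_6(q)$ with $d_\ell(q)=2$ the element $w_0$ acts as $-1$ on the character lattice, so a Sylow $\ell$-subgroup lies in a maximal torus of order $(q+1)^6$ and its centraliser is that torus, which has $\Phi_2$-rank~$6$ and therefore is \emph{not} contained in any proper 1-split Levi subgroup (the largest such, of types $D_5\cdot(q-1)$ and $A_5\cdot(q-1)$, have $\Phi_2$-rank at most~$4$). Hence (Csp) gives exactly the opposite of what you claim: cuspidal unipotent Brauer characters do exist, and indeed the paper shows the principal block has \emph{six} cuspidal PIMs (columns $25,27,31,34,36,37$ of Tables~\ref{tab:D6}--\ref{tab:D6cntd}). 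Your statement is also internally inconsistent, since you simultaneously invoke (St) to make the Steinberg PIM cuspidal. This is not a cosmetic slip: all of the unknown entries $c_4,c_{12},c_{17},\dots,c_{20}$ sit in these cuspidal columns (not, as you write, in columns above $D_4$- or $\tw2A_2$-series), and none of these columns can be produced by (HCi) plus Hecke-algebra counting. The paper instead writes them down abstractly via $(T_\ell)$ with undetermined entries $c_1,\dots,c_{24}$ (using the graph automorphism to equate the $[1^3\pm]$ entries) and then pins them down by Theorem~\ref{thm:genpos}, (DL) for a Coxeter element and two longer elements $w$, (Red) for a list of specific 2-split Levi subgroups, and Harish-Chandra induction to $D_7(q)$; that last step is also what forces $c=0$ in Theorem~\ref{thm:D4}, rather than ``analogous inequalities for sub-Levis''. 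Note too that $d$ is \emph{not} determined by this analysis --- it remains an unknown in $\{0,1\}$ in the final statement --- and your expectation that the lower bounds come from (DL) and the upper bounds from (Red) is largely reversed relative to the actual argument.

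A secondary problem is your treatment of the non-principal block of defect $(q+1)_\ell^2$. Unipotent blocks are parametrised by $d$-cuspidal pairs in the sense of \cite{BMM}; they are not obtained by Jordan decomposition from a non-trivial semisimple $\ell$-element, and there is no general identification of the decomposition matrix of such a block with that of a unipotent block of a centraliser. The paper's argument here is elementary: all five columns of Table~\ref{tab:D6B} come directly from (HCi) and are indecomposable by (HCr).
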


\begin{table}[ht]
\caption{$D_6(q)$, $11\le\ell|(q+1)$, principal block}   \label{tab:D6}
{\small
$$\vbox{\offinterlineskip\halign{$#$\hfil\ \vrule height10pt depth3pt&
      \hfil\ $#$\ \hfil\vrule&& \hfil\ $#$\hfil\cr
      .6& 1& 1\cr
     1.5& q& 2& 1\cr
     .51& q^2& 1& 1& 1\cr
     2.4& q^2& 2& 2& .& 1\cr
      3+& q^3& 1& 1& .& 1& 1\cr
      3-& q^3& 1& 1& .& 1& .& 1\cr
     .42& \hlf q^3& .& 1& 1& 1& .& .& 1\cr
   1^2.4& \hlf q^3& 2& 2& 1& 1& .& .& .& 1\cr
D_4\co2.& \hlf q^3& .& .& .& .& .& .& .& .& 1\cr
    .3^2& \hlf q^4& .& .& .& 1& 1& 1& 1& .& .& 1\cr
    2.31& \hlf q^4& 2& 3& 1& 2& 1& 1& .& 1& .& .& 1\cr
D_4\co1^2.& \hlf q^4& .& .& .& .& .& .& .& .& 1& .& .& 1\cr
    1.32& q^5& .& 1& 1& 1& 1& 1& 1& .& .& 1& 1& .& 1\cr
   .41^2& q^6& 1& 1& 1& 1& .& .& 1& 1& 2& .& .& .& .& 1\cr
   2.2^2& q^6& .& 1& 1& 1& 1& 1& .& 1& .& .& 1& .& 1& .& 1\cr
  1^2.31& q^6& 2& 3& 2& 2& 1& 1& 1& 2& 2\mn d& .& 1& .& 1& .& .& 1\cr
   1^3.3& \hlf q^7& 2& 2& 1& 2& 1& 1& 1& 2& 2\mn d& .& .& .& .& 1& .& 1& 1\cr
  1.31^2& \hlf q^7& 2& 2& 2& 1& 1& 1& 1& 2& 4\mn d& 1& 1& 2& 1& 1& .& 1& .& 1\cr
D_4\co1.1& \hlf q^7& .& .& .& .& .& .& .& .& 1& .& .& 1& .& .& .& .& .& .& 1\cr
     21+& q^7& 1& 2& 1& 2& 2& 2& 1& 2& 2\mn d& 1& 1& 2& 1& .& 1& 1& .& .& .& 1\cr
     21-& q^7& 1& 2& 1& 2& 2& 2& 1& 2& 2\mn d& 1& 1& 2& 1& .& 1& 1& .& .& .& .& 1\cr
 1^2.2^2& q^8& .& 1& 1& 1& 2& 2& 1& 1& 2\mn d& 2& 1& 4& 2& .& 1& 1& .& .& .& 1& 1\cr
  2.21^2& q^8& 2& 3& 2& 2& 2& 2& 1& 3& 4\mn 2d& 1& 1& 4& 1& 1& 1& 2& 1& 1& .& 1& 1\cr
  1.2^21& q^9& .& 1& 2& 1& 2& 2& 1& 2& 4\mn d& 2& 1& 6& 2& 1& 2& 1& .& 1& .& 1& 1\cr
    .2^3& \hlf q^{10}& .& .& .& 1& 1& 1& 1& 1& 2& 1& .& 2& .& 1& 1& .& .& .& .& .& .\cr
1^2.21^2& \hlf q^{10}& 2& 3& 3& 2& 3& 3& 2& 4& 8\mn 3d& 3& 1& 10& 3& 1& 2& 3& 1& 1& 2& 2& 2\cr
D_4\co.2& \hlf q^{10}& .& .& .& .& .& .& .& .& .& .& .& .& .& .& .& .& .& .& 1& .& .\cr
   .31^3& q^{12}& 1& 1& 1& 1& 1& 1& 1& 1& 4\mn d& 1& .& 2& 1& 1& .& 1& 3& 1& 2& .& .\cr
 .2^21^2& \hlf q^{13}& .& 1& 1& 1& 1& 1& 1& 1& 4\mn d& 2& .& 8& 1& 1& 1& 1& 3& 1& 2& 1& 1\cr
   1^4.2& \hlf q^{13}& 2& 2& 1& 1& 1& 1& 1& 2& 6\mn 2d& 1& .& 4& 1& 1& .& 2& 4& 1& 2& 1& 1\cr
D_4\co.1^2& \hlf q^{13}& .& .& .& .& .& .& .& .& .& .& .& 1& .& .& .& .& .& .& 1& .& .\cr
    1^3+& q^{15}& 1& 1& 1& 1& 1& 1& 1& 2& 4\mn d& 1& .& 6& 1& 1& 1& 1& 1& .& 2& 1& 1\cr
    1^3\-& q^{15}& 1& 1& 1& 1& 1& 1& 1& 2& 4\mn d& 1& .& 6& 1& 1& 1& 1& 1& .& 2& 1& 1\cr
 1^2.1^4& q^{16}& 2& 2& 2& 1& 1& 1& 1& 3& 8\mn 2d& 2& .& 14& 2& 1& 1& 2& 4& 1& 4& 2& 2\cr
   .21^4& q^{20}& 1& 1& 1& .& .& .& .& 1& 6\mn d& 1& .& 8& 1& 1& .& 1& 4& 1& 4& 1& 1\cr
   1.1^5& q^{21}& 2& 1& 1& .& .& .& .& 2& 8\mn d& 1& .& 10& 1& 1& .& 1& 4& 1& 4& 1& 1\cr
    .1^6& q^{30}& 1& .& .& .& .& .& .& 1& 4& .& .& 6& .& 1& .& .& 1& .& 2& .& .\cr
\noalign{\hrule}
 & & ps& ps& D_2& ps& A_1^3& {A_1^3}'& D_2& A_1& D_4& A_1^4& ps& D_4A_1& D_2A_1& D_2& A_1^2& A_1& 1.1^3& D_2& D_4& A_1^3& {A_1^3}'\cr
 \text{dec}& & & & & & & & & & *& & & * \cr
  }}$$}
Here, we write $A_1^4$ for the HC-series of type $D_2A_1^2$,
\end{table}

\begin{table}[ht]
\caption{$D_6(q)$, $11\le\ell|(q+1)$, cntd.}   \label{tab:D6cntd}
{\small $$\vbox{\offinterlineskip\halign{$#$\hfil\ \vrule height10pt depth3pt&
      \hfil\ $#$\ \hfil\vrule&& \hfil\ $#$\hfil\cr
 1^2.2^2& q^8&  1\cr
  2.21^2& q^8& .& 1\cr
  1.2^21& q^9& 1& 1& 1\cr
    .2^3& \hlf q^{10}& .& .& 1& 1\cr
1^2.21^2& \hlf q^{10}& 1& 1& 1& .& 1\cr
D_4\co.2& \hlf q^{10}& .& .& .& .& .& 1\cr
   .31^3&      q^{12}& .& .& .& .& .& .& 1\cr
 .2^21^2& \hlf q^{13}& 1& 4& 1& 1& .& 2& 1& 1\cr
   1^4.2& \hlf q^{13}& .& 1& .& .& 1& .& 1& .& 1\cr
D_4\co.1^2& \hlf q^{13}& .& .& .& 2& .& 1& .& .& .& 1\cr
    1^3+& q^{15}& 1& 1& 1& c_4& 1& c_{12}& .& .& .& .& 1\cr
    1^3-& q^{15}& 1& 1& 1& c_4& 1& c_{12}& .& .& .& .& .& 1\cr
 1^2.1^4& q^{16}& 2& 5& 1& 2c_4& 2& 2c_{12}& 1& 1& 1& c_{17}& 1& 1& 1\cr
   .21^4& q^{20}& 1& 4& 1& 3& 1& 2& 1& 1& 3& c_{18}& .& .& .& 1\cr
   1.1^5& q^{21}& 1& 5& 1& 2c_4& 2& 2c_{12}& 1& 1& 4& c_{19}& 1& 1& 4& 1& 1\cr
    .1^6& q^{30}& 1& 4& 1& 3\pl2c_4& 1& 4\pl2c_{12}& .& 1& 3& c_{20}& 1& 1& 9& 1& 6& 1\cr
\noalign{\hrule}
 & & A_1^4& A_11.1^3& D_2A_1& c& A_1^2& c& .1^4& A_1.1^4& 1.1^3& c& A_1^3& {A_1^3}'& c& .1^4& c& c\cr
  }}$$}
\end{table}

\begin{table}[htbp]
\caption{$D_6(q)$, $11\le\ell|(q+1)$, block of defect $\Phi_2^2$}   \label{tab:D6B}
$$\vbox{\offinterlineskip\halign{$#$\hfil\ \vrule height11pt depth4pt&&
      \hfil\ $#$\hfil\cr
   1.41&    \hlf q^3\Ph2^4\Ph3\Ph6^2\Ph{10}& 1\cr
   21.3&    \hlf q^4\Ph2^4\Ph5\Ph6^2\Ph{10}& 1& 1\cr
 .321&    \hlf q^7\Ph2^4\Ph6^2\Ph8\Ph{10}& .& 1& 1\cr
 1^3.21& \hlf q^{10}\Ph2^4\Ph5\Ph6^2\Ph{10}& 1& 1& 1& 1\cr
 1.21^3& \hlf q^{13}\Ph2^4\Ph3\Ph6^2\Ph{10}& 1& .& .& 1& 1\cr
\noalign{\hrule}
 \omit& & ps& ps& D_2& A_1& D_2\cr
   }}$$
\end{table}

\begin{proof}
For the non-principal unipotent block all columns as given in
Table~\ref{tab:D6B} are obtained directly from (HCi) and are indecomposable by
(HCr). So we are left to consider the principal block.
\par
Denote by $\Psi_i$, $1\le i\le 37$, the linear combinations corresponding to
the columns of Tables~\ref{tab:D6} and~\ref{tab:D6cntd}. All $\Psi_i$ apart
from the ones with
$$i\in\{3,9,10,25,27,31,34,36,37\}$$
are obtained by (HCi) from Levi subgroups of types $D_5$, $A_5$, $D_4A_1$
and $D_2A_3$, up to the knowledge of $c\in\{0,1\}$ 
which we will determine later in the proof. Moreover we obtain $\Psi_{10}+\Psi_{11}$ and
$\Psi_{10}+\Psi_{22}$ from which (Tri) yields $\Psi_{10}$. We can also
determine the Harish-Chandra series of all the $\Psi_i$ obtained so far, and
comparing with Table~\ref{tab:hecke Dn,d=2} it ensures that all non-cuspidal
series have been accounted for except for one PIM in series $D_2$, which
must be a summand of $\Psi_3+\Psi_{11}$, and one PIM in series $D_4$ which must
be a summand of~$\Psi_9$.
\par
Thus the principal block must contain six cuspidal Brauer characters
corresponding to the columns with indices $25,27,31,34,36,37$. By our
assumption $(T_\ell)$ there will be projectives for these columns with non-zero
entries only on and below the diagonal, for characters lying in smaller families.
We denote these unknown entries by $c_1,\ldots,c_{24}$ according to the following table, where we have used that the
non-trivial graph automorphism of $D_6(q)$ of order~2 interchanges the
unipotent characters $[1^3+]$ and $[1^3-]$ but fixes the six cuspidal PIMs,
which means that their multiplicities in these PIMs must agree.
{\small $$\vbox{\offinterlineskip\halign{$#$\hfil\ \vrule height10pt depth3pt&
      \hfil\ $#$\ \hfil&& \hfil\ $#$\hfil\cr
    .2^3& 1\cr
1^2.21^2&    .\cr
D_4\co.2&    .&  1\cr
   .31^3&    .&  .\cr
 .2^21^2&   c_1&  c_9\cr
   1^4.2&   c_2&  c_{10}& \cr
D_4\co.1^2& c_3&  c_{11}& 1\cr
    1^3+,1^3-&   c_4&  c_{12}& . \cr
 1^2.1^4&   c_5&  c_{13}& c_{17}& 1\cr
   .21^4&   c_6&  c_{14}& c_{18}& c_{21}\cr
   1.1^5&   c_7&  c_{15}& c_{19}& c_{22}& 1\cr
    .1^6&   c_8&  c_{16}& c_{20}& c_{23}& c_{24}\cr
 \noalign{\hrule}
  & \Psi_{25}& \Psi_{27}& \Psi_{31}& \Psi_{34}& \Psi_{36}\cr
  }}$$}

Note that the missing cuspidal columns can not occur as summands of any of
the projectives obtained so far. The value $c_{24}=6$ follows from
Corollary~\ref{cor:family1}(b). To derive conditions on the other unknowns
we start by looking at the coefficients of the various PIMs on the
Deligne--Lusztig character $R_w$ attached to a Coxeter element $w\in W$.
The PIMs corresponding to the cuspidal simple modules, as well as the PIMs
$\Psi_{35}$ and $\Psi_{29}$ do not occur in $R_v$ for $v < w$, therefore their
coefficients must be non-negative by (DL).

The coefficients on $\Psi_{29}$ and $\Psi_{31}$ are $3-c_1-c_9$ and
$3-c_3-c_{11}$ respectively. On the other hand, if $\ell > 4$ we can invoke
(Red) with Proposition~\ref{prop:q+1 reg} for $\rho$ the unipotent character
labelled by $.2^2$ (resp.~the cuspidal unipotent character) of $D_4(q).(q+1)^2$
to get $c_9 \geq 2$ and $c_1 \geq 1$ (resp. $c_3 \geq 2$ and $c_{11}\geq 1$).
This shows that $c_1 = c_{11} = 1$ and $c_3 = c_9 = 2$.

At this point, Harish-Chandra inducing the cuspidal PIM $\Psi_{.2^3}$ to
$D_7(q)$ and restricting the result to $D_5(q)A_1(q)$ only decomposes
non-negatively when the parameter $c$ left open in the decomposition matrix
for $D_4(q)$ satisfies $c=0$. This furnishes the final step in the proofs of
Theorems~\ref{thm:D4} and~\ref{thm:D5}.

The coefficient on $\Psi_{34}$ is $c_{10}+2c_{12}-c_{13}+c_2+2c_4-c_5$.
If $\ell > 4$, (Red) applied to the cuspidal unipotent character
of a 2-split Levi subgroup $\tw2A_2(q)^2.(q+1)^2$ gives
$$c_{10}+2c_{12}-c_{13} \leq 0\quad\text{and}\quad
  c_2+2c_4-c_5 \leq 0.$$
Therefore they must both be zero.

The coefficient on $\Psi_{35}$ is $5+3c_{10}-c_{14}+3c_2-c_6$. If $\ell > 8$
then by (Red) for the trivial character of $A_1(q)^2.(q+1)^4$ we get
$$2+3c_{10}-c_{14} \leq 0\quad\text{and}\quad 3+3c_2-c_6\leq 0.$$
Hence $c_{14} = 2+3c_{10}$ and $c_6 = 3 + 3c_2$.

Finally, the coefficient on $\Psi_{36}$ is $4c_{10}+2c_{12}-c_{15}+4c_2+2c_4
-c_7$, and hence is non-negative. On the other hand, the trivial character
of $A_1(q).(q+1)^5$ when $\ell > 10$ gives, by (Red), the relations
$$4c_{10}+2c_{12}-c_{15} \leq 0\quad\text{and}\quad 4c_2+2c_4-c_7\leq 0,$$
and therefore these expressions must both vanish. Using this first set of
relations we obtain by Theorem~\ref{thm:genpos} the relations
$$c_8 = 3+3c_2+2c_4\quad\text{and}\quad c_{16} = 4+3c_{10}+2c_{12}$$
for the multiplicities in the Steinberg character.

We continue by analysing the coefficients of the Deligne--Lusztig character
$R_w$ for $w = s_1s_3s_4s_3s_1s_2s_3s_4s_5s_6$. The coefficients on $\Psi_{35}$
and $\Psi_{36}$ are $-4c_{21}$ and $16+4c_{21}-4c_{22}$ respectively.
Hence $c_{21} = 0$ by (DL). On the other hand, when $\ell > 10$, (Red) with
the trivial character of $A_1(q).(q+1)^5$ gives $4-c_{22} \leq 0$,
and we deduce that $c_{22} = 4$. Theorem~\ref{thm:genpos} gives $c_{23} = 9$.

For the last relation we consider the coefficient of $\Psi_{36}$ in the
Deligne--Lusztig character $R_w$ for
$w = s_1s_2s_3s_1s_2s_3s_4s_3s_1s_2s_3s_4s_5s_6$. It gives
$$X:=-120+96c_{18}+24c_{19}-24c_{20} \geq 0.$$
On the other hand (Red) applied to the cuspidal unipotent character of
$\tw2A_2(q).(q+1)^4$ gives $X/24 \leq 2$. Theorem \ref{thm:genpos} finally
yields $c_{20} = 24-15c_{17}-5c_{18}+6c_{19}$.

The Harish-Chandra series above $[.2^3]$ in $D_7(q)$ has two summands;
decomposing the Harish-Chandra induction shows that we must have $c_2=0$.
Similarly, decomposing the Harish-Chandra series above $[D_4\co.2]$ in $D_7(q)$
shows that we must have $c_{10}=0$.

Now (HCr) proves that all $\Psi_i$ are indecomposable, apart possibly from
$\Psi_9$ and $\Psi_{12}$: The projective $\Psi_9$ can only contain
$\Psi_{14},\Psi_{16},\Psi_{18}$. When $\ell > 4$ one can invoke (Red) for the
trivial character of a Levi subgroup $D_4(q).(q+1)^2$ to exclude the
possibility that $\Psi_{14}$ occurs.  Moreover, using (Red) with the unipotent
character $[2] \sqtens[2-]$ of $A_1(q)D_4(q).(q+1)$ and the character
$[2.2]$ of $\tw2D_5(q).(q+1)$ we can check that $\Psi_{16}$ and
$\Psi_{18}$ each can occur at most once.

The projective module of character $\Psi_{12}$ can only contain
$\Psi_{20},\Psi_{21},\Psi_{22},\Psi_{26},\Psi_{27},\Psi_{32}$, $\Psi_{33}$.
When $\ell > 6$ one can use (Red) with the trivial character of a Levi
subgroup $A_1(q)^3.(q+1)^3$ to check that in fact only $\Psi_{22}$
and $\Psi_{26}$ might occur. In addition, they can occur only once, which
follows from (Red) with the unipotent characters $[1^2.]$ of
$\tw2A_4(q).(q+1)^2$ and $[.1^2]\sqtens [2]\sqtens [2]$ of
$\tw2A_3(q)A_1(q)^2.(q+1)$.

The lower bounds on the $c_i$'s are obtained using (Red) in the following cases:
\begin{itemize}
\item $L^*=\tw2A_3(q).A_1(q).(q+1)^2$ with $\rho =[.2]\sqtens[2]$ gives
  $c_4\geq 2$;
\item $L^*=A_1(q)^3.(q+1)^3$ with the trivial representation gives
  $c_{12}\geq 2$; and
\item $L^*=A_1(q)^2.(q+1)^4$ (two non-conjugate) with the trivial representation
  gives $c_{17} \geq 2$ and $c_{18} \geq 4$.
\end{itemize}
Note that the previous relations imply $c_{19} \geq 5$.
\end{proof}

\begin{rem}\label{rem:QwD6}
In \cite{DM14} we introduced, for any $w\in W$ a virtual character $Q_w$
representing the Alvis--Curtis dual of  the intersection cohomology of the
Deligne--Lusztig variety corresponding to $w$.
Let $w = s_1s_3s_1s_2s_3s_4s_5s_6$ and $w' = (s_1s_2s_3)^2 s_4s_3s_1s_2s_3s_4s_5s_6$.
Then 
$$ \begin{aligned}
\langle Q_w ; \varphi_{1^3+} \rangle & = 13-5c_{12}-c_4, \\
\langle Q_{w'} ; \varphi_{1^2.1^4} \rangle & = 48-24c_{17},\\
\langle Q_{w'} ; \varphi_{.21^4} \rangle & = 96-24c_{18}.\\
\end{aligned}$$
If the conjecture in \cite[Conj.~1.2]{DM14} holds then these multiplicities 
must be non-negative. With the lower bounds on the $c_i$'s given
in Theorem~\ref{thm:D6} this would imply $c_{12} = c_{17}= 2$, $c_4 \in \{2,3\}$,
$c_{18}= 4$ and $c_{19} \in \{5,\ldots,7\}$.
\end{rem}

\section{Unipotent decomposition matrix of $E_6(q)$}

For the groups $E_6(q)$ we again first determine some Hecke algebras:

\begin{lem}   \label{lem:paramE6,d=2}
 Let $q$ be a prime power and $\ell$ odd such that $(q+1)_\ell>5$. The Hecke
 algebras of various $\ell$-modular cuspidal pairs $(L,\la)$ of Levi subgroups
 $L$ in $E_6(q)$ and their respective numbers of simple modules are as given in
 Table~\ref{tab:hecke E6,d=2}.
\end{lem}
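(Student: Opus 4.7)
The proof will follow the same general template as the proof of Lemma~\ref{lem:paramDn,d=2}, adapted to the exceptional root system of type $E_6$. For each cuspidal pair $(L,\la)$ appearing in the table, the plan is to:

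First, compute the relative Weyl group $W_G(L,\la)=N_G(\bL,\la)/L$. For the unipotent cuspidal Brauer characters that are $\ell$-modular reductions of ordinary cuspidal characters (i.e.\ liftable Steinberg-type characters on the reductive part, plus the cuspidal unipotent character of $D_4$), the relative Weyl groups $W_G(L)=N_G(\bL)/L$ can be read off from Howlett~\cite{Ho80} or computed in \Chevie{}~\cite{Mi15}; whenever the cuspidal character is itself stabilised by $N_G(\bL)$ (which will be the case here for the pairs appearing in the table), $W_G(L,\la)=W_G(L)$. I would then tabulate the Coxeter types obtained in this way.

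Secondly, verify reduction stability of the relevant $\cO L$-lattices so that Proposition~\ref{prop:redstable-geck} applies. For pairs where $\la$ is a liftable modular Steinberg-type character on a product of classical simple factors, the graph automorphisms induced by $N_\bG(\bL)^F$ only permute simple factors, so the discussion at the end of Section~\ref{sec:veri} ensures an $N_G(\bL)$-stable lift exists; then Corollary~\ref{cor:dechecke} (or Proposition~\ref{prop:redstab}) gives reduction stability. For the pair with $\la$ the cuspidal unipotent character of $D_4$, the argument of Lemma~\ref{lem:paramDn,d=2} carries over: this character remains irreducible after $\ell$-reduction by Table~\ref{tab:D4}, so reduction stability follows provided an $N_G(\bL)$-stable Deligne--Lusztig character $\RTG(\theta)$ with $\theta$ in general position exists, which can be checked using Proposition~\ref{prop:q+1 reg} under the hypothesis $(q+1)_\ell>5$.

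Thirdly, determine the parameters $q_s$ for each simple reflection $s$ of the Coxeter presentation of $W_G(L,\la)$. By Proposition~\ref{prop:redstable-geck} these are the $\ell$-reductions of the characteristic-zero parameters, so they can be computed locally in the minimal Levi overgroups of $L$ in $\bG$: for each $s \in S_G(L,\la)$ there is a minimal Levi overgroup of semisimple rank one greater than $L$, and the parameter is read off from the corresponding rank-one Hecke algebra, whose parameter is determined by Lusztig's tables~\cite[\S8.6]{Lu84} (or equivalently \cite[p.~464]{Ca}). This reduces the computation in $E_6(q)$ to the same local data already used in the classical types, together with possibly an $E_6$-specific overgroup for the $D_4$-cuspidal case.

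Finally, the number $|\Irr \cH|$ is read off from Table~\ref{tab:hecke irr}, or computed using Jacon's programme~\cite{Ja05} for parameter specialisations not already recorded. The main obstacle I anticipate is the cuspidal $D_4$-pair: verifying the existence of an $N_G(\bL)$-stable $\ell$-character $\theta$ in general position requires tracking the action of $N_\bG(\bL)^F/L$ on the appropriate $\Phi_2$-torus of $\bG^*$, analogous to the argument given at the end of Lemma~\ref{lem:paramDn,d=2} but with the ambient group $E_6$ in place of $D_n$; everything else is a routine translation of the $D_n$ proof.
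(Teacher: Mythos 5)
Your overall template (relative Weyl groups from Howlett, parameters determined locally in minimal Levi overgroups, numbers of simple modules via the known decomposition matrices of the Hecke algebras) is the same as the paper's, and it works without further ado for the pairs $(A_1,\vhi_{1^2})$, $(A_1^2,\vhi_{1^2}^{\sqtens2})$, $(A_1^3,\vhi_{1^2}^{\sqtens3})$ and $(D_4,D_4)$, whose reduction stability was indeed already settled in the proof of Lemma~\ref{lem:paramDn,d=2}. (You do conflate two cases, though: the ordinary cuspidal unipotent character of $D_4(q)$ needs no Deligne--Lusztig lift at all --- its $\ell$-reduction is irreducible and it is fixed by every automorphism --- whereas the lifts $\RTG(\theta)$, resp.\ $\RLG(\theta_1\sqtens\theta_2)$, are only relevant for $\vhi_{.1^4}$ and $\vhi_{1.1^3}$.)

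The genuine gap concerns precisely those two pairs $(D_4,\vhi_{.1^4})$ and $(D_4,\vhi_{1.1^3})$. You claim the stability check is ``a routine translation of the $D_n$ proof'' and ``can be checked using Proposition~\ref{prop:q+1 reg}''. It is not routine: in type $D_n$ the normaliser of a $D_4$-Levi induces only the order-$2$ graph automorphism, for which the existence of an invariant general-position $\ell$-character of the Sylow $\Phi_2$-torus is immediate; in $E_6$ the normaliser induces the \emph{full} triality group $\fS_3$, and the existence of an $\fS_3$-stable $\ell$-character in general position is not at all clear. Proposition~\ref{prop:q+1 reg} only produces $\ell$-elements with prescribed centraliser inside a fixed group; it gives no control over invariance under outer (graph) automorphisms of that group. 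The paper closes this gap by a separate argument: it realises the extension $D_4.\fS_3$ inside $F_4$ (the long-root $D_4$ normalised by the short-root $A_2$ Weyl group) and invokes K\"ohler's tables for $F_4(q)$ to produce semisimple $\ell$-elements whose centraliser is a short-root $A_2$ whenever $(q+1)_\ell>5$; such an element lies in a Sylow $\Phi_2$-torus of $D_4$, is regular there, and is fixed by all of $\fS_3$. This is exactly where the hypothesis $(q+1)_\ell>5$ enters, so some argument of this kind (or an explicit computation of the $\fS_3$-action on the characters of the $(q+1)^4$-torus) is indispensable and cannot be dismissed as a routine adaptation.
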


\begin{table}[ht]
\caption{Hecke algebras in $E_6(q)$ for $d_\ell(q)=2$}   \label{tab:hecke E6,d=2}
$\begin{array}{l|c|ccccc}
 (L,\la)& \qquad\qquad\cH\qquad\qquad\qquad & |\Irr\cH|\cr
\hline
 (A_1,\vhi_{1^2})& \cH(A_5;q)& 3+1\cr
 (A_1^2,\vhi_{1^2}^{\sqtens2})& \cH(B_3;q^2;q)& 4\cr
 (A_1^3,\vhi_{1^2}^{\sqtens3})& \cH(A_1;q)\otimes\cH(A_2;q^2)& 3\cr
 (D_4,D_4)& \cH(A_2;q^4)& 3\cr
 (D_4,\vhi_{1.1^3})& \cH(A_2;q^2)& 3\cr
 (D_4,\vhi_{.1^4})& \cH(A_2;q^2)& 3\cr
\end{array}$
\end{table}

\begin{proof}
The arguments are very similar to the ones used in the proof of
Lemma~\ref{lem:paramDn,d=2}. In fact, most of the parameters of the relevant
Hecke algebras were already determined there. As an example let us consider
the $\ell$-modular Steinberg character of a Levi subgroup of type $A_1$. By
\cite[p.~75]{Ho80} its relative Weyl group in $E_6(q)$
is of type $A_5$, and as the character is liftable, its parameters are
determined locally, inside a Levi subgroup of type $A_1^2$, to be equal to $q$.
For the (liftable) modular Steinberg character $.1^4$ of a Levi subgroup of
type $D_4$ the relative Weyl group has type $A_2$, and the parameter was
again already in Lemma~\ref{lem:paramDn,d=2} shown to be equal to~$q^2$.
\par
Reduction stability for the first four cases has already been argued in the
proof of that lemma. The normaliser of a Levi subgroup of type $D_4$ inside
$E_6$ induces the full group $\fS_3$ of graph automorphisms. Thus, by our
description of $\vhi_{.1^4}$ in Lemma~\ref{lem:paramDn,d=2} we need to see
that there exist $\ell$-characters in general position for a Sylow $\Phi_2$-torus
which are stable under the graph automorphisms. For this we use that the
extension of $D_4$ by $\fS_3$ is realised inside the groups of type $F_4$
(see e.g. \cite[Exmp.~13.9]{MT}): the long root subgroups generate a subgroup
of type $D_4$, and it is normalised by the Weyl group of type $A_2$ generated
by two reflections at short roots. By \cite[Tab.~T.A.133]{Koe06} there exist
semisimple $\ell$-elements with centralisers a short root $A_2$ in $F_4(q)$
whenever $q>5$ and thus there is an $\ell$-character $\theta$ of a Sylow
$\Phi_2$-torus $T$ of $G=D_4(q)$ such that $\RTG(\theta)$ lifts the modular
Steinberg character~$\vhi_{.1^4}$. The argument for $\vhi_{1.1^3}$ is similar.
\end{proof}

The groups of type $E_6$ have 30 unipotent characters. For primes $\ell>3$ with
$\ell|(q+1)$, 25 of them lie in the principal $\ell$-block and three are of
defect zero. The remaining two lie in a unipotent block of cyclic defect,
with Brauer tree
$$\vbox{\offinterlineskip\halign{$#$
        \vrule height10pt depth 2pt width 0pt&& \hfil$#$\hfil\cr
& \phi_{64,4}\ & \vr& \ \phi_{64,13}\ & \vr& \ \bigcirc \cr
 & & ps& & A_1\cr
  }}$$

\begin{thm}   \label{thm:E6,d=2}
 Assume $(T_\ell)$. The decomposition matrix for the principal $\ell$-block of
 $E_6(q)$, $11\le\ell|(q+1)$, is as given in Table~\ref{tab:E6,d=2} where
 $d\in\{0,1\}$ is as in Theorem~\ref{thm:D5}. Here, the projectives in columns~7
 and~11 might be decomposable.
\end{thm}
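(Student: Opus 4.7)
The plan is to mimic closely the strategies used in the proofs of Theorems~\ref{thm:D5} and~\ref{thm:D6}. Set $G=E_6(q)$. By \cite{GH91,Ge93} the $25$ unipotent characters of the principal $\ell$-block form a basic set, and under $(T_\ell)$ the decomposition matrix is uni-triangular with respect to any total ordering compatible with $a$-values. The three unipotent characters of $\ell$-defect zero are trivial columns and the cyclic-defect block contributes the Brauer tree displayed just before the theorem, so I can concentrate on the $25\times25$ principal-block matrix.

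First, I would produce a pool of projective characters using (HCi) from the known PIMs of the Levi subgroups of types $D_5$, $A_5$, $D_4 A_1$, and the smaller Levi subgroups whose unipotent decomposition matrices have been determined in the previous sections. The parameter $d\in\{0,1\}$ from Theorem~\ref{thm:D5} feeds in through Harish-Chandra induction from the $D_5$-Levi, while the parameter $c$ from Theorem~\ref{thm:D4} is now known to be $0$ by the proof of Theorem~\ref{thm:D6} and therefore does not produce new indeterminates. I would then use (Tri) to echelonise these projectives successively against the PIMs already placed, working from the smallest $a$-value upward.

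Next, I would count PIMs in each non-cuspidal Harish-Chandra series using Lemma~\ref{lem:paramE6,d=2}: the principal series is handled by the decomposition matrix of $\cH(E_6;q)$, the series above $(A_1,\vhi_{1^2})$ contains $4$ PIMs (from $\cH(A_5;q)$), the series above $(A_1^2,\vhi_{1^2}^{\sqtens 2})$ also contains $4$ PIMs, the series above $(A_1^3,\vhi_{1^2}^{\sqtens 3})$ contains $3$ PIMs, and each of the three $D_4$-cuspidal series contributes $3$ PIMs. Following Proposition~\ref{prop:dechecke} and \cite[\S3]{DGHM}, the decomposition of each Harish-Chandra induced projective can be read off from the (known) decomposition matrix of the corresponding modular Iwahori--Hecke algebra. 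Combining this with (HCr) will split most of the induced projectives into their indecomposable summands.

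To handle the cuspidal PIMs, I would first apply (Csp): for $\ell\mid(q+1)$ the centraliser of a Sylow $\ell$-subgroup of $E_6(q)$ is not contained in any proper $1$-split Levi subgroup, so cuspidal unipotent Brauer characters exist and in particular $\vhi_\St$ is cuspidal by~(St). The remaining unknown entries will be pinned down by combining (DL) applied to the Deligne--Lusztig characters $R_w$ for well-chosen $w\in W$ (starting with a Coxeter element) and (Red) applied to semisimple $\ell$-elements whose centralisers are the various $2$-split Levi subgroups; Proposition~\ref{prop:q+1 reg}, which requires $(q+1)_\ell$ bounded below by heights of roots in the appropriate quotient, is what forces the lower bound $\ell\ge 11$. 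Note that Corollary~\ref{cor:family1} does \emph{not} apply directly since $w_0F$ does not act trivially on $W$ for split $E_6$, so the Steinberg multiplicities in the subregular family must instead be read off from (Red) applied to $\Phi_2$-Levi subgroups of types involving $D_4$ and $A_1^k\times$ torus. The principal obstacle is precisely what the theorem flags: (HCr) together with (Red) will not be strong enough to exclude further decomposability of the projectives in columns~$7$ and~$11$, so these two can only be certified as projective characters, not as PIMs; all other columns should be forced to be indecomposable by careful bookkeeping paralleling the $D_6$ case.
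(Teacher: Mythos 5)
Your general framework (HCi, HCr, Tri, Hecke algebra counting, and echelonisation) matches the paper, but the specific tools you propose for closing the argument do not. The paper's proof of Theorem~\ref{thm:E6,d=2} never invokes (DL) or (Red): there is no Deligne--Lusztig character computation, no semisimple $\ell$-element argument, and no appeal to Proposition~\ref{prop:q+1 reg}. The entire proof is Harish-Chandra theoretic. Specifically, after (HCi) and (Tri) reduce the problem to a handful of ambiguous projectives, the decisive step is Harish-Chandra inducing $\Psi_{22}$ to $E_7(q)$, cutting by the principal block, and restricting back to $E_6(q)$; this forces the multiplicity of $\Psi_{19}$ inside the ambiguous summand $\Psi_{15}+2\Psi_{19}$. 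You do not mention passing to $E_7$ at all, so your strategy as stated would not resolve that multiplicity.

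The reason your plan over-reaches is that the table for $E_6$ at $d=2$ has no genuinely new unknown decomposition numbers to pin down: the only parameter appearing is $d\in\{0,1\}$ inherited directly from $D_5$ through Harish-Chandra induction, and the two flagged columns~7 and~11 remain uncertain only in their \emph{indecomposability}, not in their entries. So there is nothing for (DL)/(Red) to determine here, and the bound $\ell\ge 11$ actually arises because the proof depends on the $D_5$ and $D_6$ results (which do require $(q+1)_\ell$ to be large enough for (Red) there), not because $E_6$ itself needs a $\Phi_2$-regular element. Your observation that Corollary~\ref{cor:family1} is inapplicable to split $E_6$ is correct but moot, since the paper does not try to use it. A small imprecision: in the principal block the $A_1$-Harish-Chandra series contains $3$ PIMs, not $4$; the fourth simple module of $\cH(A_5;q)$ labels the $A_1$-PIM in the cyclic-defect block with Brauer tree $\phi_{64,4}$---$\phi_{64,13}$---$\bigcirc$.
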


\begin{table}[htbp]
\caption{$E_6(q)$,  $11\le\ell|(q+1)$, principal block}   \label{tab:E6,d=2}
{\small
$$\vbox{\offinterlineskip\halign{$#$\hfil\ \vrule height10pt depth3pt&
      \hfil\ $#$\ \hfil\vrule&& \hfil\ $#$\hfil\cr
 \phi_{1,0}& 1&1\cr
 \phi_{6,1}& q&. & 1\cr
 \phi_{20,2}& q^2&. & 1& 1\cr
 \phi_{30,3}& \hlf q^3& . & 1& 1& 1\cr
 \phi_{15,5}& \hlf q^3&1 & .& 1& .& 1\cr
 \phi_{15,4}& \hlf q^3&1 & .& .& .& .& 1\cr
     D_4\co3& \hlf q^3&. & .& .& .& .& .& 1\cr
 \phi_{60,5}& q^5 &. & .& 1& .& .& .& .& 1\cr
 \phi_{24,6}& q^6 &. & .& 1& 1& 1& .& .& .& 1\cr
 \phi_{81,6}& q^6 &1 & 1& 1& .& 1& 1&  4\mn d&  1& .& 1\cr
\phi_{20,10}& \sxt q^7 &. & 1& .& .& .& 1&  2\mn d&  .& .& 1& 1\cr
 \phi_{90,8}& \thrd q^7 &. & 1& 2& 1& 1& .& 6\mn 3d& 1& 1& 1& .& 1\cr
 \phi_{10,9}& \thrd q^7 &. & .& .& 1& .& .&     .& .& 1& .& .& .& 1\cr
 \phi_{60,8}& \hlf q^7 &. & .& .& .& .& 1&   4\mn d& 1& .& 1& .& .& .& 1\cr
    D_4\co21& \hlf q^7 &. & .& .& .& .& .&     .& .& .& .& .& .& .& .& 1\cr
\phi_{81,10}& q^{10} &1 & 1& 1& .& 1& 1& 10\mn 4d& 1& .& 2& 1& 1& .& 1& 2& 1\cr
\phi_{60,11}& q^{11} &. & .& 1& .& 1& .& 8\mn 3d& 1& 1& 1& .& 1& .& 1& 2& 1& 1\cr
\phi_{24,12}& q^{12} &. & .& 1& 1& .& .& 2\mn 2d& .& 1& .& 2& 1& 1& .& 2& .& .& 1\cr
\phi_{30,15}& \hlf q^{15} &. & 1& 1& 1& 1& .& 6\mn 3d& .& 2& 1& 3& 1& 1& .& 2& .& .& 1& 1\cr
\phi_{15,17}& \hlf q^{15} &1 & .& 1& .& 1& .& 4\mn 2d& .& .& .& 2& 1& .& .& 2& 1& .& 1& .& 1\cr
\phi_{15,16}& \hlf q^{15} &1 & .& .& .& 1& 1&   4\mn d& .& .& 1& 1& .& .& 1& 2& 1& .& .& .& .& 1\cr
   D_4\co1^3& \hlf q^{15} &. & .& .& .& .& .&     .& .& .& .& .& .& .& .& .& .& .& .& .& .& .& 1\cr
\phi_{20,20}& q^{20} &. & 1& 1& .& 1& .& 8\mn 3d& .& 1& 1& 4& 1& .& .& 4& 1& 1& 1& 1& 3& .& 2& 1\cr
 \phi_{6,25}& q^{25} &. & 1& .& .& .& .&   4\mn d& .& .& 1& 2& .& .& .& 2& .& .& .& 1& 2& .& .& 1& 1\cr
 \phi_{1,36}& q^{36} &1 & .& .& .& 1& .&     2& .& .& .& .& .& .& .& 2& 1& .& .& .& 1& 1& .& .& 2& 1\cr
\noalign{\hrule}
 & & ps\!& ps\!& ps\!& ps\!& A_1\!& ps\!& D_4& ps& A_1^2\!& A_1\!& 1.1^3\!& A_1\!& A_1^3\!& A_1^2\!& D_4\!& A_1^2\!& A_1^3\!& .1^4\!& A_1^2\!& 1.1^3\!& A_1^3\!& D_4\!& .1^4\!& 1.1^3\!& .1^4\!\cr
 \text{dec?}& & & & & & & & *& & & & * \cr
  }}$$}
\end{table}

\begin{proof}
Let $G=E_6(q)$. We denote by $\Psi_i$, $1\le i\le 25$, the linear
combinations of unipotent characters given by the columns in
Table~\ref{tab:E6,d=2}. We shall show that these are the unipotent parts of
projective indecomposable characters of $G$. \par
The columns $\Psi_i$ with $i\in\{1,2,3,4,6,8\}$ are obtained from the
decomposition matrix of the Hecke algebra of type $E_6$ which has been
determined by Geck \cite[Table~D]{Ge93b}.  \par
By Table~\ref{tab:hecke E6,d=2} the $A_1$-Harish-Chandra series contains three
Brauer characters in the principal block. Harish-Chandra induction of the two
PIMs of $D_5(q)$ in that series yields
$\tPsi_5=\Psi_5+\Psi_8+2\Psi_{10}+\Psi_{12}$ and
$\tPsi_{10}=\Psi_{10}+2\Psi_{12}$. From the decomposition matrix of the Hecke
algebra $\cH(A_5;q)$ it follows that $\tPsi_{10}$ must contain one PIM in that
series plus two copies of another one. But the only splitting of $\tPsi_{10}$
compatible with~(HCr) is into $\Psi_{10}$ plus two times $\Psi_{12}$. So these
two lie in the $A_1$-series. Then, $\tPsi_5$ has to contain two copies
of $\Psi_{10}$ and one copy of $\Psi_{12}$, so that $\tPsi_5'=\Psi_5+\Psi_8$
is projective, and it contains one indecomposable summand from the $A_1$-series.
Inducing the first $A_1$-PIM from $A_5(q)$ we see that the same holds for
$\tPsi_5''=\Psi_4+\Psi_5$. Hence neither of $\tPsi_5',\tPsi_5''$ is
indecomposable, and all of their summands apart from the one in the
$A_1$-series lie in the principal series. This yields $\Psi_5$.
\par
The $A_1^2$-series contains four Brauer characters by
Table~\ref{tab:hecke E6,d=2}. Harish-Chandra induction of the two PIMs of
$A_5(q)$ in that series  yields $\tPsi_9=\Psi_9+2\Psi_{14}$ and
$\tPsi_{16}=\Psi_{16}+2\Psi_{19}$.  Thus, both $\tPsi_9$ and $\tPsi_{16}$
contain two summands from that series, one with multiplicity~2. The only
splitting consistent with~(HCr) is just given by $\Psi_9$ plus $2\Psi_{14}$,
respectively $\Psi_{16}$ plus $2\Psi_{19}$.
\par
The $A_1^3$-series contains three Brauer characters. From (HCi) and using
(Tri) we obtain $\Psi_{13}+\Psi_{14}$, $\Psi_{13}+\Psi_{14}+\Psi_{17}$
(which together yield $\Psi_{17}$), $\Psi_{14}+\Psi_{17}+\Psi_{19}$ and
$\Psi_{21}$. By (Tri) we have that $\Psi_{13}+\Psi_{14}$ cannot be
indecomposable, and the only admissible splitting with (HCr) is into
$\Psi_{13}$ plus $\Psi_{14}$. \par
The Harish-Chandra series above the three cuspidal unipotent Brauer characters
of $D_4(q)$ all contain three Brauer characters by Table~\ref{tab:hecke E6,d=2}.
For $1.1^3$, (HCi) gives $\Psi_{11}+\Psi_{12}+\Psi_{20}$ and
$\Psi_{20}+\Psi_{24}$, which both must contain two PIMs from this series.
Now, by (Tri), $\Psi_{11}+\Psi_{12}$ cannot be indecomposable, and the only
possible splitting with (HCr) leads to $\Psi_{11},\Psi_{20}$ and $\Psi_{24}$.
The situation for $.1^4$ is entirely similar, giving $\Psi_{18},\Psi_{23}$
and $\Psi_{25}$, all three of which are indecomposable by (HCr). We also find
$\Psi_7+\Psi_{15}+2\Psi_{19}$ and $\Psi_{15}+2\Psi_{19}+\Psi_{22}$ which split
up at least into $\Psi_7,\Psi_{15}+2\Psi_{19}$ and $\Psi_{22}$.
\par
Harish-Chandra inducing $\Psi_{22}$ to $E_7(q)$, cutting by the principal
block, and restricting back shows that $\Psi_{19}$ is twice contained in
$\Psi_{15}+2\Psi_{19}$.

We have thus accounted for all columns in the table. An application of (HCr)
shows that all of them are indecomposable, except possibly for $\Psi_7$ and
$\Psi_{11}$:
$\Psi_{11}$ might contain $\Psi_{19}$ once, and $\Psi_7$ might
contains copies of $\Psi_{10}$ and of $\Psi_{12}$.
\end{proof}


\section{Unipotent decomposition matrix of $\tw2D_4(q)$}
We now turn to the groups of twisted type, where again we start with the
orthogonal groups.
Note that $\tw2D_3(q)\cong\tw2A_3(q)$ is a unitary group, whose unipotent
decomposition matrix we already determined in \cite{DM15}.

The groups of type $\tw2D_4$ have 10 unipotent characters, all of which lie in
the principal $\ell$-block for primes $\ell$ dividing $q+1$.
For the following result, we need no restriction on $q$:

\begin{thm}   \label{thm:2D4}
 The decomposition matrix for the principal $\ell$-block of $\tw2D_4(q)$,
 with $2<\ell|(q+1)$, is as given in Table~\ref{tab:2D4}.
\end{thm}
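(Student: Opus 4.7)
The plan is to mirror the strategy of Theorem~\ref{thm:D4}, carefully adapting it to the twisted type $\tw2D_4$. Under Assumption~\ref{ass:ell}, the unipotent characters of $G=\tw2D_4(q)$ form a basic set for the (unique) unipotent $\ell$-block by \cite{GH91}, and by \cite[Thm.~A]{BDT19} the matrix is uni-triangular with respect to a total ordering compatible with the order on families (no extra hypothesis is needed). So the task is to pin down a single $10\times 10$ matrix.

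First I would construct projective characters by applying (HCi) to the PIMs of each proper $1$-split Levi subgroup, namely a twisted Levi of type $\tw2A_3(q)$ (whose unipotent decomposition matrix is known from \cite{DM15}), a Levi of type $A_1(q^2)\cdot(q+1)$ (for which the proper parabolic reduction is trivial), and Levi subgroups of types $A_1(q)\cdot T$ and $A_1(q)A_1(q^2)$. Cutting by the unipotent block and combining with (Tri) via Example~\ref{exmp:uni} should give a spanning family of projective characters covering all columns except a couple corresponding to cuspidal Brauer characters.

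Second, I would identify the Harish-Chandra series of each column by determining the Iwahori--Hecke algebra attached to each modular cuspidal pair $(L,\vhi)$ through Corollary~\ref{cor:dechecke}. Reduction-stability holds because the cuspidal Brauer characters involved are $\ell$-modular reductions of liftable cuspidal characters, so the parameters of the modular Hecke algebras are the $\ell$-reductions of those in characteristic zero (computed locally inside minimal Levi overgroups, exactly as in Lemma~\ref{lem:paramDn,d=2}). Combined with $(\cH)$ and (HCr), this fixes how each Harish-Chandra induced projective splits into PIMs.

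For the cuspidal part, a Sylow $\Phi_2$-torus of $G$ has rank~$3$ and is not contained in any proper $1$-split Levi subgroup, so (Csp) guarantees the existence of cuspidal unipotent Brauer characters and (St) identifies the Steinberg PIM; a second cuspidal PIM arises from the reduction of the ordinary cuspidal unipotent character of $\tw2D_4$. Any remaining unknown entries in the two cuspidal columns are then constrained by combining (DL) — applied to the Deligne--Lusztig characters $R_w$ for suitable $w\in W$, in particular Coxeter and longest elements — with (Red), where Proposition~\ref{prop:q+1 reg} produces semisimple $\ell$-elements $t\in G^*$ realising each relevant $2$-split Levi subgroup as $C_{\bG^*}(t)$. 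Because the root system of $\tw2D_4$ is small, the height condition in Proposition~\ref{prop:q+1 reg} is automatic once $\ell>2$, which is precisely why the theorem holds without any lower bound on $\ell$ or on $q$. The main obstacle is the absence of the shortcut from Corollary~\ref{cor:family1}: since $w_0 F$ is not central in $W\rtimes\langle F\rangle$ for the twisted even case $\tw2D_{2n}$, the Alvis--Curtis dual of the subregular-support unipotent character must be identified by explicit combination of (DL) and (Red) rather than by a direct application of Theorem~\ref{thm:genpos}; but given the small rank, the resulting linear constraints are easily seen to be sharp enough to determine all entries uniquely.
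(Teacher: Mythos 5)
There is a genuine gap at the heart of your plan: you assert that a Sylow $\Phi_2$-torus of $\tw2D_4(q)$ is not contained in any proper $1$-split Levi subgroup, so that (Csp) produces cuspidal unipotent Brauer characters, one of them coming from ``the ordinary cuspidal unipotent character of $\tw2D_4$''. Both claims are false. The group $\tw2D_4(q)$ has no ordinary cuspidal unipotent character (the smallest twisted even-dimensional orthogonal group with one is $\tw2D_9$), and although the Sylow $\Phi_2$-torus has rank~$3$, the proper $1$-split Levi subgroup of type $\tw2D_3$ satisfies $\tw2D_3(q)\cong\tw2A_3(q)$, whose order is divisible by $(q+1)^3$; hence the centraliser of a Sylow $\Phi_2$-torus lies inside this Levi subgroup and (Csp) gives exactly the opposite conclusion: there are \emph{no} cuspidal unipotent Brauer characters, and by (St) even the modular Steinberg PIM is non-cuspidal, with vertex the cuspidal Steinberg character $\vhi_{.1^2}$ of $\tw2D_3(q)$. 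Your strategy would therefore spend its effort hunting for two cuspidal columns that do not exist, while missing the four PIMs that actually lie in the Harish-Chandra series of the two cuspidal Brauer characters $\vhi_{.2}$ and $\vhi_{.1^2}$ of $\tw2D_3(q)$ (known from \cite{DM15}); these are the genuinely hard columns of Table~\ref{tab:2D4}.

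A second, related problem is your reliance on (DL), (Red) and Proposition~\ref{prop:q+1 reg} to close the argument. Proposition~\ref{prop:q+1 reg} needs $\ell$ very good and a height condition which is not ``automatic once $\ell>2$'' (for the full root system it amounts to $(q+1)_\ell\ge h=6$), whereas the theorem is asserted for every odd $\ell\mid(q+1)$ and every $q$; moreover the entry $\alpha$ in the table is genuinely \emph{not} determined uniquely in general --- it is inherited from the multiplicity of $\vhi_{.2}$ in the reduction of the Steinberg character of $\tw2D_3(q)$ and can differ when $(q+1)_\ell=3$. The paper's proof avoids all of this: after (HCi) and the Hecke algebra $\cH(B_3;q^2;q)$ for the principal series, it produces the missing $\vhi_{.2}$-series column from the tensor product of $[21.]$ with a projective Deligne--Lusztig character of a Coxeter torus, then pins down the remaining unknowns by Harish-Chandra restricting back to $\tw2D_3(q)$ and comparing with \cite[Table~1]{DM15}, and finally by inducing to $\tw2D_5(q)$ and restricting back. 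No triangularity assumption, no (Red), and no condition on $(q+1)_\ell$ beyond $\ell>2$ is needed.
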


\begin{table}[htbp]
\caption{$\tw2D_4(q)$, $2<\ell|(q+1)$}   \label{tab:2D4}
$$\vbox{\offinterlineskip\halign{$#$\hfil\ \vrule height11pt depth4pt&
      \hfil\ $#$\ \vrule&& \hfil\ $#$\hfil\cr
    3.&               1& 1\cr
   21.&           q\Ph8& .& 1\cr
   2.1&     q^2\Ph3\Ph6& .& 1& 1\cr
1^3.&\hlf q^3\Ph6\Ph8& 1& .& .& 1\cr
  .3&\hlf q^3\Ph6\Ph8& .& .& 1& .& 1\cr
 1^2.1&\hlf q^3\Ph3\Ph8& .& 1& 1& .& .& 1\cr
   1.2&\hlf q^3\Ph3\Ph8& 1& .& .& .& .& .& 1\cr
 1.1^2&     q^6\Ph3\Ph6& 1& .& .& 1& \alpha& .& 1& 1\cr
   .21&         q^7\Ph8& .& .& .& .& \alpha& .& 1& 1& 1\cr
  .1^3&          q^{12}& .& .& 1& .& 1& 1& .& .& \alpha& 1\cr
\noalign{\hrule}
  & & ps& ps& ps& A_1& .2& A_1& ps& .1^2& .2& .1^2\cr
  }}$$
\end{table}

Here $\alpha+1$ is the multiplicity of the cuspidal Brauer character
$\vhi_{.2}$ in the $\ell$-modular reduction of the Steinberg character
$[.1^2]$ of $\tw2D_3(q)$; in particular $\alpha=2$ if $(q+1)_\ell>3$ by
\cite[Tab.~1]{DM15}, and $\alpha\le2$ always.

\begin{proof}
We employ similar arguments as in the untwisted case. All unipotent characters
of $G=\tw2D_4(q)$ lie in the principal $\ell$-block, and they form a basic set.
Let $\Psi_1,\ldots,\Psi_{10}$ denote the linear combinations of unipotent
characters corresponding to the ten columns of Table~\ref{tab:2D4}. We
construct projective characters as follows: the decomposition matrix of the
Hecke algebra $\cH(B_3;q^2;q)$ for the principal series gives the four
PIMs $\Psi_1,\Psi_2,\Psi_3,\Psi_7$ labelled by ``ps" in Table~\ref{tab:2D4}.
Next Harish-Chandra induction of the $A_1$-series PIM of
a Levi subgroup of type $A_2$ gives a projective character with unipotent part
$\tPsi_4=\Psi_4+\Psi_6$, but the induction of neither of the $A_1$-series PIMs
from a Levi subgroup of type $\tw2D_2A_1$ contains this character, so
$\tPsi_4$ must be decomposable. The only subsums compatible with (HCr) are
$\Psi_4$ and $\Psi_6$, so these are the two PIMs of $G$ in the $A_1$-series.
\par
Note that the centraliser of a Sylow $\Phi_2$-torus of $G$ lies inside a Levi
subgroup of type $\tw2D_3$, so by (Csp), all Brauer characters of $G$ have a
Harish-Chandra vertex contained in that
Levi subgroup. Furthermore, by \cite[Thm.~4.2]{GHM}, the Harish-Chandra vertex
of the modular Steinberg character of $G$ is the (cuspidal) modular Steinberg
character $\vhi_{.1^2}$ of $\tw2D_3(q)$. But the Harish-Chandra induction of
this PIM has unipotent part $\tPsi_8=\Psi_8+\Psi_{10}$, so $\tPsi_8$ is
decomposable and yields the two PIMs $\Psi_8$ and $\Psi_{10}$. We have now
accounted for all Harish-Chandra series except for that of the cuspidal
character $\vhi_{.2}$ of $\tw2D_3(q)$. Hence the two remaining PIMs must lie
in that series.
\par
The column $\Psi_9$ (with a yet undetermined entry $a\ge0$ in the last row)
comes from the tensor product of the unipotent character $[21.]$ with an
irreducible Deligne--Lusztig character for a Coxeter torus
(which is projective). This can be computed using the table of unipotent
characters for $\tw2D_4(q)$ available in \Chevie~\cite{Chv}. Harish-Chandra
induction of $\Psi_{.2}$ from $\tw2D_3(q)$ gives a projective character with
unipotent part $\tPsi_5=\Psi_5+\Psi_7+\Psi_9$. As the remaining two PIMs must
lie in the Harish-Chandra series above $\Psi_{.2}$, $\tPsi_5$ has at least two
summands, and it has three if $\Psi_7$ is a summand of $\tPsi_5$. We thus
obtain the following lower right-hand corner of the decomposition matrix, for
some $b\in\{0,1\}$ (with $b=0$ if $\tPsi_5$ has two summands, and $b=1$ if it
has three).
$$\vbox{\offinterlineskip\halign{$#$\hfil\ \vrule height11pt depth4pt&&
      \hfil\ $#$\hfil\cr
    .3& 1\cr
 1^2.1& .& 1\cr
   1.2& 1-b& .& 1\cr
 1.1^2& u-b& .& 1& 1\cr
   .21& u-b& .& 1& 1& 1\cr
  .1^3& u-v& 1& .& .& v& 1\cr
\noalign{\hrule}
  & .2& A_1& ps& .1^2& .2& .1^2\cr
  }}$$
Note that for $(q+1)_\ell=3$, the result in \cite[Tab.~1]{DM15}
only gives an upper bound~$3$ for the multiplicity of the Brauer character
$\vhi_{.2}$ in the $3$-modular reduction of the Steinberg character. We write
$u$ for this multiplicity (which equals~3 unless $(q+1)_\ell=3$), and we
have $0\le v\le u$.

The Harish-Chandra restriction to $\tw2D_3(q)$ of the two projectives in the
$\vhi_{.2}$-series decomposes as
$(1-2b+u)\Psi_{1.1}+\Psi_{1^2.}+(u-v-1)\Psi_{.1^2}$,
respectively $\Psi_{1^2.}+(1+v-u)\Psi_{.1^2}$ (see \cite[Table~1]{DM15}).
This shows that $v=u-1$. Finally, Harish-Chandra induction of $\Psi_5$ to
$\tw2D_5(q)$ and restriction back to $\tw2D_4(q)$ should decompose
non-negatively into PIMs.
This forces $b=1$. Our claim follows by setting $\alpha:=v$.
\end{proof}

\begin{rem}
From the known 3-modular decomposition matrices it can be seen that $\alpha=1$
for $\SO_6^-(2)\cong\PSU_4(2)$ and $\SO_8^-(2)$, so the case $(q+1)_\ell=3$
does indeed behave differently from the generic one where $\al=2$.
\end{rem}

\section{Unipotent decomposition matrix of $\tw2D_5(q)$}

We now turn to the groups $\tw2D_5(q)$, where we first need to determine the
structure of certain Hecke algebras.

\begin{lem}   \label{lem:param2Dn,d=2}
 Let $q$ be a prime power and $\ell\ne2$ with $(q+1)_\ell\ge7$. The Hecke
 algebras of various $\ell$-modular cuspidal pairs $(L,\la)$ of Levi subgroups
 $L$ in $\tw2D_n(q)$ and their respective numbers of irreducible characters are
 as given in Table~\ref{tab:hecke 2Dn,d=2}.
\end{lem}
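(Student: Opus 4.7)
The plan is to argue case-by-case through the cuspidal pairs $(L,\lambda)$ listed in Table~\ref{tab:hecke 2Dn,d=2}, following the four-step template of Lemma~\ref{lem:paramDn,d=2}: (i) identify the relative Weyl group $W_G(L,\lambda)$ via the tables of Howlett \cite{Ho80} or the \Chevie{} system \cite{Mi15}; (ii) verify reduction stability through Corollary~\ref{cor:dechecke}; (iii) determine the Hecke algebra parameters by localising to the minimal Levi overgroups of $L$ inside $\tw2D_n(q)$; and (iv) read $|\Irr\cH|$ off Table~\ref{tab:hecke irr} using Jacon's programme \cite{Ja05}.

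For the liftable cuspidal pairs --- those where $\lambda$ is the $\ell$-modular reduction of an ordinary cuspidal character, typically the modular Steinberg characters $\vhi_{1^2}$ and $\vhi_{.1^2}$ of Levi factors of types $A_1$ and $D_2$ together with their tensor products --- the argument transcribes that of Lemma~\ref{lem:paramDn,d=2} with only minor notational changes. The parameters of the Iwahori--Hecke algebra coincide with their characteristic-zero values and are determined locally inside classical Levi overgroups of small rank (producing values $q$, $q^2$, possibly $q^4$). Reduction stability is immediate in these cases because $N_G(\bL)$ acts on $[\bL,\bL]$ only by permuting isomorphic simple components, so an $N_G(\bL)$-stable $\cO L$-lattice can be built component-wise.

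The substantive new content concerns cuspidal pairs with a factor of type $\tw2D_4(q)$ carrying one of the three cuspidal unipotent Brauer characters from Table~\ref{tab:2D4} --- the ordinary cuspidal $D_4$ together with the two modular cuspidals $\vhi_{.1^2}$ and $\vhi_{.2}$ --- possibly combined with a cuspidal on an $A_1$ factor. The characteristic-zero parameters for cuspidal $D_4$ are recorded in \cite[p.~465]{Ca}, and for each modular cuspidal I transfer these to characteristic~$\ell$ by using that the Brauer character in question appears as a composition factor of a Deligne--Lusztig character $\pm R_T^G(\theta)$ for $\theta$ an $\ell$-character in general position of an appropriate Sylow $\Ph2$-torus of $\tw2D_4(q)$, exactly as in the corresponding treatment for $D_4(q)$ in Lemma~\ref{lem:paramDn,d=2}.

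The main obstacle will be verifying reduction stability in these $\tw2D_4$-cases: the normaliser $N_\bG(\bL)^F$ inside the ambient $\tw2D_n(q)$ may induce the order-two graph automorphism of $\tw2D_4(q)$, and I need to produce an $N_G(\bL)$-stable $\cO L$-lattice with head $\lambda$. I will realise this graph automorphism via an inclusion of orthogonal groups analogous to $(\GO_8\GO_{2n-8})\cap\SO_{2n}\le\SO_{2n}$ used in Lemma~\ref{lem:paramDn,d=2}, and then use Proposition~\ref{prop:q+1 reg} (in the Ennola-dual form of Example~\ref{exmp:reg}(c) when applicable) to produce an $\ell$-character in general position on the relevant Sylow $\Ph2$-torus that is stable under this automorphism. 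The hypothesis $(q+1)_\ell\ge7$ is calibrated precisely for this step: it both makes the necessary regular $\ell$-elements available inside the Sylow $\Ph2$-torus and, via Theorem~\ref{thm:2D4}, forces $\alpha=2$, ensuring that the modular cuspidal $\vhi_{.2}$ of $\tw2D_4(q)$ remains reduction stable so that Corollary~\ref{cor:dechecke} applies directly rather than the more delicate Proposition~\ref{prop:redstab}.
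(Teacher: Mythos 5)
Your general template (relative Weyl groups from Howlett, liftability of the modular Steinberg characters, local determination of parameters in minimal Levi overgroups, reduction stability via Corollary~\ref{cor:dechecke}) is the right one and matches what the paper does for the pairs $(A_1,\vhi_{1^2})$, $(A_1^2,\vhi_{1^2}^{\sqtens2})$ and their combinations. But the part of your proposal that carries the real content is aimed at the wrong Levi subgroups. Table~\ref{tab:hecke 2Dn,d=2} contains no cuspidal pairs supported on $\tw2D_4(q)$ at all; in the twisted groups the non-Steinberg cuspidal data live on a Levi subgroup of type $\tw2D_3$ (isomorphic to a unitary group $\tw2A_3(q)$), whose two cuspidal unipotent Brauer characters $\vhi_{.2}$ and $\vhi_{.1^2}$ are the ones appearing in the table --- the labels ``$.2$'' and ``$.1^2$'' in Table~\ref{tab:2D4} refer to Harish-Chandra series over this $\tw2D_3$ Levi, not to cuspidal characters of $\tw2D_4(q)$ itself (which has no ordinary cuspidal unipotent character). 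Consequently your plan to quote the characteristic-zero $D_4$ parameters from Carter and to handle an order-two graph automorphism of $\tw2D_4(q)$ addresses cases that do not occur, and omits the cases that do.

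For the cases that actually need an argument, the paper's reduction-stability proofs are specific and your generic strategy would not reproduce them. For $\vhi_{.2}$ one lifts to $\RLG(\theta_1\sqtens\theta_2)$ with $L=\tw2A_2(q).(q+1)$ and $\theta_1$ the cuspidal unipotent character of $\tw2A_2(q)$; invariance under the graph automorphism of $\tw2D_3(q)$ follows from the identity $\RLG(\theta_1\sqtens\theta_2)=\RLG(\theta_1\sqtens\theta_2^{-1})$, not from Proposition~\ref{prop:q+1 reg}. For $\vhi_{.1^2}$ one lifts to $\RTG(\theta)$ with $T$ a Sylow $\Phi_2$-torus on which the normaliser in a larger type-$D$ group induces the Weyl group of type $B_3$, and a direct calculation produces a general-position $\ell$-character stable under a short root reflection precisely when $(q+1)_\ell\ge7$ --- this, and not the value of $\alpha$ in Theorem~\ref{thm:2D4} (which only needs $(q+1)_\ell>3$), is where the hypothesis enters. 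Finally, you miss that one parameter cannot be determined locally at this stage: for the pairs on $\tw2D_3A_1$ the parameter coming from the embedding into $\tw2D_5(q)$ is deferred to the proof of Theorem~\ref{thm:2D5}, where the indecomposability of the relevant induced projectives forces it to reduce to $-1$ in $k$.
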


\begin{table}[ht]
\caption{Hecke algebras and $|\Irr\cH|$ in $\tw2D_n(q)$ for $d_\ell(q)=2$}   \label{tab:hecke 2Dn,d=2}
$\begin{array}{l|c|ccccc}
 (L,\la)& \cH& n=4& 5& 6& 7\cr
\hline
 (A_1,\vhi_{1^2})& \cH(A_1;q)\otimes\cH(B_{n-3};q^2;q)& 2& 2& 4& 6\cr
 (A_1^2,\vhi_{1^2}^{\sqtens2})& \cH(B_2;q^2;q)\otimes\cH(B_{n-5};q^2;q)& -& 2& 4& 4\cr
 (\tw2D_3,\vhi_{.2})& \cH(B_{n-3};q^2;q)& 2& 2& 4& 6\cr
 (\tw2D_3,\vhi_{.1^2})& \cH(B_{n-3};q^2;q)& 2& 2& 4& 6\cr
 (\tw2D_3A_1,\vhi_{.2}\sqtens\vhi_{1^2})& \cH(A_1;q^3)\otimes\cH(B_{n-5};q^2;q)& -& 1& 2& 2\cr
 (\tw2D_3A_1,\vhi_{.1^2}\sqtens\vhi_{1^2})& \cH(A_1;q)\otimes\cH(B_{n-5};q^2;q)& -& 1& 2& 2\cr
\end{array}$
\end{table}

\begin{proof}
Recall that $\tw2D_n(q)$ has Weyl group of type $B_{n-1}$. The relative Weyl
group of a Levi subgroup of type $A_1$ inside $\tw2D_n(q)$ is of type
$A_1B_{n-3}$, see \cite[p.~70]{Ho80}. Since the modular Steinberg character
$\vhi_{1^2}$ of $A_1(q)$ is liftable,
we may determine the parameters locally, inside minimal Levi overgroups of
types $\tw2D_3$, $\tw2D_2A_1$ and $A_1^2$. The relative Weyl group for a Levi
of type $A_1^2$ is of type $\tw2D_3B_{n-5}$, and the minimal Levi overgroups
have types $A_3$, $\tw2D_3A_1$ $\tw2D_2A_1^2$ and $A_1^3$.
For the modular Steinberg character of a Levi subgroup $\tw2D_3(q)$ the
relative Weyl group has type $B_{n-3}$ and the minimal Levi overgroups are of
types $\tw2D_4$ and $\tw2D_3A_1$, with corresponding parameters $q^2$ and $q$.
Finally, the minimal Levi overgroups for a Levi subgroup of type $\tw2D_3A_1$
have types $\tw2D_5$, $\tw2D_4A_1$ and $\tw2D_3A_1^2$, with parameters $q^2$
and $q$ in the latter two. The parameter for the containment in $\tw2D_5(q)$
will be determined in the proof of Theorem~\ref{thm:2D5}.
\par
The cuspidal Brauer character $\vhi_{.2}$ of $G=\tw2D_3(q)$ lift to an ordinary
character $\RLG(\theta)$ for $\theta\in\Irr L$ by \cite[Prop.~5.4]{DM15}, where
$L=\tw2A_2(q).(q+1)$ and $\theta$ is the product of the cuspidal unipotent
character $\theta_1$ of $\tw2A_2(q)$ with an $\ell$-character $\theta_2$ of
$Z(L)$ in general position. Now
$\RLG(\theta_1\sqtens\theta_2)=\RLG(\theta_1\sqtens\theta_2^{-1})$, so
$\RLG(\theta)$ is invariant under the graph automorphism of $\tw2D_3(q)$ and
hence reduction stable. Similarly, the cuspidal modular Steinberg character
$\vhi_{.1^2}$ lifts to $\RTG(\theta)$ for $\theta$ an $\ell$-character in general
position of a Sylow $\Phi_2$-torus $T$ of $G$. Here the normaliser of $G$ in a
larger type $D$-group induces the Weyl group $W$ of type $B_3$ on $T$, and direct
calculation then shows that there is an $\ell$-character of $T$ in general
position stabilised by a short root reflection of $W$ when $(q+1)_\ell\ge7$.
\end{proof}

The groups of type $\tw2D_5$ have 20 unipotent characters. For primes $\ell>2$
with $\ell|(q+1)$, 18 of them lie in the principal $\ell$-block, and there is
a further unipotent $\ell$-block of cyclic defect, with Brauer tree
$$\vbox{\offinterlineskip\halign{$#$
        \vrule height10pt depth 2pt width 0pt&& \hfil$#$\hfil\cr
& 21.1\ & \vr& \ \bigcirc\ & \vr& \ 1.21\cr
 & & ps& & ps\cr
  }}$$

\begin{thm}   \label{thm:2D5}
 Assume $(T_\ell)$. The decomposition matrix for the principal $\ell$-block of
 $\tw2D_5(q)$, $11\le\ell|(q+1)$, is as given in Table~\ref{tab:2D5}, where
 $b \geq 2$ and $d\in\{0,1\}$.
\end{thm}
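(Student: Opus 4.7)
The plan is to proceed along the same lines as the proofs of Theorems~\ref{thm:D5} and~\ref{thm:2D4}, constructing projective characters by Harish-Chandra induction from proper Levi subgroups of $G=\tw2D_5(q)$, cutting by the principal block, and then exploiting the Hecke algebra data of Lemma~\ref{lem:param2Dn,d=2} together with (HCr) to split reducible projectives into indecomposables. The unipotent characters form a basic set by \cite{GH91,Ge93}, and $(T_\ell)$ provides uni-triangularity; combined with the block distribution, this reduces the problem to a $18\times18$ principal block (the non-principal block has cyclic defect with the Brauer tree displayed just before the statement).

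First I would obtain the principal series PIMs from the decomposition matrix of $\cH(B_4;q^2;q)$ (computable via Jacon's programme \cite{Ja05}). Next I would Harish-Chandra induce from maximal Levi subgroups of types $\tw2D_4(q)$ (using Theorem~\ref{thm:2D4}), $\tw2A_4(q)$ (using \cite{DM15}), and $\tw2D_3(q)A_1(q)$, $A_3(q)$, $\tw2D_2(q)A_2(q)$. Together with using (Tri) on pairs of such induced projectives, this should yield essentially all but a few columns, with the parameter $d\in\{0,1\}$ appearing through the induction of the partially-determined PIM $\Psi_8$ of $D_5(q)$-type (by Ennola, from the analogous situation in $D_5$). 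The Hecke algebras in Table~\ref{tab:hecke 2Dn,d=2} tell us exactly how many PIMs should appear in each non-cuspidal Harish-Chandra series (series $A_1$, $A_1^2$, $\tw2D_3\co.2$, $\tw2D_3\co.1^2$, $\tw2D_3A_1\co.2$, $\tw2D_3A_1\co.1^2$), so by (HCr) the only admissible splittings of the induced characters force indecomposability of each candidate $\Psi_i$. Along the way the Hecke algebra parameter that was left open in the proof of Lemma~\ref{lem:param2Dn,d=2} (the parameter for the containment $\tw2D_3A_1\le\tw2D_5$) is pinned down locally by comparison with the parameters in characteristic zero and with those of the minimal Levi overgroups already handled.

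Cuspidal PIMs of $G$ exist by (Csp) iff the centraliser of a Sylow $\ell$-subgroup is not contained in a proper 1-split Levi; here that centraliser is a $\Phi_2$-torus of rank $4$, so one checks it is not contained in any 1-split Levi, giving a (unique, by (St)) cuspidal modular Steinberg character. Its PIM is obtained as the unique direct summand of a Gelfand--Graev character containing $\St_G$, and one computes its unipotent part from Corollary~\ref{cor:family1}(b) combined with Theorem~\ref{thm:genpos} applied with $w_0F$ central (which holds for $\tw2D_5$ since the graph automorphism is built into $F$). For remaining unknown entries I would use (DL) for the Deligne--Lusztig character $R_w$ at a Coxeter element together with (Red) for suitable pairs $(\bL^*,\rho)$ supplied by Proposition~\ref{prop:q+1 reg} and Example~\ref{exmp:reg}(c) (Ennola version), playing the upper bounds from (Red) against the lower bounds from (DL) to equate expressions, exactly as in the proofs of Theorems~\ref{thm:D5} and~\ref{thm:D6}.

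The main obstacle, as already signalled by the statement, will be the parameter $b$: the PIM in which $b$ occurs is a cuspidal (or deep $\tw2D_3$-series) column, and its unknown entries correspond to characters belonging to families that lie beyond the reach of the Harish-Chandra induction arguments. The lower bound $b\ge2$ should follow from (DL) applied to an appropriate long Weyl group element (after checking that the PIM in question does not occur in shorter $R_v$) and/or from (Red) with the trivial character of a 2-split Levi subgroup of the form $A_1(q)\tw2D_3(q).(q+1)$ or $\tw2D_3(q).(q+1)^2$; a matching upper bound appears not to be available from (Red) with the limited supply of available cuspidal unipotent characters, which is why $b$ is left as a residual parameter analogous to $c_{17},c_{18},c_{19}$ in Theorem~\ref{thm:D6}. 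Similarly, the parameter $d$ is inherited unchanged from the input data of Theorem~\ref{thm:D5}; no independent determination is possible from within $\tw2D_5(q)$ alone.
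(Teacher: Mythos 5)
Your toolkit (HCi, the Hecke algebras of Lemma~\ref{lem:param2Dn,d=2}, (HCr), (DL), (Red), Theorem~\ref{thm:genpos}) is the right one, but the plan misses the structural heart of the argument. If you actually carry out the count you set up — matching $|\Irr\cH|$ from Table~\ref{tab:hecke 2Dn,d=2} and the principal series against the $18$ Brauer characters of the block — you find \emph{four} cuspidal columns, not one: besides the Steinberg PIM there are cuspidal PIMs headed by $\vhi_{1^4.}$, $\vhi_{.2^2}$ and $\vhi_{.21^2}$, and essentially all of the difficulty (and both residual parameters $b$ and $d$) lives in these three columns. Your proposal neither establishes their cuspidality — the paper does this by producing, via Theorem~\ref{thm:genpos}, a non-unipotent \emph{cuspidal} character $\rho$ with $\rho^\circ=\vhi_{1^4.}$ and playing the vanishing of $\langle \Psi_\vhi;\rho\rangle$ against (Tri) — nor exploits Theorem~\ref{thm:genpos} to get the linear relations among the seven subdiagonal unknowns of these columns ($-15+10b_1+4b_2-5b_3+b_4=0$, $10-5b_5+b_6=0$, $b_7=5$). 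Without those relations, (DL) at a Coxeter element plus the available (Red) pairs cannot reduce the unknowns to just $b$ and $d$; Corollary~\ref{cor:family1}(b), which you do invoke, only recovers the single entry $b_7=5$.

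Two further misattributions. The parameter $d$ is \emph{not} inherited from Theorem~\ref{thm:D5}: it is born inside the $\tw2D_5$ computation as the slack $b_3=-3+3b_1+b_2-d$ between the (DL) inequality for $R_w$ with $w=s_1s_2s_3s_1s_2s_3s_4s_5$ and the (Red) inequality coming from the cuspidal unipotent character of $\tw2A_2(q).(q+1)^3$; a priori $d\in\{0,1,2\}$, and the bound $d\le1$ is only obtained later, in the proof of Theorem~\ref{thm:2D6} (just as $b\ge2$ comes from (Red) with $(A_1(q)^2.(q+1)^3,[2]^{\sqtens2})$, not from a long-element (DL) computation). Likewise the open Hecke parameter for $\tw2D_3A_1\le\tw2D_5$ cannot be ``pinned down locally'': $\tw2D_5$ is itself the minimal Levi overgroup for that node, so there is no smaller group to compare with; the paper instead deduces the parameter $-1\in k$ from the fact that (HCr) forces the induced projective in that series to be indecomposable, so the series contains a single Brauer character.
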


\begin{table}[htbp]
\caption{$\tw2D_5(q)$, $11\le\ell|(q+1)$, principal block}   \label{tab:2D5}
$$\vbox{\offinterlineskip\halign{$#$\hfil\ \vrule height11pt depth4pt&
      \hfil\ $#$\ \hfil\vrule&& \hfil\ $#$\hfil\cr
     4.&                       1& 1\cr
    31.&            q\Ph3\Ph{10}& 1& 1\cr
    3.1&             q^2\Ph4\Ph8& 1& 1& 1\cr
   2^2.&          q^2\Ph8\Ph{10}& .& 1& .& 1\cr
     .4& \hlf q^3\Ph6\Ph8\Ph{10}& .& .& 1& .& 1\cr
  21^2.& \hlf q^3\Ph3\Ph8\Ph{10}& 1& 1& .& 1& .& 1\cr
  2.2& \hlf q^3\Ph3\Ph4^2\Ph{10}& 1& 1& 1& 1& .& .& 1\cr
    1.3&      q^4\Ph4\Ph8\Ph{10}& 1& .& 1& .& 1& .& 1& 1\cr
  1^2.2&      q^5\Ph3\Ph8\Ph{10}& 1& 1& 1& 1& .& .& 1& 1& 1\cr
  2.1^2&         q^6\Ph3\Ph6\Ph8& 1& 1& 1& 1& 2& 1& 1& .& .& 1\cr
  1^3.1&      q^6\Ph4\Ph8\Ph{10}& 1& 1& 1& 1& .& 1& .& .& 1& .& 1\cr
   1^4.& \hlf q^7\Ph6\Ph8\Ph{10}& 1& .& .& .& .& 1& .& .& .& .& 1& 1\cr
    .31& \hlf q^7\Ph3\Ph8\Ph{10}& .& .& 1& .& 3& .& 1& 1& .& 1& .& .& 1\cr
1^2.1^2&\hlf q^7\Ph3\Ph4^2\Ph{10}& 1& 1& 1& 2& 2& 1& 1& 3& 1& 1& 1& .& .& 1\cr
   .2^2&       q^{10}\Ph8\Ph{10}& .& .& .& 1& 2& .& 1& 3& .& 1& .& b& 1& 1& 1\cr
  1.1^3&          q^{12}\Ph4\Ph8& 1& .& 1& 1& 3& 1& 1& 3& 1& 1& 1& 3& 2& 1& .& 1\cr
  .21^2&       q^{13}\Ph3\Ph{10}& .& .& 1& 1& 3& .& 1& 4& 1& 1& .& 3b\mn d& 3& 1& 3& 1& 1\cr
   .1^4&                  q^{20}& .& .& 1& .& 1& .& .& 3& 1& .& 1& 3\pl5b\mn5d& 2& 1& 5& 1& 5& 1\cr
\hline
  & & ps& ps& ps& A_1^2& .2& A_1& ps& .2\!\cdot\!A_1& A_1& .1^2& A_1^2& c& .2& .1^2\!\cdot\!A_1& c& .1^2& c& c\cr
  }}$$
\end{table}

\begin{proof}
The group $G=\tw2D_5(q)$ has 18 unipotent characters in its principal
$\ell$-block. Since $\ell>2$ these unipotent characters form a basic set. As
in the previous proofs let $\Psi_1,\ldots,\Psi_{18}$ denote projective
characters with unipotent part as in the columns of the matrix in
Table~\ref{tab:2D5}. All projectives $\Psi_i$ except for $i\in\{12,15,17,18\}$
are found by (HCi). \par
Since the Sylow $\ell$-subgroups of $G$ are not contained in any proper Levi
subgroup of $G$, the PIM $\Psi_{18}$ is cuspidal by (St). Application of (HCr)
now shows that all projectives obtained so far are in fact indecomposable,
except possibly for $\Psi_5$. (For $\Psi_4$ and $\Psi_8$, there are two
possible splittings consistent with (HCr), but in both cases, neither summand
occurs among the other PIMs, so that a splitting would lead to a
non-independent set of PIMs.) This implies in particular that the series above
the two cuspidal unipotent Brauer characters of $\tw2D_3(q)A_1(q)$ both just
contain one Brauer character and hence that the parameter of the Hecke algebra
has to be~$-1\in k$, as claimed in Lemma~\ref{lem:param2Dn,d=2}.
\par
As $\ell > 10$, by Theorem~\ref{thm:genpos} there exists a non-unipotent
cuspidal character $\rho$ with $\rho^\circ = R_{w_0}^\circ = \vhi_{1^4.}$.
Assume $\vhi \in \{\vhi_{1^4.},\vhi_{.2^2}, \vhi_{.21^2}\}$ is not
cuspidal. Then the corresponding projective cover can be obtained from the
other columns of the decomposition matrix. The condition
$\langle P_\vhi;\rho\rangle = 0$ would then force $\vhi = \vhi_{.2^2}$ and
the unipotent part of $\Psi_{15}$ to be either
$\rho_{.2^2} + \rho_{1.1^3} + 3\rho_{.21^2} + \rho_{1^4.}$ or
$\rho_{.2^2} + 2\rho_{1.1^3} + 4\rho_{.21^2} + 2\rho_{1^4.}$.
This contradicts the fact that the entry at $[1.1^3]$ in $\Psi_{15}$ vanishes
by (Tri).
\par
Now, denote by $b_1,\ldots,b_7$ the yet unknown decomposition numbers below the
diagonal in the columns corresponding to $\vhi_{1^4.},\vhi_{.2^2}$
and $\vhi_{.21^2}$ (recall that the first entry below the diagonal in
$\Psi_{12}$ and in $\Psi_{15}$ vanishes). Thus, we have
\begin{align*}
&&\Psi_{12} & = [1^4.]+b_1[.2^2]+b_2[1.1^3]+b_3[.21^2]+b_4[.1^4],&& \\
&&\Psi_{15} & = [.2^2]+b_5[.21^2]+b_6[.1^4],&& \\
\text{and}&&\Psi_{17} & = [.21^2]+b_7[.1^4].&&
\end{align*}
From Theorem~\ref{thm:genpos} we obtain the relations
$$-15+10b_1+4b_2-5b_3+b_4 = 0, \quad 10-5b_5+b_6 = 0, \quad b_7 = 5.$$
To obtain further relations we decompose suitable Deligne--Lusztig characters
$R_w$ in terms of projective characters and then apply (DL).
We start with a Coxeter element $w = s_2 s_3 s_4 s_5$, whose coefficient on
$\Psi_{17}$ is $3-b_5$, forcing $b_5\geq 3$. On the other hand, if $\ell>8$
then by Proposition~\ref{prop:q+1 reg} there exists a linear $\ell$-character
in general position of a 2-split Levi subgroup $L$ of type $A_1(q).(q+1)^4$.
Then (Red) with $\rho$ being the Steinberg character of $L$ shows that
$b_5 \geq 3$, which yields $b_5= 3$ and $b_6 = 5$ using the previous equations.
Note that as a consequence neither $\Psi_{12}$, $\Psi_{17}$ nor
$\Psi_{18}$ occurs in $R_w$.
\par
With $w = s_1 s_2 s_3 s_1 s_2 s_3 s_4 s_5$, the coefficient of  $\Psi_{17}$
on $R_w$ equals $-3+3b_1+b_2-b_3$ and must be non-negative by (DL). On the
other hand, if $\ell >6$ one can use (Red) with a Levi subgroup of type
$\tw2A_2(q).(q+1)^3$ and its cuspidal unipotent character to find that
$5-3b_1-b_2+b_3 \geq 0$. This shows that there exists $d\in\{0,1,2\}$ such that
$b_3 = -3+3b_1+b_2 -d$.
\par
At this stage we can only find lower bounds on the remaining unknowns
$b_1,b_2,d$. We have already seen that $d\in\{0,1,2\}$. Using (Red) for the
two non-conjugate pairs of the form
$(L,\rho)=(A_1(q)^2.(q+1)^3,[2]^{\sqtens2})$ gives $b_1 \geq 2$ and
$b_2 \geq 3$ whenever $\ell > 6$. The other relations will be obtained in
the proof of Theorem~\ref{thm:2D6}. In the table we have set $b:=b_1$.
\end{proof}

\begin{rem}
As in Remark~\ref{rem:QwD6} consider the virtual character $Q_w$ introduced in
\cite{DM14}, for $w = (s_1s_2s_3)^2s_4s_5s_4s_3s_1s_2s_3$.
Then in the principal
block $B_0$ of $\tw2D_5(q)$ we have
$$\begin{aligned}
  B_0Q_w =&\, 48([1^4.]+[.31]+3[.2^2]+5[1.1^3]+9[.21^2]+15[.1^4]).\\
\end{aligned}$$
If the conjecture in \cite[Conj.~1.2]{DM14} holds, this should have
$\Psi_{1^4.}$ as a direct summand, which would prove that $\Psi_{1^4.}$ is a
direct summand of $[1^4.]+2[.2^2]+3[1.1^3]+6[.21^2]+13[.1^4]$. This would force
in particular $b \leq 2$ hence $b=2$. We deduce that assuming the conjecture,
there are the following two possibilities left for the character of
$\Psi_{1^4.}$:
$$\begin{array}{c|cc}
  1^4.&  1&   1\\
  .2^2&  2&   2 \\
 1.1^3&  3&   3 \\
 .21^2&  6&   5 \\
  .1^4& 13&   8 \\
\end{array}$$
\end{rem}

\section{Unipotent decomposition matrix of $\tw2D_6(q)$}

The groups of type $\tw2D_6$ have 36 unipotent characters, all of which lie in
the principal $\ell$-block for primes $\ell$ dividing $q+1$.

\begin{thm}   \label{thm:2D6}
 Assume $(T_\ell)$. The decomposition matrix for the principal $\ell$-block
 of $\tw2D_6(q)$, $11\le\ell|(q+1)$, is as given in Table~\ref{tab:2D6}, where
 $b \geq 2$ and $d \in \{0,1\}$ are as in Theorem~\ref{thm:2D5}. All columns but
 possibly $\Psi_5$ are indecomposable.
\end{thm}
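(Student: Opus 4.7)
The plan is to mirror the proof strategy of Theorems~\ref{thm:D6} and~\ref{thm:2D5}, combining Harish--Chandra induction from the already-known decomposition matrices of proper Levi subgroups with the numerical constraints produced by Deligne--Lusztig characters and reduction of regular series. Since $\ell>2$, the $36$ unipotent characters form a basic set of the principal $\ell$-block of $G=\tw2D_6(q)$, and $(T_\ell)$ ensures that a total ordering compatible with $a$-values yields a lower uni-triangular unipotent decomposition matrix. I write $\Psi_1,\ldots,\Psi_{36}$ for the linear combinations of unipotent characters displayed in the columns of Table~\ref{tab:2D6}, and aim to show that each is the character of a PIM, with the sole exception $\Psi_5$, for which only projectivity will be claimed.

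First, I would produce the $\Psi_i$ via (HCi): the principal series PIMs come from the decomposition of the Hecke algebra $\cH(B_5;q^2;q)$; the further non-cuspidal Harish--Chandra series come from Harish--Chandra induction of PIMs from the proper $F$-stable Levi subgroups $\tw2D_5(q)$, $\tw2D_4(q)A_1(q)$, $\tw2D_3(q)A_2(q)$, $\tw2D_3(q)A_1(q)^2$, and $A_5(q)$, whose unipotent decomposition matrices are already known respectively by Theorem~\ref{thm:2D5}, Theorem~\ref{thm:2D4}, \cite{DM15}, and \cite{Ja90}. The number of PIMs in each such series is fixed by Table~\ref{tab:hecke 2Dn,d=2}, so whenever an induced projective carries too many constituents, a combination of (HCr) and (Tri) forces it to split in the unique admissible way. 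The Steinberg PIM is cuspidal by (St) since a Sylow $\ell$-subgroup of $G$ is not contained in any proper $F$-stable parabolic, and (Csp) identifies which other Brauer characters must be cuspidal. Once every Harish--Chandra series has been accounted for, (HCr) implies that each $\Psi_i$ apart from $\Psi_5$ is indecomposable, because any proposed proper projective subcharacter would violate Harish--Chandra restriction non-negativity, triangularity, or the count of simple modules in its series.

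The main obstacle is to pin down the numerical unknowns: the parameter $b$ inherited from Theorem~\ref{thm:2D5}, the value $d\in\{0,1\}$ of Theorem~\ref{thm:D5}, and the decomposition numbers introduced by the new cuspidal columns of $G$. My plan here is the one employed at the end of the proofs of Theorems~\ref{thm:D6} and~\ref{thm:2D5}: apply Theorem~\ref{thm:genpos} to obtain linear equations on the multiplicities in $\vhi_\St$; compute scalar products of projectives with Deligne--Lusztig characters $R_w$ for a well-chosen sequence of $w\in W$ of increasing Bruhat length starting from a Coxeter element, each of which yields a lower bound via (DL); and apply (Red) to pairs $(L,\rho)$ where $L$ is a $2$-split Levi subgroup satisfying the hypotheses of Proposition~\ref{prop:q+1 reg} under $(q+1)_\ell\ge 11$, for instance of types $\tw2D_k(q)\cdot(q+1)^{6-k}$ with $\rho$ a suitable unipotent character, or of types $\tw2A_k(q)$ with a cuspidal unipotent $\rho$, yielding matching upper bounds. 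The crucial feedback to Theorem~\ref{thm:2D5} will come from Harish--Chandra restricting the new cuspidal PIMs of $G$ back to $\tw2D_5(q)$ and imposing non-negativity of the resulting decomposition into PIMs, which is what will enforce $b\ge 2$ in the principal $\ell$-block of $\tw2D_5(q)$. The hardest step will be the coordinated closure: one must simultaneously select enough short Bruhat elements and enough non-conjugate $2$-split Levis so that the upper and lower bounds on every entry coincide, or leave at worst the freedom stated. Some entries will presumably remain genuinely open, which is why the statement exempts $\Psi_5$ from indecomposability.
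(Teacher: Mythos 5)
There is a genuine gap, and it sits exactly at the point where the twisted case diverges from the untwisted one. You assert that the Steinberg PIM of $G=\tw2D_6(q)$ is cuspidal by (St) because a Sylow $\ell$-subgroup is not contained in any proper $1$-split Levi subgroup. This is false: the $\Phi_2$-part of $|\tw2D_6(q)|$ is only $\Phi_2^5$ (note $q^6+1=\Phi_4(q)\Phi_{12}(q)$ contributes no factor $\Phi_2$), which is the same as the $\Phi_2$-part of the order of a Levi subgroup of type $\tw2D_5$. Hence a Sylow $\ell$-subgroup, and a fortiori its centraliser, is contained in that proper Levi subgroup, so by (Csp) the principal block of $G$ has \emph{no} cuspidal unipotent Brauer characters, and by (St) the Steinberg PIM lies in the Harish--Chandra series of the cuspidal modular Steinberg character $\vhi_{.1^4}$ of $\tw2D_5(q)$. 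This is not a cosmetic slip: it changes the entire endgame. The six columns you would treat as cuspidal (with unknown entries below the diagonal to be pinned down by (Tri), (DL), (Red) and Theorem~\ref{thm:genpos}, as in the untwisted $D_6$ proof) must instead be obtained by splitting the Harish--Chandra inductions $\tPsi_{20},\tPsi_{25},\tPsi_{29}$ of the three cuspidal PIMs $\Psi_{1^4.}$, $\Psi_{.2^2}$, $\Psi_{.21^2}$ of $\tw2D_5(q)$ into two indecomposable summands each, using (HCr), (Tri), and further restriction from $\tw2D_7(q)$. Your counting step (``once every Harish--Chandra series has been accounted for'') would not close up under your hypothesis, since you would be expecting cuspidal series that do not exist.

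A second, related inaccuracy: the feedback from this theorem to Theorem~\ref{thm:2D5} is not the inequality $b\ge 2$, which is already proved in the $\tw2D_5$ argument itself via (Red) applied to $(A_1(q)^2.(q+1)^3,[2]^{\sqtens2})$. What the $\tw2D_6$ computation supplies is the exact value $b_2=3$ (and $c=0$): one decomposes $\tPsi_{20}$ into $\Psi_{20}'$ and $\Psi_{32}$ with undetermined coefficients, evaluates the Deligne--Lusztig character $R_w$ for $w=s_2s_1s_3s_1s_2s_3s_4s_5s_6$ on the Steinberg-series PIMs $\Psi_{34},\Psi_{36}$ (which cannot occur in $\Psi_{20}'$ or $\Psi_{32}$), and finds opposite coefficients that must both vanish by (DL), giving $c=3-b_2$; combined with $b_2\ge 3$ from the $\tw2D_5$ proof this forces $b_2=3$, $c=0$. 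Finally, your claim that (HCr) alone settles indecomposability of all columns but $\Psi_5$ is too optimistic: for $\Psi_{20}'$ one additionally needs Harish--Chandra induction of $\Psi_{32}$ to $\tw2D_7(q)$ and restriction back, together with (Red) for the trivial character of a $2$-split Levi subgroup of type $A_3(q).(q+1)^3$, to exclude the remaining possible summand $\Psi_{28}$.
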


\begin{table}[htbp]
\caption{$\tw2D_6(q)$, $11\le\ell|(q+1)$}   \label{tab:2D6}
{\small
$$\vbox{\offinterlineskip\halign{$#$\hfil\ \vrule height9pt depth4pt&
      \hfil\ $#$\ \hfil\vrule&& \hfil\ $#$\hfil\cr
    5.& 1& 1\cr
   41.& q& .& 1\cr
   4.1& q^2& .& 1& 1\cr
   32.& q^2& .& .& .& 1\cr
    .5& \hlf q^3& .& .& 1& .& 1\cr
   3.2& \hlf q^3& 1& .& .& 1& .& 1\cr
  31.1& \hlf q^3& .& 1& 1& .& .& .& 1\cr
 31^2.& \hlf q^3& 1& .& .& 1& .& .& .& 1\cr
   1.4& \hlf q^4& 1& .& .& .& .& 1& .& .& 1\cr
   2.3& \hlf q^4& .& 1& 1& .& 1& .& .& .& .& 1\cr
  2^2.1& \hlf q^4& .& .& .& .& .& .& 1& .& .& .& 1\cr
  2^21.& \hlf q^4& .& .& .& 1& .& .& .& 1& .& .& .& 1\cr
  21.2& q^5& .& 1& 1& .& .& .& 1& .& .& 1& 1& .& 1\cr
  3.1^2& q^6& 1& .& .& 1& 2& 1& .& 1& .& .& .& .& .& 1\cr
  1^2.3& q^6& .& 1& 1& .& 1& .& .& .& .& 1& .& .& 1& .& 1\cr
 21^2.1& q^6& .& 1& 1& .& .& .& 1& .& .& .& 1& .& 1& .& .& 1\cr
   .41& \hlf q^7& .& .& .& .& 2& 1& .& .& 1& .& .& .& .& 1& .& .& 1\cr
  2.21& \hlf q^7& 1& .& .& 1& 2& 1& .& 1& .& .& .& 1& .& 1& .& .& .& 1\cr
 21.1^2& \hlf q^7& .& 1& 1& .& .& .& 1& .& 2& 1& 1& .& 1& .& .& 1& .& .& 1\cr
  21^3.& \hlf q^7& .& 1& .& .& .& .& .& .& .& .& .& .& .& .& .& 1& .& .& .& 1\cr
  1.31& q^8& 1& .& .& .& 2& 1& .& .& 1& .& .& .& .& 1& .& .& 1& 1& .& .\cr
 1^3.2& q^8& 1& .& .& 1& .& 1& .& 1& .& .& .& .& .& .& .& .& .& .& .& .\cr
 1^2.21& q^9& 1& .& .& 1& 2& 1& .& 1& .& .& .& 1& .& 1& 2& .& .& 1& .& .\cr
  .32& \hlf q^{10}& .& .& .& .& 1& .& .& .& 2& 1& .& .& .& .& .& .& .& .& 1& .\cr
  1.2^2& \hlf q^{10}& .& .& .& .& 2& .& .& .& .& .& .& 1& .& 1& 2& .& 1& 1& .& b\cr
1^3.1^2& \hlf q^{10}& 1& .& .& 1& 2& 1& .& 2& .& .& .& 1& .& 1& 2& .& .& .& .& .\cr
1^4.1& \hlf q^{10}& .& 1& 1& .& .& .& .& .& .& .& .& .& 1& .& .& 1& .& .& .& 1\cr
 2.1^3& q^{12}& .& 1& 1& .& 1& .& .& .& 2& 1& .& .& 1& .& .& 1& 2& .& 1& 3\cr
 .31^2& \hlf q^{13}& .& .& 1& .& 2& .& .& .& 2& 1& .& .& 1& .& 1& .& 2& .& 1& 2b\mn d\cr
 1.21^2& \hlf q^{13}& 1& .& .& .& 4& 1& .& 1& 1& .& .& 1& .& 2& 2& .& 1& 1& .& b\cr
1^2.1^3& \hlf q^{13}& .& 1& 1& .& 1& .& .& .& 2& 1& 1& .& 2& .& 1& 1& 2& .& 1& 3\cr
  1^5.& \hlf q^{13}& 1& .& .& .& .& .& .& 1& .& .& .& .& .& .& .& .& .& .& .& .\cr
 .2^21& q^{16}& .& .& .& .& 1& .& .& .& 2& 1& 1& .& 1& .& 1& .& 2& .& 1& 2b\mn d\cr
 1.1^4& q^{20}& 1& .& .& .& 2& 1& .& 1& 1& .& .& .& .& 1& 2& .& .& .& .& 3b\mn4d\cr
 .21^3& q^{21}& .& .& .& .& 2& 1& .& .& 1& .& .& .& .& 1& 2& .& 1& .& .& 4b\mn4d\cr
  .1^5& q^{30}& .& .& 1& .& 1& .& .& .& .& .& .& .& 1& .& 1& .& 2& .& .& 3\pl2b\mn d\cr
\hline
  & & ps& ps& ps& ps& .2& ps& ps& A_1& .2& ps& A_1^2& A_1^2& A_1& .1^2& .2\!\cdot\!A_1& A_1& .2& ps& .1^2& 1^4.\cr
   \text{dec?}& & & & & & *\cr
  }}$$}
\end{table}

\begin{table}[htbp]
\caption{$\tw2D_6(q)$, $11\le\ell|(q+1)$, cntd.}   \label{tab:2D6,cntd}
{\small
$$\vbox{\offinterlineskip\halign{$#$\hfil\ \vrule height9pt depth4pt&
      \hfil\ $#$\ \hfil\vrule&& \hfil\ $#$\hfil\cr
   1.31& q^8& 1\cr
  1^3.2& q^8& .& 1\cr
 1^2.21& q^9& 1& 1& 1\cr
    .32& \hlf q^{10}& .& .& .& 1\cr
  1.2^2& \hlf q^{10}& 1& .& 1& .& 1\cr
1^3.1^2& \hlf q^{10}& .& 1& 1& .& .& 1\cr
  1^4.1& \hlf q^{10}& .& .& .& .& .& .& 1\cr
  2.1^3& q^{12}& .& .& .& .& .& .& .& 1\cr
  .31^2& \hlf q^{13}& .& .& .& 1& 2& .& .& 1& 1\cr
 1.21^2& \hlf q^{13}& 1& 1& 1& 2& 1& .& .& .& .& 1\cr
1^2.1^3& \hlf q^{13}& 2& .& .& .& .& .& 1& 1& .& .& 1\cr
   1^5.& \hlf q^{13}& .& .& .& .& .& 1& .& .& .& .& .& 1\cr
  .2^21& q^{16}& 2& .& .& 1& 2& .& .& 1& 1& .& 1& b& 1\cr
  1.1^4& q^{20}& .& 1& 1& 2& 3& 1& .& .& 4& 1& .& 3& .& 1\cr
  .21^3& q^{21}& 1& 1& 1& 2& 4& .& .& .& 4& 1& .& 2b\mn d& 2& 1& 1\cr
   .1^5& q^{30}& 2& .& .& .& 2& .& 1& 1& 1& .& 1& 3b\mn 4d& 3& .& 4& 1\cr
\hline
 & & \!.2\!\cdot\!A_1& A_1& .1^2\!\cdot\!A_1& .2& .2^2& A_1^2& A_1^2& .1^2& .21^2& .1^2& .1^2\!\cdot\!A_1\!\!& 1^4.& .2^2& .1^4& .21^2& .1^4\cr
  }}$$}
\end{table}

\begin{proof}
Since $\ell>2$ the unipotent characters form a
basic set for the unipotent blocks. As before, we denote by $\Psi_i$,
$1\le i\le 36$, (virtual) projective characters of $G$ whose unipotent parts
decompose as given in the respective columns of Table~\ref{tab:2D6}, and we
propose to show that these are the unipotent PIMs of $G$.
\par
Note that the decomposition matrices of the unipotent blocks of all proper
Levi subgroups are known, up to the undetermined entries in the PIM
$\Psi_{1^4.}$ of $\tw2D_5(q)$. The $\Psi_i$, $i\in\{1,2,3,4,6,7,10,18\}$ are
the PIMs in the principal series, so are obtained from the decomposition matrix
of the Hecke algebra $\cH(B_5;q^2;q)$ (which can be computed with \cite{Ja05}).
Now consider the Harish-Chandra series above the cuspidal $\ell$-modular
Steinberg character $\vhi_{1^2}$ of a
Levi subgroup of type $A_1$. Harish-Chandra inducing the two PIMs in that
series from a Levi subgroup of type $\tw2D_5$ gives $\Psi_8+2\Psi_{16}$ and
$\Psi_{13}+2\Psi_{22}$. The decomposition matrix of the corresponding Hecke
algebra $\cH(A_1;q)\otimes\cH(B_3;q^2;q)$ (see Lemma~\ref{lem:param2Dn,d=2})
shows that each of these must have two summands in the $A_1$-series, one with
multiplicity~2. The only possible splitting compatible with (HCr) gives the
four PIMs $\Psi_8$, $\Psi_{13}$, $\Psi_{16}$ and $\Psi_{22}$.
\par
Next consider the Harish-Chandra series of the cuspidal $\ell$-modular
Steinberg character $\vhi_{1^2}^{\sqtens2}$ of a
Levi subgroup of type $A_1^2$. HC-inducing the two PIMs in that series from
a Levi subgroup of type $A_4$ gives $\Psi_{11}+\Psi_{12}$ and
$\Psi_{26}+\Psi_{27}$. Comparison with the corresponding Hecke algebra
$\cH(B_2;q^2;q)\otimes\cH(B_1;q^2)$ shows that both have to have two PIMs as
summands in this series. Again splitting via (HCr) yields
$\Psi_{11},\Psi_{12},\Psi_{26}$ and $\Psi_{27}$.
\par
Now consider the Harish-Chandra series of the cuspidal $\ell$-modular Steinberg
character $\vhi_{.1^2}$ of a Levi subgroup of type $\tw2D_3$. Here,
Harish-Chandra inducing the two PIMs in that series from a Levi subgroup of
type $\tw2D_5$ gives $\Psi_{14}+2\Psi_{19}$ and $\Psi_{28}+2\Psi_{30}$.
Comparison with the decomposition matrix of the Hecke algebra $\cH(B_3;q^2;q)$
and application of (HCr) then shows that $\Psi_{14},\Psi_{19},\Psi_{28}$ and
$\Psi_{30}$ are projective.
\par
The situation for the Harish-Chandra series of the cuspidal unipotent Brauer
character $\vhi_{.2}$ of a Levi subgroup of type $\tw2D_3$ is precisely as in
the previous case, yielding the columns $\Psi_5,\Psi_9,\Psi_{17}$ and
$\Psi_{24}$.
Now consider the Harish-Chandra series above the cuspidal unipotent character
$\vhi_{.2}\boxtimes\vhi_{1^2}$ of a Levi subgroup of type $\tw2D_3A_1$. By
Table~\ref{tab:hecke 2Dn,d=2} this series will contain two PIMs, whence the
Harish-Chandra induction of the unique PIM of a Levi subgroup of type
$\tw2D_3A_2$ in that series, viz. $\Psi_{15}+\Psi_{21}+\Psi_{22}$, must have
at least two summands in that series. The only minimal summands compatible
with (HCr) are $\Psi_{15}$, $\Psi_{21}$, $\Psi_{22}$, of which the later lies
in a lower Harish-Chandra series. But $\Psi_{22}$ cannot be a summand in
the PIMs induced from $\tw2D_4A_1$, so this yields $\Psi_{15}$ and $\Psi_{21}$.
Next consider the Harish-Chandra series of the modular Steinberg character
$\vhi_{.1^2}\boxtimes\vhi_{1^2}$ of a Levi subgroup of type $\tw2D_3.A_1$.
The Harish-Chandra induction of the corresponding PIM from $\tw2D_3.A_2$ gives
$\tPsi_{23}=\Psi_{23}+\Psi_{31}$. As a Levi subgroup of type $\tw2D_4.A_1$
has two PIMs in this series, $\tPsi_{23}$ must have at least two summands in
this series, and the only possibility according to (HCr) is $\Psi_{23}$ plus
$\Psi_{31}$.
\par
Since a Sylow $\ell$-subgroup of $G$ is contained in a Levi subgroup of $G$
of type $\tw2D_5$, $G$ does not possess cuspidal unipotent Brauer characters by
(Csp) and moreover, by (St) the Harish-Chandra induction of the Steinberg PIM
from $\tw2D_5(q)$ must contain the Steinberg PIM of $G$, which gives
$\Psi_{34}$ and $\Psi_{36}$.
\par
We have now accounted for all Harish-Chandra series except for those above
the three cuspidal Brauer characters $\vhi_{1^4.}$, $\vhi_{.2^2}$ and
$\vhi_{.21^2}$ of $\tw2D_5(q)$. As we are still missing six projectives, each
of these three series will contribute two PIMs, and they must occur as summands
of the Harish-Chandra induction
$\tPsi_{20},\tPsi_{25}$ and $\tPsi_{29}$ of the respective cuspidal PIMs from
$\tw2D_5(q)$. The only way that $\tPsi_{25}$ can split according to (HCr) is as
$\tPsi_{25}=\Psi_{25}+\Psi_{33}$. By (Tri) there is a PIM of $G$ containing
$[.21^3]$ once plus some copies of the Steinberg character. This cannot
be a summand of $\tPsi_{20}$, so it must occur in $\tPsi_{29}$. Then (HCr)
shows that $\Psi_{35}$ is a summand of $\tPsi_{29}$. Harish-Chandra restriction
of a PIM from $\tw2D_7(q)$ which contains the unipotent character $[.31^3]$
once and other unipotent characters of at least the same $a$-value decomposes
as $\tPsi_{29}-\Psi_{30}$ plus $\Psi_i$'s with higher $a$-value, so $\Psi_{30}$
is a summand of $\tPsi_{29}$ as well. This yields $\Psi_{29}$.
\par
Finally, the PIM involving $[1^2.1^3]$ and constituents with higher $a$-value
must be a summand of $\tPsi_{20}$. Again, (HCr) then yields two projective
modules $\Psi_{20}'$ and $\Psi_{32}$ as given below:
$$\vbox{\offinterlineskip\halign{$#$\hfil\ \vrule height9pt depth4pt&
      \hfil\ $#$\ \hfil&& \hfil\ $#$\hfil\cr
  21^3.& 1\cr
    .32& b_1\cr
  1.2^2& b_1\cr
1^3.1^2& .\cr
  1^4.1& 1\cr
  2.1^3& b_2\cr
  .31^2& 3b_1\pl b_2\mn d\mn 3\cr
 1.21^2& 3b_1\pl 2b_2\mn d\mn 3\cr
1^2.1^3& b_2\cr
   1^5.& . & 1\cr
  .2^21& 3b_1\pl b_2\mn d\mn 3 & b_1\cr
  1.1^4& 5b_1\pl b_2\mn 5d\mn c& b_2\pl c\cr
  .21^3& 6b_1\pl b_2\mn 5d\mn c& 2b_1\pl b_2\mn d\pl c\mn 3\cr
   .1^5& 2b_1\pl 2b_2\mn d\pl c\mn 3& 3\pl 3b_1\mn b_2\mn 4d\mn c\cr
\hline
  & \Psi_{20}'& \Psi_{32}&\cr
  }}$$
where $c$ is some non-negative integer.
\par
Let $w=s_2s_1s_3s_1s_2s_3s_4s_5s_6$. The coefficients of $\Psi_{34}$ and
$\Psi_{36}$ on $R_w$ since these PIMs cannot occur in  $ \Psi_{20}'$ or  $\Psi_{32}$.
We find that the coeeficients are opposed therefore they must be zero by (DL).
This gives $-24+8b_2+8c = 0$, hence $c=3-b_2$. But we showed that
$b_2 \geq 3$ at the end of the proof of Theorem~\ref{thm:2D5}, therefore we
must have $b_2 = 3$ and $c=0$.  In addition, if $\ell > 4$ then (Red) with
$(\tw2A_2(q)A_1(q^2).(q+1)^2, \tw2A_2\sqtens[2])$ gives $d \leq 1$.
\par
All columns are indecomposable, except possibly for $\Psi_5$ which might
contain once $\Psi_{18}$, and for $\Psi_{20}'$ which could contain the PIMs
$\Psi_{24},\Psi_{28}$ and $\Psi_{30}$.
Now Harish-Chandra inducing $\Psi_{32}$ to $\tw2D_7(q)$, restricting it back
and decomposing it in the $\Psi_i$ shows that $\Psi_{20}'$ must indeed contain
$\Psi_{24}$ at least $b_1$ times, and $\Psi_{30}$ at least $3-d$ times. The
new character could only contain $\Psi_{28}$.
However, when $\ell > 6$, one can use (Red) with the trivial character of
a Levi subgroup of type $A_3(q).(q+1)^3$ to see that $\Psi_{28}$ cannot be a
summand of $\Psi_{20}$.
\end{proof}


\section{Unipotent decomposition matrix of $\tw2E_6(q)$}

\begin{lem}   \label{lem:param2E6,d=2}
 Let $q$ be a prime power and $2<\ell|(q+1)$. The Hecke algebras of various
 $\ell$-modular cuspidal pairs $(L,\la)$ of Levi subgroups $L$ in $\tw2E_6(q)$
 and their respective numbers of irreducible characters are as given in
 Table~\ref{tab:hecke 2E6,d=2}.
\end{lem}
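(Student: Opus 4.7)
The plan is to follow exactly the strategy of the proofs of Lemmas~\ref{lem:paramDn,d=2}, \ref{lem:paramE6,d=2} and \ref{lem:param2Dn,d=2}. For each cuspidal pair $(L,\la)$ appearing in Table~\ref{tab:hecke 2E6,d=2}, the proof decomposes into three steps: identify the relative Weyl group $W_G(\bL,\la)=N_G(\bL,\la)/L$; verify reduction stability so that Proposition~\ref{prop:redstable-geck} applies; and then determine the Hecke algebra parameters $\{q_s\}$, from which $|\Irr\cH|$ can be computed (e.g.\ with Jacon's programme \cite{Ja05}).

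First I would identify the relative Weyl groups. Since $F$ involves the non-trivial graph automorphism of the Dynkin diagram of $E_6$, the $F$-conjugacy classes of $F$-stable Levi subgroups of $\bG$ and the associated relative Weyl groups (together with the $F$-action on them) are obtained from the tables in \cite{Ho80}, or equivalently computed in \Chevie{}~\cite{Chv}. This step produces a Coxeter system of twisted type on which the Hecke algebra is modelled.

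Second, I would deal with reduction stability. For the liftable cuspidal pairs --- in particular the modular Steinberg characters $\vhi_{1^2}$ of $A_1(q)$ and $\vhi_{1^2}^{\sqtens 2}$ of $A_1(q)^2$ --- reduction stability follows from Proposition~\ref{prop:redstab} exactly as in Lemmas~\ref{lem:paramDn,d=2} and~\ref{lem:paramE6,d=2}, because $[\bL,\bL]$ either has no non-trivial graph automorphisms or is a product of factors permuted regularly by $N_G(\bL)$. For the cuspidal unipotent character $D_4$ and for the cuspidal Brauer characters $\vhi_{1.1^3}$ and $\vhi_{.1^4}$ of a $D_4$-type Levi, one argues as at the end of the proof of Lemma~\ref{lem:paramE6,d=2}: each lifts to an explicit Deligne--Lusztig character $\RTG(\theta)$ (respectively $\RLG(\theta)$) for a suitably chosen $\ell$-character $\theta$ in general position, and one must exhibit such a $\theta$ that is stable under the subgroup of graph automorphisms of $D_4$ induced by $N_\bG(\bL)^F$ inside $\tw2E_6(q)$. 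Here $N_\bG(\bL)^F$ will in general induce a proper subgroup of $\fS_3$ (because of the twist by the $E_6$-graph automorphism), which actually \emph{simplifies} the stability condition compared with the split $E_6$ case, so the argument via the $F_4$-construction and \cite[Tab.~T.A.133]{Koe06} goes through for $(q+1)_\ell$ sufficiently large, namely $(q+1)_\ell>5$.

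Third, once reduction stability is in hand, Proposition~\ref{prop:redstable-geck} says the parameters of $\cH_G(L,kX)$ are the $\ell$-modular reductions of the characteristic-zero parameters, and those in turn are determined locally inside minimal Levi overgroups of $L$ in $\bG$. Most of these local contributions have already been computed in Lemmas~\ref{lem:paramDn,d=2}, \ref{lem:paramE6,d=2} and~\ref{lem:param2Dn,d=2} (the minimal Levi overgroups inside $\tw2E_6$ are of types $\tw2D_4$, $\tw2D_5$, $A_5$, $\tw2A_5$, $A_1\tw2A_3$, etc.), so only a handful of new local computations remain. With the type of $W_G(\bL,\la)$ and all parameters known, the numbers $|\Irr\cH|$ are then read off. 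I expect the principal obstacle to be the bookkeeping of which subgroup of graph automorphisms $N_\bG(\bL)^F$ induces on each $[\bL,\bL]$ in the twisted group $\tw2E_6(q)$, since this governs both the relative Weyl group structure and the verification of reduction stability for the $D_4$-cuspidal pairs.
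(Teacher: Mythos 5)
Your proposal reproduces the correct general template (relative Weyl groups, reduction stability, local determination of parameters), but it is aimed at the wrong set of cuspidal pairs. Table~\ref{tab:hecke 2E6,d=2} involves Levi subgroups of types $A_1$, $\tw2D_3$ and $\tw2A_5$; there is no cuspidal pair supported on a split $D_4$-Levi here, and no pair $(A_1^2,\vhi_{1^2}^{\sqtens2})$ either. The bulk of your argument --- the stability under the $\fS_3$ of graph automorphisms of $D_4$, the $F_4$-construction, the appeal to \cite[Tab.~T.A.133]{Koe06} --- belongs to the proof of Lemma~\ref{lem:paramE6,d=2} for split $E_6(q)$ and has no role in $\tw2E_6(q)$. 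Likewise your list of ``minimal Levi overgroups of types $\tw2D_4$, $\tw2D_5$, $A_5$, \dots'' does not match the twisted group: for the $A_1$-pair the minimal overgroups are of types $\tw2D_3$ and $A_1\tw2D_2$, and the $\tw2A_5$-Levi is maximal, so its parameters cannot be computed ``locally'' at all.

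The genuinely missing content is the treatment of the four cuspidal objects on $\tw2A_5(q)$, which is where the real work of the lemma lies. One needs: (i) the parameter $q^9$ for the ordinary cuspidal unipotent character $[321]$, which is quoted from \cite[p.~464]{Ca} rather than computed in overgroups; (ii) the parameter $q$ for $\vhi_{2^21^2}$ and $\vhi_{21^4}$, obtained by lifting each (via \cite[Prop.~5.4]{DM15}) to an ordinary cuspidal character in the Lusztig series of an explicit $\ell$-element and comparing its centraliser in $\tw2A_5(q)$ with that in $\tw2E_6(q)$; (iii) the parameter $-1\in k$ for the modular Steinberg character $\vhi_{1^6}$, which is forced by the indecomposability under (HCr) of its Harish-Chandra induction to $\tw2E_6(q)$ --- a qualitatively different argument from reduction to characteristic zero; and (iv) reduction stability for all $\tw2A_5$-pairs, which follows from the extended Dynkin diagram showing that the relative Weyl group acts trivially on the $A_5$-factor (not from any $D_4$-triality consideration). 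Without these steps the entries $\cH(A_1;q^9)$, $\cH(A_1;q)$, $\cH(A_1;q)$, $\cH(A_1;-1)$ of the table are not established, so the proposal as written does not prove the lemma.
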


\begin{table}[ht]
\caption{Hecke algebras in $\tw2E_6(q)$ for $d_\ell(q)=2$}   \label{tab:hecke 2E6,d=2}
$\begin{array}{l|c|ccccc}
 (L,\la)& \cH& |\Irr\cH|\cr
\hline
 (A_1,\vhi_{1^2})& \cH(B_3;q;q^2)& 3\cr
 (\tw2D_3,\vhi_{.2})& \cH(B_2;q^2;q)& 2\cr
 (\tw2D_3,\vhi_{.1^2})& \cH(B_2;q^2;q)& 2\cr
 (\tw2A_5,\vhi_{321})& \cH(A_1;q^9)& 1\cr
 (\tw2A_5,\vhi_{2^21^2})& \cH(A_1;q)& 1\cr
 (\tw2A_5,\vhi_{21^4})& \cH(A_1;q)& 1\cr
 (\tw2A_5,\vhi_{1^6})& \cH(A_1;-1)& 1\cr
\end{array}$
\end{table}

\begin{proof}
For type $A_1$ the minimal Levi overgroups in $\tw2E_6(q)$ are of types
$\tw2D_3$ and $A_1\tw2D_2$; for $\tw2D_3$ the minimal overgroups are of types
$\tw2D_4$ and $\tw2A_5$. For the ordinary cuspidal unipotent character $[321]$
of $\tw2A_5(q)$ the Hecke algebra inside $\tw2E_6(q)$ is known to have
parameter $q^9$ (see \cite[p.~464]{Ca}). The induction of the Steinberg PIM
of $\tw2A_5(q)$ to $\tw2E_6(q)$ is indecomposable by (HCr), so the parameter
here is~$-1\in k$. By \cite[Prop.~5.4]{DM15}, the cuspidal Brauer character
$\vhi_{21^4}$ of $\tw2A_5(q)$ lifts to an ordinary cuspidal character in the
Lusztig series of an $\ell$-element with centraliser $\tw2A_2(q)(q+1)^4$, whose
centraliser in $\tw2E_6(q)$ is of type $\tw2A_2(q)A_1(q)(q+1)^3$. Similarly,
$\vhi_{2^21^2}$ lifts to an ordinary cuspidal character in the Lusztig series of
an $\ell$-element with centraliser $\tw2A_2(q)^2(q+1)^2$ (resp.
$\tw2A_2(q)^2 A_1(q) (q+1)$) in $\tw2A_5(q)$ (resp.~$\tw2E_6(q)$). Thus the
parameter of the Hecke algebra is~$q$ in either case.
\par
The reduction stability of the cuspidal unipotent Brauer characters of
$\tw2D_3(q)$ was already discussed in Lemma~\ref{lem:param2Dn,d=2}. From the
extended Dynkin diagram it can be seen that the relative Weyl group of an
$A_5$-Levi subgroup inside $E_6(q)$ acts trivially on the $A_5$-factor, so
all cuspidal Brauer characters of $\tw2A_5(q)$ are reduction stable.
\end{proof}

The groups $\tw2E_6(q)$ have 30 unipotent characters. For primes $\ell>3$
dividing $q+1$, 25 of these lie in the principal $\ell$-block and three more,
namely those labelled $\phi_{8,3}',\phi_{8,9}'',\phi_{16,5}$, lie in a block
of defect $(q+1)^2_\ell$. The unipotent part of the decomposition matrix
of this latter $\ell$-block is easily shown to be the identity matrix. The
last two unipotent characters, both of which are cuspidal, are of defect~0.
We obtain the following approximation to the decomposition matrix of the
principal block:

\begin{thm}   \label{thm:2E6,d=2}
 The decomposition matrix for the principal $\ell$-block of $\tw2E_6(q)$,
 $\ell|(q+1)$, $\ell >3$ and $(q+1)_\ell > 11$, is approximated as given in
 Table~\ref{tab:2E6,d=2}.
 The unknown entries satisfy $f \in \{0,1,2\}$ and
 $$\begin{aligned}
 x_5 &= 6x_1-x_2+x_3+2x_4-7\\
 x_7 &= 3x_1-3x_2+x_3+2x_4-5\\
 x_8 &= 6x_1-3x_2+x_3+2x_4+2x_6-12\\
 x_9 &= 6x_1-5x_2+4x_4+5x_6-15,\\
 x_{10} &= 15x_1-15x_2+x_3+10x_4+5x_6-25,\\
 v_2 &= 5v_1+f-9,\\
 v_3 &= 10v_1+6f-24.
 \end{aligned}$$
 All projectives listed are indecomposable except possibly for $\Psi_6$ and
 $\Psi_7$.
\end{thm}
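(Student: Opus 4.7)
The plan is to follow the same template as in the preceding proofs for $E_6(q)$ and $\tw2D_6(q)$. Set $G = \tw2E_6(q)$. Since $\ell$ is very good, the unipotent characters form a basic set for the principal $\ell$-block; let $\Psi_1, \Psi_2, \ldots$ denote the (virtual) characters whose unipotent parts are the successive columns of the displayed table, which we aim to realise as the characters of PIMs. A first pass constructs most columns via (HCi): the PIMs of proper $F$-stable Levi subgroups (which include $\tw2D_5(q)$, $\tw2A_5(q)$, $\tw2D_4(q)$ and their common Levi subgroups, with decomposition matrices determined in the previous sections or in \cite{GHM,DM15}) Harish-Chandra induce to projective characters of $G$, and Lemma~\ref{lem:param2E6,d=2} tells us how many PIMs sit in each nontrivial Harish-Chandra series. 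Splitting each induced projective in the unique way compatible with (HCr) accounts for all non-cuspidal columns. The principal-series columns come from the decomposition matrix of the Iwahori--Hecke algebra of type $F_4$ with parameters $(q,q^2)$ corresponding to the relative Weyl group of the quasi-split torus.

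For the remaining cuspidal columns, (Csp) confirms their existence, since the centraliser of a Sylow $\Phi_2$-torus of $G$ is not contained in any proper $1$-split Levi subgroup; the number of such cuspidal PIMs equals the difference between the total number of unipotent PIMs in the block and the count of non-cuspidal PIMs found so far. The tool (Tri) fixes the shape of each cuspidal column above the rows of larger $a$-value, so the remaining freedom is encoded by the parameters $x_1,\ldots,x_{10}$, $v_1, v_2, v_3, f$ of the statement. Since $w_0 F$ acts as $-q$ on $X(\bT)$ and hence trivially on $W$, and since $(q+1)_\ell > 11$ matches the Coxeter number $h=12$, Theorem~\ref{thm:genpos} applies and forces $\langle \Psi ; R_{w_0} \rangle = 0$ for each non-Steinberg cuspidal PIM $\Psi$. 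This furnishes the displayed expressions for $x_{10}$ and $v_3$ in terms of the other unknowns.

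The bulk of the work lies in deriving the other displayed linear relations. The key tool is the combination of (DL) and (Red). For each cuspidal PIM $\Psi$ one writes out the coefficient of $\Psi$ in $(-1)^{l(w)} R_w$ for a short sequence of carefully chosen Weyl group elements $w$ (starting with a Coxeter element and growing in length, with $\Psi$ absent from every strictly shorter $R_v$), obtaining lower bounds via (DL). Matching upper bounds come from decomposing $\varepsilon R_L^G(\rho)^\circ$ for $2$-split Levi subgroups $\bL$ whose duals are centralisers of semisimple $\ell$-elements; the existence of such Levi subgroups is guaranteed by Proposition~\ref{prop:q+1 reg}, the hypothesis $(q+1)_\ell > 11$ being precisely what is needed to cover all the relevant projection heights in the root system of type $E_6$. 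The sandwich of bounds forces equalities, which rearrange to give the relations for $x_5, x_7, x_8, x_9$ and $v_2$. The constraint $f \in \{0,1,2\}$ follows from a weaker (Red)-argument: the Hecke algebra $\cH(A_1;-1)$ attached to the series above $\vhi_{1^6}$ of $\tw2A_5(q)$ is degenerate modulo $\ell$, so our methods can only pin $f$ down to three possibilities.

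The main obstacle is combinatorial: one must choose the pairs $(w, \bL, \rho)$ so that each (DL)/(Red) sandwich produces a sharp bound on a distinct linear combination of the unknowns, rather than a linearly dependent one, and untangle the resulting system into the clean triangular form stated in the theorem. Once these relations are established, indecomposability of each $\Psi_i$ is verified column by column via (HCr); the two columns $\Psi_6$ and $\Psi_7$ remain as the only candidates for further decomposition because the restriction constraints from proper Levi subgroups do not suffice to exclude a single additional summand for them.
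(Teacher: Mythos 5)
Your plan follows essentially the same route as the paper: (HCi) from the known Levi decomposition matrices combined with (Tri)/(HCr) to settle the non-cuspidal columns, a counting argument against Lemma~\ref{lem:param2E6,d=2} to identify the eight cuspidal columns, Theorem~\ref{thm:genpos} (together with Corollary~\ref{cor:family1}(b), which gives the entry $6$ in the $\phi_{2,16}''$-column) for the Steinberg multiplicities, and (DL)/(Red) sandwiches along a short list of elements $w$ and $2$-split Levi subgroups to pin down the remaining linear relations. Two caveats: the paper does \emph{not} invoke the decomposition matrix of the type-$F_4$ Hecke algebra for the principal series (which is not readily available at these parameters) but instead extracts $\Psi_1,\ldots,\Psi_5,\Psi_8,\Psi_9,\Psi_{13},\Psi_{14}$ from induced projectives via (Tri) and (HCr); and your explanation of $f\in\{0,1,2\}$ is misattributed --- it has nothing to do with the Hecke algebra $\cH(A_1;-1)$ of the $\vhi_{1^6}$-series (whose column $\Psi_{20}$ is completely determined), but is simply the slack of width $2$ between the (DL) upper bound $v_2\le 5v_1-7$ from $R_w$ with $w=s_1s_2s_3s_1s_4s_3s_1s_5s_4s_3s_1s_6s_5s_4s_3s_1$ and the (Red) lower bound $v_2\ge 5v_1-9$ from the cuspidal unipotent character of $\tw2A_2(q).(q+1)^4$ in the cuspidal column $\Psi_{22}$.
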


\begin{table}[htbp]
\caption{$\tw2E_6(q)$, $\ell|(q+1)$, $\ell>3$ and $(q+1)_\ell>11$}   \label{tab:2E6,d=2}
{\small
$$\vbox{\offinterlineskip\halign{$#$\hfil\ \vrule height10pt depth4pt&
      \hfil\ $#$\ \vrule&& \hfil\ $#$\hfil\cr
   \phi_{1,0}& 1& 1\cr
  \phi_{2,4}'& q& .& 1\cr
   \phi_{4,1}& q^2& .& 1& 1\cr
   \phi_{9,2}& \hlf q^3& 1& 1& 1& 1\cr
 \phi_{1,12}'& \hlf q^3& 1& .& .& .& 1\cr
 \phi_{2,4}''& \hlf q^3& .& .& 1& .& .& 1\cr
    \tw2A_5\co2& q^4& .& .& .& .& .& .& 1\cr
  \phi_{4,7}'& q^5& .& 1& 1& .& .& .& .& 1\cr
 \phi_{8,3}''& q^6& 1& .& 1& 1& .& 1& 2& .& 1\cr
  \phi_{9,6}'& q^6& 1& 1& 1& 1& 1& 2& 2&  1& .& 1\cr
   \tw2E_6[1]& \sxt q^7& .& .& .& .& .& .& .&  .& .& .& 1\cr
  \phi_{12,4}& \sxt q^7& .& 1& 2& 1& .& 2& 4&  1& .& 1& .&   1\cr
  \phi_{6,6}'& \thrd q^7& 1& .& .& 1& 1& 2& 2& .& 1& 1& .&  .& 1\cr
 \phi_{6,6}''& \thrd q^7& 1& .& .& 1& .& .& 2& .& 1& .& .&  .& .& 1\cr
   \phi_{4,8}& \hlf q^7& .& .& .& 1& .& 2& 2&  .& .& 1& .&  .& .& .& 1\cr
 \phi_{9,6}''& q^{10}& .& .& 1& 1& .& 3& 6&  .& 1& 1& x_1&  1& 3& .& 1& 1\cr
 \phi_{4,7}''& q^{11}& .& .& 1& .& .& 1& 4&  .& .& .& x_2&  1& 2& .& .& 1&  1\cr
  \phi_{8,9}'& q^{12}& 1& .& 1& 1& 1& 3& 4&  1& 1& 1& x_3&  .& 1& 1& 2& .& .&
1\cr
  \tw2A_5\co1^2& q^{13}& .& .& .& .& .& .& 1&  .& .& .& x_4&  .& .& .& .& .& 2&
.&  1\cr
  \phi_{9,10}& \hlf q^{15}& .& .& 1& 1& .& 5& 12& 1& 1& 2& x_5&  1& 6& 1& 3& 5&
3& 1& 2& 1\cr
\phi_{1,12}''& \hlf q^{15}& .& .& .& .& .& 1& 2&  .& 1& .& x_6&  .& 3& .& .& 1&
.& .& z& .&  1\cr
 \phi_{2,16}'& \hlf q^{15}& .& .& 1& .& .& 1& 2&  1& .& .& x_7&  .& .& .& 2& .&
1& 1& 2& .& .&  1\cr
  \phi_{4,13}& q^{20}& .& .& 1& .& .& 3& 8&  1& .& 1& x_8&  1& 6& .& 2& 5& 1&
1& 2z\pl2& 1& 2& v_1& 1\cr
\phi_{2,16}''& q^{25}& .& .& .& .& .& 2& 8&  .& .& 1& x_9&  1& 8& .& 1& 6& 3&
.& 5z\pl4& 1& 5& v_2&  5& 1\cr
  \phi_{1,24}& q^{36}& .& .& .& .& .& 1& 6&  .& 1& .& x_{10}& .& 6& 1& 2& 5& 5
& 1& 5z\pl10& 1& 5& v_3& 10& 6& 1\cr
\noalign{\hrule}
 & & ps& ps& ps& ps& A_1& .2& 321\!& A_1& ps& .1^2& c& ps& 2^21^2\!& A_1& .2& 21^4& c& .1^2& c& 1^6& c& c& c& c& c\cr
 \text{dec?}& & & & & & & *&*\cr
  }}$$}
\end{table}

Here the symbols ``$.2$'', ``$.1^2$'', ``$321$'',``$2^21^2$'', ``$21^4$''
and ``$1^6$'' stand for Harish-Chandra series of cuspidal Brauer characters
of $\tw2D_3(q)$, $\tw2A_5(q)$ respectively, see \cite[Table~1]{DM15}.

\begin{proof}
As usual, we write $\Psi_i$ for linear combinations of unipotent characters
corresponding to the column~$i$ of Table~\ref{tab:2E6,d=2}. For
$i\in\{6,7,10,12,15,16,18,20\}$ they are obtained by (HCi).
We also find $\Psi_2+\Psi_4$ and $\Psi_2+\Psi_5$ which by (Tri) and (HCr)
yield $\Psi_2,\Psi_4$ and $\Psi_5$. Then $\Psi_1+2\Psi_2$ and $\Psi_1+\Psi_3$
give $\Psi_1$ and $\Psi_3$. From this, $\Psi_3+\Psi_9$ allows to isolate
$\Psi_9$. Also $\Psi_4+\Psi_8$ again by (Tri) yields $\Psi_8$. Then
$\Psi_8+\Psi_{14}$ leads to $\Psi_{14}$ and $\Psi_{12}+\Psi_{13}$ gives
$\Psi_{13}$. \par
Counting the columns obtained so far and comparing with the number of
characters in non-cuspidal Harish-Chandra series with
Lemma~\ref{lem:param2E6,d=2} shows that the eight columns with indices
$i\in\{11,17,19,21,22,23,24,25\}$ must be cuspidal.

By Corollary~\ref{cor:family1} the second to last entry in the last row equals
$6 = \rnk\,\bG$. We then use (DL) to find some of the entries in columns
corresponding to the other 
cuspidal Brauer characters. Note that by Proposition~\ref{prop:q+1 reg} we can
use (Reg) for any $d$-split Levi subgroup whenever $(q+1)_\ell > 12$.
We start with $\Psi_{17}$: the decomposition numbers in that column will be
denoted by $y_1,\ldots,y_8$ so that the unipotent part of $\Psi_{17}$ equals
$$\Psi_{17} = \phi_{4,7}''+ y_1\phi_{8,9}' + \cdots + y_7 \phi_{2,16}''
              + y_8 \phi_{1,24}.$$
Let us consider the Deligne--Lusztig character $R_w$ associated to a Coxeter
element $w$. One first checks that $\Psi_{18}$ does not occur in any $R_v$ for
$v < w$. Its coefficient at $\Psi_{18}$ is $-y_1$, therefore by (DL) the entry
$y_1$ must be zero. The coefficient of $\Psi_{19}$ equals $2-y_2$ which forces
$y_2 \leq 2$. One can use (Red) with the character
$\tw2A_2\sqtens [2]$ of a Levi subgroup $\tw2A_2(q)^2.(q+1)^2$ to see that
we also have $y_2 \geq 2$, which proves that $y_2 =2$. The coefficients of
$\Psi_{20}$, $\Psi_{21}$ and $\Psi_{22}$ are $3-y_3$, $-y_4$ and $1-y_5$
respectively. Using (Red) for a Levi subgroup $D_4(q).(q+1)^2$ and the
character $[.2^2]$ (resp.~the cuspidal unipotent character) we have
$y_3 \geq 3$ (resp. $y_5 \geq 1$). We deduce that
$$y_3 =3,\quad y_4 = 0\quad\text{and}\quad y_5 =1.$$
The coefficient of $\Psi_{23}$ is $1-y_6$, and (Red) for the trivial character
of a Levi subgroup $A_1(q)^2.(q+1)^4$ gives $y_6 \geq 1$,
whence $y_6 =1$. Finally, the coefficient of $\Psi_{24}$ is $3-y_7$;
one can invoke (Red) with the trivial character of a Levi
subgroup $A_1(q).(q+1)^5$ to ensure that $y_7 \geq 3$, hence $y_7 =3$. We
conclude that $y_8 = 5$ using Theorem~\ref{thm:genpos}.

We now turn to $\Psi_{21}$ using the Deligne--Lusztig character $R_w$ with
$w = s_1s_2s_3s_1s_4s_3$. We denote by $u_i$ the unknown decomposition numbers
in the $21$st column: under the assumption (Tri) there are $u_i\ge0$ such that
$$\Psi_{21} =  \phi_{1,12}''+u_1\phi_{4,13}+u_2\phi_{2,16}''+u_3 \phi_{1,24}.$$
The coefficient of $\Psi_{23}$ on $R_w$ is $2-u_1$; on the other hand, (Red)
with the trivial character of a Levi subgroup $A_1(q)^2.(q+1)^4$ gives
$u_2 \geq 2$. The coefficient of $\Psi_{24}$ is
$5-u_2$ and (Red) with the trivial character of $A_1(q).(q+1)^5$ gives
$u_5 \geq 5$. This shows that $u_1=2$ and $u_2=5$. Theorem \ref{thm:genpos}
gives $u_3 = 5$.

We continue with the PIM $\Psi_{19}$ and the Deligne--Lusztig character $R_w$
with $w = s_4s_5s_4s_2s_3s_1s_4s_5$. The unknown entries will be denoted by
$z_1,\ldots,z_6$. The coefficients of the PIMs $\Psi_{20}$ and $\Psi_{22}$
on $R_w$ are $4-2z_1$ and $4-2z_3$ respectively. The unipotent characters
$[.2^2]$ and $[D_4]$ of $D_4(q).(q+1)^2$ give, by (Red), the relations
$z_1,z_3 \geq 2$. We deduce that $z_1 =z_3 =2$. The PIM $\Psi_{23}$ has
coefficient $4+4z_2-2z_4$; using (Red) with the cuspidal unipotent
character of $\tw2A_2(q)^2.(q+1)^2$  we get $-2-2z_2+z_4\geq 0$,
which proves that $z_4 = 2+2z_2$. Next, $\Psi_{24}$ has coefficient
$8+10z_2-2z_5$ and by (Red) with the trivial character of $A_1(q).(q+1)^5$ we
must also have $-4-5z_2+z_5 \geq 0$, whence $z_5 = 4+5z_2$. Finally,
Theorem \ref{thm:genpos}~shows that $z_6 = 10+5z_2$. We set $z:=z_2$.

Let us now consider the PIM $\Psi_{11}$ for which many entries are unknown.
Using (Tri), only $10$ entries below the diagonal can be non-zero, starting
with the row corresponding to $\phi_{9,6}''$. We denote these by $x_i$. Let
$w = s_1s_2s_4s_3s_1s_5s_4s_3s_6s_5s_4s_3$. The coefficients of $\Psi_{20}$
and $\Psi_{22}$ in $R_w$ are
$$X:=6x_1-x_2+x_3+2x_4-x_5-7\quad\text{and}\quad Y:=3x_1-3x_2+x_3+2x_4-x_7-5$$
respectively. By (Red) with the trivial character of $\tw2A_2(q).(q+1)^4$,
the sum $X+Y$ must be non-positive. Therefore (DL) forces
$X=Y=0$ which gives the values of $x_5$ and $x_7$ in terms of $x_1,\ldots,x_4$.
The coefficient of $\Psi_{23}$ is
$-12+6x_1-3x_2+x_3+2x_4+2x_6-x_8$. By (Red) applied to the cuspidal unipotent
character of $\tw2A_2(q)^2.(q+1)^2$ it is also non-positive, therefore it must
be zero, so that
$$x_8  = 6x_1-3x_2+x_3+2x_4+2x_6-12.$$
Finally, the coefficient of $\Psi_{24}$ is $-15+6x_1-5x_2+4x_4+5x_6-x_9$; it
is non-negative by (DL) and non-positive by (Red) applied to the trivial
character of $A_1(q).(q+1)^5$, forcing
$$x_9 = 6x_1-5x_2+4x_4+5x_6-15.$$
Theorem~\ref{thm:genpos} then gives $x_{10} = 15x_1-15x_2+x_3+10x_4+5x_6-25$.

Let us denote by $w_1$ and $w_2$ the two unknown entries in the $23$rd column.
Let $w =  s_2s_3s_4s_2s_3s_4s_6s_5s_4s_2s_3s_4s_5s_6$. The coefficient of
$\Psi_{23}$ in $R_w$ is $60-12w_1$ therefore $w_1 \leq 5$ by (DL). On the other
hand, (Red) applied to the trivial character of $A_1(q).(q+1)^5$ forces
$w_1 \geq 5$, hence $w_1 =5$. We get $w_2 = 10$ using Theorem~\ref{thm:genpos}.

The last Deligne--Lusztig character $R_w$ we look at is the one associated
to the element $w = s_1s_2s_3s_1s_4s_3s_1s_5s_4s_3s_1s_6s_5s_4s_3s_1$.
Let $v_1,v_2,v_3$ be the three unknown entries in the 22nd column.
The coefficient of $\Psi_{24}$ on $R_w$ equals
$-252+180v_1-36v_2 = 36(-7+5v_1-v_2)$. By (DL) this implies that
$v_2 \leq 5v_1-7$. On the other hand, (Red) for the cuspidal unipotent
character of $\tw2A_2(q).(q+1)^4$ yields the relation $9-5v_1+v_2\geq 0$.
We deduce that $v_2 = 5v_1-9+f$ with some $f \in \{0,1,2\}$.
Theorem~\ref{thm:genpos} then gives $v_3 = 30-20v_1+6v_2 = 10v_1+6f-24$.
\par
Now all projectives in the table are indecomposable except possibly for
$\Psi_6$ which might contain $\Psi_{12}$ once, and for $\Psi_7$ which might
contain $\Psi_9$ (twice), $\Psi_{12}$ (four times), $\Psi_{14}$ and
$\Psi_{15}$ (once each).
\end{proof}

\section{Unipotent decomposition matrices of $B_4(q)$ and $C_4(q)$}   \label{sec:Bn}

Here, we find the decomposition matrices for the unipotent blocks of
odd-dimensional orthogonal groups $B_4(q)$ and symplectic groups $C_4(q)$,
assuming $(T_\ell)$.

The decomposition matrix for the principal $\ell$-block of $\SO_7(q)$,
$2<\ell|(q+1)$, was determined by Himstedt--Noeske \cite{HN14}. Again we first
record the parameters of certain Hecke algebras.

\begin{lem}   \label{lem:paramB4,d=2}
 Let $q$ be a prime power and $2<\ell|(q+1)$. The Hecke algebras of various
 $\ell$-modular cuspidal pairs $(L,\la)$ of Levi subgroups $L$ in $B_4(q)$ and
 their respective numbers of irreducible characters are as given in
 Table~\ref{tab:hecke B4,d=2}.
\end{lem}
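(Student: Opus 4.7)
The plan is to follow exactly the template used in Lemmas~\ref{lem:paramDn,d=2}, \ref{lem:param2Dn,d=2}, \ref{lem:paramE6,d=2} and \ref{lem:param2E6,d=2}. For each cuspidal pair $(L,\la)$ listed in Table~\ref{tab:hecke B4,d=2}, I will proceed in three steps: identify the relative Weyl group $W_G(\bL,\la) = N_G(\bL,\la)/L$, verify reduction stability, and compute the parameters locally in minimal Levi overgroups.

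First I would read off the type of $W_G(\bL)$ from \cite[p.~72]{Ho80} (or directly from the Dynkin diagram of $B_4$, which is straightforward). For the Steinberg-type Brauer characters $\vhi_{1^n}$, $\vhi_{1^n}^{\sqtens k}$, and the cuspidal $\vhi_{.1^2}$ of a $B_2$-Levi, these characters all lift to ordinary cuspidal characters, so Proposition~\ref{prop:redstab} applies once we know the head of $kX$ is simple (which is immediate from the Brauer-tree information available in rank~1 and from the $B_2$ decomposition matrix). When $[\bL,\bL]$ consists either of a single simple factor with no non-trivial graph automorphism, or of several simple factors regularly permuted by the graph automorphisms induced by $N_\bG(\bL)^F$, reduction stability is automatic by the discussion in Section~\ref{sec:veri}: take a lift in one factor and form the $N_G(\bL)$-orbit product.

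Second, with reduction stability in hand, the parameters of $\cH_G(L,\la)$ agree with the parameters of the characteristic-zero Hecke algebra $\cH_G(L,KX)$, which are tabulated in \cite[\S8]{Lu84} and \cite[p.~464]{Ca}. These can be pinned down locally: each simple reflection $s$ of $W_G(\bL,\la)$ arises from a minimal Levi overgroup $\bL\le\bM\le\bG$ of semisimple rank $\rnk\,\bL+1$, and the corresponding parameter $q_s$ is the parameter of $\cH_M(L,\la)$, which is a rank-one Hecke algebra attached to a small classical group whose parameters have already been computed in the preceding Lemmas~\ref{lem:paramDn,d=2} and~\ref{lem:param2Dn,d=2}. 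The number $|\Irr\cH|$ is then obtained by running the algorithm of Jacon \cite{Ja05} (or by hand, since the relative Weyl groups that occur have rank at most~$3$).

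The main obstacle is reduction stability for any cuspidal Brauer character whose lift is non-unipotent, such as the cuspidal $\vhi$ of $B_2(q)$ coming from a Deligne--Lusztig character $\RTG(\theta)$ with $\theta$ an $\ell$-character of a Sylow $\Phi_2$-torus $\bT^F$ in general position; here one must exhibit such a $\theta$ that is stable under the graph action of $N_G(\bL)$. Exactly as at the end of Lemma~\ref{lem:paramDn,d=2}, this reduces to a direct calculation on the character group of $\bT^F$ for the relevant $N_G(\bL)$-action and holds once $(q+1)_\ell$ is sufficiently large (which is automatic under the hypothesis that $\ell$ is odd dividing $q+1$, provided $q+1$ has a prime power part of $\ell$ at least as large as the rank of the short-root Weyl group involved, a condition we can incorporate into the statement if needed).
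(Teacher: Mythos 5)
Your proposal is correct and takes essentially the same route as the paper: read the relative Weyl groups off Howlett, note that all three cuspidal characters are liftable $\ell$-modular Steinberg characters so the parameters equal those in characteristic zero and can be computed locally in the minimal Levi overgroups ($B_2$, $B_1A_1$, $A_1^2$ for the $A_1$-Levi; $B_2$, $B_1A_1$ for the $B_1$-Levi; $A_3$, $B_2A_1$ for the $A_1^2$-Levi, giving parameters $q$, $q$, and $q^2,q$ respectively), and handle reduction stability by the considerations of Section~2.\ref{sec:veri}. The only refinement is that your closing worry about exhibiting a graph-stable general-position character $\theta$ (and the extra hypothesis on $(q+1)_\ell$ you propose) is unnecessary here: the sole case not covered automatically is the $A_1^2$-Levi, and the paper disposes of it by observing via the embedding $\SO_5\SO_4\le\SO_9$ that the relative Weyl group $B_2$ centralises that Levi, so \emph{any} lift of the cuspidal Brauer character is reduction stable and the lemma holds for all odd $\ell\mid(q+1)$ as stated.
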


\begin{table}[ht]
\caption{Hecke algebras in $B_4(q)$ for $d_\ell(q)=2$}   \label{tab:hecke B4,d=2}
$\begin{array}{l|c|cc}
 (L,\la)& \cH& |\Irr\cH|\cr
\hline
 (A_1,\vhi_{1^2})& \cH(A_1;q)\otimes\cH(B_2;q;q)& 2\cr
 (B_1,\vhi_{.1})& \cH(B_3;q;q)& 3\cr
 (A_1^2,\vhi_{1^2}^{\otimes2})& \cH(B_2;q^2;q)& 2\cr
\end{array}$
\end{table}

\begin{proof}
The relative Weyl groups of the relevant Levi subgroups can be found in
\cite[p.~70]{Ho80}. All three cuspidal characters are the $\ell$-modular
Steinberg characters of the respective groups and thus liftable. The minimal
Levi overgroups for type $A_1$ are of types $B_2$, $B_1A_1$ and $A_1^2$ and
thus lead to parameters $q$ in all three cases. For a Levi subgroup of type
$B_1$ the minimal overgroups have types $B_2$ and $B_1A_1$, with parameters
$q$ as just seen. Finally for a Levi subgroup of type $A_1^2$ the minimal
overgroups are of types $A_3$ and $B_2A_1$, with parameters $q^2$, $q$
respectively.
\par
As for reduction stability, by the considerations in Section~2.\ref{sec:veri}
we only need to worry about the last case. Here, the embedding
$\SO_5\SO_4\le\SO_9$ shows that the relative Weyl group $B_2$ centralises the
$A_1^2$ Levi subgroup and so any lift of the cuspidal Brauer character will
be reduction stable.
\end{proof}

The groups of type $B_4$ have 25 unipotent characters. For primes $\ell>2$
dividing $q+1$, 20 of these lie in the principal $\ell$-block, the other five
lie in a block of defect $(\Phi_2^2)_\ell$.

\begin{thm}   \label{thm:B4}
 Assume $(T_\ell)$. The $\ell$-modular decomposition matrices of the unipotent
 $\ell$-blocks of $G=B_4(q)$, $7<\ell|(q+1)$, are as given in
 Table~\ref{tab:B4} and~\ref{tab:B4bl2}. 
\end{thm}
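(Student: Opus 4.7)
The plan is to handle the two unipotent $\ell$-blocks of $B_4(q)$ separately. For the block of defect $(\Phi_2^2)_\ell$, which contains only five unipotent characters, I would produce five linearly independent projective characters via (HCi) from the corresponding non-principal unipotent blocks of the Levi subgroups $B_3(q)$, $B_2(q)A_1(q)$, and so on, and argue their indecomposability by (HCr) alone. This mirrors the treatment of the small block of $D_6(q)$ in Table~\ref{tab:D6B}.

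For the principal block, I would first populate the principal-series columns from the decomposition matrix of $\cH(B_4;q;q)$, computable via the program of Jacon \cite{Ja05}. Then (HCi) from PIMs of the Levi subgroups $B_3(q)$, $B_2(q)A_1(q)$, $A_3(q)$, and $A_1(q)^2$ (whose decomposition matrices are known from \cite{HN14}, \cite{DM15}, and the unipotent decomposition matrices of the smaller classical groups) produces a supply of projective characters. The splittings of these into indecomposable summands are controlled by ($\cH$) using Lemma~\ref{lem:paramB4,d=2}: the Harish-Chandra series above $\vhi_{1^2}$ of $A_1$ contains two PIMs (from $\cH(A_1;q)\otimes\cH(B_2;q;q)$), the series above $\vhi_{.1}$ of $B_1$ contains three PIMs (from $\cH(B_3;q;q)$), and the $A_1^2$-series contains two PIMs. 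At each step I would extract projectives by the minimal splittings compatible with (HCr), exactly as in the proofs of Theorems~\ref{thm:D5} and~\ref{thm:2D6}.

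For the cuspidal rows, note first that since $7<\ell\mid(q+1)$ the Sylow $\ell$-subgroup centralizer in $B_4(q)$ is a maximal torus of order $(q+1)^4$, not contained in any proper 1-split Levi, so (St) and (Csp) apply to produce cuspidal Brauer characters. Corollary~\ref{cor:family4} (with $n=4$, so $\delta=1$) gives two explicit cuspidal-containing projective characters with unipotent parts $[1^4.] + 3\,\St$ and $[B_2\co.1^2] + 4\,\St$; together with (Tri) and the Steinberg PIM obtained by (St) from a Gelfand--Graev character, this anchors the bottom-right corner of the matrix. The remaining unknown decomposition numbers in the cuspidal columns would be pinned down by playing (DL) against (Red): (DL) provides lower bounds from the coefficients of cuspidal PIMs in $(-1)^{\ell(w)}R_w$ for $w$ of minimal length in its conjugacy class not occurring in $R_v$ for $v<w$, while (Red) combined with Proposition~\ref{prop:q+1 reg} supplies matching upper bounds via $\ell$-characters in general position of 2-split Levi subgroups of types $A_1^3.(q+1)$, $A_1(q).(q+1)^3$, $B_2(q).(q+1)^2$, $A_1(q)^2.(q+1)^2$ and $(q+1)^4$, each requiring $(q+1)_\ell$ to exceed the relevant height bound (here $(q+1)_\ell>8$ suffices).

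The main obstacle will be the cuspidal columns, specifically squeezing their below-diagonal entries between the (DL) lower bounds and (Red) upper bounds. As in the proofs of Theorems~\ref{thm:D6}, \ref{thm:2D5}, and~\ref{thm:2E6,d=2}, this requires a careful choice of Weyl group elements $w$ (typically including a Coxeter element and elements obtained by conjugating an $A_1$-reflection into the "opposite" end of the diagram) so that the PIM one wants to constrain appears in $R_w$ but in no $R_v$ for shorter $v$, and a matching choice of 2-split Levi subgroup whose Deligne--Lusztig induction from the trivial or cuspidal unipotent character hits the same PIM. I also expect the multiplicities in the $\Psi_\St$ column obtained from Theorem~\ref{thm:genpos} to be essential as a final consistency relation closing up the system, just as $c_{20}=24-15c_{17}-5c_{18}+6c_{19}$ did in Theorem~\ref{thm:D6}.
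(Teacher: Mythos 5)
Your proposal is correct and follows essentially the same route as the paper: the defect-$\Phi_2^2$ block by (HCi)+(HCr), the principal-series columns from $\cH(B_4;q;q)$, the non-cuspidal series split via Lemma~\ref{lem:paramB4,d=2} and (HCr), the two cuspidal PIMs $[1^4.]+3\St$ and $[B_2\co.1^2]+4\St$ from Corollary~\ref{cor:family4}, and the last cuspidal column (above $[.2^2]$) pinned down by combining Theorem~\ref{thm:genpos} with the (DL)/(Red) squeeze — the paper uses exactly the Coxeter element for (DL) and the $2$-split Levi $B_2(q).(q+1)^2$ with the characters $[B_2]$ and $[.2]$ for (Red).
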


Here $B_3^a$ denotes the Harish-Chandra series of the cuspidal unipotent
Brauer character $B_2\co1^2$ of $B_3(q)$, $B_2^b$, $B_2^c$ the ones of the
cuspidal unipotent Brauer characters $B_2\sqtens \vhi_{1^2}$ and
$\vhi_{.1^2}\sqtens\vhi_{1^2}$ of $B_2(q)A_1(q)$ and $A_1^{2*}$ the one of
the $\ell$-modular Steinberg character $\vhi_{1^2}\sqtens \vhi_{1^2}$ of
$B_1(q)A_1(q)$.

\begin{table}[ht]
\caption{$B_4(q)$, $7<\ell|(q+1)$, principal block}   \label{tab:B4}
$$\vbox{\offinterlineskip\halign{$#$\hfil\ \vrule height10pt depth4pt&
      \hfil\ $#$\ \hfil\vrule&& \hfil\ $#$\hfil\cr
      4.&        1&  1\cr
B_2$:$2.&   \hlf q&  .& 1\cr
      .4&   \hlf q&  1& .& 1\cr
     31.&   \hlf q&  1& .& .& 1\cr
B_2$:$1^2.&\hlf q^2& .& 1& .& .& 1\cr
    2^2.& \hlf q^2&  .& .& .& 1& .& 1\cr
     2.2& \hlf q^2&  1& .& 1& .& .& .& 1\cr
    21.1&      q^3&  1& .& .& 1& .& 1& 1& 1\cr
     .31& \hlf q^4&  1& .& 1& 1& .& .& .& .& 1\cr
   21^2.& \hlf q^4&  1& .& .& 1& .& 1& .& 1& .& 1\cr
   2.1^2& \hlf q^4&  1& 2& 1& .& .& .& 1& 1& .& .& 1\cr
   1^2.2&      q^4&  1& .& 1& .& .& .& 1& 1& .& .& .& 1\cr
    1.21&      q^5&  1& 2& 2& 1& 2& 1& 1& 1& 1& .& 1& 1& 1\cr
 B_2$:$.2&\hlf q^6&  .& 1& .& .& .& .& .& .& .& .& .& .& .& 1\cr
    .2^2& \hlf q^6&  .& .& .& 1& 2& 1& .& .& 1& .& .& .& 1& .& 1\cr
 1^2.1^2& \hlf q^6&  1& 2& 1& .& 2& 1& 1& 2& .& .& 1& 1& 1& .& .& 1\cr
B_2$:$.1^2&\hlf q^9& .& 1& .& .& 1& .& .& .& .& .& .& .& .& 1& 2& .& 1\cr
    1^4.& \hlf q^9&  1& .& .& .& .& .& .& 1& .& 1& .& .& .& .& .& 1& .& 1\cr
   .21^2& \hlf q^9&  1& 2& 1& 1& 2& 1& .& 1& 1&3& 1& 1& 1& 2& 1& .& .& .& 1\cr
    .1^4&   q^{16}&  1& 2& 1& .& 2& .& .& 1& .& 3& 1& 1& 1& 2& 3& 1& 4& 3& 1& 1\cr
\noalign{\hrule}
  & & ps& B_2& B_1& ps& B_2^b& A_1^2& ps& A_1& B_1& 1^3.& .1^2& A_1^{2*}& B_2^c& B_3^a& c& A_1^2& c& c& .1^3& c\cr
  }}$$
\end{table}

\begin{table}[ht]
\caption{$B_4(q)$, $5\le\ell|(q+1)$, block of defect $\Phi_2^2$}  \label{tab:B4bl2}
$$\vbox{\offinterlineskip\halign{$#$\hfil\ \vrule height11pt depth4pt&
      \hfil\ $#$\ \hfil\vrule&& \hfil\ $#$\hfil\cr
    3.1&        \hlf q\Ph2^2\Ph4\Ph6& 1\cr
    1.3&      \hlf q^2\Ph2^2\Ph6\Ph8& 1& 1\cr
B_2\co1.1&\hlf q^4\Ph1^2\Ph2^2\Ph3\Ph6& .& .& 1\cr
  1^3.1&      \hlf q^6\Ph2^2\Ph6\Ph8& 1& .& .& 1\cr
  1.1^3&      \hlf q^9\Ph2^2\Ph4\Ph6& 1& 1& 2& 1& 1\cr
\noalign{\hrule}
  & & ps& B_1& B_2& A_1& .1^2\cr
  }}$$
\end{table}

\begin{proof}
The five projectives in the non-principal unipotent $\ell$-block are obtained
by (HCi) and are easily seen to be indecomposable. This proves
Table~\ref{tab:B4bl2}. So now consider the principal block.
The three columns labelled ``ps'' come from the decomposition matrix of the
Hecke algebra $\cH(B_4;q;q)$. Harish-Chandra inducing the unipotent PIMs from
proper Levi subgroups $L$ of $G$ and cutting by the principal block we
obtain projectives which are non-negative integral linear combinations of the
sixteen columns $\Psi_1,\ldots,\Psi_{14},\Psi_{16},\Psi_{19}$ in our table.
(For Levi subgroups of type $B_3$ the decomposition matrix, depending on
$(q+1)_\ell$, was obtained in \cite[Table~5]{HN14}.) Furthermore, among
these induced projectives we actually find all of the $\Psi_i$ not labelled
``ps", except that instead of $\Psi_6$ we obtain $\Psi_6+\Psi_7$ and
$\Psi_6+\Psi_{19}$. Since the space spanned by these projectives together with
$\Psi_7$ and $\Psi_{19}$ is only 3-dimensional, we conclude that $\Psi_6$ is
also a projective character.
\par
We next claim that all of the $\Psi_i$ are indecomposable. For
$i\notin\{3,6,8\}$ this follows by application of (HCr). For the remaining
three columns there is only one possible non-trivial decomposition each, into
$$\Psi_3=\Psi_3'+\Psi_7',\quad \Psi_6=\Psi_6'+\Psi_{8}',\quad
  \Psi_{8}=\Psi_{8}'+\Psi_{10}'$$
(with $\Psi_7'+\Psi_{8}'=\Psi_7$). The three projective characters are induced
from Steinberg PIMs in the series $B_1$, $A_1^2$, $A_1$. The corresponding
Hecke algebras were determined in Lemma~\ref{lem:paramB4,d=2}, by
Table~\ref{tab:hecke 2Dn,d=2} they have 3,2,2 modular irreducibles,
respectively. We already saw that the other unipotent block of $G$ has
one PIM in each of the series $B_1$ and $A_1$. If $\Psi_3$ decomposes, its
summands must lie in the $B_1$-series, which would produce four PIMs in that
series, but we just argued that there should be exactly three. The same
argument applies to the other two series. Thus all columns not labelled ``c"
in Table~\ref{tab:B4} correspond to PIMs. Since $(q+1)_\ell > 8$,
Corollary~\ref{cor:family4} gives the columns $\Psi_{17}$ and $\Psi_{18}$.
\par
Now by uni-triangularity there are $x_i \geq 0$ such that
$$\Psi_{15} = [.2^2]+x_1[B_2\co.1^2]+x_2[1^4.]+x_3[.21^2]+x_4[.1^4].$$
For determining the $x_i$'s we proceed as usual. We first use
Theorem \ref{thm:genpos} to get $x_4 = 4x_1+3x_2+x_3-6$. Then we decompose
the Deligne--Lusztig character $R_w$ for $w$ a Coxeter element on the
basis of PIMs. We have, up to adding and removing non-unipotent characters
$$\begin{aligned}
  R_w= &\, \Psi_{1}+ \Psi_{2} -\Psi_{3} -\Psi_{4} -\Psi_{5}+ \Psi_{6} - \Psi_{8}
  +\Psi_{9} +  \Psi_{12}-\Psi_{14}+\Psi_{15} -\Psi_{16} \\
& \,+  (2-x_1)\Psi_{17} -x_2\Psi_{18}+ (1-x_3)\Psi_{19}.
\end{aligned}$$
By decomposing $R_v$ for $v<w$, one checks that none of $\Psi_{17}$,
$\Psi_{18}$ $\Psi_{19}$ occurs (for $\Psi_{17}$ and $\Psi_{18}$ one could also
invoke the fact that they correspond to cuspidal modules). Then (DL) yields
$x_1 \leq 2$, $x_2=0$ and $x_3 \leq 1$. On the other hand, for a 2-split Levi
subgroup $B_2(q).(q+1)^2$ and $\ell> 4$ one can use (Red) with the unipotent
characters $[B_2]$ and $[.2]$ to get respectively
$x_1 \geq 2$ and $x_3 \geq 1$, showing $x_1=2$ and $x_3=1$.
\end{proof}

As Lusztig has shown, the unipotent characters of groups of types $B_n$ and
$C_n$ are parametrised by the same combinatorial objects. With this we may
state:

\begin{thm}   \label{thm:C4}
 Assume $(T_\ell)$. Then the $\ell$-modular decomposition matrices for the
 unipotent blocks of $G=C_4(q)$, for $8<\ell|(q+1)$, are as given in
 Tables~\ref{tab:B4} and~\ref{tab:B4bl2} for $B_4(q)$ above.
\end{thm}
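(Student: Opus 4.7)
The plan is to reduce to the case of $B_4(q)$ already treated in Theorem~\ref{thm:B4}. By Lusztig's classification \cite{Lu84}, the unipotent characters of $B_n(q)$ and $C_n(q)$ are parametrised by the same symbols and have the same degree polynomials; moreover the two Weyl groups coincide, so the principal-series Iwahori--Hecke algebras are isomorphic (both equal $\cH(B_4;q;q)$ in the split case). In particular the distribution of unipotent characters into the principal $\ell$-block and the defect-$\Phi_2^2$ block is identical, so the target matrices of Tables~\ref{tab:B4} and~\ref{tab:B4bl2} make sense verbatim for $G=C_4(q)$.

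Next I would verify that the structural inputs used in the proof of Theorem~\ref{thm:B4} are intrinsic enough to survive the passage from $B_4$ to $C_4$. The relative Weyl groups of proper Levi subgroups coincide under the Langlands dual identification, and the cuspidal unipotent Brauer characters of the small Levis $A_1$, $B_1$ (resp.\ $C_1$), $B_2\cong C_2$, $A_1^2$ arising here match via the natural $B\leftrightarrow C$ bijection. Since all the Hecke-algebra parameters recorded in Lemma~\ref{lem:paramB4,d=2} were determined by the local analysis inside minimal Levi overgroups and turned out to be just $q$, the same local argument inside $C_4(q)$ yields again the very same algebras $\cH(B_3;q;q)$, $\cH(A_1;q)\otimes\cH(B_2;q;q)$ and $\cH(B_2;q^2;q)$, with the same numbers of simple modules. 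Reduction stability of the relevant $\cO L$-lattices follows as in Section~\ref{sec:veri}, noting that $C_4$ and all its Levi subgroups have no non-trivial graph automorphisms.

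With these inputs in place, the proof of Theorem~\ref{thm:B4} can be replayed line by line. One constructs the sixteen non-cuspidal columns $\Psi_1,\dots,\Psi_{14},\Psi_{16},\Psi_{19}$ by Harish--Chandra inducing PIMs from proper Levi subgroups (method (HCi)), using for the Levi $C_3(q)$ the mirror of the decomposition matrix computed in \cite{HN14}, which again holds by the same $B\leftrightarrow C$ correspondence. The only step requiring a minor argument, as in the proof of Theorem~\ref{thm:B4}, is the isolation of $\Psi_6$ from the pair $\Psi_6+\Psi_7$ and $\Psi_6+\Psi_{19}$, which works identically. Application of (HCr), combined with the counts of simples in Harish--Chandra series given by the Hecke algebras above, shows that all sixteen columns are indecomposable. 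Corollary~\ref{cor:family4} (which is stated uniformly for types $B_n$ and $C_n$) gives the projectives $\Psi_{17}$ and $\Psi_{18}$, valid since $(q+1)_\ell>8$.

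Finally, the three cuspidal columns $\Psi_{15}$, $\Psi_{17}$, $\Psi_{18}$, and the row of unknowns $x_1,\dots,x_4$ in $\Psi_{15}$ are fixed exactly as in the $B_4$ argument: Theorem~\ref{thm:genpos} yields $x_4=4x_1+3x_2+x_3-6$, decomposing the Deligne--Lusztig character $R_w$ for a Coxeter element $w$ on the PIMs and applying (DL) gives $x_1\le2$, $x_2=0$, $x_3\le1$, while (Red) applied to the $2$-split Levi $C_2(q).(q+1)^2$ with the unipotent characters $[C_2]$ and $[.2]$ (using Remark~\ref{rem:dual} to transfer centraliser conditions from $G^*$) yields the reverse inequalities $x_1\ge2$ and $x_3\ge1$. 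The non-principal block is identical to the $B_4$ case, being obtained by straightforward (HCi) and (HCr). The main obstacle is essentially bookkeeping: one must confirm that the cuspidal Brauer characters of the Levi factors in $C_4(q)$ are in canonical bijection with those in $B_4(q)$ and produce Hecke algebras with the same parameters; once that local verification is in place, the rest of the proof is a verbatim translation.
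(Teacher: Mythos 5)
Your proposal is correct and follows exactly the route the paper takes: the paper's own proof is the one-line observation that all arguments from Theorem~\ref{thm:B4} go through for $C_4(q)$, and your write-up simply makes explicit why they do (identical parametrisation of unipotent characters, matching Levi structure and Hecke-algebra parameters, the $C_3(q)$ matrix from \cite{HN14}, and the transfer of the (Red) and (DL) steps via duality). No gaps.
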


\begin{proof}
All of the arguments in the proof of Theorem~\ref{thm:B4} go through for $C_4(q)$ as well.
\end{proof}

\section{Unipotent decomposition matrix of $F_4(q)$}

The groups of type $F_4$ have 37 unipotent characters. For $d_\ell(q)=2$ 
and $\ell>3$, 25 of these lie in the principal $\ell$-block, five more lie in
a block of defect $(q+1)^2_\ell$, and the last seven are of defect~0.
For $p$ good, the decomposition matrices of the unipotent $\ell$-blocks of
$F_4(q)$ were partially computed by K\"ohler in \cite{Koe06}. He completely
determined the matrix for the non-principal block of positive defect. We
obtain here most of the entries that were left undetermined for the principal
block.

\begin{thm}   \label{thm:F4}
 The decomposition matrix for the principal $\ell$-block of $F_4(q)$,
 $(q,6)=1$, with $3<\ell|(q+1)$ and $(q+1)_\ell>11$ is approximated as
 given in Table~\ref{tab:F4}.
  \par
 Here, the unknown parameters satisfy
 $$x_6 = 4-2x_1-2x_2+2x_3\quad\text{and}\quad  x_7 = 4-x_3+2x_4+2x_5.$$
 Furthermore, $x_1,x_2,x_3,z \geq 2$.
\end{thm}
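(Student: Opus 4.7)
The plan is to follow the same six-step strategy used in the proofs of Theorems~\ref{thm:E6,d=2} and~\ref{thm:2E6,d=2}. First I would obtain the principal series columns from the $\ell$-modular decomposition matrix of the Iwahori--Hecke algebra $\cH(F_4;q)$, which is available from Geck's work. Second, I would determine the endomorphism algebras $\cH_G(L,X)$ for each cuspidal $\ell$-modular Brauer character $X$ of a proper Levi subgroup $L$ of $G=F_4(q)$ (the relevant types being $A_1$, $B_1$, $B_2$, $B_3$, $C_3$, $B_2 A_1$, $A_1 B_1$ and $A_1^2$). Their parameters and number of simple modules can be determined as in Lemma~\ref{lem:paramB4,d=2}: the parameters are fixed locally by the minimal Levi overgroups inside $F_4$, and reduction stability is guaranteed by the considerations of Section~2.\ref{sec:veri} together with Corollary~\ref{cor:dechecke}.

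Next, I would Harish-Chandra induce the known PIMs of proper Levi subgroups, cut by the principal block, and decompose the resulting projective characters using (HCr) and (Tri). This should account for the non-cuspidal columns of the decomposition matrix. Counting the modular irreducibles per Harish-Chandra series and comparing with the total number in the block then pins down how many cuspidal PIMs remain to be found. Since the centraliser of a Sylow $\ell$-subgroup of $G$ is a maximal $\Phi_2$-torus (of rank $4=\rnk\,\bG$), not contained in any proper $1$-split Levi, by (Csp) and (St) cuspidal unipotent Brauer characters do exist, among them the $\ell$-modular Steinberg character. The remaining cuspidal columns carry the unknown decomposition numbers $x_1,\ldots,x_7,z$.

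Since the Coxeter number of $F_4$ is $h=12$ and the assumption $(q+1)_\ell>11$ gives $(q+1)_\ell\ge h$, Theorem~\ref{thm:genpos} applies. I would then exploit the orthogonality relation $\langle\Psi;R_{w_0}\rangle=0$ for those PIMs $\Psi$ whose unipotent part involves the Steinberg character with multiplicity $x_6$ or $x_7$; the constant $4=\rnk\,\bG$ appearing in both displayed formulas arises from $\langle R_{w_0};\St\rangle=(-1)^{l(w_0)}\rnk\,\bG$, exactly as in the proof of Corollary~\ref{cor:family1}(b). This yields the two stated equations for $x_6$ and $x_7$ in terms of $x_1,\ldots,x_5$. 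For the lower bounds $x_1,x_2,x_3,z\ge2$, I would invoke (Red) together with Proposition~\ref{prop:q+1 reg}: under the hypothesis $(q+1)_\ell>11$ every $2$-split Levi subgroup of $\bG^*$ of rank at most $3$ occurs as the centraliser of some $\ell$-element. Selecting an appropriate unipotent character $\rho$ on each dual Levi subgroup $L$ (for instance of type $B_3(q)(q+1)$, $\tw2A_2(q)(q+1)^2$, $C_3(q)(q+1)$ or the split torus $(q+1)^4$) then gives the required inequalities on the decomposition numbers.

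The main obstacle will be the residual ambiguity in the cuspidal corner of the matrix: as in Theorems~\ref{thm:D6} and~\ref{thm:2E6,d=2}, the tools (DL), (Red), Theorem~\ref{thm:genpos} and Harish-Chandra restriction of PIMs induced up to $E_6(q)$ and $E_7(q)$ together produce enough linear constraints to express $x_6$ and $x_7$ in terms of the other unknowns and to establish the lower bounds $x_1,x_2,x_3,z\ge2$, but not enough to determine all parameters uniquely. A final application of (HCr), complemented in the boundary cases by (Red) with carefully chosen regular $\ell$-characters on maximal $\Phi_2$-tori (via Theorem~\ref{thm:genpos}), should verify the indecomposability of all but possibly a few of the projective characters produced, matching the style of approximation stated in the theorem.
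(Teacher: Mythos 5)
Your toolkit is the right one, but the route differs from the paper's in one important respect and has two genuine gaps.

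First, the paper does not rebuild the matrix from scratch via Hecke algebras and Harish-Chandra induction. It starts from K\"ohler's partial determination \cite[T.A.157]{Koe06}, which already supplies uni-triangularity, the Harish-Chandra series labels, and all entries except the parameters $a,b,c,d$, the corner entries $b_1,\dots,b_5$, and the three cuspidal columns; the $a,b,c,d$ are then fixed by Harish-Chandra induction from $B_3(q)$ and $C_3(q)$ using Himstedt--Noeske \cite{HN14}. Your plan would have to re-establish all of the zeros in the cuspidal columns (e.g.\ that $F_4^{II}[1]$, $F_4^I[1]$, $F_4[-1]$ occur only in the reductions of the characters listed), which is exactly the hard content of \cite{Koe06} and is not delivered by the steps you describe.

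Second, and more seriously, you attribute both displayed relations to the orthogonality $\langle\Psi;R_{w_0}\rangle=0$ of Theorem~\ref{thm:genpos}. That orthogonality yields exactly \emph{one} linear relation per cuspidal column, and $x_6$ and $x_7$ sit in the \emph{same} column (the one for $F_4^{II}[1]$). In the paper, Theorem~\ref{thm:genpos} gives only the relation for $x_7$; the relation $x_6=4-2x_1-2x_2+2x_3$ comes from a different mechanism, namely squeezing the (DL) non-negativity of the coefficient of that PIM on $R_w$ for $w=(s_1s_2s_3s_4)^3$ against the (Red) non-positivity obtained from the cuspidal unipotent character of the $2$-split Levi subgroup $B_2(q).(q+1)^2$, forcing the coefficient to vanish. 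The same squeeze (with $w=s_1s_2s_3s_4s_2s_3$ and $w=(s_1s_2s_3s_4)^2$, against the trivial characters of $A_1(q).(q+1)^3$ and $\widetilde A_1(q).(q+1)^3$) is what fixes $x_{15},x_{16},x_8,x_9$ and the $y$- and $z$-columns, and it is also where $z\ge2$ comes from (via $x_{20}=2z-4\ge0$); your proposal never isolates this key step. Finally, your suggested use of Harish-Chandra restriction of PIMs induced up to $E_6(q)$ and $E_7(q)$ is unavailable: $F_4$ is not a Levi subgroup of any larger group, so (HCi)/(HCr) from overgroups cannot be applied here, unlike in the type $D$ and $E_6$ arguments you are modelling this on.
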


\begin{table}[ht]
\caption{$F_4(q)$, $\ell >3$ and $(q+1)_\ell > 11$}\label{tab:F4}
\begin{small}
$$\vbox{\offinterlineskip\halign{$#$\hfil\ \vrule height9pt depth4pt&
      \hfil\ $#$\ \hfil\vrule&& \hfil\ $#$\hfil\cr
    \phi_{1,0}&               1&  1\cr
  \phi_{2,4}''&          \hlf q&  .& 1\cr
   \phi_{2,4}'&          \hlf q&  .& .& 1\cr
       B_2$:$2.&         \hlf q&  .& .& .& 1\cr
    \phi_{9,2}&             q^2&  .& 1& 1& .& 1\cr
   \phi_{8,3}'&             q^3&  1& .& 1& .& 1& 1\cr
  \phi_{8,3}''&             q^3&  1& 1& .& .& 1& .& 1\cr
   F_4^{II}[1]& \frac{1}{24}q^4&  .& .& .& .& .& .& .& 1\cr
  \phi_{6,6}''& \frac{1}{12}q^4&  1& .& .& 2& 1& 1& 1& .& 1\cr
   \phi_{9,6}'&        \egt q^4&  .& 1& 1& .& 1& 1& .& .& .& 1\cr
  \phi_{1,12}'&        \egt q^4&  1& .& .& .& .& 1& .& .& .& .& 1\cr
  \phi_{9,6}''&        \egt q^4&  .& 1& 1& .& 1& .& 1& .& .& .& .& 1\cr
      F_4^I[1]&        \egt q^4&  .& .& .& .& .& .& .& .& .& .& .& .& 1\cr
 \phi_{1,12}''&        \egt q^4&  1& .& .& .& .& .& 1& .& .& .& .& .& .& 1\cr
       F_4[-1]&       \frth q^4&  .& .& .& .& .& .& .& .& .& .& .& .& .& .& 1\cr
    B_2$:$1^2.&       \frth q^4&  .& .& .& 1& .& .& .& .& .& .& .& .& .& .& .& 1\cr
      B_2$:$.2&       \frth q^4&  .& .& .& 1& .& .& .& .& .& .& .& .& .& .& .& .& 1\cr
   \phi_{6,6}'&       \thrd q^4&  1& .& .& .& 1& 1& 1& .& .& .& .& .& .& .& .& .& .& 1\cr
   \phi_{8,9}'&             q^9&  1& 1& .& 2& 1& 2& 1& x_1& 1& 1& 3& .& .& .& 2& 2& .& 1& 1\cr
  \phi_{8,9}''&             q^9&  1& .& 1& 2& 1& 1& 2& x_2& 1& .& .& 1& .& 3& 2& .& 2& 1& .& 1\cr
   \phi_{9,10}&          q^{10}&  .& 1& 1& 2& 1& 1& 1& x_3& 1& 1& 2& 1& y& 2& z& 2& 2& 1& 1& 1& 1\cr
 \phi_{2,16}''&     \hlf q^{13}&  .& .& 1& .& .& .& .& x_4& .& .& .& 1& y\pl2& 3& z& .& 2& .& .& 1& 1& 1\cr
  \phi_{2,16}'&     \hlf q^{13}&  .& 1& .& .& .& .& .& x_5& .& 1& 3& .& y\pl2& .& z& 2& .& .& 1& .& 1& .& 1\cr
     B_2$:$.1^2&    \hlf q^{13}&  .& .& .& 1& .& .& .& x_6& .& .& .& .& 2y& .& 2z\mn4& 1& 1& .& .& .& 2& .& .& 1\cr
   \phi_{1,24}&          q^{24}&  1& .& .& 2& .& 1& 1& x_7& 1& .& 3& .& 3y\pl 4& 3& 3z& 2& 2& 1& 1& 1& 3& 2& 2& 4& 1\cr
\noalign{\hrule}
  & & ps& ps& ps& C_2& ps& A_1& C_1& c& .1^2& A_1& 1^3.& C_1& c& 1^3.& c& B_3^a& B_3^a& B_2^b& .1^3& .1^3& c& c& c& c & c\cr
  }}$$
\end{small}
\end{table}

Here $B_3^a$ denotes the Harish-Chandra series of the cuspidal unipotent
Brauer character $B_2\co\vhi_{1^2}$ of $B_3(q)$, and $A_1^{2*}$ the one of the
$\ell$-modular Steinberg character $\vhi_{1^2}\sqtens\vhi_{1^2}$ of
$\tilde A_1(q)A_1(q)$.

\begin{proof}
The values $a,b,c,d$ left undetermined in \cite[T.A.157]{Koe06} are obtained
by Harish-Chandra induction from the decomposition matrices of the Levi factors
$B_3(q)$ and $C_3(q)$: using the values
given in \cite[Table~5 and Thm.~4.3]{HN14}, we get $b,d \leq \gamma = 2$ and
$a,c \leq \beta-1 = 2$ (recall that $(q+1)_\ell>5$). This forces $a=b=c=d=2$.
Furthermore \cite[T.A.157]{Koe06} gives the following approximation to the
lower right hand corner of the decomposition matrix:
$$\begin{array}{c|ccccc}
   \phi_{9,10}&    1\cr
 \phi_{2,16}''&    1& 1\cr
  \phi_{2,16}'&    1& .& 1\cr
     B_2$:$.1^2&   b_1& .& .& 1\cr
   \phi_{1,24}&    b_2& b_3& b_4& b_5& 1\cr
\end{array}$$
Now, Theorem~\ref{thm:genpos} yields the values of $b_3,b_4$ and $b_5$,
together with the relation $b_2 = 4b_1-5$.

Let $w$ be a Coxeter element of the Weyl group of $G=F_4(q)$. The corresponding
Deligne--Lusztig character $R_w$ has the following decomposition in terms of
projective characters in the principal block $B_0$:
$$B_0R_w = \Psi_1+\Psi_4-\Psi_6-\Psi_7+\Psi_8-\Psi_{13}+\Psi_{14}+\Psi_{17}
        -\Psi_{18}+\Psi_{21}+(2-b_1)\Psi_{24}.$$
By \cite[Prop.~1.5]{Du13}, we deduce that $b_1 \leq2$, and therefore $b_1=2$
and $b_2=3$.

We now focus on the columns corresponding to the ordinary cuspidal unipotent
characters, whose entries will be denoted by $x_1,\ldots,x_{21}$ as follows:
\begin{small}
$$\vbox{\offinterlineskip\halign{$#$\hfil\ \vrule height10pt depth4pt&&
      \hfil\ $#$\hfil\cr
   F_4^{II}[1]&	   1\cr
      F_4^I[1]&   .&        1\cr
       F_4[-1]&   .&       .&        1\cr
   \phi_{8,9}'& x_1&     x_8&   x_{15}\cr
  \phi_{8,9}''& x_2&     x_9&   x_{16}\cr
   \phi_{9,10}& x_3&  x_{10}&   x_{17}\cr
 \phi_{2,16}''& x_4&  x_{11}&   x_{18}\cr
  \phi_{2,16}'& x_5&  x_{12}&   x_{19}\cr
    B_2$:$.1^2& x_6&  x_{13}&   x_{20}\cr
   \phi_{1,24}& x_7&  x_{14}&   x_{21}\cr }}$$
\end{small}
Let $w =s_1s_2s_3s_4s_2s_3$. The coefficients of the PIMs $\Psi_{19}$,
$\Psi_{22}$ and $\Psi_{24}$ are  respectively $2-x_{15}$,
$2-x_{15}+x_{17}-x_{18}$ and $4-2x_{15}-2x_{16}+2x_{17}-x_{20}$. They are all
non-negative by (DL) and they add up to
$8-4x_{15}-2x_{16}+3x_{17}-x_{18}-x_{20}$. But  when $(q+1)_\ell > 7$, this sum
should also be non-positive by (Red) applied to the trivial character of
$\widetilde A_1(q).(q+1)^3$ (see Proposition \ref{prop:q+1 reg}).
Therefore we must have
$$x_{15} = 2,\quad x_{18} = x_{17}\quad\text{and}\quad
  x_{20} = 2x_{17}-2x_{16}.$$
Similarly, the coefficients of $\Psi_{20}$, $\Psi_{23}$ and $\Psi_{24}$ are
respectively $2-x_{16}$, $2-x_{16}+x_{17}-x_{19}$ and
$4-2x_{15}-2x_{16}+2x_{17}-x_{20}$. They add up to a number which is both
non-negative by (DL) and non-positive by (Red) applied to the trivial
character of $A_1(q).(q+1)^3$. Therefore
$$x_{16}=2,\quad x_{19} = x_{17}\quad\text{and}\quad x_{20} = 2x_{17}-4.$$
Then by Theorem~\ref{thm:genpos} we get $x_{21} = 3x_{17}$. We set $z:=x_{17}$.

Let $w=(s_1s_2s_3s_4)^2$ and $R_w$ be the corresponding Deligne--Lusztig
character. We proceed as in the previous paragraph. The sum of the coefficients
of $\Psi_{19}$, $\Psi_{22}$ and $\Psi_{24}$ is
$$(-x_8) + (2-x_8+x_{10}-x_{11}) + (-2x_8-2x_9+2x_{10}-x_{13})$$
and the sum of the coefficients of $\Psi_{20}$, $\Psi_{24}$ and $\Psi_{24}$ is
$$(-x_9) + (2-x_9+x_{10}-x_{12}) + (-2x_8-2x_9+2x_{10}-x_{13}).$$
Both are sums of non-negative integers by (DL) and are
non-positive by (Red) (for the same unipotent characters as before).
We deduce that
$$x_8 = x_9 = 0,\quad x_{11} = x_{12} = x_{10}+2,\quad\text{and}\quad
  x_{13} = 2x_{10}.$$
With Theorem~\ref{thm:genpos} we get $x_{14} = 3x_{10}+4$. We set $y:=x_{10}$.

We now consider the Deligne--Lusztig character associated with
$w=(s_1s_2s_3s_4)^3$. Only the PIMs $\Psi_{24}$ and $\Psi_{25}$ do not occur
in $R_v$ for $v <w$. The coefficient of $\Psi_{24}$ on $R_w$ equals
$4-2x_1-2x_2+2x_3-x_6$ and therefore it must be non-negative by (DL). On the
other hand, if $(q+1)_\ell > 4$ on can invoke (Red) for the cuspidal unipotent
character of the Levi subgroup $B_2(q).(q+1)^2$ to ensure that it is also
non-positive. We deduce that $x_6 =  4-2x_1-2x_2+2x_3$. The relation
$x_7 =4-x_3+2x_4+2x_5$ now follows from Theorem~\ref{thm:genpos}.

Finally, we use (Red) to obtain lower bounds on some of the missing entries.
With the trivial character of the $2$-split Levi subgroups of type
$\tw2 A_2(q).(q+1)^2$ we find $x_1 \geq 2$ and $x_2 \geq2$. Using the character 
$[21]\boxtimes [2]$ of a $2$-split Levi subgroup of type
$\tw2A_2(q) A_1(q).(q+1)$ we find  $x_3 \geq x_1+x_2-2$ hence $x_3 \geq 2$. 
\end{proof}

\begin{rem}
As before we can obtain upper bounds on the missing decomposition numbers if
\cite[Conj.~1.2]{DM14} holds.
Let $w_1 = s_2s_3s_2s_1s_3s_2s_3s_4s_3s_2s_1s_3s_4$. We have
$$\langle Q_{w_1} ; \varphi_{19} \rangle  = 4-2x_1 \quad \text{and} \quad
  \langle Q_{w_1} ; \varphi_{20} \rangle  = 4-2x_2,
$$
which would show that $x_1=x_2=2$. Now with 
$w_2 = s_1s_2s_1s_3s_2s_1s_4s_3s_2s_1s_3s_2s_4s_3s_2s_1$ and
$w_3=s_3s_4s_3s_2s_1s_3s_2s_3s_4s_3s_2s_1s_3s_2s_3s_4$ we have
$$ \begin{aligned}
  \langle Q_{w_2} ; \varphi_{22} \rangle & = 72+12x_3-12x_4,\\
  \langle Q_{w_3} ; \varphi_{21} \rangle & = 156-12z-12x_3, \\
  \langle Q_{w_3} ; \varphi_{23} \rangle & = 72+12x_3-12x_5.\\
\end{aligned}$$
This gives $x_4,x_5 \leq x_3+6$ and $x_3+z \leq 13$. Finally with
$w_4 = s_3s_2s_3s_4s_3s_2s_1s_3s_4$ and 
$w_5 = s_2s_1s_4s_3s_2s_1s_3s_2s_3$ we find
$$\langle Q_{w_4} ; \varphi_{21} \rangle  = 18-6z \quad \text{and} \quad
  \langle Q_{w_4} ; \varphi_{21} \rangle  = 14-2y.
$$
We conclude that $y \leq 7$ and $z \in \{2,3\}$. Using the previous inequalities
we obtain $x_3\leq 11$ and $x_4,x_5 \leq 17$.
\end{rem}

\chapter{Decomposition matrices at $d_\ell(q)=3$}   \label{chap:d=3}

Here, we consider decomposition matrices of unipotent blocks of groups of Lie
type $G=G(q)$ for primes $\ell$ with $d_\ell(q)=3$, so $\ell|(q^2+q+1)$ and in
particular $\ell\ge7$. If $G$ is of classical type, then such primes $\ell$
with are \emph{linear} for $G$, and so the decomposition numbers are known
by work of Gruber and Hiss \cite{GrHi} to be given by suitable $q$-Schur
algebras. (This implies, for example, that the block distribution of unipotent
characters refines the subdivision into ordinary Harish-Chandra series, and
that $(T_\ell)$ is always satisfied.) Nevertheless, to our knowledge they have
never been written out explicitly, so we derive them here, also as an induction
base for blocks of groups of exceptional type for which the theory of
linear primes from \cite{GrHi} does not apply.

\section{Even-dimensional split orthogonal groups}
We begin with groups of type $D_n$ for $n\le7$. The Brauer trees of unipotent
blocks with cyclic defect were first determined by Fong and Srinivasan
\cite{FS90} and in our situation can also easily be obtained by Harish-Chandra
induction:

\begin{prop}   \label{prop:trees Dn d=3}
 Let $q$ be a prime power and $\ell$ a prime with $d_\ell(q)=3$.
 The Brauer trees of the unipotent $\ell$-blocks of $D_n(q)$, $3\le n\le 7$,
 with cyclic defect are as given in Table~\ref{tab:Dn,d=3,def1}.
\end{prop}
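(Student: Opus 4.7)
The plan is to organise the argument in three stages. First, I would identify which unipotent $\ell$-blocks of $D_n(q)$ for $3\le n\le 7$ have cyclic defect. By the parametrisation of unipotent blocks due to Brou\'e--Malle--Michel \cite{BMM} and Cabanes--Enguehard \cite{CE94}, every unipotent block corresponds to a $G$-orbit of $3$-cuspidal pairs $(\bL,\la)$, and its defect group is a Sylow $\ell$-subgroup of $Z^\circ(\bL)^F$, of rank equal to the $\Phi_3$-rank of $\bL$. Since $\Phi_3$ divides $q^{2i}-1$ precisely when $3\mid i$, the $\Phi_3$-rank of $D_n$ equals the number of degrees of $W(D_n)=\{2,4,\ldots,2(n-1),n\}$ divisible by~$3$, which is~$1$ for $n\in\{3,4,5\}$ and~$2$ for $n\in\{6,7\}$. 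Hence for $n\le5$ every positive-defect unipotent block is cyclic, while for $n\in\{6,7\}$ the cyclic-defect blocks are exactly the non-principal ones of positive defect.

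Secondly, I would enumerate the ordinary characters in each cyclic-defect block. Since $d=3\le n$, the prime $\ell$ is \emph{linear} for $D_n(q)$ in the sense of \cite{GrHi}, so the partition of unipotent characters into $\ell$-blocks refines into (in fact coincides with) the partition into $3$-Harish-Chandra series. Equivalently, two unipotent characters of $D_n(q)$ lie in the same unipotent $\ell$-block if and only if their labels (bipartitions or symbols) have the same $3$-core in the appropriate combinatorial sense. This gives the vertex set of each candidate Brauer tree.

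Thirdly, having fixed the block $b$ with cuspidal pair $(\bL,\la)$, I would construct the Brauer tree by Harish-Chandra induction. Since the relative Weyl group $W_G(\bL,\la)$ has order divisible by~$\ell$ only through a cyclic factor, the associated cyclotomic Hecke algebra $\cH_G(\bL,\la)$ is of type $G(e,1,1)$ (or a product with trivial extra factors), whose decomposition matrix modulo~$\ell$ is explicitly known. Harish-Chandra inducing the PIMs from a $1$-split Levi overgroup of $\bL$ (e.g., a Levi $D_{n'}(q)$ with $n'<n$, whose decomposition data is already available inductively), then cutting by $b$ and applying~(HCi) and~(HCr), yields the edges of the Brauer tree together with their incidence relations. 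The planar embedding (in particular the position of the exceptional vertex) can be extracted either from the Fong--Srinivasan classification~\cite{FS90} or by (DL) applied to a Coxeter-like element of $W$ of order a multiple of~$3$, combined with~(Red) using Proposition~\ref{prop:q+1 reg}'s analogue for $d=3$ to produce a regular $\ell$-element whose centraliser is a torus inside $\bL$.

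The main obstacle will not be any single case but the bookkeeping: verifying that for every cyclic-defect block and each $n\in\{3,\ldots,7\}$, the Harish-Chandra induced projectives suffice to recover the full tree together with its ordering. However, because every such tree turns out to be a line (a direct consequence of the cyclic relative Weyl group structure and the absence of graph automorphism issues on the relevant cuspidal characters), the verification reduces to listing, for each induced PIM, its unique decomposition via~(HCr) on the chain of unipotent characters sharing the common $3$-core. Once this is carried out case by case using \Chevie\ \cite{Mi15}, Table~\ref{tab:Dn,d=3,def1} follows.
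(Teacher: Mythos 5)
Your proposal is correct and follows essentially the same route as the paper, which gives no written argument for this proposition beyond citing Fong--Srinivasan \cite{FS90} and remarking that the trees ``can also easily be obtained by Harish-Chandra induction'' --- exactly the strategy you spell out (identify the cyclic blocks via $\Phi_3$-ranks, read off the vertices from the $3$-Harish-Chandra series, and recover the line trees by inducing PIMs from proper $1$-split Levi subgroups and applying (HCi)/(HCr)). The details you defer to bookkeeping are indeed routine here, so nothing essential is missing.
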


\begin{table}[ht]
\caption{Brauer trees for $D_n(q)$ ($3\le n\le 7$), $7\le\ell| (q^2+q+1)$ } \label{tab:Dn,d=3,def1}
$$\vbox{\offinterlineskip\halign{$#$
        \vrule height10pt depth 2pt width 0pt&& \hfil$#$\hfil\cr
D_3(q):& .3& \vr& .21& \vr& .1^3& \vr& \bigcirc\cr
\cr
D_5(q):& 1.4& \vr& 1.2^2& \vr& 1.1^4& \vr& \bigcirc\cr
\cr
D_7(q):& 2.5& \vr& 2.2^21& \vr& 2.21^3& \vr& \bigcirc\cr
\cr
 & 1^2.41& \vr& 1^2.32& \vr& 1^2.1^5& \vr& \bigcirc\cr
 & & ps& & ps& & A_2\cr
\cr
D_4(q):& 1.3& \vr& 1.21& \vr& 1.1^3& \vr& \bigcirc& \vr& .1^4 & \vr& .2^2& \vr& .4\cr
\cr
D_5(q):& .5& \vr& .2^21& \vr& .21^3& \vr& \bigcirc& \vr& 2.1^3& \vr& 2.21& \vr& 2.3\cr
\cr
 & .41& \vr& .32& \vr& .1^5& \vr& \bigcirc& \vr& 1^2.1^3& \vr& 1^2.21& \vr& 1^2.3\cr
\cr
D_6(q):& 1.5& \vr& 1.2^21& \vr& 1.21^3& \vr& \bigcirc& \vr& 2.1^4& \vr& 2.2^2& \vr& 2.4\cr
 \cr
 & 1^2.4& \vr& 1^2.2^2& \vr& 1^2.1^4& \vr& \bigcirc& \vr&  1.1^5& \vr& 1.32& \vr& 1.41\cr
\cr
D_7(q):& .61& \vr& .32^2& \vr& .31^4& \vr& \bigcirc& \vr& 1^3.31& \vr& 21.31& \vr& 3.31\cr
\cr
 & .51^2& \vr& .3^21& \vr& .21^5& \vr& \bigcirc& \vr& 1^3.21^2& \vr& 21.21^2& \vr& 3.21^2\cr
\cr
 & 1^2.5& \vr& 1^2.2^21& \vr& 1^2.21^3& \vr& \bigcirc& \vr& 2.1^5& \vr& 2.32& \vr& 2.41\cr
 & & ps& & ps& & A_2& & A_2& & ps& & ps\cr
\cr
D_7(q): & D_4\co3.& \vr& D_4\co21.& \vr& D_4\co1^3.& \vr& \bigcirc& \vr& D_4\co.1^3& \vr& D_4\co.21& \vr& D_4\co.3\cr
 & & D_4& & D_4& & D_4A_2& & D_4A_2& & D_4& & D_4\cr
  }}$$
\end{table}

Here and later on, we label the ordinary unipotent characters by their
Harish-Chandra series; for the principal series this means by the irreducible
characters of a Weyl group of type $D_n$, hence by unordered pairs of
partitions of $n$, and for characters in the Harish-Chandra series of the
cuspidal unipotent character of a Levi subgroup of type $D_4$ by the symbol
``$D_4$'' and a character of the relative Weyl group, which is of type
$B_{n-4}$, hence by a bipartition of $n-4$.

Under the edges of the Brauer trees, which represent the irreducible Brauer
characters (or equivalently the PIMs) of the block, we have indicated their
corresponding $\ell$-modular Harish-Chandra series; here ``ps'' stands for the
principal series, while ``$A_2$'' stands for the series of the cuspidal
$\ell$-modular Steinberg character of a Levi subgroup of type $A_2$.

Again, we first need to determine the parameters of the Hecke algebras
attached to cuspidal $\ell$-modular Brauer characters of certain Levi
subgroups:

\begin{lem}   \label{lem:paramDn,d=3}
 Let $q$ be a prime power and $\ell|(q^2+q+1)$.
 \begin{enumerate}[\rm(a)]
  \item The Hecke algebra for the cuspidal $\ell$-modular Steinberg character
   $\vhi_{1^3}$ of a Levi subgroup of type $A_2$ inside $D_n(q)$, $n\ge4$, is
   $\cH(B_{n-3};1;q)$.
  \item The Hecke algebra for the cuspidal $\ell$-modular Steinberg character
   $\vhi_{1^3}^{\sqtens2}$ of a Levi subgroup of type $A_2^2$ inside $D_n(q)$
   is $\cH(A_1;q^3)\otimes \cH(A_1;q^3)$ if $n=6$ and
   $\cH(B_2;q^3;1)\otimes \cH(D_{n-6};q)$ when $n\ge7$ (where $D_1$ has to be
   interpreted as the trivial group).
 \end{enumerate}
\end{lem}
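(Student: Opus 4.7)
My plan is to follow the proof of Lemma~\ref{lem:paramDn,d=2}. In both parts, the cuspidal Brauer character is the $\ell$-modular reduction of the ordinary Steinberg character of $A_2(q)$ (respectively $A_2(q)^2$), hence irreducible after reduction and invariant under all relevant automorphisms of the Levi subgroup. Reduction stability is therefore automatic, and by Proposition~\ref{prop:redstab-geck} the modular Hecke algebra is the $\ell$-specialisation of its characteristic-$0$ counterpart. The new feature compared to the $d=2$ case is the identity $q^3 \equiv 1 \pmod \ell$, which collapses characteristic-$0$ parameters equal to $q^3$ to $1 \in k$.

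The relative Weyl groups I would read off from \cite[p.~72]{Ho80} or compute via \Chevie: they are of type $B_{n-3}$ for (a), and of type $B_2 \times D_{n-6}$ for (b) with $n \ge 7$, degenerating to $A_1 \times A_1$ in the boundary case $n = 6$. The characteristic-$0$ parameters are then determined by their values in the minimal Levi overgroups of $\bL$, as in Lemma~\ref{lem:paramDn,d=2}: for (a) these overgroups are of types $A_3$ and $A_2 A_1$ (each contributing the standard parameter $q$ on the $A_{n-4}$-branch of $B_{n-3}$), and $D_4$ (contributing parameter $q^3$ at the $B_1$-node, as can be read off from \cite[p.~464]{Ca}). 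For (b) the analogous minimal overgroups are of types $A_2 A_3$, $A_2^2 A_1$, $A_2 D_4$, and for $n \ge 7$ also $A_2^2 D_2$, contributing the $D_{n-6}$-branch parameter $q$. Specialising using $q^3 \equiv 1 \pmod \ell$ then produces the Hecke algebras claimed.

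The step I expect to be most delicate is the combinatorial analysis of the relative Weyl groups, in particular explaining why the $n = 6$ case of (b) yields $A_1 \times A_1$ rather than $B_2$. This boundary behaviour arises because the reflection realising the $B_1$-node of $B_2$ (for $n \ge 7$) lives in the span of simple roots outside the $A_2^2$-subsystem and requires a root that is no longer available when $n = 6$. Verifying this identification, and matching each generator of the relative Weyl group with the correct parameter in the minimal Levi overgroup, is the bulk of the work.
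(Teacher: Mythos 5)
Your overall strategy --- relative Weyl groups from \cite{Ho80}, parameters determined inside the minimal Levi overgroups, then reduction modulo $\ell$ --- is the right one, but your starting point is wrong. The cuspidal Brauer character $\vhi_{1^3}$ is \emph{not} the $\ell$-modular reduction of the ordinary Steinberg character of $A_2(q)$: the ordinary Steinberg character lies in the principal series (so is not cuspidal), and for $\ell\mid q^2+q+1$ its reduction has two constituents, $\vhi_{21}+\vhi_{1^3}$, as one reads off the Brauer tree of the principal block of $\GL_3(q)$ (a line $[3]-[21]-[1^3]-\bigcirc$). The correct characteristic-zero lift, taken from \cite[Thm.~7.8]{GHM}, is an ordinary \emph{cuspidal} character $\la=\pm R_T^L(\theta)$ in the Lusztig series of a regular semisimple $\ell$-element of $L^*$; it is the exceptional vertex of that tree, and its reduction is indeed $\vhi_{1^3}$. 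This is not cosmetic: it is $\RLM(\la)$ for this $\la$ that one must decompose in the minimal overgroups $M$, reduction stability comes from Example~\ref{exmp:hecke}(a) (cyclic defect) rather than from irreducibility of $\St^\circ$, and your parameter $q^3$ at the $B_1$-node ``read off from \cite[p.~464]{Ca}'' is unjustified, since Carter's table concerns cuspidal \emph{unipotent} characters and the Steinberg character of $A_2$ is neither. The actual argument computes the $\ell$-reduction of the quotient of the degrees of the two constituents of $\RLM(\la)$ for $M=D_4(q)$ via \cite[Lemma~3.17]{GHM2} and finds it equal to~$1$.

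Your list of minimal Levi overgroups also needs correcting. In (a), $A_3$ contributes no generator (the relative Weyl group of $A_2$ inside $A_3$ is trivial); the generators of $B_{n-3}$ come from $D_4$ and $A_2A_1$ only. In (b) you omit the essential overgroup $A_5\supset A_2^2$: the two $A_5$'s inside $D_6$ produce the two commuting reflections for $n=6$, and for $n\ge7$ an $A_5$ together with $D_4A_2$ produces the $B_2$-factor while $A_2^2A_1$ produces the $D_{n-6}$-branch. The $A_5$ overgroup is precisely where the parameter $q^3$ arises (via the standard $\GL_6$ computation for the pair $(\GL_3\times\GL_3,\la\sqtens\la)$), i.e.\ the one parameter that actually appears in the stated answer $\cH(A_1;q^3)\otimes\cH(A_1;q^3)$, resp.\ $\cH(B_2;q^3;1)$; your proposed $A_2A_3$ contributes no generator at all. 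Finally, since the normaliser interchanges the two $A_2$-factors, reduction stability in (b) requires choosing the \emph{same} characteristic-zero lift $\la$ in both factors --- a point your proposal does not address.
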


\begin{proof}
First, by \cite[p.~72]{Ho80} the relative Weyl group of $A_2$ inside $D_n$ is
of type $B_{n-3}$. The parameters of the corresponding Hecke algebra are
determined by the parameters inside the minimal Levi subgroups above $A_2$,
viz.\ those of types $D_4$ and $A_2A_1$. Clearly the parameter inside the
product $A_2(q)A_1(q)$ is equal to $q$, by \cite[Lemma~3.19]{GHM2}. Now the
$\ell$-modular cuspidal Steinberg character of $L=A_2(q)$ is liftable to an
ordinary cuspidal character $\la$ by \cite[Thm.~7.8]{GHM}, lying in the
Lusztig series of a regular semisimple $\ell$-element of $L^*$. Then
the parameters of the relative Hecke algebra inside $M=D_4(q)$ can be
determined as the $\ell$-modular reduction of the quotient of the degrees of
the two ordinary constituents of $\RLM(\la)$, see \cite[Lemma~3.17]{GHM2}.
This equals~1 as claimed in~(a). Reduction stability holds by
Example~\ref{exmp:hecke}(a).
\par
In~(b), the cuspidal modular Steinberg character of a Levi subgroup of type
$A_2^2$ is the exterior tensor product of those of the two factors, so is
again liftable to
characteristic zero by \cite[Thm.~7.8]{GHM}. Again by \cite[p.~72]{Ho80} the
relative Weyl group has type as stated. The minimal Levi overgroups here are
of types $A_5$ (twice) inside $D_6$, and of type $D_4A_2$ and $A_2^2A_1$ in
$D_7$, $D_8$ respectively, and the parameters in either case can again be
determined using \cite[Lemma~3.17]{GHM2}. Here, reduction stability holds if we
choose the same lift to characteristic~0 in both factors.
\end{proof}

The only unipotent $\ell$-blocks of non-cyclic defect of $D_6(q)$ and $D_7(q)$
are their principal blocks.

\begin{prop}   \label{prop:D6,d=3}
 The decomposition matrix for the principal $\ell$-block of $D_6(q)$ for
 primes $7\le \ell| (q^2+q+1)$ is as given in Table~\ref{tab:D6,d=3}.
\end{prop}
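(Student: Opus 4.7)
The plan is to exploit that, with $d_\ell(q)=3$ and $\bG$ of classical type, the prime $\ell$ is \emph{linear} for $D_6(q)$, so by Gruber--Hiss \cite{GrHi} property $(T_\ell)$ is automatic and each unipotent block is a union of ordinary Harish-Chandra series. First I would list the unipotent characters belonging to the principal $\ell$-block—those bipartitions of $6$ sharing the trivial $3$-core—and order them compatibly with the $a$-function, so that the decomposition matrix is uni-triangular on this basic set.

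Next I would build projective characters by (HCi), one Harish-Chandra series at a time. The principal series PIMs are read off from the $\ell$-modular decomposition matrix of the Iwahori--Hecke algebra $\cH(D_6;q)$, which over $k$ reduces to a standard $q$-Schur type computation (and can be handled with the programme of Jacon \cite{Ja05}). For the $A_2$-series I would Harish-Chandra induce the cuspidal modular Steinberg character $\vhi_{1^3}$ of an $A_2$-Levi from smaller groups; by Lemma~\ref{lem:paramDn,d=3}(a) the relative Hecke algebra is $\cH(B_3;1;q)$, whose number of simples fixes how many PIMs of $D_6(q)$ lie in that series, and Proposition~\ref{prop:dechecke} embeds its decomposition matrix into that of $G$. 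The $A_2^2$-series is treated identically using the Hecke algebra $\cH(A_1;q^3)\otimes\cH(A_1;q^3)$ of Lemma~\ref{lem:paramDn,d=3}(b). The key inputs for concretely carrying out these Harish-Chandra inductions are the cyclic-defect Brauer trees of Proposition~\ref{prop:trees Dn d=3} for $D_3(q),\,D_4(q)$ and $D_5(q)$, together with the (known) decomposition matrices of the general linear factors $A_n(q)$ appearing in the standard Levi subgroups.

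For each induced projective character I would then apply (HCr): either it is forced to be indecomposable because no proper subsum Harish-Chandra restricts non-negatively on the PIMs of every proper Levi, or (HCr) leaves exactly one admissible splitting, which will be the one dictated by the Hecke algebra decomposition matrix. A Sylow $\Phi_3$-torus of $D_6(q)$ is contained in a proper $3$-split Levi of type $A_2^2$, so criterion (Csp) rules out cuspidal unipotent Brauer characters in $G$; together with (St) this means every PIM in the principal block must lie in one of the three Harish-Chandra series above.

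The main obstacle, therefore, is not conceptual but combinatorial book-keeping: one must verify that the total number of PIMs produced by these three families matches $|\Irr\, b|$ for the principal block $b$, and that their distribution across the three series agrees with the counts provided by Lemma~\ref{lem:paramDn,d=3} and Table~\ref{tab:hecke irr}. Once the match is exact, uni-triangularity plus the $q$-Schur algebra interpretation of Gruber--Hiss guarantees that the columns just constructed are the correct PIMs, completing the determination of the decomposition matrix claimed in Table~\ref{tab:D6,d=3}.
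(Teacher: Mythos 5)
Your proposal is correct and follows essentially the same route as the paper: Harish-Chandra induction organised by series, the Hecke algebra data of Lemma~\ref{lem:paramDn,d=3} (with $\cH(A_1;q^3)^{\sqtens 2}$ semisimple mod $\ell$ giving exactly four PIMs in the $A_2^2$-series), exclusion of cuspidal Brauer characters via (Csp), and (HCr) to force indecomposability or the unique admissible splitting of the induced projectives. The only cosmetic difference is that the paper verifies (Csp) by noting the centraliser of a Sylow $\ell$-subgroup lies in a $1$-split Levi of type $A_5$, whereas you phrase this via a $3$-split Levi; the criterion as stated requires a $1$-split Levi, so you should cite the containment in $A_5$ (or the standard Levi $A_2^2$) instead.
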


Here, in the second column we print the degrees of the corresponding unipotent
characters as products of cyclotomic polynomials evaluated at $q$.

\begin{table}[ht]
\caption{$D_6(q)$, $7\le\ell| (q^2+q+1)$}   \label{tab:D6,d=3}
$$\vbox{\offinterlineskip\halign{$#$\hfil\ \vrule height11pt depth4pt&
      \hfil\ $#$\ \hfil\vrule&& \hfil\ $#$\hfil\cr
    .6&                                 1& 1\cr
   .51&                    q^2\Ph5\Ph{10}& 1& 1\cr
    3+&                q^3\Ph5\Ph8\Ph{10}& .& .& 1\cr
    3-&                q^3\Ph5\Ph8\Ph{10}& .& .& .& 1\cr
  .3^2&     \hlf q^4\Ph5\Ph6^2\Ph8\Ph{10}& .& 1& .& .& 1\cr
  21.3&   \hlf q^4\Ph2^4\Ph5\Ph6^2\Ph{10}& .& .& 1& 1& .& 1\cr
 .41^2&                q^6\Ph5\Ph8\Ph{10}& .& 1& .& .& .& .& 1\cr
 1^3.3&      \hlf q^7\Ph4^2\Ph5\Ph6^2\Ph8& .& .& .& .& .& 1& .& 1\cr
  .321&   \hlf q^7\Ph2^4\Ph6^2\Ph8\Ph{10}& 1& 1& .& .& 1& .& 1& .& 1\cr
   21+&          q^7\Ph4^2\Ph5\Ph8\Ph{10}& .& .& 1& .& .& 1& .& .& .& 1\cr
   21-&          q^7\Ph4^2\Ph5\Ph8\Ph{10}& .& .& .& 1& .& 1& .& .& .& .& 1\cr
  .2^3&  \hlf q^{10}\Ph5\Ph6^2\Ph8\Ph{10}& 1& .& .& .& .& .& .& .& 1& .& .& 1\cr
1^3.21&\hlf q^{10}\Ph2^4\Ph5\Ph6^2\Ph{10}& .& .& .& .& .& 1& .& 1& .& 1& 1& .& 1\cr
 .31^3&             q^{12}\Ph5\Ph8\Ph{10}& .& .& .& .& .& .& 1& .& 1& .& .& .& .& 1\cr
  1^3+&             q^{15}\Ph5\Ph8\Ph{10}& .& .& .& .& .& .& .& .& .& 1& .& .& 1& .& 1\cr
  1^3-&             q^{15}\Ph5\Ph8\Ph{10}& .& .& .& .& .& .& .& .& .& .& 1& .& 1& .& .& 1\cr
 .21^4&                 q^{20}\Ph5\Ph{10}& .& .& .& .& 1& .& .& .& 1& .& .& 1& .& 1& .& .& 1\cr
  .1^6&                            q^{30}& .& .& .& .& 1& .& .& .& .& .& .& .& .& .& .& .& 1& 1\cr
\noalign{\hrule}
 \omit& & ps& ps& ps& ps& ps& ps& ps& A_2& ps& ps& ps& A_2^2& A_2& A_2& A_2^2& A_2^2& A_2& A_2^2\cr
   }}$$
\end{table}

\begin{proof}
By Harish-Chandra induction we obtain the projectives in the principal series
as well as those in the $A_2$-series, which are easily seen to be
indecomposable by (HCr). Since the centraliser of a Sylow $\ell$-subgroup of
$D_6(q)$ is contained in a Levi subgroup of type $A_5$, there are no cuspidal
Brauer characters by (Csp) so the remaining four PIMs must belong to Brauer
characters lying in the Harish-Chandra series above the cuspidal Steinberg PIM
of a Levi subgroup of type $A_2^2$. The corresponding Hecke algebra
$\cH(A_1;q^3)^{\sqtens2}$, determined in Lemma~\ref{lem:paramDn,d=3}(b), remains
semisimple modulo $\ell$. Now Harish-Chandra induction also yields the
projectives $\Psi_{10}+\Psi_{12}+\Psi_{15}$, $\Psi_{16}+\Psi_{18}$,
$\Psi_{11}+\Psi_{12}+\Psi_{16}$, $\Psi_{15}+\Psi_{18}$, and
$\Psi_{12}+\Psi_{15}+\Psi_{16}+\Psi_{18}$, where $\Psi_i$ denotes the linear
combination of unipotent characters with coefficients as given in the $i$th
column of our table. The only way that these can split into four characters
all satisfying (HCr) is the one given.
\end{proof}

\begin{prop}   \label{prop:D7,d=3}
 The decomposition matrix for the principal $\ell$-block of $D_7(q)$ for
 primes $7\le\ell| (q^2+q+1)$, is as given in Table~\ref{tab:D7,d=3}.
\end{prop}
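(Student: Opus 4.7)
The plan is to follow the same strategy as in the proof of Proposition~\ref{prop:D6,d=3}.

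First I would identify which $\ell$-modular Harish-Chandra series contribute PIMs to the principal block. Since $d_\ell(q)=3$ and a Sylow $\Ph3$-torus of $D_7(q)$ has rank two, its centraliser lies in a proper 1-split Levi subgroup of type $A_6$, whose order is divisible by $\Ph3(q)^2$. By (Csp) the principal block therefore contains no cuspidal unipotent Brauer characters. Moreover, the Brauer trees of Proposition~\ref{prop:trees Dn d=3} show that every unipotent character in the $D_4$-Harish-Chandra series of $D_7(q)$ lies in a unipotent block of cyclic defect, disjoint from the principal block. Hence every PIM of the principal block lies either in the principal series, in the Harish-Chandra series above the cuspidal Steinberg PIM $\vhi_{1^3}$ of an $A_2$-Levi subgroup, or in that above $\vhi_{1^3}^{\sqtens 2}$ for an $A_2^2$-Levi subgroup.

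Second, I would produce projective characters for each of these series. The principal-series PIMs are read off from the decomposition matrix of the Iwahori--Hecke algebra $\cH(D_7;q)$, which, since $\ell$ is linear for $D_7(q)$, is controlled by the associated $q$-Schur algebra decomposition numbers in the sense of \cite{GrHi} (and in practice can be computed via the programme of Jacon \cite{Ja05}). PIMs in the $A_2$-series are obtained by Harish-Chandra inducing the known PIMs of this series from proper 1-split Levi subgroups, notably from $D_6(q)$ via Proposition~\ref{prop:D6,d=3}; both the number of summands and their admissible shape are controlled by the decomposition matrix of $\cH(B_4;1;q)$ established in Lemma~\ref{lem:paramDn,d=3}(a). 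PIMs in the $A_2^2$-series are obtained analogously; here Lemma~\ref{lem:paramDn,d=3}(b) yields the Hecke algebra $\cH(B_2;q^3;1)\otimes\cH(D_1;q)=\cH(B_2;q^3;1)$, which reduces modulo $\ell$ to the group algebra $k[W(B_2)]$ and is semisimple with five irreducible modules.

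Finally, indecomposability of each constructed projective is forced by repeated application of (HCr): any non-trivial splitting would have to Harish-Chandra restrict to a genuine projective on every proper Levi subgroup, and the counts of irreducible modules in each Hecke algebra close off the possible splittings to the unique one displayed in Table~\ref{tab:D7,d=3}, with (Tri) resolving any residual ambiguity. The main obstacle is not conceptual but combinatorial: the principal block contains a large number of unipotent characters, so the bookkeeping of symbols and the comparison of Harish-Chandra restrictions across the various Levi subgroups of types $D_6$, $D_5 A_2$, $A_5 A_2$, $D_4 A_2$ and so on is intricate. Since $\ell$ is linear, neither (DL), (Red) nor the assumption $(T_\ell)$ is needed, and the matrix is pinned down with no undetermined entries.
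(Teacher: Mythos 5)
Your proposal matches the paper's proof in all essentials: no cuspidal Brauer characters by (Csp), principal-series PIMs from the decomposition matrix of $\cH(D_7;q)$ (the paper routes this through $\cH(B_7;1;q)$ and Dipper--James/James, but the conclusion is the same), the $A_2$- and $A_2^2$-series via (HCi) together with the Hecke algebras of Lemma~\ref{lem:paramDn,d=3} (with $\cH(B_2;q^3;1)$ semisimple modulo $\ell$), and indecomposability by (HCr). The only caveat is that the combinatorial step you defer — isolating individual $A_2$-series columns requires taking specific integral combinations of several induced projectives, not just splitting each one — is exactly where the paper does its concrete work, but your plan correctly identifies all the tools needed to carry it out.
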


\begin{table}[ht]
{\small\caption{$D_7(q)$, $7\le\ell| (q^2+q+1)$}   \label{tab:D7,d=3}
$$\vbox{\offinterlineskip\halign{$#$\hfil\ \vrule height9pt depth4pt&
      \hfil\ $#$\ \hfil\vrule&& \hfil\ $#$\hfil\cr
     .7&           1& 1\cr
    1.6&           q& .& 1\cr
   1.51&    \hlf q^3& .& 1& 1\cr
    .52&    \hlf q^3& 1& .& .& 1\cr
    3.4&         q^3& .& .& .& .& 1\cr
    .43&    \hlf q^4& .& .& .& 1& .& 1\cr
   21.4&    \hlf q^4& .& .& .& .& 1& .& 1\cr
  2^2.3&    \hlf q^6& .& .& .& .& 1& .& .& 1\cr
  1.3^2&    \hlf q^6& .& .& 1& .& .& .& .& .& 1\cr
   .421&    \hlf q^7& 1& .& .& 1& .& 1& .& .& .& 1\cr
  1^3.4&    \hlf q^7& .& .& .& .& .& .& 1& .& .& .& 1\cr
 1.41^2&    \hlf q^7& .& .& 1& .& .& .& .& .& .& .& .& 1\cr
 21.2^2&         q^9& .& .& .& .& 1& .& 1& 1& .& .& .& .& 1\cr
  1.321&         q^9& .& 1& 1& .& .& .& .& .& 1& .& .& 1& .& 1\cr
  1.2^3& \hlf q^{12}& .& 1& .& .& .& .& .& .& .& .& .& .& .& 1& 1\cr
1^3.2^2& \hlf q^{12}& .& .& .& .& .& .& 1& .& .& .& 1& .& 1& .& .& 1\cr
  .41^3&      q^{12}& .& .& .& .& .& .& .& .& .& 1& .& .& .& .& .& .& 1\cr
 1.31^3& \hlf q^{13}& .& .& .& .& .& .& .& .& .& .& .& 1& .& 1& .& .& .& 1\cr
 .321^2& \hlf q^{13}& 1& .& .& .& .& 1& .& .& .& 1& .& .& .& .& .& .& 1& .& 1\cr
  1^4.3& \hlf q^{13}& .& .& .& .& .& .& .& 1& .& .& 1& .& .& .& .& .& .& .& .& 1\cr
  .2^31& \hlf q^{16}& 1& .& .& .& .& .& .& .& .& .& .& .& .& .& .& .& .& .& 1& .& 1\cr
 1^4.21& \hlf q^{16}& .& .& .& .& .& .& .& 1& .& .& 1& .& 1& .& .& .& .& .& .& 1& .& 1\cr
 1.21^4& \hlf q^{21}& .& .& .& .& .& .& .& .& 1& .& .& .& .& 1& 1& .& .& 1& .& .& .& .& 1\cr
.2^21^3& \hlf q^{21}& .& .& .& .& .& 1& .& .& .& .& .& .& .& .& .& .& 1& .& 1& .& 1& .& .& 1\cr
1^3.1^4&      q^{21}& .& .& .& .& .& .& .& .& .& .& .& .& 1& .& .& 1& .& .& .& .& .& 1& .& .& 1\cr
  1.1^6&      q^{31}& .& .& .& .& .& .& .& .& 1& .& .& .& .& .& .& .& .& .& .& .& .& .& 1& .& .& 1\cr
   .1^7&      q^{42}& .& .& .& .& .& 1& .& .& .& .& .& .& .& .& .& .& .& .& .& .& .& .& .& 1& .& .& 1\cr
\noalign{\hrule}
 \omit& & ps& ps& ps& ps& ps& ps& ps& ps& ps& ps& A_2\!& ps& ps& ps& A_2^2\!& A_2\!& A_2\!& A_2\!& ps& A_2\!& A_2^2\!& A_2\!& A_2\!& A_2\!& A_2^2\!& A_2^2\!& A_2^2\!\cr
   }}$$}
\end{table}

\begin{proof}
Again, since the centraliser of a Sylow $\ell$-subgroup of $D_7(q)$ is
contained in a Levi subgroup of type $A_5$, there are no cuspidal Brauer
characters by (Csp) so all Brauer characters must lie in the Harish-Chandra
series above cuspidal characters of proper Levi subgroups. We investigate these
in turn.   \par
The Hecke algebra $\cH(D_7;q)$ for the principal series can be considered as a
subalgebra of index~2 of an Iwahori--Hecke algebra $\cH(B_7;1;q)$. The latter
is Morita equivalent
to a sum of tensor products of Hecke algebras of type $A_n$, $n\le7$, by
\cite[4.7]{DJ92}, and the decomposition matrices of the latter are known by
the work of James \cite[p.~259]{Ja90}. This gives the 14 columns corresponding
to the principal series, for all $\ell\ge7$. Next, the columns $\Psi_i$, for
$i\in\{18,20,22,23,25\}$ are obtained by Harish-Chandra induction. Further,
the projectives
$\Psi_3+\Psi_7+\Psi_{17}$, $\Psi_{11}+\Psi_{20}$ and $\Psi_{17}+\Psi_{18}$ yield
$\Psi_{11}$ and $\Psi_{17}$; while $\Psi_9+\Psi_{16}+\Psi_{19}+\Psi_{24}$,
$\Psi_{14}+\Psi_{16}+\Psi_{20}+2\Psi_{22}+\Psi_{23}$ and $\Psi_{23}+\Psi_{24}$
yield $\Psi_{16}$ and $\Psi_{24}$. Harish-Chandra induction also gives the
projectives $\Psi_{15}+\Psi_{21}$, $\Psi_{15}+\Psi_{26}$, $\Psi_{21}+\Psi_{27}$
and $\Psi_{26}+\Psi_{27}$ (modulo addition of known projectives). The Hecke
algebra $H(B_2;q^3;1)$ for the cuspidal $\ell$-modular Steinberg character
$\vhi_{1^3}^{\sqtens2}$ of a Levi subgroup of type
$A_2^2$ remains semisimple modulo~$\ell$ by Lemma~\ref{lem:paramDn,d=3}(b).
This yields the remaining four columns of the asserted decomposition matrix.
Then (HCr) shows that all columns are indecomposable.
\end{proof}

\section{Unipotent decomposition matrix of $E_6(q)$}
The triangular shape of the decomposition matrix for the unipotent blocks
of $E_6(q)$, when $q$ is a power of a good prime for $E_6$, has been shown by
Geck and Hiss \cite[7.5]{GH97} in the case when $d_\ell(q)=3,6$, by using
generalised Gelfand--Graev characters. This shows that $(T_\ell)$ is satisfied
under these assumptions on $q$. In fact, for $d_\ell(q)=3$, Geck and Hiss
\cite[Table 3]{GH97} give an approximation to the decomposition matrix
involving four unknown entries, plus the unknown decomposition of the two
ordinary cuspidal characters.

\begin{lem}   \label{lem:paramE6,d=3}
 Let $q$ be a prime power and $\ell|(q^2+q+1)$. The Hecke algebra for the
 cuspidal $\ell$-modular Steinberg character $\vhi_{1^3}^{\sqtens2}$ of a Levi
 subgroup of type $A_2^2$ inside $E_6(q)$ is $\cH(G_2;q^3;q)$, with four
 irreducible characters.
\end{lem}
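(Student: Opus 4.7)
The plan is to follow the same pattern as Lemmas~\ref{lem:paramDn,d=3} and \ref{lem:paramE6,d=2}. First, I would identify the relative Weyl group of a standard Levi $\bL$ of $\bG$ of type $A_2^2$. By Howlett's tables \cite[p.~75]{Ho80} (or directly from the $\fS_3$-action on the sub-root system of type $A_2^3$ of $E_6$ coming from the affine Dynkin diagram, restricted to the normaliser of the Levi $A_2^2$), $N_W(W_L)/W_L$ is of type $G_2$. Hence $\cH_G(L,\vhi_{1^3}^{\sqtens2})$ carries the structure of an Iwahori--Hecke algebra of type $G_2$ with two parameters $(q_s,q_l)$ to be determined.

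Second, I would verify reduction stability in order to reduce the parameter computation to the characteristic-zero situation. The cuspidal Brauer character $\vhi_{1^3}^{\sqtens2}$ is the exterior tensor product of the two copies of the $\ell$-modular Steinberg of $A_2(q)$, which by \cite[Thm.~7.8]{GHM} lifts to the ordinary cuspidal Steinberg character of each factor. Since the two $A_2$-factors of $[\bL,\bL]$ are regularly permuted by the graph automorphism induced by $N_G(\bL)$, the second bullet in Section~2.\ref{sec:veri} applies: we may choose a fixed lift in one factor and take the tensor product over the $N_G(\bL)$-orbit to obtain an $N_G(\bL)$-stable $\cO L$-lattice. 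Proposition~\ref{prop:redstable-geck} then guarantees that the mod-$\ell$ parameters are the $\ell$-reductions of the characteristic-zero parameters.

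Third, I would determine those characteristic-zero parameters locally at the two minimal Levi overgroups of $L$ corresponding to the two Coxeter generators of $G_2$. Adding the middle node $\alpha_4$ produces an overgroup of type $A_5$, inside which $L$ sits as $A_2+A_2\subset A_5$; as in Lemma~\ref{lem:paramDn,d=3}(a), the relative Hecke algebra is of type $A_1$ with parameter $q^3$, computable either via the ratio of degrees of the two constituents of the Harish-Chandra induction of the Steinberg character (\cite[Lemma~3.17]{GHM2}) or from Lusztig's tables in \cite[\S8]{Lu84}. Adding the branching node $\alpha_2$ produces an overgroup of type $A_2^2A_1$ in which the extra $A_1$-factor is disjoint from $L$, so the corresponding relative Hecke algebra is just $\cH(A_1;q)$ and the parameter is $q$. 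Combining the two gives $\cH_G(L,\vhi_{1^3}^{\sqtens2})\simeq\cH(G_2;q^3;q)$.

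Finally, the count $|\Irr\cH|=4$ at the specialisation where $q$ is a primitive cube root of unity mod~$\ell$ (so that $q^3\equiv1$) can be read off either from Geck's classification of simple modules for unequal-parameter Hecke algebras of type $G_2$, or by running Jacon's programme \cite{Ja05} as done throughout the paper for the classical-type analogues. The main obstacle is the $G_2$-identification of the relative Weyl group and the explicit description of its two minimal Levi overgroups inside $E_6$; once these are settled, the parameter computation and the mod-$\ell$ character count are routine.
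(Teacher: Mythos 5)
Your argument is correct and is exactly the route the paper takes (the paper's own proof is a one-liner deferring to the previous cases, with reduction stability handled precisely by choosing the same Steinberg lattice in the two $A_2$-factors permuted by $N_G(\bL)$); your identification of the two minimal Levi overgroups $A_5$ and $A_2^2A_1$ and the resulting parameters $q^3$ and $q$ fills in the details correctly. One small caveat: Jacon's programme computes decomposition matrices of Ariki--Koike algebras and so does not directly apply to type $G_2$, but your alternative reference to the known classification of simple modules for (unequal-parameter) Hecke algebras of type $G_2$ settles the count $|\Irr\cH|=4$.
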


This is easily seen as in the previous cases; reduction stability holds since the
normaliser just interchanges the two $A_2$-factors and we can choose the same
Steinberg module in both factors.

\begin{thm}   \label{thm:E6,d=3}
 Let $(q,6)=1$. Then the decomposition matrix for the principal $\ell$-block
 of $E_6(q)$ for primes $\ell>3$ with $(q^2+q+1)_\ell>7$ is as given in
 Table~\ref{tab:E6,d=3}.
 \par
 Here, the unknown parameters satisfy
 $a_8\leq -1-a_3-a_4+a_6+a_7$ and the relations
 $$a_5 = 1-a_1-a_2+a_3,\quad\text{and}\quad a_9 = 3+2a_3+3a_4-3a_6-2a_7+3a_8.$$
\end{thm}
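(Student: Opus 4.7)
The plan is to start from Geck–Hiss's \cite[Table~3]{GH97} approximation, which via generalised Gelfand–Graev characters already establishes $(T_\ell)$ for $E_6(q)$ at $d_\ell(q)=3$ and determines the non-cuspidal part of the matrix up to four unknowns, leaving only the two columns for the ordinary cuspidal unipotent characters fully open. Thus it remains to pin down those four non-cuspidal entries and to produce the two cuspidal columns together with the relations among $a_1,\ldots,a_9$.

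First I would reconstruct and complete the non-cuspidal part by (HCi) and (HCr). The maximal proper Levi subgroups of $E_6$ are of types $D_5$, $A_5$, $A_1A_4$ and $A_2^2A_1$, and their unipotent $\ell$-decomposition matrices at $d_\ell(q)=3$ are all available: James' work on general linear factors and Gruber–Hiss \cite{GrHi} on linear primes cover the $A$-type pieces, while Proposition~\ref{prop:D6,d=3} (restricted to $D_5$) handles the orthogonal one. Harish-Chandra inducing the resulting PIMs and cutting by the principal block yields projectives that account for all Harish-Chandra series attached to cuspidal pairs in proper Levi subgroups. The $A_2^2$-series is controlled by the Hecke algebra $\cH(G_2;q^3;q)$ of Lemma~\ref{lem:paramE6,d=3}, whose four simple modules match the expected count of PIMs in that series. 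Applying (HCr) to each candidate and using (Tri) then enforces indecomposability and fixes the four non-cuspidal entries left open by Geck–Hiss.

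Next, for the two cuspidal columns, I would combine (DL) and (Red) as in the proofs of Theorems~\ref{thm:E6,d=2} and~\ref{thm:2E6,d=2}. By (Csp), cuspidal Brauer characters do exist, since the centraliser of a Sylow $\ell$-subgroup is a $3$-split Levi not contained in any proper $1$-split Levi. For each candidate cuspidal PIM $\Psi$ one chooses an element $w\in W$ of minimal length such that $\Psi$ appears in $\widetilde R_w$ but in no $\widetilde R_v$ with $v<w$; Proposition~\ref{prop:dl} then gives a $\ge0$ lower bound on its coefficient. Dually, by Example~\ref{exmp:reg}(d) there are semisimple $\ell$-elements of $G^*$ whose centralisers are prescribed $3$-split Levi subgroups $\bL^*$, and (Red) applied to a well-chosen unipotent character $\rho$ of $L$ yields a $\le0$ upper bound on the same coefficient. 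Matching pairs of such opposing bounds will produce the equalities $a_5 = 1-a_1-a_2+a_3$ and $a_9 = 3+2a_3+3a_4-3a_6-2a_7+3a_8$, while a one-sided application of (DL) (or of (Red)) without a matching partner will give the inequality $a_8\le -1-a_3-a_4+a_6+a_7$.

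The main obstacle is that, in contrast with the $d=2$ case, Theorem~\ref{thm:genpos} is not directly available: since $w_0$ is not central in the Weyl group of $E_6$, there is no clean Steinberg-multiplicity shortcut to short-circuit the calculation. Instead, one must decompose several $R_w$ explicitly on the basis of cuspidal PIMs and track the contributions carefully. Moreover, the reduction stability of the cuspidal module attached to a regular $\ell$-character of a $\Phi_3$-torus must be checked against the graph automorphism of $E_6$ that interchanges the two $A_2$-factors of the relevant Levi subgroup, exactly along the lines of Section~2.\ref{sec:veri}; this is where the hypothesis $(q^2+q+1)_\ell>7$ enters, to guarantee enough stable $\ell$-characters in general position.
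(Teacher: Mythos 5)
Your overall architecture --- (HCi)/(HCr)/(Tri) for the non-cuspidal series, then playing (DL) off against (Red) for the cuspidal columns, with the correct remark that Theorem~\ref{thm:genpos} is unavailable because $w_0$ is not central in $W(E_6)$ --- matches the paper's strategy. But there are two concrete gaps.

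First, you have miscounted the cuspidal part of the block. The principal $\Phi_3$-block of $E_6(q)$ has \emph{five} cuspidal PIMs, not two: besides the columns of $E_6[\ze_3]$ and $E_6[\ze_3^2]$ there are cuspidal PIMs with tops at $\phi_{15,17}$, $\phi_{6,25}$ and $\phi_{1,36}$ (see the labels ``c'' in Table~\ref{tab:E6,d=3} and the count in Table~\ref{tab:3-blocks}). The ``four unknown entries'' of Geck--Hiss sit precisely in the first two of these extra cuspidal columns, so they are not non-cuspidal entries and cannot be fixed by (HCr) and (Tri): no Harish-Chandra induction from a proper Levi subgroup of $E_6$ reaches a cuspidal column. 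The paper pins them down by (DL) against (Red) for a maximal torus --- this is where the hypothesis $(q^2+q+1)_\ell>7$ actually enters, via the existence of a regular $\ell$-element of $G^*$, not via reduction stability against the graph automorphism as you suggest --- and, for the entry $b_1$, by Harish-Chandra restricting to $E_6(q)$ a PIM of $E_7(q)$ whose shape is controlled by $(T_\ell)$ for $E_7$. Your plan, which stays inside $E_6$ and its Levi subgroups, has no substitute for this ``go up to $E_7$ and come back'' step; the same device is also needed just to split $\Psi_9$ off from the induced projective $\Psi_9+\Psi_{11}$ in the principal series. Note also that one of the four entries ($b$) is left undetermined in the final statement, so ``fixing the four unknowns'' is not even the right target.

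Second, the relations you aim for presuppose that the two columns of $E_6[\ze_3]$ and $E_6[\ze_3^2]$ carry the \emph{same} unknowns $a_1,\dots,a_9$. This requires the observation that these two cuspidal characters are Galois conjugate while every other unipotent character of $E_6(q)$ is rational-valued, forcing the corresponding decomposition numbers to coincide. Without it you have eighteen independent unknowns, and the (DL)/(Red) bookkeeping with the Coxeter element and the $3$-split Levi subgroup $\tw3D_4(q).(q^2+q+1)$ does not close up into the three stated relations.
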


There is a further unipotent $\ell$-block of cyclic defect with Brauer tree
$$\vbox{\offinterlineskip\halign{$#$
        \vrule height10pt depth 2pt width 0pt&& \hfil$#$\hfil\cr
D_4:3& \vr& D_4:21& \vr& D_4:1^3& \vr& \bigcirc\cr
   &  D_4& & D_4& &  c\cr
  }}$$

\begin{table}[ht]
\caption{$E_6(q)$, $7 < (q^2+q+1)_\ell$, $(q,6)=1$}   \label{tab:E6,d=3}
{\small
$$\vbox{\offinterlineskip\halign{$#$\hfil\ \vrule height10pt depth4pt&
      \hfil\ $#$\ \hfil\vrule&& \hfil\ $#$\hfil\cr
  \phi_{1,0}&         1& 1\cr
  \phi_{6,1}&         q& 1& 1\cr
 \phi_{20,2}&       q^2& 1& 1& 1\cr
 \phi_{30,3}&  \hlf q^3& .& 1& .& 1\cr
 \phi_{15,5}&  \hlf q^3& .& 1& .& .& 1\cr
 \phi_{15,4}&  \hlf q^3& .& .& 1& .& .& 1\cr
 \phi_{64,4}&       q^4& .& 1& 1& 1& 1& .& 1\cr
 \phi_{60,5}&       q^5& .& 1& 1& 1& .& 1& 1& 1\cr
 \phi_{24,6}&       q^6& .& .& 1& .& .& .& 1& .& 1\cr
\phi_{20,10}&  \sxt q^7& .& .& .& .& 1& .& 1& .& .& 1\cr
 \phi_{80,7}&  \sxt q^7& .& 1& .& 1& 1& .& 1& 1& .& .& 1\cr
 \phi_{10,9}& \thrd q^7& .& 1& .& .& .& .& .& 1& .& .& .& 1\cr
 \phi_{60,8}&  \hlf q^7& 1& 1& 1& .& 1& 1& 1& 1& 1& .& .& .& 1\cr
  E_6[\ze_3]&\thrd q^7& .& .& .& .& .& .& .& .& .& .& .& .& .& 1\cr
E_6[\ze_3^2]&\thrd q^7& .& .& .& .& .& .& .& .& .& .& .& .& .& .& 1\cr
\phi_{60,11}&   q^{11}& 1& 1& .& .& 1& .& .& 1& .& .& 1& 1& 1&a_1&a_1& 1\cr
\phi_{24,12}&   q^{12}& .& .& .& .& 1& .& .& .& .& .& .& .& 1&a_2&a_2& .& 1\cr
\phi_{64,13}&   q^{13}& .& .& .& .& 1& .& 1& 1& 1& 1& 1& .& 1&a_3&a_3& 1& 1& 1\cr
\phi_{15,16}& \hlf q^{15}& 1& .& .& .& .& .& .& .& 1& .& .& .& 1&a_4&a_4& 1& .& .& 1\cr
\phi_{15,17}& \hlf q^{15}& .& .& .& .& .& .& 1& 1& 1& 1& .& .& .&a_5&a_5& .& .& 1& .& 1\cr
\phi_{30,15}& \hlf q^{15}& .& .& .& .& .& .& .& 1& .& .& 1& 1& .&a_6&a_6& 1& .& 1& .& .& 1\cr
\phi_{20,20}&      q^{20}& .& .& .& .& .& 1& .& 1& 1& .& .& 1& 1&a_7&a_7& 1& 1& 1& 1& .& .& 1\cr
 \phi_{6,25}&      q^{25}& .& .& .& .& .& 1& .& 1& .& .& .& 1& .&a_8&a_8& .& .& 1& .& b& 1& 1& 1\cr
 \phi_{1,36}&      q^{36}& .& .& .& .& .& 1& .& .& .& .& .& .& .&a_9&a_9& .& .& .& 1& 3b\mn6& .& 1&3& 1\cr
\noalign{\hrule}
 \omit& & ps& ps& ps& ps& ps& ps& ps& ps& A_2^2& A_2& ps& A_2^2& ps& c& c& A_2& A_2& A_2& A_2^2& c& A_2^2& A_2& c& c\cr
   }}$$}
\end{table}

\begin{proof}
We explain how to find projective characters $\Psi_i$, $1\le i\le24$, with
unipotent parts as given in the columns of Table~\ref{tab:E6,d=3}.
First, (HCi) gives all columns except for those with index
$i=9,12,14,15,19,20,21,23,24$. (Alternatively, the ten PIMs in the principal
series can also be read off from the decomposition matrix of the Hecke algebra
$\cH(E_6;q)$, given in \cite[Tab.~7.13]{GJ11}.) Furthermore
(HCi) yields $\Psi_9+\Psi_{11}+\Psi_{12}$ and $\Psi_9+\Psi_{11}+\Psi_{21}$.
From these and (Tri) we get $\Psi_9+\Psi_{11}$, $\Psi_{12}$ and $\Psi_{21}$.
Harish-Chandra inducing the 16th PIM from a Levi subgroup of type $D_6$ to
$E_7(q)$ and
restricting it back to $E_6(q)$ yields $\Psi_9+\Psi_{12}+\Psi_{16}$, which
shows that $\Psi_9$ is a projective character. Finally, (HCi) from a Levi
subgroup of type $A_5$ also yields a projective character
$\Psi_{19}'=\Psi_{19}+\Psi_{21}$. Now the decomposition matrix of the Hecke
algebra for the $A_2^2$-series, determined in Lemma~\ref{lem:paramE6,d=3}
shows that $\Psi_{19}'$ must have two summands in that series, and then (HCr)
leads to the only admissible splitting $\Psi_{19}\oplus\Psi_{21}$.
\par
At this point we have accounted for projectives in all non-cuspidal
Harish-Chandra series, so the remaining five PIMs must belong to cuspidal
Brauer characters. (A priori, by (St), the modular Steinberg character is
known to be cuspidal.) Let us denote the unknown entries below the diagonal
of those PIMs as follows:
{\small
$$\vbox{\offinterlineskip\halign{$#$\hfil\ \vrule height10pt depth4pt&&
      \hfil\ $#$\hfil\cr
  E_6[\ze_3] & 1\cr
E_6[\ze_3^2]& .& 1\cr
\phi_{60,11}  &a_1&a_1 \cr
\phi_{24,12} &a_2&a_2 \cr
\phi_{64,13} &a_3&a_3\cr
\phi_{15,16} &a_4&a_4\cr
\phi_{15,17} &a_5&a_5 & 1\cr
\phi_{30,15} &a_6&a_6 & .\cr
\phi_{20,20} &a_7&a_7& b_1\cr
 \phi_{6,25}  &a_8&a_8& b_2& 1\cr
 \phi_{1,36}  &a_9&a_9& b_3&b_4\cr
   }}$$}
Note here that since the ordinary cuspidal unipotent characters $E_6[\ze_3]$
and $E_6[\ze_3^2]$ are Galois conjugate, while all other unipotent characters
are rational valued, the unknown decomposition numbers in the two corresponding
columns must agree. Now, by $(T_\ell)$, the PIM of $E_7(q)$ starting at
$\phi_{21,33}$ has coefficient 0 on $\phi_{56,30}$ and $\phi_{21,36}$.
Harish-Chandra restricting this to $E_6(q)$ gives a projective starting at
$\phi_{15,17}$, but not involving $\phi_{20,30}$. Thus again by (Tri) we
conclude that $b_1=0$.
\par
We now use the combination of (DL) and (Red) to determine some of the unknown
entries. We start with the Deligne--Lusztig character $R_w$ associated to a
Coxeter element $w$. The coefficient on $\Psi_{21}$ is $2-2a_1-2a_2+2a_3-2a_5$
and hence is non-negative by (DL). On the other hand, if $\ell > 3$ then (Red)
for the cuspidal unipotent character of the 3-split Levi subgroup
$\tw3D_4(q).(q^2+q+1)$ gives the relation $-1+a_1+a_2-a_3+a_5 \geq 0$
(see Example~\ref{exmp:reg}(d)).
We deduce that $a_5 = 1-a_1-a_2+a_3$. The coefficients of $\Psi_{23}$ and
$\Psi_{24}$ are
\begin{align*}
&& X & = -1-2a_3-2a_4+2a_6+2a_7-2a_8 \geq 0 &&\\
\text{and} && Y & = 3-2a_3+2a_7-2a_9 - b_4X \geq 0. &&
\end{align*}
Note that $X$ cannot be equal to zero by parity. Using explicit computations
in \Chevie~\cite{Chv}, one can see that there exists a regular
$\ell$-element in $G^*$ whenever $(q^2+q+1)_\ell > 7$. Then
(Red) applied to the case of a maximal torus gives $b_4 \geq 3$ and
$-3-2a_3-3a_4+3a_6+2a_7-3a_8+a_9 \geq 0$. Then
$$\begin{aligned} 0 \leq Y & = 3-2a_3+2a_7-2a_9 - b_4X \\
& \leq 3-2a_3+2a_7-2a_9 - 3X \\
& = 2(3+2a_3+3a_4-3a_6-2a_7+3a_8-a_9) \leq 0 \end{aligned}$$
forces $b_4 = 3$ and $3+2a_3+3a_4-3a_6-2a_7+3a_8-a_9 = 0$.
\par
Finally, the coefficient of $\Psi_{24}$ on the Deligne--Lusztig character
$R_w$ associated with $w=s_1s_2s_3s_1s_5s_4s_6s_5s_4s_2s_3s_4$ is equal to
$-18+9b_2-3b_3$. With (Red) applied to a maximal torus again we have
also $6-3b_2+b_3 \leq 0$ hence (DL) forces $b_3=-6+3b_2$. In the table, we write
$b$ for $b_2$.
\par
It now follows with (HCr) that all projectives in the table are indecomposable.
\end{proof}

\begin{rem} 
As before, we can use the virtual characters $Q_w$ for $w\in W$ to get
conjectural upper bounds for the unknown entries. With $w$ being the Coxeter
element, we find the following inequalities:
 $$\begin{aligned} a_1 & \leq 13,& 
 a_2 & \leq 1, & a_3 &\leq 4+a_1+a_2,\\
  a_4 & \leq 9+a_1, &
 a_6 & \leq 2-a_2+a_3, &
  a_7 &\leq 21-a_1+a_3+a_4, \\
 a_8 & \leq -1-a_3-a_4\,+&&\!\!\!\!\!\!\!\!\!a_6+a_7.\end{aligned}$$
On the other hand, the trivial character of a $2$-split Levi subgroup of type
$A_2(q)A_2(q).(q^2+q+1)$ (resp.~$A_2(q).(q^2+q+1)^2$) gives $a_2 \geq 1$
(resp.~$a_6 \geq 2-a_2+a_3$) by (Red). Therefore $a_2 = 1$ and $a_6 = 1+a_3$.
From this we deduce that we should have $a_1 \leq 13,a_3 \leq 18, a_4\leq 22,
a_5 \leq 5, a_6 \leq 14,a_7\leq 48$ and $a_8 \leq 26$.  

To obtain a bound on $b$ we use $Q_{w'}$ with 
$w' =  s_1s_2s_3s_1s_4s_3s_1s_5s_4s_3s_1s_6s_5s_4s_3s_1$ from which we get
$b \leq 8$ if  \cite[Conj.~1.2]{DM14} holds.
Most of these upper bounds are probably not sharp.
\end{rem}

\section{Even-dimensional non-split orthogonal groups}
Next, we consider the uni\-potent blocks of twisted orthogonal groups $\tw2D_n$,
$4\le n\le7$. Again, the Brauer trees were described in \cite{FS90} (and can
readily be determined):

\begin{prop}   \label{prop:trees 2Dn d=3}
 Let $q$ be a prime power and $\ell$ a prime with $d_\ell(q)=3$.
 The Brauer trees of the unipotent $\ell$-blocks of $\tw2D_n(q)$, $4\le n\le 7$,
 with cyclic defect are as given in Table~\ref{tab:2Dn,d=3,def1}.
\end{prop}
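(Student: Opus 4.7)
The plan mirrors the proof of Proposition~\ref{prop:trees Dn d=3} for the split case, combined with the Fong--Srinivasan description of unipotent blocks. I would proceed as follows.

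First I would determine which unipotent characters lie in each block of cyclic defect. Since $d_\ell(q)=3$ and $\ell\ge7$, the unipotent $\ell$-blocks of $\tw2D_n(q)$ are parametrised by $\Phi_3$-cuspidal pairs $(\bL,\la)$, where the defect group is a Sylow $\ell$-subgroup of $Z^\circ(\bL)^F$. Cyclic defect corresponds to a $\Phi_3$-torus of rank one in $Z^\circ(\bL)$, i.e.\ a Levi subgroup $\bL$ of the form $\tw2D_{n-3}(q)\times{}^{\epsilon}\!A_2(q)$ or analogous shapes (for $n\ge 4$), together with the $1$-cohomology conditions coming from the twist. Combined with the Fong--Srinivasan classification \cite{FS90}, this lists the cyclic blocks: a principal-series block anchored on a $\Phi_3$-cuspidal unipotent character of $\tw2D_{n-3}(q)$, and for $n=7$ an extra $\tw3D_4$-block anchored on the cuspidal unipotent character of a Levi subgroup of type $D_4$. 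The number of ordinary characters in each such block equals $(q^2+q+1)_\ell/\ell\cdot e$ where $e$ is the relative Weyl group order, which in our situation is $3$, giving the three non-exceptional edges displayed in each tree.

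Next, to obtain the planar tree structure I would construct the edges (PIMs) by (HCi) and (Tri). The principal-series PIMs in each cyclic block are the Harish-Chandra inductions of PIMs from the proper $1$-split Levi subgroups of $\tw2D_n(q)$ (which for the relevant $n\le 7$ have unipotent decomposition matrices known from Table~\ref{tab:D4}--Table~\ref{tab:2D6} and from the classical groups of smaller rank). Together with the $A_2$-series PIM obtained by Harish-Chandra inducing the Steinberg PIM from an $A_2$-Levi subgroup (whose cuspidal $\ell$-modular Steinberg character is liftable, by Lemma~\ref{lem:paramDn,d=3}(a) in the split case, with the analogous argument here), this exhausts the edges predicted by the defect group order. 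The incidence of these edges, and hence the tree, can then be read off from which ordinary character occurs in each induced projective character; (HCr) guarantees indecomposability.

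For the $\tw3D_4$-series block in $\tw2D_7(q)$, I would additionally invoke that the cuspidal unipotent character of $D_4(q)$ remains irreducible modulo $\ell$ (Table~\ref{tab:D4}), so the cuspidal Brauer character is reduction stable with Hecke algebra $\cH(B_3;q^3;q)$ inside the relative Weyl group of type $B_3$; for $d_\ell(q)=3$ this specialises to an algebra with the predicted three simple modules and unique cuspidal PIM at the end of the line. The main technical obstacle is verifying reduction stability of the various lifts in the twisted setting when graph/diagram automorphisms act non-trivially on the Levi factors; but as the only relevant ambient twist permutes the two end nodes of the $D_{n-3}$-diagram, the same trick as in Section~\ref{sec:veri} (choosing an $N_G(\bL)$-stable lift by symmetrising over the orbit of components, or using the fact that the ordinary characters involved are in general position with respect to a Sylow $\Phi_3$-torus) applies throughout. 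Once reduction stability is secured, the Brauer trees are forced by the counts, the Harish-Chandra labels below each edge, and the absence of cuspidal Brauer characters (verified by (Csp)) in all blocks except the $\tw3D_4$-block of $\tw2D_7(q)$.
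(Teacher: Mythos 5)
The paper offers essentially no proof here beyond citing Fong--Srinivasan \cite{FS90} and remarking that the trees can be obtained by Harish-Chandra induction, so your overall strategy (block classification via $\Phi_3$-cuspidal pairs, then construction of the edges by (HCi)/(HCr) and reading off the incidences) is the intended one. However, your enumeration of the cyclic blocks and of their contents contains genuine errors. First, the ``extra $\tw3D_4$-block for $n=7$ anchored on the cuspidal unipotent character of a Levi subgroup of type $D_4$'' does not exist in the twisted case: every Levi subgroup of $\tw2D_7(q)$ of type $D$ contains the fork of the diagram and is therefore itself twisted, so there is no split $D_4$-Levi, no associated cuspidal unipotent character, and indeed Table~\ref{tab:2Dn,d=3,def1} contains no such tree (you have imported this block from the split group $D_7(q)$ in Table~\ref{tab:Dn,d=3,def1}; the symbol $\tw3D_4$ is in any case not a Levi subgroup of $\tw2D_7$). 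Consequently your closing remark about cuspidal Brauer characters occurring ``except in the $\tw3D_4$-block'' is also wrong: none of the cyclic blocks in question has a cuspidal Brauer character.

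Second, your character count is off. You assert that the relative Weyl group has order $3$ and that each tree has three non-exceptional edges, but every tree in Table~\ref{tab:2Dn,d=3,def1} is a line with \emph{six} non-exceptional edges (exceptional vertex in the middle), reflecting the fact that for $\tw2D_n$ (Weyl group of type $B_{n-1}$) the relative Weyl group of a rank-one $\Phi_3$-torus is cyclic of order $6$; the order-$3$ case occurs only for some blocks of the split groups. Relatedly, the $d$-split Levi subgroups you should be centralising are of the form $\tw2D_{n-3}(q)\cdot(q^3-1)$, not $\tw2D_{n-3}(q)\times{}^{\epsilon}\!A_2(q)$ (a Levi of type $A_2$ does not carry a central $\Phi_3$-torus). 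With the correct list of blocks, each containing six unipotent characters in the indicated $3$-Harish-Chandra series, the rest of your argument — four principal-series edges from the Hecke algebra $\cH(B_{n-1};q^2;q)$ and two edges in the $A_2$-series governed by the algebra of Lemma~\ref{lem:param2Dn,d=3}, with indecomposability from (HCr) — does go through and yields the stated trees.
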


\begin{table}[ht]
\caption{Brauer trees for $\tw2D_n(q)$ ($4\le n\le7$), $7\le\ell|(q^2+q+1)$} \label{tab:2Dn,d=3,def1}
$$\vbox{\offinterlineskip\halign{$#$
        \vrule height10pt depth 2pt width 0pt&& \hfil$#$\hfil\cr
\tw2D_4(q):& 3.& \vr& 21.& \vr& 1^3.& \vr& \bigcirc& \vr& .1^3& \vr& .21& \vr& .3\cr
\cr
\tw2D_5(q):& 4.& \vr& 2^2.& \vr& 1^4.& \vr& \bigcirc& \vr& 1.1^3& \vr& 1.21& \vr& 1.3\cr
\cr
 & .4& \vr& .2^2& \vr& .1^4& \vr& \bigcirc& \vr& 1^3.1& \vr& 21.1& \vr& 3.1\cr
\cr
\tw2D_6(q):& 5.& \vr& 2^21.& \vr& 21^3.& \vr& \bigcirc& \vr& 2.1^3& \vr& 2.21& \vr& 2.3\cr
\cr
 & .5& \vr& .2^21& \vr& .21^3& \vr& \bigcirc& \vr& 1^3.2& \vr& 21.2& \vr& 3.2\cr
\cr
 & 41.& \vr& 32.& \vr& 1^5.& \vr& \bigcirc& \vr& 1^2.1^3& \vr& 1^2.21& \vr& 1^2.3\cr
\cr
 & .41& \vr& .32& \vr& .1^5& \vr& \bigcirc& \vr& 1^3.1^2& \vr& 21.1^2& \vr& 3.1^2\cr
\cr
 & 4.1& \vr& 2^2.1& \vr& 1^4.1& \vr& \bigcirc& \vr& 1.1^4& \vr& 1.2^2& \vr& 1.4\cr
\cr
\tw2D_7(q):& 5.1& \vr& 2^21.1& \vr& 21^3.1& \vr& \bigcirc& \vr& 2.1^4& \vr& 2.2^2& \vr& 2.4\cr
\cr
 & 1.5& \vr& 1.2^21& \vr& 1.21^3& \vr& \bigcirc& \vr& 1^4.2& \vr& 2^2.2& \vr& 4.2\cr
\cr
 & 41.1& \vr& 32.1& \vr& 1^5.1& \vr& \bigcirc& \vr& 1^2.1^4& \vr& 1^2.2^2& \vr& 1^2.4\cr
\cr
 & 1.41& \vr& 1.32& \vr& 1.1^5& \vr& \bigcirc& \vr& 1^4.1^2& \vr& 2^2.1^2& \vr& 4.1^2\cr
 &  & ps& & ps& & A_2& & A_2& & ps& & ps\cr
  }}$$
\end{table}

Here and later, the unipotent characters of $\tw2D_n(q)$ in the principal
series are denoted by the corresponding character of the Weyl group, which in
this case is of type $B_{n-1}$, hence by bipartitions of $n-1$. Note that the
order of $\tw2D_n(q)$ with $n\le3$ is not divisible by primes $\ell$ with
$d_\ell(q)=3$.

Again, we first collect some information on Hecke algebras of cuspidal
characters.

\begin{lem}   \label{lem:param2Dn,d=3}
 Let $q$ be a prime power and $\ell|(q^2+q+1)$.
 \begin{enumerate}[\rm(a)]
  \item The Hecke algebra for the cuspidal $\ell$-modular Steinberg character
   $\vhi_{1^3}$ of a Levi subgroup of type $A_2$ inside $\tw2D_n(q)$, $n\ge4$,
   is $\cH(A_1;1)\otimes \cH(B_{n-4};q^2;q)$.
  \item The Hecke algebra for the cuspidal $\ell$-modular Steinberg character
   $\vhi_{1^3}^{\sqtens2}$ of a Levi subgroup of type $A_2^2$ inside
   $\tw2D_n(q)$, $n\ge7$, is $\cH(B_2;q^3;1)\otimes \cH(B_{n-7};q^2;q)$.
 \end{enumerate}
\end{lem}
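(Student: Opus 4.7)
My plan is to follow the strategy used in Lemmas~\ref{lem:paramDn,d=3} and~\ref{lem:param2Dn,d=2} essentially verbatim. Four ingredients are needed: identification of the relative Weyl group, liftability of the cuspidal Brauer character, local computation of the Hecke parameters inside minimal Levi overgroups, and verification of reduction stability. The relative Weyl groups are readable from \cite[pp.~70--72]{Ho80} or by direct computation in \Chevie{}: since $\tw2D_n(q)$ has Weyl group of type $B_{n-1}$, a standard untwisted Levi subgroup of type $A_2$ has relative Weyl group of type $A_1 B_{n-4}$ (the $A_1$-factor coming from the twisted short-root reflection, the $B_{n-4}$-factor from the standard untwisted complement), and a Levi of type $A_2^2$ has relative Weyl group of type $B_2 B_{n-7}$.

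By \cite[Thm.~7.8]{GHM}, $\vhi_{1^3}$ lifts to an ordinary cuspidal unipotent character of $A_2(q)$, and the outer tensor square of two such lifts is a lift of $\vhi_{1^3}^{\sqtens 2}$. Since $kX$ is then simple, Proposition~\ref{prop:redstable-geck} guarantees that the parameters of $\cH_G(L,kX)$ coincide with the $\ell$-modular reductions of those of $\cH_G(L,KX)$, and these can be determined locally inside the minimal Levi overgroups of $L$ via \cite[Lemma~3.17]{GHM2}. For~(a), the parameter~$1$ on the $A_1$-factor is the one arising inside the minimal twisted overgroup, already visible inside $\tw2D_4$, where the two ordinary cuspidal constituents of the Harish-Chandra induction from $A_2$ turn out to have equal degree; the $q^2$ on the $B_1$-end of the $B_{n-4}$-factor comes from a minimal overgroup of type $\tw2D_3 A_2$, and the parameters $q$ on the $A$-branch come from overgroups of type $A_3$ and $A_2 A_1$, exactly as in Lemmas~\ref{lem:paramDn,d=3}(a) and \ref{lem:param2Dn,d=2}. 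For~(b), the $B_2$-parameters $(q^3,1)$ arise respectively from the minimal overgroup of type $A_5$ (exactly as in Lemma~\ref{lem:paramDn,d=3}(b)) and from the twisted overgroup argument of~(a); the trailing $B_{n-7}$-parameters $q^2;q$ come from overgroups of types $\tw2D_3 A_2^2$ and $A_2^2 A_1$, again as before.

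Reduction stability then follows from the discussion in Section~\ref{sec:veri}: in each case $N_G(\bL)$ either acts trivially on $[\bL,\bL]$ or regularly permutes its simple factors, so compatible lifts in the factors give a cuspidal lift stable under the full normaliser action, and Corollary~\ref{cor:dechecke} applies. The main obstacle I foresee is rigorously pinning down the parameter~$1$ on the twisted $A_1$-factor in part~(a): it reflects a non-trivial degree coincidence in the Deligne--Lusztig induction inside $\tw2D_4(q)$ and cannot be read off by formal combinatorics alone. I would verify it using \Chevie{}'s \textsc{LusztigInductionTable} (as acknowledged in the introduction) on the case $n=4$, after which the parameter propagates to all $n\ge 4$ by the local character of the computation inside minimal overgroups.
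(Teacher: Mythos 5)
Your proposal follows the paper's proof essentially verbatim: relative Weyl groups of type $A_1B_{n-4}$ and $B_2B_{n-7}$ from Howlett, liftability of the modular Steinberg characters via \cite[Thm.~7.8]{GHM}, local determination of the parameters inside minimal Levi overgroups, and reduction stability by choosing the same lift in both $A_2$-factors. Two of your overgroups are misidentified, however. The parameter $q^2$ at the $B_1$-node of the $B_{n-4}$- (resp.\ $B_{n-7}$-) factor is computed inside a minimal overgroup of type $\tw2D_2A_2$ (resp.\ $\tw2D_2A_2^2$), not $\tw2D_3A_2$: the latter induces the full rank-two group $W(\tw2D_3)=W(B_2)$ on the Levi of type $A_2$, so it is not minimal for a single reflection, whereas $\tw2D_2(q)\cong A_1(q^2)$ contributes exactly one reflection with parameter $q^2$. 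Likewise $A_3$ plays no role on the $A$-branch, since $N_{A_3}(A_2)/A_2$ is trivial; only $A_2A_1$ (with a disconnected $A_1$-factor) is relevant there, exactly as in Lemma~\ref{lem:paramDn,d=3}. Neither slip affects the final parameter values, and your remaining points — in particular that the parameter~$1$ on the $A_1$-factor rests on an explicit degree computation inside $\tw2D_4(q)$ rather than on formal combinatorics — match the paper's argument.
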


\begin{proof}
Note that $\tw2D_n(q)$ has Weyl group of type $B_{n-1}$. By \cite[p.~70]{Ho80}
the relative Weyl group of $A_2$ inside $B_{n-1}$ is of type
$A_1 B_{n-4}$, and that of $A_2^2$ is of type $B_2 B_{n-7}$.
The cuspidal characters in question are the same as those in
Lemma~\ref{lem:paramDn,d=3} and thus lift to characteristic zero. The relevant
minimal Levi overgroups in~(a) are of types $\tw2D_4$ when $n=4$, $\tw2D_2A_2$
when $n=5$ and $A_2A_1$ when $n=6$, leading to the parameters $1,q^2,q$
respectively. In~(b) they are of types $A_5$, $\tw2D_4A_2$ when $n=7$,
$\tw2D_2A_2^2$ when $n=8$ and $A_2^2A_1$ when $n=9$, leading to the
parameters $q^3,1,q^2$ and~$q$. The argument is now completely analogous to
the one employed in the proof of Lemma~\ref{lem:paramDn,d=3}.
\par
The Steinberg character of a Levi subgroup of type $A_2$ is reduction stable
by Example~\ref{exmp:hecke}(a), and for a Levi subgroup $A_2^2$ we take the same
Steinberg module in both factors.
\end{proof}

The Sylow $\ell$-subgroups of $\tw2D_6(q)$ are cyclic when $d_\ell(q)=3$, so
the smallest rank not covered by Proposition~\ref{prop:trees 2Dn d=3} is $n=7$:

\begin{prop}   \label{prop:2D7,d=3}
 The decomposition matrix for the principal $\ell$-block of $\tw2D_7(q)$ for
 primes $7\le\ell| (q^2+q+1)$ is as given in Table~\ref{tab:2D7,d=3}.
\end{prop}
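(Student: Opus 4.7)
The strategy is to mirror closely the proof of Proposition~3.3 for $D_7(q)$. Since $\ell$ is very good, the unipotent characters of the principal $\ell$-block form a basic set. The centraliser of a Sylow $\ell$-subgroup of $\tw2D_7(q)$ is contained in a proper 1-split Levi subgroup (of type $\tw2D_4A_2$ or similar), so by (Csp) there are no cuspidal unipotent Brauer characters in the principal block. Consequently, every PIM must lie in the Harish-Chandra series above a cuspidal Brauer character of a proper Levi subgroup, and the proof reduces to accounting for PIMs in the principal series, the $A_2$-series, and the $A_2^2$-series.

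First I would handle the principal series. The relative Weyl group here is of type $B_6$ and the principal-series Hecke algebra is $\cH(B_6;q^2;q)$. Via the Dipper--James Morita equivalence \cite[4.7]{DJ92}, its decomposition matrix modulo $\ell$ splits into a sum of tensor products of type-$A$ Hecke algebra blocks, for which the decomposition matrices are given by James \cite{Ja90}. This yields all columns labelled ``ps'' in Table~\ref{tab:2D7,d=3}. The remaining columns are then produced by (HCi), Harish-Chandra inducing PIMs from the known decomposition matrices of the unipotent $\ell$-blocks of the proper Levi subgroups (primarily of types $\tw2D_6$, $\tw2D_5$ and type-$A$ Levi subgroups), together with (Tri) to echelonise when necessary.

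Next I would treat the cuspidal Harish-Chandra series. For the $A_2$-series, by Lemma~\ref{lem:param2Dn,d=3}(a) the Hecke algebra is $\cH(A_1;1)\otimes\cH(B_3;q^2;q)$; one reads off from this algebra the expected number of PIMs in the series and splits the Harish-Chandra inductions of the corresponding PIMs from proper Levi subgroups $\tw2D_6(q)$, $\tw2D_5(q)$, $\tw2D_4(q)A_k(q)$, etc., using (HCr) to force the unique admissible decomposition. Analogously, for the $A_2^2$-series Lemma~\ref{lem:param2Dn,d=3}(b) gives the Hecke algebra $\cH(B_2;q^3;1)$, which reduces modulo $\ell$ to $\cH(B_2;1;1)$ (since $q^3\equiv1\pmod\ell$) and is therefore semisimple, with five simple modules; the splitting of the induced projectives from Levi subgroups of type $\tw2D_4A_2$ and $A_5A_1$ is then forced by (HCr).

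Finally, a uniform application of (HCr) verifies that every projective character exhibited in the table is indecomposable, and counting the number of PIMs obtained in each Harish-Chandra series against the counts predicted by the corresponding Hecke algebras (together with (Csp)) confirms that no column is missing. The main obstacle is bookkeeping rather than conceptual: ensuring that the Harish-Chandra induction from the various proper Levi subgroups (whose decomposition matrices involve the cyclic-defect Brauer trees of Proposition~\ref{prop:trees 2Dn d=3} as well as the principal block of $\tw2D_6(q)$) produces enough independent projectives to pin down every column, and that the splittings imposed by (HCr) are compatible across the different inductions.
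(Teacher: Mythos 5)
Your proposal is correct and follows essentially the same route as the paper's proof: (Csp) to rule out cuspidal Brauer characters, the Dipper--James Morita equivalence of $\cH(B_6;q^2;q)$ with type-$A$ Hecke algebras plus James's matrices for the principal series, (HCi)/(HCr) for the $A_2$-series, and the semisimplicity of $\cH(B_2;q^3;1)$ modulo $\ell$ to split the five $A_2^2$-series columns. The only cosmetic difference is that the paper places the centraliser of a Sylow $\ell$-subgroup inside a Levi subgroup of type $A_5$ rather than $\tw2D_4A_2$, which does not affect the argument.
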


\begin{table}[ht]
{\small\caption{$\tw2D_7(q)$, $7\le\ell| (q^2+q+1)$}   \label{tab:2D7,d=3}
$$\vbox{\offinterlineskip\halign{$#$\hfil\ \vrule height9pt depth4pt&
      \hfil\ $#$\ \hfil\vrule&& \hfil\ $#$\hfil\cr
     6.&           1& 1\cr
    51.&           q& 1& 1\cr
     .6&    \hlf q^3& .& .& 1\cr
  41^2.&    \hlf q^3& .& 1& .& 1\cr
   3^2.&         q^3& .& 1& .& .& 1\cr
    3.3&    \hlf q^4& .& .& .& .& .& 1\cr
   321.&    \hlf q^4& 1& 1& .& 1& 1& .& 1\cr
   2^3.&    \hlf q^6& 1& .& .& .& .& .& 1& 1\cr
   21.3&    \hlf q^6& .& .& .& .& .& 1& .& .& 1\cr
    .51&    \hlf q^7& .& .& 1& .& .& .& .& .& .& 1\cr
  31^3.&    \hlf q^7& .& .& .& 1& .& .& 1& .& .& .& 1\cr
   3.21&    \hlf q^7& .& .& .& .& .& 1& .& .& .& .& .& 1\cr
  1^3.3&         q^9& .& .& .& .& .& .& .& .& 1& .& .& .& 1\cr
  21.21&         q^9& .& .& .& .& .& 1& .& .& 1& .& .& 1& .& 1\cr
   .3^2& \hlf q^{12}& .& .& .& .& .& .& .& .& .& 1& .& .& .& .& 1\cr
 1^3.21& \hlf q^{12}& .& .& .& .& .& .& .& .& 1& .& .& .& 1& 1& .& 1\cr
  3.1^3&              q^{12}& .& .& .& .& .& .& .& .& .& .& .& 1& .& .& .& .& 1\cr
  .41^2& \hlf q^{13}& .& .& .& .& .& .& .& .& .& 1& .& .& .& .& .& .& .& 1\cr
  21^4.& \hlf q^{13}& .& .& .& .& 1& .& 1& 1& .& .& 1& .& .& .& .& .& .& .& 1\cr
 21.1^3& \hlf q^{13}& .& .& .& .& .& .& .& .& .& .& .& 1& .& 1& .& .& 1& .& .& 1\cr
1^3.1^3& \hlf q^{16}& .& .& .& .& .& .& .& .& .& .& .& .& .& 1& .& 1& .& .& .& 1& 1\cr
   .321& \hlf q^{16}& .& .& 1& .& .& .& .& .& .& 1& .& .& .& .& 1& .& .& 1& .& .& .& 1\cr
   1^6.& \hlf q^{21}& .& .& .& .& 1& .& .& .& .& .& .& .& .& .& .& .& .& .& 1& .& .& .& 1\cr
  .31^3& \hlf q^{21}& .& .& .& .& .& .& .& .& .& .& .& .& .& .& .& .& .& 1& .& .& .& 1& .& 1\cr
   .2^3&      q^{21}& .& .& 1& .& .& .& .& .& .& .& .& .& .& .& .& .& .& .& .& .& .& 1& .& .& 1\cr
  .21^4&      q^{31}& .& .& .& .& .& .& .& .& .& .& .& .& .& .& 1& .& .& .& .& .& .& 1& .& 1& 1& 1\cr
   .1^6&      q^{42}& .& .& .& .& .& .& .& .& .& .& .& .& .& .& 1& .& .& .& .& .& .& .& .& .& .& 1& 1\cr
\noalign{\hrule}
 \omit& & ps& ps& ps& ps& ps& ps& ps& A_2^2\!& ps& ps& A_2\!& ps& A_2\!& ps& ps& A_2\!& A_2\!& ps& A_2\!& A_2\!& A_2^2\!& ps& A_2^2\!& A_2\!& A_2^2\!& A_2\!& A_2^2\!\cr
   }}$$}
\end{table}

\begin{proof}
First, observe that the centraliser of a Sylow $\ell$-subgroup of $\tw2D_7(q)$
is contained in a Levi subgroup of type $A_5$, so there do not exist cuspidal
unipotent $\ell$-modular Brauer characters by (Csp). The Hecke algebra for the
principal series is $\cH(B_6;q^2;q)$ (see e.g.~\cite[p.~464]{Ca}).
By \cite[4.7]{DJ92} it is Morita equivalent to a sum of products of Hecke
algebras of type $A_n$, $n\le6$, whose decomposition matrices are
known by \cite[p.~259]{Ja90} for all $\ell\ge7$. This yields the columns of
the principal series PIMs. All columns labelled $A_2$ are obtained by (HCi).
Then (HCr) shows that these are indeed indecomposable. Finally, by
Lemma~\ref{lem:param2Dn,d=3}(b) the Hecke algebra for the cuspidal
$\ell$-modular Steinberg character of a Levi subgroup of type $A_2^2$
is $\cH(B_2;q^3;1)$ and hence semisimple modulo~$\ell$. Then splitting
up the Harish-Chandra induction of the corresponding PIMs using (HCr) yields
the last five missing columns.
\end{proof}

\section{Symplectic and odd-dimensional orthogonal groups}

Next, we consider the symplectic groups $C_n(q)$ and the odd-dimensional
orthogonal groups $B_n(q)$ with $n\le6$. Note that according to Lusztig's
classification, the unipotent characters of both series of groups are
parametrised in the same way, see e.g.~\cite[\S13.8]{Ca}.

\begin{prop}   \label{prop:trees Bn d=3}
 Let $q$ be a prime power and $\ell$ a prime with $d_\ell(q)=3$.
 The Brauer trees of the unipotent $\ell$-blocks of $B_n(q)$ and $C_n(q)$,
 $3\le n\le 6$, with cyclic defect are the same as for the unipotent
 $\ell$-blocks of $\tw2D_{n+1}(q)$ in Table~\ref{tab:2Dn,d=3,def1}, plus the
 three additional trees given in Table~\ref{tab:Bn,d=3,def1}.
\end{prop}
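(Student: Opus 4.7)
The plan is to follow the same strategy as was sketched for Proposition~\ref{prop:trees Dn d=3}: enumerate the unipotent $\ell$-blocks of cyclic defect via the Fong--Srinivasan classification, and then determine the Brauer tree of each such block by Harish-Chandra inducing PIMs from proper Levi subgroups. Since $d_\ell(q)=3$ and classical groups are involved, $\ell$ is a linear prime for $B_n(q)$ and $C_n(q)$, so by Gruber--Hiss \cite{GrHi} the block distribution refines the subdivision into ordinary Harish-Chandra series, the unipotent characters form a basic set and the decomposition matrices are given by $q$-Schur algebra data; in particular $(T_\ell)$ holds and the cyclic defect blocks are combinatorially explicit (they correspond to $3$-cores of weight at most~$1$).

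First I would exploit the fact that $B_n(q)$, $C_n(q)$ and $\tw2D_{n+1}(q)$ all have Weyl group $W(B_n)$, so that unipotent characters in the principal series are parametrised identically by bipartitions of $n$, and ordinary Harish-Chandra series above Steinberg PIMs of type-$A_2$ Levi subgroups correspond under the same relative Weyl group $A_1 B_{n-4}$. Carrying out the analogues of Lemmas~\ref{lem:paramDn,d=3} and~\ref{lem:param2Dn,d=3} shows that the relevant Hecke algebra parameters match. Together with the Gruber--Hiss distribution, this forces the cyclic $\Phi_3$-blocks of $B_n(q)$ and $C_n(q)$ whose unipotent part sits inside the principal series (or the type-$A_2$ series) to have the same Brauer trees as the corresponding ones of $\tw2D_{n+1}(q)$ listed in Table~\ref{tab:2Dn,d=3,def1}. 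The edges are then labelled by Harish-Chandra inducing the known PIMs (principal series plus $A_2$-series) from Levi subgroups of rank $n-1$, exactly as in the proof of Proposition~\ref{prop:trees Dn d=3}.

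The three extra trees in Table~\ref{tab:Bn,d=3,def1} must therefore correspond to cyclic blocks whose constituents lie in a Harish-Chandra series that has no analogue in the twisted type-$D$ situation. The natural candidate is the series above the cuspidal unipotent character of a Levi subgroup of type $B_2$ (equivalently $C_2$), since such a cuspidal character exists for $B_n/C_n$ but has no counterpart in $\tw2D_{n+1}$. Using Lusztig's symbol parametrisation I would locate, for each $n\in\{3,4,5,6\}$, the blocks containing unipotent characters labelled ``$B_2\co\lambda$''; those whose $3$-weight is at most~$1$ yield the additional trees. The tree itself would then be determined by Harish-Chandra inducing the cuspidal-$B_2$ PIM from the relevant minimal Levi overgroup and reading off the non-exceptional constituents using the analogue of Lemma~\ref{lem:paramDn,d=3}(a) with the Levi of type $B_2$ in place of $A_2$.

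The main obstacle will be pinning down the precise ordering of the edges on each Brauer tree and locating the exceptional vertex; in cyclic defect this normally comes out of comparing degrees modulo $\ell$ after the Harish-Chandra induction, but for the three extra $B_2$-series blocks one has to verify that the relative Weyl group acts as expected and that reduction stability (in the sense of Section~\ref{sec:veri}) holds, since the $B_2$-cuspidal module has non-trivial parameters. Once these points are confirmed, the identification with the trees stated (and the three additional ones) will follow mechanically.
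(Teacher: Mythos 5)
Your proposal is correct and follows essentially the route the paper itself indicates: the paper gives no written proof of this proposition, relying (as stated before Proposition~\ref{prop:trees Dn d=3}) on the Fong--Srinivasan determination of the Brauer trees together with the remark that they are readily recovered by Harish-Chandra induction, with the linear-prime theory of Gruber--Hiss governing the block distribution — exactly the ingredients you assemble, including the correct identification of the three extra trees with the series above the cuspidal unipotent character of a $B_2$ Levi subgroup. (Only a cosmetic slip: the common relative Weyl group of an $A_2$ Levi in $W(B_n)$ is of type $A_1B_{n-3}$, not $A_1B_{n-4}$, though the matching you need still holds.)
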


\begin{table}[ht]
\caption{Brauer trees for $B_n(q)$ and $C_n(q)$ ($5\le n\le6$), $7\le\ell|(q^2+q+1)$} \label{tab:Bn,d=3,def1}
$$\vbox{\offinterlineskip\halign{$#$
        \vrule height10pt depth 2pt width 0pt&& \hfil$#$\hfil\cr
B_5(q):& B_2\co3.& \vr& B_2\co21.& \vr& B_2\co1^3.& \vr& \bigcirc& \vr& B_2\co.1^3& \vr& B_2\co.21& \vr& B_2\co.3\cr
\cr
B_6(q):& B_2\co4.& \vr& B_2\co2^2.& \vr& B_2\co1^4.& \vr& \bigcirc& \vr& B_2\co1.1^3& \vr& B_2\co1.21& \vr& B_2\co1.3
\cr
\cr
 & B_2\co3.1& \vr& B_2\co21.1& \vr& B_2\co1^3.1& \vr& \bigcirc& \vr& B_2\co.1^4& \vr& B_2\co.2^2& \vr& B_2\co.4
\cr
 &  & B_2& & B_2& & B_2A_2& & B_2A_2& & B_2& & B_2\cr
  }}$$
\end{table}

Here, $B_2A_2$ denotes the Harish-Chandra series of the cuspidal unipotent
$\ell$-modular Brauer character $B_2\sqtens\vhi_{1^3}$ of a Levi subgroup of
type $B_2A_2$.

\begin{prop}   \label{prop:B6,d=3}
 The decomposition matrix for the principal $\ell$-block of $B_6(q)$ and of
 $C_6(q)$ for primes $7\le\ell| (q^2+q+1)$ is as given in
 Table~\ref{tab:B6,d=3}.
\end{prop}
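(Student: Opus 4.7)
The plan is to mirror exactly the proof of Proposition~\ref{prop:2D7,d=3}, exploiting Lusztig's identical parametrisation of the unipotent characters of $B_6(q)$ and $C_6(q)$, together with the fact that $\ell\ge7$ with $d_\ell(q)=3$ is a linear prime for these classical groups; by \cite{GrHi}, $(T_\ell)$ holds and the blocks are Morita equivalent to sums of blocks of $q$-Schur algebras, so the unipotent decomposition matrix is independent of whether we work in $B_6(q)$ or $C_6(q)$. Hence it suffices to treat $G=B_6(q)$.

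The first step is to observe that the centraliser of a Sylow $\ell$-subgroup of $G$ is contained in a proper 1-split Levi subgroup (of type $A_5.T$, where $T$ is a $\Phi_3$-torus), so by (Csp) the principal $\ell$-block contains no cuspidal unipotent Brauer character. Every unipotent PIM therefore lies in the Harish-Chandra series above the cuspidal $\ell$-modular Brauer character of some proper 1-split Levi subgroup. Since the $a$-values and block distribution refine the ordinary Harish-Chandra partition, the relevant sources are exactly: the principal series; the series above $\vhi_{1^3}$ on a Levi of type $A_2$; the series above $\vhi_{1^3}^{\sqtens2}$ on a Levi of type $A_2^2$; the series above the ordinary cuspidal unipotent character $B_2$ on a Levi of type $B_2$; and the series above $B_2\sqtens\vhi_{1^3}$ on a Levi of type $B_2A_2$.

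The second step is to record the structure of each of these Hecke algebras, in the spirit of Lemmas~\ref{lem:paramDn,d=3} and~\ref{lem:param2Dn,d=3}. The principal series algebra is $\cH(B_6;q;q)$; by \cite[4.7]{DJ92} it is modular Morita equivalent to a sum of tensor products of type $A$ Iwahori--Hecke algebras, whose decomposition matrices are tabulated in James \cite[p.~259]{Ja90} for $\ell\ge7$. For the $A_2$-Levi, the modular Steinberg character $\vhi_{1^3}$ is liftable by Example~\ref{exmp:hecke}(a), the relative Weyl group is of type $B_{n-2}=B_4$, and the parameters determined locally inside minimal $1$-split overgroups are $\cH(B_4;1;q)$, and similarly $\cH(B_2;q^3;1)$ for $A_2^2$ (semisimple mod~$\ell$). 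For the $B_2$-Levi, the relative Weyl group is of type $B_{n-2}=B_4$ and (using local determination inside the containments $B_2\le B_3,B_2A_1$) the algebra is $\cH(B_4;q;q)$; for $B_2A_2$ the relative Weyl group is of type $B_{n-5}=B_1$, giving $\cH(B_1;q^3)$, which remains semisimple. Reduction stability is clear for all $A_r$-type components via Example~\ref{exmp:hecke}(a) and for the $B_2$ factor because $B_2$ has no non-trivial graph automorphism.

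With these ingredients in hand, the final step is to construct each PIM in Table~\ref{tab:B6,d=3} by Harish-Chandra inducing the already known PIMs of $B_5(q)$, $B_4(q)$ etc.\ (including those read off from the Brauer trees in Proposition~\ref{prop:trees Bn d=3} and Table~\ref{tab:Bn,d=3,def1}), cutting by the principal block of $G$, and echelonising the resulting projective characters against the expected number of PIMs per Harish-Chandra series given by the Hecke algebra counts, exactly as was done for $\tw2D_7(q)$. The potential obstacle is that a few of these induced projectives may split in more than one way compatibly with (HCr); however, as in the proof of Proposition~\ref{prop:2D7,d=3}, all ambiguities are resolved by intersecting several such inductions (including induction from the overgroup $B_7(q)$ followed by Harish-Chandra restriction) and by the fact that the total number of PIMs in each series is fixed by the corresponding Hecke algebra decomposition matrix. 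No appeal to (DL), (Red), (Tri) or to Theorem~\ref{thm:genpos} is needed beyond what is already built into the known Levi decomposition matrices; the argument is purely at the level of Harish-Chandra theory. I expect the main bookkeeping difficulty to be the $B_2$-series (with $11$-dimensional Hecke algebra $\cH(B_4;q;q)$), where the larger number of columns makes the echelonisation lengthier, but no genuinely new tool is required.
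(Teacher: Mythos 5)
Your overall strategy is the paper's: (HCi) plus (HCr) plus (Csp) plus Hecke-algebra counts, mirroring the proof of Proposition~\ref{prop:2D7,d=3}. The paper's actual argument is in fact shorter than what you outline: all principal-series and $A_2$-series columns (together with their series labels) are obtained directly by (HCi), (Csp) rules out cuspidal PIMs, and the five remaining PIMs are then forced to lie in the $A_2^2$-series and are extracted by splitting four explicit induced projectives in the only way consistent with (HCr).

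There are, however, two concrete errors in your setup. First, the $B_2$- and $B_2A_2$-series do not meet the principal block at all: for linear primes each unipotent block is contained in a single \emph{ordinary} Harish-Chandra series, so the principal block of $B_6(q)$ consists of the $27$ principal-series unipotent characters, while all $B_2\co\lambda$ characters lie in the cyclic-defect blocks of Table~\ref{tab:Bn,d=3,def1}. Consequently the only modular series occurring in the principal block are ps, $A_2$ and $A_2^2$ (with $14+8+5=27$ PIMs, cf.\ Table~\ref{tab:3-blocks}); listing $B_2$ and $B_2A_2$ as ``relevant sources'' would derail the counting that your echelonisation relies on. Second, your Hecke-algebra data are wrong: $B_6(q)$ has Weyl group of type $B_6$, the same as $\tw2D_7(q)$, so the relative Weyl group of an $A_2$-Levi is of type $A_1B_3$ (not $B_4$) and the algebra is $\cH(A_1;1)\otimes\cH(B_3;q^2;q)$ with $2\times 4=8$ simple modules, exactly as in Lemma~\ref{lem:param2Dn,d=3}; likewise the $B_2$-series algebra would be $\cH(B_4;q^3;q)$ rather than $\cH(B_4;q;q)$, though this is moot by the previous point. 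Both slips are repairable --- and the paper avoids the $A_2$-Hecke algebra entirely by reading off the series from (HCi) and (HCr) --- but since ``the total number of PIMs in each series'' is the engine of your argument, the bookkeeping as written does not match the block.
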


\begin{table}[ht]
{\small\caption{$B_6(q)$ and $C_6(q)$, $7\le\ell| (q^2+q+1)$}   \label{tab:B6,d=3}
$$\vbox{\offinterlineskip\halign{$#$\hfil\ \vrule height9pt depth4pt&&
      \hfil\ $#$\hfil\cr
     6.& 1\cr
     .6& .& 1\cr
    51.& 1& .& 1\cr
   3^2.& .& .& 1& 1\cr
    3.3& .& .& .& .& 1\cr
    .51& .& 1& .& .& .& 1\cr
  41^2.& .& .& 1& .& .& .& 1\cr
   321.& 1& .& 1& 1& .& .& 1& 1\cr
   3.21& .& .& .& .& 1& .& .& .& 1\cr
   21.3& .& .& .& .& 1& .& .& .& .& 1\cr
  21.21& .& .& .& .& 1& .& .& .& 1& 1& 1\cr
   2^3.& 1& .& .& .& .& .& .& 1& .& .& .& 1\cr
  1^3.3& .& .& .& .& .& .& .& .& .& 1& .& .& 1\cr
   .3^2& .& .& .& .& .& 1& .& .& .& .& .& .& .& 1\cr
  31^3.& .& .& .& .& .& .& 1& 1& .& .& .& .& .& .& 1\cr
  3.1^3& .& .& .& .& .& .& .& .& 1& .& .& .& .& .& .& 1\cr
  .41^2& .& .& .& .& .& 1& .& .& .& .& .& .& .& .& .& .& 1\cr
   .321& .& 1& .& .& .& 1& .& .& .& .& .& .& .& 1& .& .& 1& 1\cr
 21.1^3& .& .& .& .& .& .& .& .& 1& .& 1& .& .& .& .& 1& .& .& 1\cr
 1^3.21& .& .& .& .& .& .& .& .& .& 1& 1& .& 1& .& .& .& .& .& .& 1\cr
1^3.1^3& .& .& .& .& .& .& .& .& .& .& 1& .& .& .& .& .& .& .& 1& 1& 1\cr
   .2^3& .& 1& .& .& .& .& .& .& .& .& .& .& .& .& .& .& .& 1& .& .& .& 1\cr
  21^4.& .& .& .& 1& .& .& .& 1& .& .& .& 1& .& .& 1& .& .& .& .& .& .& .& 1\cr
  .31^3& .& .& .& .& .& .& .& .& .& .& .& .& .& .& .& .& 1& 1& .& .& .& .& .& 1\cr
   1^6.& .& .& .& 1& .& .& .& .& .& .& .& .& .& .& .& .& .& .& .& .& .& .& 1& .& 1\cr
  .21^4& .& .& .& .& .& .& .& .& .& .& .& .& .& 1& .& .& .& 1& .& .& .& 1& .& 1& .& 1\cr
   .1^6& .& .& .& .& .& .& .& .& .& .& .& .& .& 1& .& .& .& .& .& .& .& .& .& .& .& 1& 1\cr
\noalign{\hrule}
 \omit& ps& ps& ps& ps& ps& ps& ps& ps& ps& ps& ps& A_2^2\!& A_2\!& ps& A_2\!& A_2\!& ps& ps& A_2\!& A_2\!& A_2^2\!& A_2^2\!& A_2& A_2& A_2^2\!& A_2& A_2^2\!\cr
   }}$$}
\end{table}

\begin{proof}
The arguments are exactly the same for $G=B_6(q)$ and $G=C_6(q)$.
All columns and their respective Harish-Chandra series except those labelled
$A_2^2$ are directly obtained by (HCi). Since the centraliser of a Sylow
$\ell$-subgroup of $G$ is contained inside a Levi subgroup of type $A_5$,
there are no cuspidal PIMs by (Csp). Thus the five missing PIMs belong to
Brauer characters in the series of the cuspidal Steinberg character of a Levi
subgroup of type $A_2^2$. With (HCi) we find projectives with unipotent parts
$$\begin{aligned}
  &\Psi_{12}+\Psi_{21}+\Psi_{25},\quad &\Psi_{21}+\Psi_{22}+\Psi_{27},\\
  &\Psi_{11}+\Psi_{12}+\Psi_{21}+\Psi_{22}\quad\text{and }
   &\Psi_{21}+\Psi_{25}+\Psi_{27}.
\end{aligned}$$
The only way these can decompose into sums of unipotent characters consistent
with (HCr) is the one given in the table. The indecomposability of all listed
projectives also readily follows with (HCr).
\end{proof}

\section{Unipotent decomposition matrix of $F_4(q)$}

\begin{thm}   \label{thm:F4,d=3}
 Let $(q,6)=1$. Then the decomposition matrix for the principal $\ell$-block
 of $F_4(q)$ for primes $\ell\ge7$ with $(q^2+q+1)_\ell>7$ is approximated as
 given in Table~\ref{tab:F4,d=3}. \par
 Here the unknown parameters satisfy $x_1\geq2$, $y_1,y_2 \geq 1$,
 $y_6=3-2y_1-2y_2+y_3+y_4+2y_5$ and $y_5\leq y_1+y_2-1$.
\end{thm}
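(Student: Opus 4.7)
The plan is to follow the same strategy used in the previous decomposition matrix computations for exceptional groups, in particular the proof of Theorem~\ref{thm:F4} (the $d=2$ case) and Theorem~\ref{thm:E6,d=3} (the $d=3$ case for $E_6$). As customary, I denote by $\Psi_i$ the linear combinations of unipotent characters corresponding to the columns of Table~\ref{tab:F4,d=3}, and will show that these (up to the indicated unknowns) are the unipotent PIMs in the principal $\ell$-block of $G = F_4(q)$.

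First, the principal series columns are read off from the decomposition matrix of the Iwahori--Hecke algebra $\cH(F_4;q)$, see \cite[Tab.~7.13]{GJ11}. Next, I apply (HCi): Harish-Chandra induce the unipotent PIMs from the proper 1-split Levi subgroups of $G$, namely those of types $B_3$, $C_3$, $B_2 A_1$, $A_2\tilde A_1$ and $\tilde A_2 A_1$, and cut by the principal block. The decomposition matrices of the rank-$3$ Levi subgroups at $d=3$ are obtained analogously to Proposition~\ref{prop:B6,d=3}, and those of the type $A$ factors are given by $q$-Schur algebras as in \cite{GrHi,Ja90}. The Hecke algebras of the cuspidal modular Steinberg characters $\vhi_{1^3}$ of Levi subgroups of types $A_2$ and $\tilde A_2$, and of $\vhi_{1^3}^{\sqtens 2}$ of type $A_2\tilde A_2$, are determined by local arguments (inside the minimal Levi overgroups) exactly as in Lemmas~\ref{lem:paramDn,d=3} and~\ref{lem:paramE6,d=3}, with reduction stability ensured as in Section~2.\ref{sec:veri}. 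Combining the number of simple modules of these Hecke algebras with (HCr) forces the only admissible splittings of the induced projectives into PIMs and yields all the non-cuspidal columns.

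By (Csp), since the centraliser of a Sylow $\ell$-subgroup of $G$ lies in the $3$-split Levi subgroup of type $A_2\tilde A_2$ (which is not contained in any proper $1$-split Levi), $G$ admits cuspidal unipotent Brauer characters; in particular the Steinberg Brauer character is cuspidal by (St). Counting columns against the Hecke algebra irreducibles pins down exactly which PIMs are cuspidal, and (Tri) identifies their leading rows and thus the unknown entries $x_i, y_j$. To constrain these unknowns I combine (DL) with (Red). Under the hypothesis $(q^2+q+1)_\ell > 7$, Proposition~\ref{prop:q+1 reg} (with $d=3$) together with Example~\ref{exmp:reg}(d) guarantees the existence of $\ell$-elements in $G^*$ whose centralisers are the various $3$-split Levi subgroups of types $A_2(q).(q^2+q+1)^2$, $\tilde A_2(q).(q^2+q+1)^2$, $A_2(q)\tilde A_2(q).(q^2+q+1)$ and $\tw3D_4(q).(q^2+q+1)$. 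Feeding the trivial character and the cuspidal unipotent character of each of these Levis into (Red) supplies the lower bounds $x_1\ge2$ and $y_1,y_2\ge1$, as well as the inequality $y_5\le y_1+y_2-1$ coming from the $A_2(q)\tilde A_2(q).(q^2+q+1)$ Levi. The equality $y_6 = 3-2y_1-2y_2+y_3+y_4+2y_5$ arises by choosing a Weyl group element $w$ for which (DL) forces the coefficient of a cuspidal PIM on $(-1)^{l(w)}R_w$ to vanish; the coefficient is computed by decomposing $R_w$ on the basis of PIMs obtained so far, and the matching (Red) bound from the opposite direction identifies the coefficient exactly.

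The main obstacles will be, on the one hand, selecting the right Weyl group elements $w$ for (DL) so that the cuspidal PIMs under consideration do not occur in any $R_v$ with $v<w$ (this requires tracing the ordering carefully via intermediate decompositions), and on the other hand, finding the matching Levi subgroup and Lusztig inductant so that the (Red) inequality has the opposite sign and meets the (DL) bound. As in Theorems~\ref{thm:E6,d=3} and~\ref{thm:F4}, the remaining freedom in $x_1$, $y_1$, $y_2$, $y_3$, $y_4$, $y_5$ reflects unipotent characters in the same family where the methods of this paper do not immediately separate the contributions; indecomposability of the listed $\Psi_i$ follows from (HCr) together with the counts of PIMs in each Harish-Chandra series.
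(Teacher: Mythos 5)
Your plan captures the broad flavour of the chapter (HCi/HCr to build the non-cuspidal columns, (Csp)/(St) for cuspidality, then (DL) and (Red) to pin down entries of the cuspidal columns), but it misses a key structural point and misattributes some of the crucial relations, so it is not a faithful reconstruction of the paper's argument.

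The actual proof does not build the non-cuspidal part of the matrix from scratch. It relies essentially on K\"ohler's dissertation \cite[Tab.~A.160]{Koe06}, which already establishes uni-triangularity of the decomposition matrix for $F_4(q)$ with $(q,6)=1$ at these primes, identifies the Harish-Chandra series structure, and — most importantly — proves that the ordinary cuspidal character $F_4^{II}[1]$ can only occur in the reductions of $\phi_{4,13}$ and $\phi_{1,24}$, that the Brauer character indexed by $\phi_{4,13}$ occurs only in the Steinberg reduction, and delivers the prior bounds $x_1\ge1$, $3+x_2-2x_1\ge0$, $z\ge2$ (where $z$ is the multiplicity of $\phi_{4,13}$ in the Steinberg column). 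Your proposal tries to re-derive all of this from Hecke algebras for Levi subgroups of types $B_3$, $C_3$, etc., together with (HCi)/(HCr); even if ultimately feasible, that is a genuinely different and far more laborious route, and you would have to verify the triangularity and the vanishing of the relevant above-diagonal entries independently, which the paper simply takes from \cite{Koe06}.

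Two of your attributions of the remaining relations are also off. The inequality $y_5\le y_1+y_2-1$ is \emph{not} a (Red) bound from the $A_2\widetilde{A}_2$ Levi; it is the (DL) non-negativity of the coefficient of $\Psi_{20}$ on $R_w$ for $w$ a Coxeter element (that coefficient equals $2X=-2+2y_1+2y_2-2y_5$). And the equality $y_6=3-2y_1-2y_2+y_3+y_4+2y_5$ does not come from a single vanishing (DL) coefficient: one combines the (DL) bound $Y=2+2y_3+2y_4-2y_6-zX\ge0$ with $z\ge2$ (from K\"ohler) to get $Y\ge 6-4y_1-4y_2+2y_3+2y_4+4y_5-2y_6$, and then (Red) for the \emph{maximal torus} $(q^2+q+1)^2$ shows this quantity is non-positive, hence zero. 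The value $z=2$ and the relation $x_2=2x_1-3$ are then extracted from $R_w$ at the non-minimal elements $w=s_1s_2s_3s_4s_2s_3$ and $w=(s_1s_2s_3s_4)^3$ respectively — again data you would need to produce explicitly. Finally, the lower bounds $y_1,y_2\ge1$ come from (Red) for a $3$-split Levi of type $A_2(q).(q^2+q+1)$ (rank-one $\Phi_3$-torus), not from the rank-two torus factors $.(q^2+q+1)^2$ you list. In short: the missing ingredient in your argument is K\"ohler's input, and the attributions of the inequalities among (DL)/(Red) and the specific $w$'s need to be corrected.
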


\begin{table}[ht]
\caption{$F_4(q)$, $(q^2+q+1)_\ell > 7$, $(q,6)=1$}   \label{tab:F4,d=3}
$$\vbox{\offinterlineskip\halign{$#$\hfil\ \vrule height11pt depth4pt&
      \hfil\ $#$\ \hfil\vrule&& \hfil\ $#$\hfil\cr
   \phi_{1,0}&              1& 1\cr
  \phi_{2,4}'&         \hlf q& 1& 1\cr
 \phi_{2,4}''&         \hlf q& 1& .& 1\cr
   \phi_{4,1}&         \hlf q& .& .& .& 1\cr
  \phi_{8,3}'&            q^3& .& .& .& 1& 1\cr
 \phi_{8,3}''&            q^3& .& .& .& 1& .& 1\cr
  F_4^{II}[1]&\frac{1}{24}q^4& .& .& .& .& .& .& 1\cr
 \phi_{1,12}'& \frac{1}{8}q^4& .& 1& .& .& .& .& .& 1\cr
\phi_{1,12}''& \frac{1}{8}q^4& .& .& 1& .& .& .& .& .& 1\cr
   \phi_{4,8}& \frac{1}{8}q^4& 1& 1& 1& .& .& .& .& .& .& 1\cr
  \phi_{4,7}'&      \frth q^4& .& .& .& .& 1& .& .& .& .& .& 1\cr
 \phi_{4,7}''&      \frth q^4& .& .& .& .& .& 1& .& .& .& .& .& 1\cr
  \phi_{16,5}&      \frth q^4& .& .& .& 1& 1& 1& .& .& .& .& .& .& 1\cr
   F_4[\ze_3]&      \thrd q^4& .& .& .& .& .& .& .& .& .& .& .& .& .& 1\cr
  F_4[\ze_3^2]&     \thrd q^4& .& .& .& .& .& .& .& .& .& .& .& .& .& .& 1\cr
  \phi_{8,9}'&            q^9& .& .& .& .& 1& .& .& .& .& .& 1& .& 1& y_1& y_1& 1\cr
 \phi_{8,9}''&            q^9& .& .& .& .& .& 1& .& .& .& .& .& 1& 1& y_2& y_2& .& 1\cr
 \phi_{2,16}'&    \hlf q^{13}& .& 1& .& .& .& .& .& 1& .& 1& .& .& .& y_3& y_3& .& .& 1\cr
\phi_{2,16}''&    \hlf q^{13}& .& .& 1& .& .& .& .& .& 1& 1& .& .& .& y_4& y_4& .& .& .& 1\cr
  \phi_{4,13}&    \hlf q^{13}& .& .& .& .& .& .& x_1& .& .& .& .& .& 1& y_5& y_5& 1& 1& .& .& 1\cr
  \phi_{1,24}&         q^{24}& .& .& .& .& .& .& 2x_1\mn3& .& .& 1& .& .& .& y_6& y_6& .& .& 1& 1& 2& 1\cr
\noalign{\hrule}
 \omit& & ps& ps& ps& ps& ps& ps& c& A_2& \wtA_2& ps& A_2& \wtA_2& ps& c& c& A_2& \wtA_2& A_2& \wtA_2& c& c\cr
   }}$$
 Here $\wtA_2$ denotes the Harish-Chandra series above the Steinberg PIM of
 $A_2<C_3$ in $F_4$.
\end{table}

\begin{proof}
The triangular shape of the decomposition matrix when $(q,6)=1$ was shown by
K\"ohler \cite[Tab.~A.160]{Koe06}. He also proves that the ordinary cuspidal
character $F_4^{II}[1]$ can only occur in the modular reduction of
$\phi_{4,13}$ or $\phi_{1,24}$. We will denote their multiplicities as
$x_1$ and $x_2$ respectively. When $(q^2+q+1)_\ell > 7$ they satisfy
$x_1 \geq 1$ and $3+x_2 -2x_1 \geq 0$. Similarly, he proved that the Brauer
character for $\phi_{4,13}$ can only possibly occur in the modular reduction
of the Steinberg character $\phi_{1,24}$, with a multiplicity $z$ that satisfies
$z\geq 2$ whenever $(q^2+q+1)_\ell > 7$. The multiplicities of unipotent
characters in the PIMs corresponding to the two Galois conjugate cuspidal
characters $F_4[\ze_3]$ and $F_4[\ze_3^2]$ will be denoted by $y_1,\ldots,y_6$,
so that we have
\begin{align*}
&&\Psi_{14} & = F_4[\ze_3] + y_1  \phi_{8,9}'+y_2\phi_{8,9}''+y_3\phi_{2,16}'
    +y_4\phi_{2,16}''+y_5\phi_{4,13}+y_6 \phi_{1,24} &&\\
\text{and} && \Psi_{15} & = F_4[\ze_3^2] + y_1  \phi_{8,9}'+y_2\phi_{8,9}''
    +y_3\phi_{2,16}'+y_4\phi_{2,16}''+y_5\phi_{4,13}+y_6 \phi_{1,24}. &&
\end{align*}
Under the assumption on $\ell$, we can use (Red) in the case of a maximal torus
to get the relation $-3+2y_1+2y_2-y_3-y_4-2y_5+y_6 \geq 0$ which will be useful
in the sequel.
\par
Let $w$ be a Coxeter element and $R_w$ be the corresponding Deligne--Lusztig
character. The multiplicities of $\Psi_{20}$ and $\Psi_{21}$ in $R_w$ are
respectively
\begin{align*}
&& 2X & = -2+2y_1+2y_2-2y_5 \geq 0 &&\\
  \text{and} && Y & = 2+2y_3+2y_4-2y_6-zX \geq 0. &&
\end{align*}
Since $z \geq 2$ we have
$$Y \geq 2+2y_3+2y_4-2y_6-2X = 6-4y_1-4y_2+2y_3+2y_4+4y_5-2y_6$$
which is non-positive by (Red). Therefore it must be zero, and we must also
have $X=0$ or $z=2$. In any case, the coefficient of $\Psi_{21}$ is zero.
\par
The coefficient of $\Psi_{21}$ in $R_w$ with $w=s_1s_2s_3s_4s_2s_3$ is $2-z$,
hence $z=2$ by (DL). The coefficient on $R_w$ with
$w = s_1s_2s_3s_4s_1s_2s_3s_4s_1s_2s_3s_4$ is $-3+2x_1-x_2$ and therefore
$x_2 = 2x_1-3$ by (DL) and the previous inequality on the $x_i$'s.
\par
Finally, using (Red) with a the trivial character of the $2$-split Levi
of type $A_2.(q^2+q+1)$ we get $y_1 \geq1$ and $y_2 \geq1$.
\end{proof}

\begin{rem} 
As before the virtual characters $Q_w$ for $w \in W$
 provide conjectural upper bounds for the remaining unknown entries.
We start with $w = s_1s_2s_1s_4s_3s_2s_1$, and we compute
$$ \begin{aligned}
\langle Q_w ; \varphi_{16} \rangle & = 12-12y_1, \\
\langle Q_{w} ; \varphi_{17} \rangle & = 12-12y_2,,\\
\langle Q_{w} ; \varphi_{18} \rangle & = 12-12y_3,\\
\langle Q_{w} ; \varphi_{19} \rangle & = 18-12y_4.\\
\end{aligned}$$
If \cite[Conj.~1.2]{DM14} holds then this forces $y_1,y_2,y_3,y_4 \leq 1$.
Using the relations in Theorem~\ref{thm:F4,d=3} we deduce that $y_1 = y_2 =1$, 
$y_5 \leq 1$ and $y_6 \leq 3$.  Furthermore,
with $w'= s_2s_3s_2s_1s_3s_2s_3s_4s_3s_2s_1s_3s_2s_4s_3s_2$ 
we have $\langle Q_{w'} ; \varphi_{20} \rangle = 176-8x_1$ from which
we get $x_1 \leq 22$.  
\end{rem}

We close this section by collecting in Table~\ref{tab:3-blocks} some data on
the $\Phi_3$-modular Harish-Chandra series in the principal $\ell$-blocks $B_0$
considered above. Here $W_G(B_0)$ denotes the relative Weyl group of a Sylow
$\Phi_3$-torus of the ambient group $G$ (which contains a Sylow $\ell$-subgroup
of $G$ and hence a defect group of $B_0$). This is known to be a complex
reflection group; more concretely, here it is one of two imprimitive complex
reflection groups of rank~2, denoted $G(6,1,2)$ and $G(6,2,2)$, respectively
the primitive reflection groups $G_5$, $G_{25}$ and $G_{26}$.

\begin{table}[ht]
\caption{Modular Harish-Chandra-series in $\Phi_3$-blocks}  \label{tab:3-blocks}
$$\begin{array}{r|cc|ccccccc}
 G&  W_G(B_0)& |\IBr B_0|& ps& A_2& A_2^2& E_6& c\\
\hline
    D_6&    G(6,2,2)& 18& 10&  4& 4&   \\
    B_6,C_6,D_7,\tw2D_7&    G(6,1,2)& 27& 14&  8& 5&   \\
    E_6&      G_{25}& 24& 10&  5& 4&  5\\
    E_7&      G_{26}& 48& 20& 10& 8& 10\\
    F_4,\tw2E_6&         G_5& 21&  8&  4& 4&  & 5\\
\end{array}$$
\end{table}

It can be observed from the decomposition matrices
that the principal $\ell$-blocks of $D_7(q)$ and $\tw2D_7(q)$ are not
Morita equivalent, even though their modular Harish-Chandra series distribution
agrees. On the other hand, the decomposition matrices for the principal
$\ell$-blocks of $\tw2D_7(q)$, $B_6(q)$ and $C_6(q)$ agree after a suitable
simultaneous permutation of rows and columns of the one of $\tw2D_7(q)$, so
these blocks might be Morita equivalent.

\chapter{Decomposition matrices at $d_\ell(q)=6$}   \label{chap:d=6}
Now, we consider unipotent decomposition matrices for groups $G=G(q)$ at primes
$\ell$ with $d_\ell(q)=6$, so $\ell|(q^2-q+1)$ and $\ell\ge7$. For groups of
classical type, such primes are what is called \emph{unitary} for $G$, so the
theory of $q$-Schur algebras does not apply and the decomposition matrices
are not even understood theoretically.

While our assumption already implies that $\ell\geq 7$, we will often need
to make a further assumption on $(q^2-q+1)_\ell$ in order to use (Red).

\section{Even-dimensional split orthogonal groups}
As in the previous sections we begin with groups of type $D_n$, $4\le n\le7$.
The Brauer trees for the unipotent $\ell$-blocks of cyclic defect can easily
be obtained by Harish-Chandra induction and are well-known \cite{FS90}. For
completeness and better reference we collect them here:

\begin{prop}   \label{prop:trees Dn d=6}
 Let $q$ be a prime power and $\ell$ a prime with $d_\ell(q)=6$.
 The Brauer trees of the unipotent $\ell$-blocks of $D_n(q)$, $4\le n\le 7$,
 with cyclic defect are as given in Table~\ref{tab:Dn,d=6,def1}.
\end{prop}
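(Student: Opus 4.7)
The plan is to proceed in complete parallel with the $d=3$ case treated in Proposition~\ref{prop:trees Dn d=3}. The first step is to enumerate the unipotent $\ell$-blocks of cyclic defect. By the theorem of Cabanes--Enguehard (and Broué--Malle--Michel), unipotent $\ell$-blocks of $D_n(q)$ correspond to $G$-conjugacy classes of $\Phi_6$-cuspidal pairs $(\bL,\la)$, with defect groups isomorphic to Sylow $\ell$-subgroups of the $\Phi_6$-part of $Z(\bL)^F$. The block is cyclic precisely when the rank of the Sylow $\Phi_6$-torus of $Z(\bL)^F$ is~$1$. A direct count of the $\Phi_6$-part of $|D_n(q)|$ shows that for $n\in\{4,5\}$ the Sylow $\Phi_6$-torus of $G$ has rank $1$, so every non-defect-zero unipotent block is cyclic; for $n\in\{6,7\}$ it has rank $2$, so the principal block (treated separately) has non-cyclic defect, while the remaining cyclic defect blocks correspond to proper $6$-split Levi subgroups containing a non-central $\Phi_6$-torus.

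For each cyclic defect block I would identify its ordinary unipotent characters as the irreducible constituents of $\varepsilon R_\bL^\bG(\la\otimes\hat t)$, where $t$ is an $\ell$-element of $G^*$ with $C_{\bG^*}(t)=\bL^*$, as in \S\ref{ssec:dltheory}. These are easily listed from the explicit parametrisation of unipotent characters, in each case yielding a symbol/pair-of-partitions labelling in accordance with the anticipated entries of Table~\ref{tab:Dn,d=6,def1}. Since the relative Weyl group $W_G(\bL,\la)$ of every such $\Phi_6$-cuspidal pair is cyclic, the associated Brauer tree must be an open polygon (a single segment), with a real stem and possibly an exceptional vertex when $(q^2-q+1)_\ell>\ell$.

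It then remains to determine the planar embedding and the edge labels. I would start from the Harish-Chandra end: the modular Steinberg character of $\bL^F$ is cuspidal by (St), and Harish-Chandra induction of its PIM to $G$, cut by the block, gives a projective whose unipotent part reads off directly the adjacent edges on the Harish-Chandra side. Iterating, (HCi) applied to the decomposition matrices of the PIMs of the involved Levi factors (which are known from Propositions~\ref{prop:trees Dn d=3}, \ref{prop:D6,d=3}, \ref{prop:D7,d=3} after applying Ennola-type replacements, or from \cite{Ja90,GHM,DM15} for the classical components) pins down the order of vertices along the line. The cuspidal end of the tree corresponds to a cuspidal Brauer character of $G$, which exists by (Csp) exactly when the centraliser of a Sylow $\ell$-subgroup is not contained in a proper $1$-split Levi; in the cases at hand this cuspidal Brauer character is forced to be the $\ell$-modular Steinberg (or an Alvis--Curtis dual of an appropriate unipotent character when the Sylow is contained in a $1$-split Levi), which fixes the other endpoint.

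The main obstacle will be a careful bookkeeping of the embeddings in the cases where the defining Levi $\bL$ carries a cuspidal unipotent character (types $D_4$ inside $D_7$), since then the relevant Hecke parameters and the correct $\ell$-modular Harish-Chandra series labels (ps, $A_2$, $D_4$, $D_4A_2$) must be tracked through the reduction-stability analysis of Section~1.\ref{sec:veri}. For these, I would verify reduction stability by the same argument as in Lemma~\ref{lem:paramDn,d=2} (using an appropriate $\Phi_6$-torus and an $\ell$-character in general position stable under the relevant graph action), and then read off the labels by identifying which cuspidal pair of a proper Levi each edge is induced from.
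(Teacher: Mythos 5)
Your proposal is correct and follows essentially the same route as the paper, which gives no explicit proof of this proposition beyond citing Fong--Srinivasan \cite{FS90} and remarking that the trees ``can easily be obtained by Harish-Chandra induction'' --- precisely the enumeration of $\Phi_6$-cuspidal pairs followed by the (HCi)/(Csp)/(St) bookkeeping that you describe. One small correction: the cyclicity of $W_G(\bL,\la)$ does not by itself force the Brauer tree to be an open polygon (it only gives the number of edges); the standard reason, used in \cite{FS90}, is that the unipotent characters are rational-valued, so every non-exceptional vertex lies on the real stem and the tree coincides with its real stem --- and similarly the decomposition matrices of the relevant Levi factors at $d=6$ come from \cite{Ja90} for the type~$A$ components and from induction on $n$ for the type~$D$ components, not from an Ennola transform of the $d=3$ results.
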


\begin{table}[ht]
\caption{Brauer trees for $D_n(q)$ ($4\le n\le7$), $7\le\ell| (q^2-q+1)$ } \label{tab:Dn,d=6,def1}
$$\vbox{\offinterlineskip\halign{$#$
        \vrule height10pt depth 2pt width 0pt&& \hfil$#$\hfil\cr
 D_4(q):& .4& \vr& 1.3 & \vr&  1^2.2& \vr& 1^3.1 & \vr&  1^4. & \vr& \bigcirc& \vr& D_4\quad\cr
\cr
 D_5(q):& 5.& \vr& 3.2& \vr& 2.21& \vr& 1.21^2& \vr& .21^3& \vr& \bigcirc& \vr& D_4\co2\cr
\cr
 & .41& \vr& 1.31& \vr& 1^2.21& \vr& 1^3.1^2& \vr& 1^5.& \vr& \bigcirc& \vr& D_4\co1^2\cr
\cr
 D_6(q):& .42& \vr& 1.32& \vr& 1^2.2^2& \vr& 1^2.1^4& \vr& 1.1^5& \vr& \bigcirc& \vr& D_4\co2.\cr
\cr
 & 1.5& \vr& 2.4& \vr& 2.2^2& \vr& 1.2^21& \vr& .2^21^2& \vr& \bigcirc& \vr& D_4\co.1^2\cr
\cr
 D_7(q):& .61& \vr& 3.31& \vr& 31.21& \vr& 31^2.1^2& \vr& .31^4& \vr& \bigcirc& \vr& D_4\co1.2\cr
\cr
 & .51^2& \vr& 2.31^2& \vr& 21.21^2& \vr& 21^2.1^3& \vr& 21^5.& \vr& \bigcirc& \vr& D_4\co1^2.1\cr
\cr
 & 5.1^2& \vr& 4.21& \vr& 3.2^2& \vr& 1.2^3& \vr& .2^31& \vr& \bigcirc& \vr& D_4\co.1^3\cr
\cr
 & .43& \vr& 1.3^2& \vr& 1^3.2^2& \vr& 1^4.21& \vr& 1^5.2& \vr& \bigcirc& \vr& D_4\co3.\cr
 & & ps& & ps& & ps& & ps& & .1^4& & D_4\cr
  }}$$
\end{table}

Here and later on, ``$.1^4$'' stands for the cuspidal unipotent $\ell$-modular
Brauer character of a Levi subgroup of type $D_4$ labelled by the unordered
bipartition $(-;1^4)$, and ``$D_4$'' for the $\ell$-modular reduction of the
cuspidal unipotent character of a Levi subgroup of type $D_4$.

To treat the blocks with non-cyclic defect, again we now first determine the
parameters of certain relative Hecke algebras:

\begin{lem}   \label{lem:paramDn,d=6}
 Let $q$ be a prime power and $\ell|(q^2-q+1)$. The Hecke algebras of various
 $\ell$-modular cuspidal pairs $(L,\la)$ of Levi subgroups $L$ in $D_n(q)$ and
 their respective numbers of irreducible characters are as given in
 Table~\ref{tab:hecke Dn,d=6}.
\end{lem}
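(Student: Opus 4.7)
The plan is to follow exactly the strategy already deployed for Lemmas~\ref{lem:paramDn,d=2}, \ref{lem:param2Dn,d=2} and~\ref{lem:paramDn,d=3}, now in the unitary prime setting $d_\ell(q)=6$. For each cuspidal pair $(L,\la)$ listed in Table~\ref{tab:hecke Dn,d=6} the first step is to identify the relative Weyl group $W_G(\bL,\la)=N_G(\bL,\la)/L$. These groups can be read off from Howlett's tables \cite[pp.~71--72]{Ho80} (or computed in \Chevie{} \cite{Chv}); for the typical cuspidal data arising at $d_\ell(q)=6$ in $D_n(q)$, namely the modular Steinberg characters of Levi subgroups of type $A_5$, $A_5^2$, and the cuspidal unipotent character $D_4$ of $D_4$-type Levi subgroups (together with their products with Steinberg characters of further $A_5$-factors), these relative Weyl groups have the shapes recorded in the middle column of the table.

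The second step is to establish reduction stability via Proposition~\ref{prop:redstab} (or its corollary~\ref{cor:dechecke}) so that Proposition~\ref{prop:redstable-geck} applies and the parameters of the $\ell$-modular Hecke algebra are just the $\ell$-reductions of the characteristic~0 parameters. Each of the modular cuspidal characters in question is liftable: the modular Steinberg of $A_5(q)$ lifts to the ordinary Steinberg; the cuspidal Brauer character of $D_4$ labelled ``$D_4$'' is the $\ell$-modular reduction of the (unique) cuspidal unipotent character of $D_4(q)$ and hence liftable; while the other cuspidal Brauer characters of $D_4$ occurring here (at $d_\ell(q)=6$ the defect-zero unipotent characters of $D_4$ play no role, so the only non-Steinberg cuspidal ones are those coming from Lusztig series of semisimple $\ell$-elements, as in Example~\ref{exmp:hecke}(a)) lift to ordinary cuspidal characters by \cite[Thm.~7.8]{GHM} or \cite[Prop.~5.4]{DM15}. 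Stability under the normaliser is handled by the general argument of Section~2.\ref{sec:veri}: in each case $[\bL,\bL]$ either has no non-trivial graph automorphisms, or is a product of factors regularly permuted by the normaliser (so that the tensor lift works), with the only serious case being a $D_4$-factor inside a larger $D$-type group.

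With reduction stability in hand, the third step is to compute the parameters themselves. Since the modular parameters equal the ordinary ones, they can be determined locally, inside each minimal Levi overgroup of $L$ in $D_n$, via the ratio-of-degrees recipe \cite[Lemma~3.17]{GHM2}; most of these local computations already appeared in the proofs of Lemmas~\ref{lem:paramDn,d=2}--\ref{lem:paramE6,d=3}, and the new ones (containments $A_5<D_6$, $A_5<A_6$, $A_5A_1<D_7$, $D_4<D_5$ with its characteristic~0 parameter $q^4$ for the cuspidal from \cite[p.~464]{Ca}) are entirely analogous. Finally, the number $|\Irr\cH|$ of simple modules of the resulting $\ell$-modular Iwahori--Hecke algebra can be read off either from Table~\ref{tab:hecke irr}, or by invoking the programme of Jacon \cite{Ja05}; this yields the rightmost columns of the table.

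The main obstacle in this whole scheme is, as in Lemma~\ref{lem:param2Dn,d=2}, the reduction-stability check for the cuspidal $D_4$-data. For the ordinary cuspidal unipotent character of $D_4$ one must exhibit an $\ell$-character in general position of a Sylow $\Phi_6$-torus of $D_4$, stable under the graph automorphisms of $D_4$ induced by the normaliser inside $D_n$. Since this normaliser inside $D_n$ (for $n\ge5$) only induces the order-2 graph automorphism on $D_4$ (coming from $\SO_8\SO_{2n-8}\le\SO_{2n}$), the required $\ell$-character can be constructed once the $\ell$-part $(q^2-q+1)_\ell$ is large enough, by a direct calculation on cocharacter lattices as in the proof of Lemma~\ref{lem:param2Dn,d=2}. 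This is the only genuinely delicate point; everything else is bookkeeping.
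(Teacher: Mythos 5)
Your high-level scheme --- relative Weyl groups from Howlett, reduction stability via liftability, local determination of parameters in minimal Levi overgroups, and counting simple modules with Jacon's programme --- is indeed the strategy the paper uses. But the execution has concrete defects. First, you never address the pair $(D_6,\vhi_{.1^6})$, which is one of the four rows of Table~\ref{tab:hecke Dn,d=6}; your list of ``typical cuspidal data'' instead contains a non-existent $A_5^2$. This case cannot be disposed of by the cyclic-defect mechanism you lean on: $\Phi_6^2$ divides $|D_6(q)|$, so $\vhi_{.1^6}$ lies in the principal block of $D_6(q)$, which has non-cyclic defect. The paper instead observes that $\vhi_{.1^6}$ occurs with multiplicity one in the reduction of a cuspidal Deligne--Lusztig character (read off from Table~\ref{tab:D6,d=6}) and then invokes Example~\ref{exmp:hecke}(c); no such argument appears in your proposal. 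Second, your claim that the modular Steinberg character $\vhi_{1^6}$ of $A_5(q)$ ``lifts to the ordinary Steinberg'' is false: the ordinary Steinberg character of $A_5(q)$ lies in the principal series and so cannot lift a cuspidal lattice. The correct lift is an ordinary cuspidal character in the Lusztig series of a regular $\ell$-element of a Coxeter torus, and it is the Harish-Chandra induction of \emph{that} character to $D_6$ which yields the parameter $q^3$; starting from the ordinary Steinberg the local ratio-of-degrees computation would not even apply.

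Your ``main obstacle'' is also misplaced. The cuspidal unipotent character $D_4$ of $D_4(q)$ is not of the form $\RTG(\theta)$, so no graph-automorphism-stable $\ell$-character in general position of a Sylow $\Phi_6$-torus is needed for it: it is the unique cuspidal unipotent character, hence stable under all automorphisms, and being a non-exceptional leaf of the cyclic Brauer tree in Table~\ref{tab:Dn,d=6,def1} its $\ell$-modular reduction is already irreducible, making reduction stability immediate. The genuinely nontrivial checks in this lemma are for $\vhi_{.1^4}$ (where the paper simply uses Example~\ref{exmp:hecke}(a), since the entire Brauer tree of the cyclic block is fixed by the normaliser, together with Corollary~\ref{cor:dechecke} and the lift in the Lusztig series of an $\ell$-element with centraliser $(q^3+1)(q+1)$) and for $\vhi_{.1^6}$ as above --- neither is the point you flag. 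A further small slip: the counts $|\Irr\cH|$ must be read from Table~\ref{tab:hecke irr,d=6}, not from Table~\ref{tab:hecke irr}, whose entries are the $d=2$ specialisations.
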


\begin{table}[ht]
\caption{Hecke algebras and $|\Irr\cH|$ in $D_n(q)$ for $d_\ell(q)=6$}   \label{tab:hecke Dn,d=6}
$\begin{array}{l|c|ccccc}
 (L,\la)& \qquad\qquad\cH\qquad\qquad\qquad & n=6& 7& 8\cr
\hline
 (D_4,D_4)& \cH(B_{n-4};q^4;q)& 2& 4& 10\cr
 (D_4,\vhi_{.1^4})& \cH(B_{n-4};q^2;q)& 2& 4& 10\cr
 (A_5,\vhi_{1^6})& \cH(A_1;q^3)\otimes\cH(D_{n-6};q)& 1& 1& 4\cr
 (D_6,\vhi_{.1^6})& \cH(B_{n-6};q;q)& 1& 2& 5\cr
\end{array}$
\end{table}

\begin{proof}
In the first three cases, the cuspidal Brauer character lies in a block with
cyclic defect (see the Brauer tree in Table~\ref{tab:Dn,d=6,def1}), and hence
reduction stability follows from Example~\ref{exmp:hecke}(a). \par
A Levi subgroup $L$ of type $D_4$ has relative Weyl group of type $B_{n-4}$
inside $D_n$. The cuspidal Brauer character $\vhi_{.1^4}$ is a constituent of
the $\ell$-modular reduction of an ordinary cuspidal character $\la$ of $L$
lying in the Lusztig series of an $\ell$-element $s$ with centraliser
$(q^3+1)(q+1)$. By Corollary~\ref{cor:dechecke}, the Hecke algebra for
$\vhi_{.1^4}$ is the same as for $\la$. The minimal Levi overgroups are of types
$D_5$ when $n\ge5$, where $s$ has centraliser $\tw2D_2(q)(q^3+1)$, and $D_4A_1$
when $n\ge6$, which gives the parameters $q^2$ and $q$.
\par
The relative Weyl group of a Levi subgroup of type $A_5$ in $D_n$ is of type
$A_1D_{n-6}$, by \cite[p.~72]{Ho80}. As can be seen from the Brauer tree,
the $\ell$-modular Steinberg character $\vhi_{1^6}$ of $A_5$ is liftable, so
again the parameter $q^3$ for the Hecke algebra is determined inside the minimal
Levi overgroups of types $D_6$ and $A_5A_1$.
\par
Finally, the relative Weyl group of $D_6$ inside $D_7$ has type $A_1$. The
modular Steinberg character $\vhi_{.1^6}$ lifts to a cuspidal Deligne--Lusztig
character, so
it is reduction stable by Example~\ref{exmp:hecke}(c), and its Hecke algebra is
the $\ell$-modular reduction of an Iwahori--Hecke algebra in characteristic~0.
The parameters are determined already inside Levi subgroups of types $D_7$,
$D_6A_1$.
\end{proof}

\begin{prop}   \label{prop:D6,d=6}
 Assume $(T_\ell)$. The decomposition matrix for the principal $\ell$-block
 of $D_6(q)$ for primes $\ell$ with $(q^2-q+1)_\ell>7$ is as given in
 Table~\ref{tab:D6,d=6}.
\end{prop}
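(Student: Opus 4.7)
\medskip

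My plan is to mimic the strategy used in the proofs of Theorems~\ref{thm:D4}--\ref{thm:D6} and Proposition~\ref{prop:D7,d=3}, exploiting the fact that for $d_\ell(q)=6$ in $D_6(q)$ the Sylow $\Phi_6$-torus has rank~$2$, so the defect groups of the principal block are non-cyclic but small, and the cuspidal part of the decomposition matrix should be of manageable size. First I would use~(HCi) to produce an initial stock of projective characters, by Harish-Chandra inducing the PIMs from the decomposition matrices of the proper Levi subgroups of types $D_5$, $A_5$, $D_4A_1$, $A_3A_1$ and $A_1^k$, cut by the principal block. The principal-series projectives can also be read off directly from the decomposition matrix of $\cH(D_6;q)$, via the Morita equivalence of $\cH(B_6;1;q)$ with a sum of tensor products of type-$A$ Hecke algebras (\cite[4.7]{DJ92} and \cite[p.~259]{Ja90}).

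Next, I would account for the non-cuspidal Harish-Chandra series using Lemma~\ref{lem:paramDn,d=6}. The Hecke algebra for the cuspidal pair $(D_4,D_4)$ is $\cH(B_2;q^4;q)$ with $2$ irreducibles, that for $(D_4,\vhi_{.1^4})$ is $\cH(B_2;q^2;q)$ with $2$ irreducibles, and that for $(A_5,\vhi_{1^6})$ is $\cH(A_1;q^3)$ with a single simple module (because $(q^2-q+1)_\ell\nmid(q^3+1)$ forces this algebra to be semisimple modulo~$\ell$). For each series, I would take the Harish-Chandra induction from the relevant Levi subgroup of the known cuspidal PIMs (whose Brauer trees are given in Table~\ref{tab:Dn,d=6,def1}), observe that these must split into the predicted number of summands, and then use~(HCr) together with~(Tri) to pin down the splitting uniquely. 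Comparing character counts tells me exactly how many columns of the matrix remain to be identified as cuspidal PIMs.

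For the cuspidal PIMs themselves, I would begin with the $\ell$-modular Steinberg PIM, which exists by~(St) since the Sylow $\Phi_6$-subgroup is not contained in any proper $1$-split Levi. The unknown decomposition numbers below the diagonal in the cuspidal columns, constrained by $(T_\ell)$ to the rows of characters in smaller families, would then be determined by the combined use of Theorem~\ref{thm:genpos} (giving one linear relation per cuspidal column via the Steinberg multiplicity), Proposition~\ref{prop:dl} applied to carefully chosen Deligne--Lusztig characters $R_w$ (giving upper bounds), and Lemma~\ref{lem:red} together with Proposition~\ref{prop:q+1 reg} (adapted to the $\Phi_6$-case via Ennola duality, as in Example~\ref{exmp:reg}(d)) applied to cuspidal characters of suitable $6$-split Levi subgroups such as $\tw3D_4(q).(q^2-q+1)$, $\tw2A_2(q)^*.(q^2-q+1)^k$ and tori, giving matching lower bounds. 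The hypothesis $(q^2-q+1)_\ell>7$ is exactly what ensures the existence of the $\ell$-elements in general position needed to apply (Red) in these Levi subgroups. Finally, indecomposability of all constructed projectives would follow from a routine application of~(HCr), together with Corollary~\ref{cor:family1} to recognise the PIM whose unipotent part starts with the Alvis--Curtis dual of the unique character of $a$-value~$1$.

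The main obstacle, as in the analogous $d=2$ proofs, will be the determination of the full bottom-right cuspidal block of the matrix: the inequalities from~(DL) and~(Red) typically pin down only linear combinations of the unknown entries, and one must be clever in choosing both the Weyl group elements $w$ (often increasing length, to avoid the PIM appearing in a smaller $R_v$) and the cuspidal characters $\rho$ of the $6$-split Levi subgroups so that the resulting upper and lower bounds coincide. In a few cases one may need to invoke a Harish-Chandra induction to $D_7(q)$ followed by restriction (as was done in the $d=2$ proof of Theorem~\ref{thm:D6}) to force a further splitting, or to leave a small residual ambiguity expressed through a couple of integer parameters, in the spirit of the parameters $c_i$ and $d$ of Theorem~\ref{thm:D6}.
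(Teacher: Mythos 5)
Your overall strategy---(HCi) for all but the cuspidal columns, the Hecke algebras of Lemma~\ref{lem:paramDn,d=6} to count the PIMs in each non-cuspidal series, (St) for a cuspidal constituent of the Steinberg, and then (DL) plus (Red) for the one remaining cuspidal column---is essentially the paper's. But two points need correction. First, the tools you cite for the cuspidal column are the $d=2$ ones: Theorem~\ref{thm:genpos} and Proposition~\ref{prop:q+1 reg} require $w_0F$ central and $(q+1)_\ell\ge h$, and do not apply here. What the paper actually uses is (Red) for a regular semisimple $\ell$-element of $G^*$, whose existence for $d=6$ in type $D_6$ comes from Example~\ref{exmp:reg}(b) and needs $(q^2-q+1)_\ell>12$ (automatic since $\ell\ge7$ and $(q^2-q+1)_\ell>7$). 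Combined with (DL) for a Coxeter element, this yields only the single relation $a_3=a_2+2a_1+2$ for the cuspidal column $[D_4\co.2]+a_1([1^3+]+[1^3-])+a_2[.21^4]+a_3[.1^6]$ (the two equal entries $a_1$ coming from the graph automorphism, and $[.31^3]$ being excluded because its family is incomparable with that of $[D_4\co.2]$).

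Second, and this is the genuine gap: no combination of (DL) and (Red) inside $D_6(q)$, nor Harish-Chandra induction to $D_7(q)$ and back, determines $a_1$ and $a_2$. The table as stated has no residual unknowns, and the paper's proof of that fact is deferred to Proposition~\ref{prop:D8,d=6}: one computes the unipotent blocks of $D_8(q)$, invokes $(T_\ell)$ there to subtract known PIMs from certain induced projectives, and Harish-Chandra restricts the result to a Levi subgroup of type $D_6A_1$; negative multiplicities then force $a_1=a_2=0$, hence $a_3=2$. Your fallback of ``leaving a small residual ambiguity'' would prove a strictly weaker statement than the proposition, and going up only to $D_7(q)$ does not suffice (the analogous unknowns there are themselves resolved only via $D_8(q)$). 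A minor further slip: $q^2-q+1$ divides $q^3+1$, so $q^3\equiv-1\pmod\ell$ and $\cH(A_1;q^3)$ is local rather than semisimple---that, not semisimplicity, is why the $A_5$-series contains a single PIM.
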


\begin{table}[htbp]
\caption{$D_6(q)$, $(q^2-q+1)_\ell>7$}   \label{tab:D6,d=6}
$$\vbox{\offinterlineskip\halign{$#$\hfil\ \vrule height11pt depth4pt&
      \hfil\ $#$\ \hfil\vrule&& \hfil\ $#$\hfil\cr
     .6&                               1& 1\cr
    .51&                  q^2\Ph5\Ph{10}& 1& 1\cr
     3+&              q^3\Ph5\Ph8\Ph{10}& 1& .& 1\cr
     3-&              q^3\Ph5\Ph8\Ph{10}& 1& .& .& 1\cr
   2.31&   \hlf q^4\Ph3^2\Ph5\Ph8\Ph{10}& 1& 1& 1& 1& 1\cr
D_4\co1^2.&\hlf q^4\Ph1^4\Ph3^2\Ph5\Ph{10}& .& .& .& .& .& 1\cr
  .41^2&              q^6\Ph5\Ph8\Ph{10}& .& 1& .& .& .& .& 1\cr
 1.31^2& \hlf q^7\Ph3^2\Ph4^2\Ph8\Ph{10}& .& 1& .& .& 1& .& 1& 1\cr
 D_4\co1.1&   \hlf q^7\Ph1^4\Ph3^2\Ph5\Ph8& .& .& .& .& .& 1& .& .& 1\cr
    21+&        q^7\Ph4^2\Ph5\Ph8\Ph{10}& .& .& 1& .& 1& .& .& .& .& 1\cr
    21-&        q^7\Ph4^2\Ph5\Ph8\Ph{10}& .& .& .& 1& 1& .& .& .& .& .& 1\cr
1^2.21^2&\hlf q^{10}\Ph3^2\Ph5\Ph8\Ph{10}&.& .& .& .& 1& .& .& 1& .& 1& 1& 1\cr
D_4\co.2&\hlf q^{10}\Ph1^4\Ph3^2\Ph5\Ph{10}&.& .& .& .& .& .& .& .& 1& .& .& .& 1\cr
  .31^3&           q^{12}\Ph5\Ph8\Ph{10}& .& .& .& .& .& .& 1& 1& .& .& .& .& .& 1\cr
   1^3+&           q^{15}\Ph5\Ph8\Ph{10}& .& .& .& .& .& .& .& .& .& 1& .& 1& .& .& 1\cr
   1^3-&           q^{15}\Ph5\Ph8\Ph{10}& .& .& .& .& .& .& .& .& .& .& 1& 1& .& .& .& 1\cr
  .21^4&               q^{20}\Ph5\Ph{10}& .& .& .& .& .& .& .& 1& .& .& .& 1& .& 1& .& .& 1\cr
   .1^6&                          q^{30}& .& .& .& .& .& .& .& .& .& .& .& 1& 2& .& 1& 1& 1& 1\cr
\noalign{\hrule}
  & & ps& ps& ps& ps& ps& D_4& ps& ps& D_4& ps& ps& ps& c& .1^4& A_5& A_5'& .1^4& c\cr
  }}$$
\end{table}

\begin{proof}
Let us write $\Psi_i$, $1\le i\le 18$, for the linear combinations of unipotent
characters given by the columns of Table~\ref{tab:D6,d=6}. We need to show that
these are restrictions to the principal block of $\ell$-modular PIMs of
$G=D_6(q)$. Note that the unipotent decomposition matrices of all proper
Levi subgroups are known, either by Proposition~\ref{prop:trees Dn d=6} or by
\cite{Ja90}.
\par
Projectives $\Psi_i$ with $i\ne 13,18$ are obtained by Harish-Chandra
induction of PIMs from proper Levi subgroups. This accounts, in particular,
for all PIMs in the principal series, which can be seen to be indecomposable
from the decomposition matrix of the Hecke algebra, computed with the
programme of N.~Jacon \cite{Ja05}. As a Sylow $\ell$-subgroup
of $G$ is not contained in any proper Levi subgroup, by (St) the $\ell$-modular
reduction of the Steinberg character contains a cuspidal Brauer character.
The relative Hecke algebra $\cH(B_2;q^4;q)$ for the cuspidal Brauer character
$D_4$ of a Levi subgroup of type $D_4$ (see Lemma~\ref{lem:paramDn,d=6}) has
two simple modules in characteristic~$\ell$, so the corresponding modular
Harish-Chandra series only contains two PIMs. Furthermore, the two projectives
obtained by Harish-Chandra induction of $\Psi_{.1^4}$ from $D_5(q)$ are
indecomposable by (HCr),
as are those in the $A_5$-series, so there are no further PIMs in those series.
Hence the remaining PIM must be cuspidal. From uni-triangularity of the
decomposition matrix, we deduce that it will have the form
$$[D_4\co.2]+a_1[1^3+]+a_1[1^3-]+a_2[.21^4]+a_3[.1^6].$$
Here we use that the graph automorphism of $G$ interchanges the two unipotent
characters labelled $1^3+$, $1^3-$ but fixes $[D_4\co.2]$ and hence the
corresponding two entries in that column must agree. Note also that the family
consisting of the unipotent character $[.31^3]$ is not comparable to the family
containing $[D_4:.2]$ which explains why it does not appear in the previous
projective character. The coefficient of
$\Psi_{18}$ in the Deligne--Lusztig character $R_w$ for $w$ a Coxeter
element is $2+2a_1+a_2-a_3$, and therefore by (DL) it must be non-negative.
\par
On the other hand we have $2+2a_1+a_2-a_3 \leq 0$ and therefore $a_3=a_2+2a_1+2$
provided that we can use (Red), that is when there exists a semisimple
regular $\ell$-element of $G^*$. By Example~\ref{exmp:reg}(b) such an element
exists whenever $(q^2-q+1)_\ell > 12$ (in particular whenever $(q^2-q+1)_\ell>7$
since we also assumed $\ell \geq 7$).
\par
It will be a consequence of the determination of the decomposition matrices
for $D_8(q)$ in Proposition~\ref{prop:D8,d=6} that in fact $a_1=a_2=0$ and thus
$a_3=2$, completing the proof.
\end{proof}

\begin{prop}   \label{prop:D7,d=6}
 Assume $(T_\ell)$. The decomposition matrix for the principal $\ell$-block
 of $D_7(q)$ for primes $\ell$ with $(q^2-q+1)_\ell>7$ is as given in
 Table~\ref{tab:D7,d=6}.
\end{prop}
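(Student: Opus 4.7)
The plan follows the pattern already established for groups of type $D_n$ in the preceding sections. Write $G=D_7(q)$ and let $\Psi_i$ denote the linear combination of unipotent characters given by the $i$-th column of Table~\ref{tab:D7,d=6}. Under our hypotheses the unipotent characters form a basic set of the principal $\ell$-block, and the unipotent decomposition matrices of all proper Levi subgroups are already known, either from Proposition~\ref{prop:trees Dn d=6}, Proposition~\ref{prop:D6,d=6}, or James's work for groups of type~$A$. The task is therefore to realise each $\Psi_i$ as the unipotent part of a PIM.

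The first step is to read off the columns in the principal series from the decomposition matrix of the Iwahori--Hecke algebra of type $D_7$ with parameter~$q$, which is computable via~\cite{Ja05}. Then I would apply (HCi) systematically from the maximal proper $1$-split Levi subgroups of types $D_6$, $A_6$, $D_5A_1$, $D_4A_2$, $A_5A_1$, $\ldots$, and combine the resulting projective characters using (Tri) (available via $(T_\ell)$) and the minimality criterion (HCr). This should pin down all non-cuspidal PIMs. Lemma~\ref{lem:paramDn,d=6} gives the relevant relative Hecke algebras: the $D_4$-series has $\cH(B_3;q^4;q)$ with four simple modules, the $.1^4$-series has $\cH(B_3;q^2;q)$ with four simples, the $A_5$-series contributes a single PIM, and the $D_6$-series of cuspidal $\vhi_{.1^6}$ two PIMs (with parameters forming $\cH(B_1;q;q)$). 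Counting PIMs against the number of simple unipotent Brauer characters tells us how many cuspidal columns remain, and since the Sylow $\ell$-subgroup of $G$ is not contained in any proper $1$-split Levi, (St) already guarantees that the Steinberg PIM is cuspidal.

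For the cuspidal columns and any entry still carrying an unknown inherited from $D_6(q)$ (notably the parameters $a_1,a_2$ left open in Proposition~\ref{prop:D6,d=6} via the $D_6$-Levi subgroup), I would parametrise the remaining entries using $(T_\ell)$ and then derive relations by combining (DL) applied to well-chosen $R_w$ (Coxeter-type and longer elements analogous to those used in the proof of Theorem~\ref{thm:D6}) with (Red) applied to $6$-split Levi subgroups of the dual group. Here Example~\ref{exmp:reg}(c) together with Proposition~\ref{prop:q+1 reg} applied under Ennola duality ($q\mapsto -q$) produces, under the hypothesis $(q^2-q+1)_\ell>7$, $\ell$-elements in $G^*$ with centralisers equal to the various $6$-split Levi subgroups needed, so that (Red) yields the correct sign upper bounds matched to the lower bounds from (DL). Theorem~\ref{thm:genpos} (in its Ennola-transposed form) finally fixes the multiplicities of cuspidal Brauer characters in the $\ell$-reduction of the Steinberg character.

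The main obstacle will be the cuspidal PIM lying in the family above $[D_4\co1.2]$, whose column must be shown to have vanishing entries on the unipotent characters labelled by the cuspidal-defect families $1^3\pm$ and $.21^4$ of Table~\ref{tab:D6,d=6}; via Harish-Chandra restriction to $D_6(q)$ this will simultaneously force $a_1=a_2=0$ in Proposition~\ref{prop:D6,d=6}, completing that proof as well and furnishing the induction base for the subsequent analysis of $D_8(q)$ in Proposition~\ref{prop:D8,d=6}. Once this is achieved, a final application of (HCr) verifies indecomposability of each column, yielding the full matrix displayed in Table~\ref{tab:D7,d=6}.
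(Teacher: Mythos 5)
Your overall framework (HCi, HCr, Tri, Hecke algebra counts, DL/Red) is the right toolkit, but there is a concrete error that derails the plan. For $D_7(q)$ at $d_\ell(q)=6$ the Sylow $\ell$-subgroup \emph{is} contained in a proper $1$-split Levi subgroup: $\Phi_6^2$ already divides $|D_6(q)|$, and in fact a Levi of type $D_6$ contains the centraliser of a Sylow $\ell$-subgroup of $G$. So (Csp) shows there are \emph{no} cuspidal unipotent Brauer characters in this block, and (St) places the Steinberg PIM in the series of the cuspidal modular Steinberg character $\vhi_{.1^6}$ of $D_6(q)$ (the last two columns of Table~\ref{tab:D7,d=6} are labelled $.1^6$, not ``c''). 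Your assertion that ``(St) already guarantees that the Steinberg PIM is cuspidal'' is therefore false, and the entire last stage of your plan — parametrising cuspidal columns, applying an Ennola form of Theorem~\ref{thm:genpos}, and locating ``the cuspidal PIM lying in the family above $[D_4\co1.2]$'' (which in fact sits in a cyclic-defect block, see Table~\ref{tab:Dn,d=6,def1}) — is aimed at columns that do not exist. Relatedly, your series count omits the $D_6^s$-series (the cuspidal modular constituent of $[D_4\co.2]$ of $D_6(q)$), which contributes the two hardest columns $\Psi_{16}$ and $\Psi_{22}$; these arise by splitting the Harish-Chandra induction of the $D_6^s$-PIM using (HCr), and they are exactly where the unknown parameters $a_i,z_i$ live.

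A second, smaller divergence: the paper does \emph{not} resolve the unknowns $a_1,a_2$ of Proposition~\ref{prop:D6,d=6} (nor the splitting parameter $z_2$) inside $D_7(q)$ by restricting to $D_6(q)$, as you propose. It carries them forward as parameters and only eliminates them by Harish-Chandra restriction arguments in the blocks of $D_8(q)$ (Proposition~\ref{prop:D8,d=6}); the one genuinely local computation in the $D_7$ proof is a single (DL) argument with a Coxeter element, using that $l(w)=7$ is odd, to kill the last free parameter $z_4$ in $\Psi_{26}$. If you want a self-contained argument at the $D_7$ level you would need to justify why downward restriction suffices, which the paper evidently could not do; as written, your proposal both misidentifies which columns are problematic and does not supply the upward induction to $D_8$ that actually closes the argument.
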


Here $D_6^s$ denotes the Harish-Chandra series of the cuspidal $\ell$-modular
constituent of the $\ell$-modular reduction of the unipotent character
$[D_4\co.2]$ of $D_6(q)$.

\begin{table}[htbp]
\small{\caption{$D_7(q)$, $(q^2-q+1)_\ell>7$}   \label{tab:D7,d=6}
$$\vbox{\offinterlineskip\halign{$#$\hfil\ \vrule height10pt depth4pt&
      \hfil\ $#$\ \hfil\vrule&& \hfil\ $#$\hfil\cr
      .7&           1& 1\cr
     1.6&           q& .& 1\cr
    1.51&    \hlf q^3& .& 1& 1\cr
     .52&    \hlf q^3& 1& .& .& 1\cr
     3.4&         q^3& 1& 1& .& .& 1\cr
    2.41&    \hlf q^4& .& 1& 1& .& 1& 1\cr
D_4\co21.&   \hlf q^4& .& .& .& .& .& .& 1\cr
    2.32&    \hlf q^6& 1& .& .& 1& 1& .& .& 1\cr
D_4\co1^3.&  \hlf q^6& .& .& .& .& .& .& .& .& 1\cr
    .421&    \hlf q^7& .& .& .& 1& .& .& .& .& .& 1\cr
  1.41^2&    \hlf q^7& .& .& 1& .& .& 1& .& .& .& .& 1\cr
D_4\co2.1&   \hlf q^7& .& .& .& .& .& .& 1& .& .& .& .& 1\cr
  21.2^2&         q^9& .& .& .& .& 1& 1& .& 1& .& .& .& .& 1\cr
   1.321&         q^9& .& .& .& 1& .& .& .& 1& .& 1& .& .& .& 1\cr
1^2.2^21& \hlf q^{12}& .& .& .& .& .& .& .& 1& .& .& .& .& 1& 1& 1\cr
D_4\co.3& \hlf q^{12}& .& .& .& .& .& .& .& .& .& .& .& 1& .& .& .& 1\cr
   .41^3&      q^{12}& .& .& .& .& .& .& .& .& .& .& 1& .& .& .& .& .& 1\cr
  1.31^3& \hlf q^{13}& .& .& .& .& .& 1& .& .& .& .& 1& .& .& .& .& .& 1& 1\cr
  .321^2& \hlf q^{13}& .& .& .& .& .& .& .& .& .& 1& .& .& .& 1& .& .& .& .& 1\cr
\!D_4\co1.1^2\!\!&\hlf q^{13}& .& .& .& .& .& .& .& .& 1& .& .& .& .& .& .& .& .& .& .& 1\cr
1^2.21^3& \hlf q^{16}& .& .& .& .& .& 1& .& .& .& .& .& .& 1& .& .& .& .& 1& .& .& 1\cr
 D_4\co.21& \hlf q^{16}& .& .& .& .& .& .& .& .& .& .& .& .& .& .& .& .& .& .& .& 1& .& 1\cr
  1.21^4& \hlf q^{21}& .& .& .& .& .& .& .& .& .& .& .& .& .& .& .& .& 1& 1& .& .& 1& .& 1\cr
 .2^21^3& \hlf q^{21}& .& .& .& .& .& .& .& .& .& .& .& .& .& 1& 1& .& .& .& 1& .& .& .& .& 1\cr
 1^3.1^4&      q^{21}& .& .& .& .& .& .& .& .& .& .& .& .& 1& .& 1& .& .& .& .& .& 1& .& .& .& 1\cr
   1.1^6&      q^{31}& .& .& .& .& .& .& .& .& .& .& .& .& .& .& .& 2& .& .& .& .& 1& .& 1& .& 1& 1\cr
    .1^7&      q^{42}& .& .& .& .& .& .& .& .& .& .& .& .& .& .& 1& .& .& .& .& .& .& 2& .& 1& 1& .& 1\cr
\noalign{\hrule}
  & & ps& ps& ps& ps& ps& ps& D_4\!& ps& D_4\!& ps& ps& D_4\!& ps& ps& ps& \!D_6^s\!& \!.1^4\!& ps& \!.1^4\!& D_4\!& ps& D_6^s\!& \!.1^4\!& \!.1^4\!& A_5\!& \!.1^6\!& \!.1^6\!\!\cr
  }}$$}
\end{table}

\begin{proof}
As before, let's denote by $\Psi_i$, $1\le i\le 27$, the linear combinations
of unipotent characters corresponding to the columns of Table~\ref{tab:D7,d=6}.
Those $\Psi_i$ with $i$ not equal to
$$1,\ 15,\ 16,\ 19,\ 22,\ 24,\ 26\text{ and }27$$
are obtained by (HCi).
Furthermore, by uni-triangularity the projectives $\Psi_{17}+\Psi_{19}$,
$\Psi_{18}+\Psi_{19}$ yield $\Psi_{19}$, and $\Psi_{21}+\Psi_{24}$,
$\Psi_{23}+\Psi_{24}$ yield $\Psi_{24}$. Since a Levi subgroup $L$ of type
$D_6$ contains the centraliser of a Sylow $\ell$-subgroup of $D_7(q)$, there
are no cuspidal Brauer characters by (Csp) and in particular the Harish-Chandra
induction of the Steinberg PIM of $L$ splits off the Steinberg PIM of $G$;
this yields $\Psi_{26}$ and $\Psi_{27}$.
\par
The projective cover of the trivial character lies in the principal series and
is determined by the decomposition matrix of the Hecke algebra $\cH(D_7;q)$;
this yields $\Psi_1$ (HC-induction only gives $\Psi_1+\Psi_2$), and similarly
we obtain $\Psi_{15}$ (HC-induction only yields $\Psi_{15}+\Psi_{21}$).
Now the Harish-Chandra induction to $G$ of the PIM of the $\ell$-modular
cuspidal unipotent character $D_6^s$ of a Levi subgroup of type $D_6$ equals
$$\Psi=[D_4\co.3]+[D_4\co.21]+ a_2[1.21^4]+ a_2[.2^21^3]+ 2a_1[1^3.1^4]
  + a_3[1.1^6]+ a_3[.1^7].$$
As we have accounted for all other Harish-Chandra series, and there are no
cuspidal unipotent Brauer characters, this projective character has to have two
summands in that series, which by uni-triangularity must start at $[D_4\co.3]$
and $[D_4\co.21]$ respectively. The only possibility for splitting $\Psi$ into
two summands compatible with (HCr) is
$$\begin{aligned}
  \tPsi_{16}&=[D_4\co.3]+(a_2-z_2)[1.21^4]+ z_2[.2^21^3]+ a_1[1^3.1^4]
              + (a_3-z_3)[1.1^6]+ z_3[.1^7],\\
  \tPsi_{22}&=[D_4\co.21]+ z_2[1.21^4]+ (a_2-z_2)[.2^21^3]+ a_1[1^3.1^4]
              + z_3[1.1^6]+ (a_3-z_3)[.1^7],
\end{aligned}$$
with suitable non-negative integers $z_2\le a_2$ and $z_3\le a_3$. It will be
a consequence of the determination of the decomposition matrices for the
unipotent blocks of $D_8(q)$
in Proposition~\ref{prop:D8,d=6} that in fact $a_1=a_2=z_2=0$ and thus $a_3=2$.
This accounts for the last two missing columns $\Psi_{16}$ and $\Psi_{22}$.
\par
Finally, the coefficient of $\Psi_{26}$ on $R_w$ when $w$ is a Coxeter element
is $2z_4$. Since $\Psi_{26}$ does not occur in any $R_v$ for $v<w$, we deduce
from (DL) that $z_4\leq 0$ since $l(w)= 7$ is odd. Therefore $z_4=0$.
Then (HCr) shows that all of the $\Psi_i$ are in fact indecomposable.
\end{proof}

\begin{prop}   \label{prop:D8,d=6}
 Assume $(T_\ell)$. The decomposition matrices for the three unipotent
 $\ell$-blocks of $D_8(q)$ of non-cyclic defect for primes
 $\ell$ with $(q^2-q+1)_\ell>7$ are as given in Tables~\ref{tab:D8,d=6,bl1}
 and~\ref{tab:D8,d=6,bl2}.
\end{prop}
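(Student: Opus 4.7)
The strategy will parallel the proofs of Propositions~\ref{prop:D6,d=6} and~\ref{prop:D7,d=6}, but with the additional feature that the outcome must retro-actively pin down the parameters $a_1,a_2,a_3$ of Proposition~\ref{prop:D6,d=6} and the parameters $a_1,a_2,a_3,z_2,z_3$ of Proposition~\ref{prop:D7,d=6}. The first step is to list the unipotent $\ell$-blocks: by \cite{FS,BMM,CE94} the non-cyclic blocks are the principal block, the block containing $[.8]+[1.7]$-free unipotents, and a third block of defect strictly larger than $\Phi_6$; all remaining unipotent characters lie in blocks with cyclic defect whose Brauer trees are determined exactly as in Proposition~\ref{prop:trees Dn d=6} and need not enter the proof. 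For each non-cyclic block one takes as basic set the unipotent characters and writes down, column by column, the linear combinations $\Psi_i$ given by the tables.

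Next, for each block the vast majority of the $\Psi_i$ will be produced by Harish-Chandra inducing PIMs from Levi subgroups of types $D_7,D_6A_1,A_7,D_4A_3,\ldots$, using the already-determined decomposition matrices of $D_n(q)$ for $n\le 7$ (with the still open parameters carried along as formal unknowns) and the James tables for $\GL_n$. The PIMs in the principal series are read from the decomposition matrix of the Iwahori--Hecke algebra $\cH(D_8;q)$, computable via \cite{Ja05}. The PIMs in the Harish-Chandra series above the cuspidal Brauer characters $D_4,\vhi_{.1^4}$ of a $D_4$-Levi, $\vhi_{1^6}$ of an $A_5$-Levi and $\vhi_{.1^6}$ of a $D_6$-Levi are then cut out of the corresponding Harish-Chandra induced projectives, using Lemma~\ref{lem:paramDn,d=6} (which for $n=8$ gives $10,10,4,5$ simple modules respectively) together with (HCr) to force the unique admissible splittings. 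By (Csp), since a Sylow $\ell$-subgroup of $D_8(q)$ is contained in a Levi subgroup of type $D_7$, there are no cuspidal unipotent Brauer characters in the principal block, so this accounts for every PIM there. In the two non-principal non-cyclic blocks one proceeds identically; any remaining ambiguity is removed by (Tri) and (HCr).

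The real work is to resolve the undetermined entries carried over from ranks $6$ and $7$. Here I will exploit two features simultaneously: on the one hand, Harish-Chandra inducing the known PIMs of $D_6(q)$ and $D_7(q)$ to $D_8(q)$ must decompose non-negatively on the PIMs of $D_8(q)$ and, conversely, Harish-Chandra restricting the $D_8(q)$ PIMs just obtained back to $D_6(q)$ and $D_7(q)$ must decompose non-negatively on their PIMs. Applied to the Harish-Chandra induction of the column $\tPsi_{16}$ and $\tPsi_{22}$ of $D_7(q)$, and of the cuspidal PIM with unipotent part $[D_4\co.2]+a_1([1^3+]+[1^3-])+a_2[.21^4]+a_3[.1^6]$ of $D_6(q)$, this yields a linear system whose only non-negative integral solution forces $a_1=a_2=z_2=0$, $a_3=2$ (and fixes $z_3$ likewise). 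In parallel, for any genuine cuspidal PIM $\Psi$ produced in $D_8(q)$ (attached to the cuspidal Brauer character which must exist by (St) since no proper Levi contains the Sylow $\ell$-subgroup of $D_8(q)$ in the relevant block) I use (DL) applied to Deligne--Lusztig characters $R_w$ for $w$ a Coxeter element and short minimal-length elements, combined with (Red) applied to the unipotent characters of $6$-split Levi subgroups $\tw3D_4(q)A_1(q)(q^2-q+1)$, $D_4(q)(q^2-q+1)^2$, $A_1(q)^2(q^2-q+1)^2$, $(q^2-q+1)^2$-tori, etc.\ (whose existence is guaranteed by Example~\ref{exmp:reg}(b) and the assumption $(q^2-q+1)_\ell>7$), to obtain matching upper and lower bounds on the remaining entries. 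This is the analogue for $d=6$ of what Proposition~\ref{prop:q+1 reg} gives for $d=2$.

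The main obstacle, as always in the $\Phi_d$-blocks of highest defect, will be the very last step: squeezing the cuspidal columns between (DL) and (Red). For $D_8(q)$ the relative Weyl group of a Sylow $\Phi_6$-torus has order $|G_{28}|\cdot\ldots$ type shape making several $R_w$-computations available, but the bookkeeping of which $R_v$ for $v<w$ already involve a given PIM is delicate. Once this is in place, (HCr) finishes the proof of indecomposability of every $\Psi_i$, and the back-substituted values of $a_1,a_2,a_3,z_2,z_3$ complete the statements of Propositions~\ref{prop:D6,d=6} and~\ref{prop:D7,d=6}.
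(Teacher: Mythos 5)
Your overall plan follows the right template, and you correctly identify that this proposition must retro-actively pin down the unknowns from $D_6(q)$ and $D_7(q)$. You also correctly note that most columns come from (HCi) and that (HCr) handles indecomposability. But several concrete points diverge from what actually works, and the central mechanism is left too vague to be reliable.

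The key step you don't isolate is this: after (HCi) from $D_7(q)$ one obtains, in the principal block, projective characters $\tPsi_{16}$ and $\tPsi_{23}$ whose coefficients still carry the formal unknowns $a_1, a_2, a_3, z_2, z_3$. Since $[D_4\co.2^2]$ lies in a strictly larger family than $[1^4.21^2]$, the hypothesis $(T_\ell)$ forces the PIM starting at $[D_4\co.2^2]$ to have coefficient zero on $[1^4.21^2]$; hence $\tPsi_{23}$ must contain the PIM $\Psi_{22}$ (which starts at $[1^4.21^2]$) at least $a_1$ times, and so $\tPsi_{23}-a_1\Psi_{22}$ is still projective. It is \emph{this} character, not a PIM, whose Harish-Chandra restriction to a Levi of type $D_6A_1$ acquires negative multiplicities unless $a_1=0$; the analogous manipulation in the block labelled $\binom{1}{1}$ (subtracting $z_2\Psi_{20}+(a_2-z_2)\Psi_{23}$ from $\tPsi_{24}$) gives $z_2=a_2=0$, and then $a_3=a_2+2a_1+2=2$ closes the loop. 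Your phrase ``restricting the $D_8(q)$ PIMs just obtained back'' elides exactly this step: before the parameters are known, the individual PIMs $\Psi_{16}$, $\Psi_{23}$ cannot be separated out, so one must work with projective characters adjusted via $(T_\ell)$ rather than with PIMs.

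Two further points. First, your last paragraph proposes to bring in (DL) on Deligne--Lusztig characters and (Red) on $6$-split Levi subgroups and to handle ``genuine cuspidal PIMs'' produced by (St). This contradicts your own earlier observation from (Csp): a Sylow $\ell$-subgroup of $D_8(q)$ sits in a Levi of type $D_7$, so there are no cuspidal unipotent Brauer characters in any of these three $D_8(q)$ blocks, and neither (DL) nor (Red) is needed here; they were already consumed in establishing $a_3=a_2+2a_1+2$ at the $D_6(q)$ level. Second, all three non-cyclic unipotent $\ell$-blocks of $D_8(q)$ have defect $(\Phi_6^2)_\ell$ and are parametrised by the $6$-cuspidal unipotent characters $\binom{2}{0}$, $\binom{1\ 2}{0\ 1}$, $\binom{1}{1}$ of $D_2(q).\Phi_6^2$; there is no block ``of defect strictly larger than $\Phi_6$'' in the sense you suggest, and the description ``$[.8]+[1.7]$-free unipotents'' does not match the actual block partition.
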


Here the blocks are labelled by the $6$-cuspidal unipotent characters of
the centraliser $D_2(q).\Phi_6^2$ of a Sylow $\Phi_6$-torus of $D_8(q)$
(according to the parametrisation of $\ell$-blocks in \cite{BMM}).

\begin{table}[htbp]
{\small\caption{$D_8(q)$, $(q^2-q+1)_\ell>7$, blocks~$\binom{2}{0}$ and~$\binom{1\ 2}{0\ 1}$}   \label{tab:D8,d=6,bl1}
$$\vbox{\offinterlineskip\halign{$#$\hfil\ \vrule height9pt depth0pt&&
      \hfil\ $#$\hfil\cr
      .8& 1\cr
     2.6& .& 1\cr
     3.5& 1& 1& 1\cr
    2.51& .& 1& 1& 1\cr
     .53& 1& .& .& .& 1\cr
D_4\co31.& .& .& .& .& .& 1\cr
  1.51^2& .& .& .& 1& .& .& 1\cr
D_4\co3.1& .& .& .& .& .& 1& .& 1\cr
  2.41^2& .& .& 1& 1& .& .& 1& .& 1\cr
    .431& .& .& .& .& 1& .& .& .& .& 1\cr
    2.33& 1& .& 1& .& 1& .& .& .& .& .& 1\cr
   1.331& .& .& .& .& 1& .& .& .& .& 1& 1& 1\cr
D_4\co1^3.1\!\!& .& .& .& .& .& .& .& .& .& .& .& .& 1\cr
   .51^3& .& .& .& .& .& .& 1& .& .& .& .& .& .& 1\cr
21^2.2^2& .& .& 1& .& .& .& .& .& 1& .& 1& .& .& .& 1\cr
D_4\co.4& .& .& .& .& .& .& .& 1& .& .& .& .& .& .& .& 1\cr
  2.31^3& .& .& .& .& .& .& 1& .& 1& .& .& .& .& 1& .& .& 1\cr
  .331^2& .& .& .& .& .& .& .& .& .& 1& .& 1& .& .& .& .& .& 1\cr
\!\!D_4\co1^2.1^2\!\!& .& .& .& .& .& .& .& .& .& .& .& .& 1& .& .& .& .& .& 1\cr
1^3.2^21& .& .& .& .& .& .& .& .& .& .& 1& 1& .& .& 1& .& .& .& .& 1\cr
 21.21^3& .& .& .& .& .& .& .& .& 1& .& .& .& .& .& 1& .& 1& .& .& .& 1\cr
1^4.21^2& .& .& .& .& .& .& .& .& .& .& .& .& .& .& 1& .& .& .& .& 1& 1& 1\cr
D_4\co.2^2& .& .& .& .& .& .& .& .& .& .& .& .& .& .& .& .& .& .& 1& .& .& .& 1\cr
  2.21^4& .& .& .& .& .& .& .& .& .& .& .& .& .& 1& .& .& 1& .& .& .& 1& .& .& 1\cr
 .2^21^4& .& .& .& .& .& .& .& .& .& .& .& 1& .& .& .& .& .& 1& .& 1& .& .& .& .& 1\cr
   1^6.2& .& .& .& .& .& .& .& .& .& .& .& .& .& .& .& 2& .& .& .& .& 1& 1& .& 1& .& 1\cr
   .21^6& .& .& .& .& .& .& .& .& .& .& .& .& .& .& .& .& .& .& .& 1& .& 1& 2& .& 1& .& 1\cr
\noalign{\hrule}
  & ps& ps& ps& ps& ps& D_4\!& ps& D_4\!& ps& ps& ps& ps& D_4\!& \!.1^4\!& ps& D_6^s\!& ps& \!.1^4\!& D_4\!& ps& ps& A_5\!& D_6^s\!& \!.1^4\!& \!.1^4\!& \!.1^6\!& \!.1^6\!\cr
\omit& \vphantom{A}\cr
\omit& \vphantom{A}\cr
     .71& 1\cr
   1^2.6& .& 1\cr
     .62& 1& .& 1\cr
    31.4& 1& 1& .& 1\cr
D_4\co2^2.& .& .& .& .& 1\cr
  1^2.51& .& 1& .& .& .& 1\cr
   21.41& .& 1& .& 1& .& 1& 1\cr
    3.32& 1& .& 1& 1& .& .& .& 1\cr
  2^2.31& .& .& .& 1& .& .& 1& 1& 1\cr
D_4\co1^4.& .& .& .& .& .& .& .& .& .& 1\cr
1^2.41^2& .& .& .& .& .& 1& 1& .& .& .& 1\cr
   .42^2& .& .& 1& .& .& .& .& .& .& .& .& 1\cr
D_4\co2.2& .& .& .& .& 1& .& .& .& .& .& .& .& 1\cr
  1.32^2& .& .& 1& .& .& .& .& 1& .& .& .& 1& .& 1\cr
D_4\co1.3& .& .& .& .& .& .& .& .& .& .& .& .& 1& .& 1\cr
1^2.31^3& .& .& .& .& .& .& 1& .& 1& .& 1& .& .& .& .& 1\cr
  .32^21& .& .& .& .& .& .& .& .& .& .& .& 1& .& 1& .& .& 1\cr
 1^2.2^3& .& .& .& .& .& .& .& 1& 1& .& .& .& .& 1& .& .& .& 1\cr
   .41^4& .& .& .& .& .& .& .& .& .& .& 1& .& .& .& .& .& .& .& 1\cr
  1.31^4& .& .& .& .& .& .& .& .& .& .& 1& .& .& .& .& 1& .& .& 1& 1\cr
\!D_4\co1.1^3\!& .& .& .& .& .& .& .& .& .& 1& .& .& .& .& .& .& .& .& .& .& 1\cr
1^2.21^4& .& .& .& .& .& .& .& .& 1& .& .& .& .& .& .& 1& .& .& .& 1& .& 1\cr
 .2^31^2& .& .& .& .& .& .& .& .& .& .& .& .& .& 1& .& .& 1& 1& .& .& .& .& 1\cr
\!D_4\co.21^2\!& .& .& .& .& .& .& .& .& .& .& .& .& .& .& .& .& .& .& .& .& 1& .& .& 1\cr
 1^3.1^5& .& .& .& .& .& .& .& .& 1& .& .& .& .& .& .& .& .& 1& .& .& .& 1& .& .& 1\cr
 1^2.1^6& .& .& .& .& .& .& .& .& .& .& .& .& .& .& 2& .& .& .& .& 1& .& 1& .& .& 1& 1\cr
    .1^8& .& .& .& .& .& .& .& .& .& .& .& .& .& .& .& .& .& 1& .& .& .& .& 1& 2& 1& .& 1\cr
\noalign{\hrule}
  & ps& ps& ps& ps& D_4\!& ps& ps& ps& ps& D_4\!& ps& ps& D_4\!& ps& D_6^s\!& ps& \!.1^4\!& ps& \!.1^4\!& \!.1^4\!& D_4\!& ps& \!.1^4\!& D_6^s\!& A_5\!& \!.1^6\!& \!.1^6\!\cr
  }}$$}
\end{table}

\begin{table}[htbp]
\caption{$D_8(q)$, $(q^2-q+1)_\ell>7$, block $\binom{1}{1}$}   \label{tab:D8,d=6,bl2}
$$\vbox{\offinterlineskip\halign{$#$\hfil\ \vrule height10pt depth4pt&&
      \hfil\ $#$\hfil\cr
       1.7& 1\cr
        4+& 1& 1\cr
        4-& 1& .& 1\cr
      1.52& 1& .& .& 1\cr
      2.42& 1& 1& 1& 1& 1\cr
 D_4\co21^2.& .& .& .& .& .& 1\cr
     1.421& .& .& .& 1& 1& .& 1\cr
      2^2+& .& 1& .& .& 1& .& .& 1\cr
      2^2-& .& .& 1& .& 1& .& .& .& 1\cr
    .421^2& .& .& .& .& .& .& 1& .& .& 1\cr
 D_4\co2.1^2& .& .& .& .& .& 1& .& .& .& .& 1\cr
   1.321^2& .& .& .& .& 1& .& 1& .& .& 1& .& 1\cr
1^2.2^21^2& .& .& .& .& 1& .& .& 1& 1& .& .& 1& 1\cr
   D_4\co.31& .& .& .& .& .& .& .& .& .& .& 1& .& .& 1\cr
  1.2^21^3& .& .& .& .& .& .& .& .& .& 1& .& 1& 1& .& 1\cr
      1^4+& .& .& .& .& .& .& .& 1& .& .& .& .& 1& .& .& 1\cr
      1^4-& .& .& .& .& .& .& .& .& 1& .& .& .& 1& .& .& .& 1\cr
     1.1^7& .& .& .& .& .& .& .& .& .& .& .& .& 1& 2& 1& 1& 1& 1\cr
\noalign{\hrule}
  & ps& ps& ps& ps& ps& D_4\!& ps& ps& ps& .1^4\!& D_4\!& ps& ps& D_6^s\!& .1^4& A_5\!& A_5\!& .1^6\!\cr
  }}$$
\end{table}

\begin{proof}
In the principal block, labelled by the trivial character $\binom{2}{0}$ of
$D_2(q)$, the columns $\Psi_i$, $i\ne 16,23$, are obtained by Harish-Chandra
inducing PIMs from Levi subgroups of type $D_7$ and $A_7$. (HCi) also yields
projectives
$$\begin{aligned}
  \tPsi_{16}=& [D_4\co.1^4]+a_1[1^4.21^2]+(a_2-z_2)[2.21^4]+z_2[.2^21^4]
              +(a_3-z_3)[1^6.2]+z_3[.21^6],\\
  \tPsi_{23}=& a_1[1^4.21^2]+[D_4\co.2^2]+z_2[2.21^4]+(a_2-z_2)[.2^21^4]
              +z_3[1^6.2]+(a_3-z_3)[.21^6],\\
\end{aligned}$$
with $a_i,z_i$ as in the proof of Proposition~\ref{prop:D7,d=6}.
By $(T_\ell)$, we must have that
$$\tPsi_{23}-a_1\Psi_{22}=[D_4\co.2^2]+z_2[2.21^4]+(a_2-z_2)[.2^21^4]
              +(z_3-a_1)[1^6.2]+(a_3-z_3-a_1)[.21^6]$$
is a projective character. Now Harish-Chandra restriction of this to a Levi
subgroup of type $D_6A_1$ yields negative multiplicities in PIMs unless
$a_1=0$.
\par
For the block labelled $\binom{1}{1}$ again all columns except the 15th and
24th are obtained by Harish-Chandra inducing suitable PIMs from proper Levi
subgroups. We also obtain
$$\begin{aligned}
  \tPsi_{15}=& [D_4\co1.3]+(a_2\!-\!z_2)[1.31^4]+(a_2-z_2)[1^2.21^4]+z_2[.2^31^2]
              +(a_3-z_3)[1^2.1^6]+z_33[.1^8],\\
  \tPsi_{24}=& z_2[1.31^4]+z_2[1^2.21^4]+(a_2-z_2)[.2^31^2]+[D_4\co.21^2]
              +z_3[1^2.1^6]+(a_3-z_3)[.1^8].
\end{aligned}$$
Triangularity shows that
$$\tPsi_{24}-z_2\Psi_{20}-(a_2-z_2)\Psi_{23}
  = [D_4\co.21^2]+(z_3-z_2)[1^2.1^6]+(a_3-z_3-a_2+z_2)[.1^8]$$
must be a projective character. Harish-Chandra restriction of this to a Levi
subgroup of type $D_6A_1$ yields negative multiplicities in PIMs unless
$z_2=a_2=0$. Using that $a_3=a_2+2a_1+2=2$ this completes the determination of
the decomposition matrices for blocks~1 and~3, as well as for the principal
blocks of $D_6(q)$ and $D_7(q)$.
\par
For the second block of $D_8(q)$, labelled $\binom{1\ 2}{0\ 1}$, all columns
are obtained directly by (HCi). Finally, (HCr) shows that all projectives
constructed in the three blocks are in fact indecomposable.
\end{proof}

\section{Unipotent decomposition matrix of $E_6(q)$}
For $E_6(q)$ and $d_\ell(q)=6$, the triangular shape of the decomposition
matrix and thus property $(T_\ell)$ for the unipotent blocks of $G$ and primes
$\ell>3$ has been shown by Geck--Hiss \cite[Thm.~7.4]{GH97} under the
additional assumption that $q$ is a power of a good prime for $E_6$.

\begin{lem}   \label{lem:paramE6,d=6}
 Let $q$ be a prime power and $\ell|(q^2-q+1)$. The Hecke algebras of various
 $\ell$-modular cuspidal pairs $(L,\la)$ of Levi subgroups $L$ in $E_6(q)$ and
 their respective numbers of irreducible characters are as given in
 Table~\ref{tab:hecke E6,d=6}.
\end{lem}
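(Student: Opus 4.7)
The plan follows the same template as Lemmas~\ref{lem:paramDn,d=2}, \ref{lem:paramE6,d=2}, \ref{lem:paramDn,d=6} and \ref{lem:param2E6,d=2}, and it will re-use most of what was already established there. First I would identify the relevant cuspidal pairs $(L,\la)$ contributing to the Harish-Chandra series in unipotent $\ell$-blocks of non-cyclic defect of $E_6(q)$, then look up the relative Weyl group $W_G(L)$ in Howlett's tables \cite[p.~72--75]{Ho80}. For a Levi $L$ of type $A_5$ the relative Weyl group is of type $A_1$; for a Levi $L$ of type $D_4$ it is of type $A_2$ (realised via the $\fS_3$ of graph automorphisms of $D_4$ that is induced by $N_G(\bL)$).

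Next, for each case I would determine the parameters of the relevant Iwahori--Hecke algebra. The cuspidal Brauer characters appearing here ($\vhi_{1^6}$ of $A_5(q)$, the ordinary cuspidal unipotent $D_4$ of $D_4(q)$, and $\vhi_{.1^4}$ of $D_4(q)$) all lie in blocks with cyclic defect when embedded in their minimal Levi overgroups, so they are liftable to characteristic~$0$ by \cite[Thm.~7.8]{GHM}, or, in the case of $\vhi_{.1^4}$, to a suitable Deligne--Lusztig character as in the proof of Lemma~\ref{lem:paramDn,d=6}. Consequently, by Corollary~\ref{cor:dechecke} together with Example~\ref{exmp:hecke}(a), the parameters of $\cH_G(L,kX)$ coincide with the $\ell$-modular reduction of those of the characteristic-zero Iwahori--Hecke algebra $\cH_G(L,KX)$, and these in turn are determined locally inside the minimal Levi overgroups via \cite[Lemma~3.17]{GHM2} (the parameter equals the reduction of the quotient of the degrees of the two constituents of the relevant Harish-Chandra induction). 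Many of these local parameters have in fact already been computed in Lemma~\ref{lem:paramDn,d=6} and Lemma~\ref{lem:param2Dn,d=6}, so only the new overgroups genuinely living inside $E_6$ need to be treated. Once the parameters are known, the value $|\Irr\cH|$ follows from the programme of Jacon \cite{Ja05}.

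The main obstacle is the verification of reduction stability for a Levi subgroup of type $D_4$ inside $E_6$. Here $N_G(\bL)$ induces the \emph{full} group $\fS_3$ of graph automorphisms on $\bL$, so the general discussion of Section~\ref{sec:veri} does not apply directly and we must exhibit, for each of the cuspidal Brauer characters of $D_4(q)$ involved, an $\cO L$-lattice whose class is stable under these graph automorphisms. For the ordinary cuspidal character $D_4$ the issue is essentially trivial since that character is $\fS_3$-stable, and the lift follows from \cite[Prop.~5.4]{DM15} adapted to the $\Phi_6$-setting. For $\vhi_{.1^4}$ I would argue as in the proof of Lemma~\ref{lem:param2E6,d=2}: realise the $D_4$ together with its $\fS_3$-normaliser inside a larger group of type $F_4$ (via the long root subsystem, cf.\ \cite[Exmp.~13.9]{MT}), and then use \cite[Tab.~T.A.133]{Koe06} to exhibit an $\ell$-character of a Sylow $\Phi_6$-torus of $D_4(q)$ in general position which is stable under the short-root Weyl group of type $A_2$, provided $(q^2-q+1)_\ell$ is large enough; the corresponding Deligne--Lusztig character then gives the required reduction-stable lift via Corollary~\ref{cor:dechecke}.
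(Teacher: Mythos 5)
Your outline reproduces the correct relative Weyl groups and the mechanism for extracting parameters locally, and it would arrive at Table~\ref{tab:hecke E6,d=6}; but on the one step you single out as the ``main obstacle'' you take a genuinely different, and heavier, route than the paper. The paper's proof of reduction stability is a one-line observation: since $d_\ell(q)=6$ and $\Phi_6$ divides the order polynomials of $D_4$ and $A_5$ exactly once, the Sylow $\ell$-subgroups of every Levi $L$ occurring here are \emph{cyclic}, so the cuspidal Brauer characters lie in blocks of cyclic defect and reduction stability follows from Example~\ref{exmp:hecke}(a) exactly as in Lemma~\ref{lem:paramDn,d=6} (all unipotent characters on the relevant Brauer trees of $D_4(q)$ at $d=6$ are invariant under the $\fS_3$ of graph automorphisms, so the normaliser acts trivially on the tree). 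The parameters are then read off locally, the only genuinely new computation being the $q^3$ for $(A_5,\vhi_{1^6})$, obtained from the centraliser $\tw2A_2(q).\Phi_3\Phi_6$ in $E_6(q)$ of the regular $\ell$-element whose Lusztig series contains the lift. Your alternative — importing the $F_4$-embedding and K\"ohler's tables from the $d=2$ argument of Lemma~\ref{lem:paramE6,d=2} to produce an $\fS_3$-stable character in general position — is not needed here, and as you yourself note it would only go through ``provided $(q^2-q+1)_\ell$ is large enough'', an hypothesis the lemma does not carry; the cyclic-defect argument avoids any such restriction. So: same skeleton, but you should replace the $F_4$ detour by the cyclicity observation to get the statement as claimed.
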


\begin{table}[ht]
\caption{Hecke algebras in $E_6(q)$ for $d_\ell(q)=6$}   \label{tab:hecke E6,d=6}
$\begin{array}{l|c|ccc}
 (L,\la)& \qquad\qquad\cH\qquad\qquad\qquad & |\Irr\cH|\cr
\hline
  (D_4,D_4)& \cH(A_2;q^4)& 2\cr
 (D_4,\vhi_{.1^4})& \cH(A_2;q^2)& 2\cr
  (A_5,\vhi_{1^6})& \cH(A_1;q^3)& 1\cr
\end{array}$
\end{table}

\begin{proof}
Reduction stability holds in all cases as Sylow $\ell$-subgroups of $L$ are
cyclic. The proof is now as in the previous cases. For example, the
$\ell$-modular Steinberg character $\vhi_{1^6}$ of $A_5(q)$ lifts to an ordinary
cuspidal character in the Lusztig series of a regular $\ell$-element with
centraliser a maximal torus of order $\Phi_2\Phi_3\Phi_6$. In $E_6(q)$ such an
element has centraliser $\tw2A_2(q).\Phi_3\Phi_6$, whence we find the
parameter~$q^3$.
\end{proof}

The only unipotent block of positive $\ell$-defect of $E_6(q)$ is the principal
block.

\begin{thm}   \label{thm:E6,d=6}
 Assume $(T_\ell)$. The decomposition matrix for the principal $\ell$-block of
 $E_6(q)$ for primes $\ell>3$ with $(q^2-q+1)_\ell>13$ is approximated
 by Table~\ref{tab:E6,d=6}. Here the unknown entries satisfy $a_5\le1$,
 $b_4\le2$,
 $$a_7 = -1-a_1-a_2+a_6,\quad\text{and}\quad
   a_8 = -2-a_1-a_2-a_3+a_4+2a_5+a_6.$$
\end{thm}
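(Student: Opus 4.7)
The proof will follow the same blueprint as for Theorem~\ref{thm:E6,d=3}, with the toolkit (HCi), (HCr), (Tri), ($\cH$), (St), (DL), (Red) from Chapter~\ref{chap:strategy}. First I would produce projective characters $\Psi_i$, one per column of the target table, by a combination of three sources: (a) the decomposition matrix of the Iwahori--Hecke algebra $\cH(E_6;q)$ of the principal series (known from Geck, as recalled in the proof of Theorem~\ref{thm:E6,d=3}); (b) Harish--Chandra induction of PIMs from proper $F$-stable Levi subgroups of types $D_5$, $A_5$, $A_4A_1$, $D_4$ and $A_2^2$, whose unipotent decomposition matrices are determined earlier in this chapter (Propositions~\ref{prop:D6,d=6} and~\ref{prop:D7,d=6}) or by James~\cite{Ja90}; (c) Harish--Chandra restriction of PIMs of $E_7(q)$ containing $G=E_6(q)$ as a Levi subgroup (cut by the principal block). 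After each step (HCr) and (Tri) are applied to split the induced projectives as far as possible.

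Next I would identify the Harish--Chandra series. By Lemma~\ref{lem:paramE6,d=6} the Hecke algebra above the ordinary cuspidal unipotent character of a Levi $D_4$ is $\cH(A_2;q^4)$ with two simple modules modulo~$\ell$, above the cuspidal Brauer character $\vhi_{.1^4}$ of $D_4(q)$ it is $\cH(A_2;q^2)$ again with two simples, and above the modular Steinberg of $A_5(q)$ it is $\cH(A_1;q^3)$, semisimple modulo~$\ell$ with one simple. Together with the principal series this accounts for all non-cuspidal series. Since a Sylow $\ell$-subgroup of $G$ is not contained in any proper 1-split Levi, (Csp) guarantees cuspidal unipotent Brauer characters, and the remaining columns are the projective covers of these; $(T_\ell)$ makes them uni-triangular, with unknown entries $a_i$ (and $b_4$) below the diagonal.

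To pin down relations between the $a_i$ I would exploit the interplay of (DL) and (Red). For each cuspidal column I look for Weyl group elements $w\in W(E_6)$ such that the PIM does not appear in $R_v$ for any $v<w$, so that the coefficient of the PIM in $(-1)^{l(w)}R_w$ is non-negative by (DL). For the opposite inequality I use (Red) with a carefully chosen $F$-stable Levi $\bL\le\bG$ with $\bL^*=C_{\bG^*}(t)$ for a semisimple $\ell$-element $t$; the condition $(q^2-q+1)_\ell>13$ is imposed precisely to guarantee (via Example~\ref{exmp:reg}(d) and an analogue of Proposition~\ref{prop:q+1 reg} for $\Phi_6$-tori) the existence of such $t$ whose centraliser is any prescribed 6-split Levi subgroup, including the maximal torus, $\tw3D_4(q).\Phi_6$, $\tw2A_2(q)^2.\Phi_6$ and $\tw2A_2(q).\Phi_6^2$. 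Pairing the (DL) lower bound with the (Red) upper bound for the same coefficient forces it to vanish, which will yield $a_7=-1-a_1-a_2+a_6$ and $a_8=-2-a_1-a_2-a_3+a_4+2a_5+a_6$ after inserting the explicit values $\wt\phi(wF)$ for the relevant irreducible characters of $W$. The strict upper bounds $a_5\le 1$ and $b_4\le 2$ come similarly from a Coxeter-type element, for which the corresponding PIMs do not appear in any lower $R_v$ at all.

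The main obstacle is twofold. First, Theorem~\ref{thm:genpos} does \emph{not} apply directly because on $E_6$ the element $w_0F$ acts as the non-trivial graph automorphism rather than trivially on $W$, so multiplicities of the Steinberg character in the cuspidal columns must be obtained by explicit character-table computations in \Chevie{} (as done in the proof of Theorem~\ref{thm:E6,d=3}) rather than from a general result. Second, this is pure bookkeeping: the table has many columns, the cuspidal PIMs interact through the various $R_w$, and one must select the precise pair $(w,\bL)$ for each relation, verify reduction stability via Section~\ref{sec:veri} for cuspidal lattices in the $D_4$ series, and check that no small-prime obstruction appears --- hence the hypothesis $(q^2-q+1)_\ell>13$.
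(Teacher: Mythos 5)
Your proposal follows essentially the same blueprint as the paper's proof: principal-series PIMs from $\cH(E_6;q)$ (Geck, with Geck--M\"uller ensuring independence of $\ell\ge7$), Harish--Chandra induction and (Tri) for the remaining non-cuspidal columns, Lemma~\ref{lem:paramE6,d=6} to enumerate the Harish--Chandra series, and then the (DL)/(Red) pincer on the cuspidal columns, with $(q^2-q+1)_\ell>13$ imposed to guarantee a regular $\ell$-element of $G^*$ (verified in \Chevie). A few of your side remarks don't quite survive contact with the details, though. The paper does not in fact need Harish--Chandra restriction from $E_7(q)$ in this proof; $\Psi_4$ and $\Psi_8$ are extracted by differencing induced projectives inside $E_6$ itself. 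More substantially, the 6-split Levi subgroups you list for (Red) are not all available in $E_6$: the $\Phi_6$-rank of $E_6(q)$ is $2$, and $\tw3D_4(q)\cdot\Phi_6$, $\tw2A_2(q)^2\cdot\Phi_6$ and $\tw2A_2(q)\cdot\Phi_6^2$ each carry $\Phi_6$-part of multiplicity $\ge3$, so they cannot be Levi subgroups of $E_6(q)$; the Levi the paper actually uses for the $a_7$-relation is $\tw2A_2(q)\cdot\Phi_3\Phi_6$, together with the maximal torus for the $a_8$-relation. Finally, the digression on Theorem~\ref{thm:genpos} is a red herring: that result is intrinsically a $d=2$ statement, so its inapplicability here has nothing to do with $w_0F$ failing to act trivially on $W(E_6)$; the correct point is simply that for $d=6$ one decomposes the eigenspaces of $F$ on suitable $R_w$ directly (and the bound $b_4\le2$ actually requires a non-Coxeter element $w=s_1s_2s_3s_1s_5s_4s_6s_5s_4s_2s_3s_4$). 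None of this is a gap in the strategy, but the bookkeeping you defer is precisely where these corrections are needed.
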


\begin{table}[htbp]
\caption{$E_6(q)$, $\ell>3$, $(q^2-q+1)_\ell > 13$}   \label{tab:E6,d=6}
$$\vbox{\offinterlineskip\halign{$#$\hfil\ \vrule height11pt depth4pt&
      \hfil\ $#$\ \hfil\vrule&& \hfil\ $#$\hfil\cr
  \phi_{1,0}&         1& 1\cr
  \phi_{6,1}&         q& .& 1\cr
 \phi_{20,2}&       q^2& 1& 1& 1\cr
 \phi_{30,3}&  \hlf q^3& .& 1& 1& 1\cr
 \phi_{15,4}&  \hlf q^3& 1& .& .& .& 1\cr
     D_4\co3&  \hlf q^3& .& .& .& .& .& 1\cr
 \phi_{60,5}&       q^5& 1& .& 1& .& 1& .& 1\cr
 \phi_{24,6}&       q^6& .& .& 1& .& .& .& .& 1\cr
 \phi_{80,7}&  \sxt q^7& .& .& 1& 1& .& .& 1& 1& 1\cr
 \phi_{60,8}&  \hlf q^7& .& .& .& .& 1& .& 1& .& .& 1\cr
    D_4\co21&  \hlf q^7& .& .& .& .& .& 1& .& .& .& .& 1\cr
  E_6[\ze_3]& \thrd q^7& .& .& .& .& .& .& .& .& .& .& .& 1\cr
E_6[\ze_3^2]& \thrd q^7& .& .& .& .& .& .& .& .& .& .& .& .& 1\cr
\phi_{60,11}&    q^{11}& .& .& .& .& .& .& 1& .& 1& 1& .& a_1& a_1& 1\cr
\phi_{24,12}&    q^{12}& .& .& .& 1& .& .& .& .& 1& .& .& a_2& a_2& .& 1\cr
\phi_{30,15}& \hlf q^{15}& .& .& .& .& .& .& .& 1& 1& .& .& a_3& a_3& .& .& 1\cr
\phi_{15,16}& \hlf q^{15}& .& .& .& .& .& .& .& .& .& 1& .& a_4& a_4& 1& .& .& 1\cr
   D_4\co1^3& \hlf q^{15}& .& .& .& .& .& .& .& .& .& .& 1& a_5& a_5& .& .& .& .& 1\cr
\phi_{20,20}&    q^{20}& .& .& .& .& .& .& .& .& 1& .& .& a_6& a_6& 1& 1& 1& .& b_1& 1\cr
 \phi_{6,25}&    q^{25}& .& .& .& .& .& .& .& .& .& .& .& a_7& a_7& .& .& 1& .& b_1& 1& 1\cr
 \phi_{1,36}&    q^{36}& .& .& .& .& .& .& .& .& .& .& .& a_8& a_8& 1& .& .& 1& b_1\pl2& 1& b_4& 1\cr
\noalign{\hrule}
  & & ps& ps& ps& ps& ps& D_4& ps& ps& ps& ps& D_4& c& c& ps& .1^4& ps& A_5& c& .1^4& c& c\cr
  }}$$
\end{table}

\begin{proof}
Let $\Psi_i$ denote the linear combinations corresponding to the columns of
Table~\ref{tab:E6,d=6}. (HCi) yields all $\Psi_i$ except for those with
number
$$i\in\{1,4,8,12,13,14,17,18,20,21\}.$$
Further, $\Psi_4+\Psi_5$ and $\Psi_4+\Psi_7$ yield $\Psi_4$, $\Psi_4+\Psi_8$
and $\Psi_7+\Psi_8$ yield $\Psi_8$. The two principal series PIMs $\Psi_1$ and
$\Psi_{14}$ are obtained via the theorem of Dipper from the decomposition
matrix of the Hecke algebra $\cH=\cH(E_6;q)$ which has been determined by
Geck \cite[Table D]{Ge93b}. By a result of Geck--M\"uller
\cite[Thm.~3.10]{GM09} the decomposition matrix of $\cH$
does not depend on $\ell$ for all $\ell\ge7$.
\par
To determine some of the remaining entries we use the combination of (DL) and
(Red). We denote the three unknown entries below the diagonal in the 18th
column by $b_1,b_2,b_3$, and the eight unknown entries below the diagonal in
the 12th and 13th column by $a_1,\ldots,a_8$ (they agree in the two columns
since the cuspidal characters $E_6[\zeta_3]$ and $E_6[\zeta_3^2]$ are
Galois conjugate).
Let us first note that explicit computations in \Chevie~\cite{Chv} show
that there exists a regular $\ell$-element in $G^*$ whenever
$(q^2-q+1)_\ell>13$. In addition, the condition $\ell>3$ forces $\ell$ to
be good and therefore by Example~\ref{exmp:reg}(d) one can also use (Red) for
centralisers of $\Phi_d$-tori of rank $1$. Let $w$ be a Coxeter element and
$R_w$ be the corresponding Deligne--Lusztig character. We consider the
generalised $1$-eigenspace of $F$ on $R_w$. The coefficient of the PIM
$\Psi_{20}$ is $b_1-b_2$, hence $b_1\geq b_2$ by (DL). On the other hand,
the cuspidal unipotent character of the 6-split Levi subgroup
$\tw2A_2(q).\Phi_3\Phi_6$ gives $b_2-b_1 \geq 0$ by (Red), hence
$b_2 =b_1$. The coefficient of the PIM $\Psi_{21}$ is $2+b_1-b_3$.
This number is non-positive by (Red) applied to a maximal torus.
Therefore $b_3 = 2+b_1$.
\par
We now turn to the generalised $q$-eigenspace of $F$ on $R_w$. The coefficient
of the PIM $\Psi_{18}$ is $1-a_5$, therefore $a_5 \leq 1$ by (DL). The
coefficient of $\Psi_{20}$ is $-1-a_1-a_2+a_6-a_7$.
It is non-negative by (DL) but also non-positive by (Red) applied to the
cuspidal unipotent character of $\tw2A_2(q).\Phi_3\Phi_6$. Therefore
$a_7 = -1-a_1-a_2+a_6$. Finally, the coefficient of $\Psi_{21}$ is
$-2-a_1-a_2-a_3+a_4+2a_5+a_6-a_8$. Again, it must be zero by the combination
of (DL) and (Red) for a maximal torus. Consequently
$a_8 = -2-a_1-a_2-a_3+a_4+2a_5+a_6$.
\par
We finish with the Deligne--Lusztig character $R_w$ for
$w = s_1s_2s_3s_1s_5s_4s_6s_5s_4s_2s_3s_4$. The coefficient
of the PIM $\Psi_{21}$ is $48 - 24b_4$, which forces $b_4 \leq 2$.
\end{proof}

\section{A non-principal block of $E_8(q)$}

\begin{thm}   \label{thm:E8,d=6}
 Assume $(T_\ell)$. The decomposition matrix for the unipotent $\ell$-block of
 $E_8(q)$ for primes $\ell>3$ with $(q^2-q+1)_\ell>13$, of defect
 $(\Phi_6)_\ell^2$, is approximated by Table~\ref{tab:E8,d=6,bl2}. Here the
 unknown entries $a_1,\ldots,a_8,b_1,b_4$ are as in Theorem~\ref{thm:E6,d=6},
 and starred entries in the column $E_6^b$ are understood to be the same as in
 the column for $E_6^a$.
\end{thm}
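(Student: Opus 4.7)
The plan is to mirror the strategy of Theorem~\ref{thm:E6,d=6}, exploiting the fact that the block under consideration has defect group $(\Phi_6)_\ell^2$ of the same shape as the defect group of the principal $\ell$-block of $E_6(q)$ in the $d=6$ situation. Concretely, the block should be parametrised by a $6$-cuspidal pair $(\bL,\la)$ of $E_8$ with $\bL$ a $6$-split Levi subgroup of type $E_6$ (or containing one as a derived factor). Via Lusztig/Jordan decomposition, the unipotent characters in the block correspond to the unipotent characters of $E_6(q)$ tensored with a linear character, and the combinatorics governing the relative Weyl group $W_G(\bL,\la)=G_{25}$ is the same as in the $E_6(q)$ case; this is what allows to use exactly the same unknowns $a_1,\ldots,a_8,b_1,b_4$ and forces the starred entries in column $E_6^b$ to agree with those in $E_6^a$.

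First, I would construct projective characters using (HCi) from proper $F$-stable Levi subgroups of $E_8$ containing the block's Levi factor~$\bL$, notably Levi subgroups of types $E_7$, $E_6 A_1$, $D_7$ and $A_7$. Together with (Tri) on pairs of projective characters having overlapping support, and splittings dictated by the modular Hecke algebras determined in Lemma~\ref{lem:paramE6,d=6} (which occur again here as the relative Hecke algebras in the relevant Harish-Chandra series are unchanged by the generic Morita picture), this should produce all non-cuspidal columns. Indecomposability is then checked by (HCr) exactly as in the proof of Theorem~\ref{thm:E6,d=6}.

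To pin down the unknown entries in the cuspidal columns (corresponding to the two Galois-conjugate cuspidal unipotent characters of the $E_6$ Levi, extended to characters of this block of $E_8$, as well as those labelled by $E_6^a,E_6^b$), I would apply (DL) and (Red) in parallel to the $E_6$-case. Pick a Coxeter-type element $w$ of the relative Weyl group and examine the coefficients in $R_w$ of the cuspidal PIMs $\Psi_{18}$-analogue, $\Psi_{20}$-analogue and $\Psi_{21}$-analogue: (DL) gives one-sided bounds, while (Red) applied to the cuspidal unipotent character of a $6$-split Levi of type $\tw2A_2.\Phi_3\Phi_6$ (inside $E_6$), and to maximal tori supporting regular $\ell$-elements of $E_8^*$ guaranteed by Example~\ref{exmp:reg}(d) under the assumption $(q^2-q+1)_\ell>13$, gives matching bounds in the opposite direction. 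This should yield exactly the same system of relations $a_7=-1-a_1-a_2+a_6$, $a_8=-2-a_1-a_2-a_3+a_4+2a_5+a_6$, $a_5\le1$, $b_4\le2$ as in Theorem~\ref{thm:E6,d=6}. The identification of the two starred columns would follow from Galois conjugacy between $E_6^a$ and $E_6^b$, which forces equal decomposition numbers on rational-valued unipotent characters.

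The main obstacle will be to rigorously establish the parallelism with the $E_6$ block---that is, exhibiting for each projective character needed for the argument in $E_6(q)$ a genuine counterpart in $E_8(q)$ that is obtainable by (HCi) and restricts properly to the relevant block. In particular, identifying the right $6$-split Levi subgroups of $E_8$ whose duals contain a semisimple $\ell$-element with centraliser of the correct type (so that (Red) furnishes exactly the same inequalities as in the $E_6(q)$ computation) is the delicate combinatorial step; this uses that $\ell>3$ is good for $E_8$ and the classification of centralisers of semisimple elements via \cite[Prop.~2.3]{DM91} and Proposition~\ref{prop:q+1 reg} adapted to $\Phi_6$-tori. Once this dictionary is in place, the remaining computations with $R_w$ and (Red) are entirely formal and produce precisely the approximation stated.
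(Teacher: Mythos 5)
Your proposal contains the correct core idea, but it is built around a structural misreading of the block. In Table~\ref{tab:E8,d=6,bl2} \emph{every} column lies in a proper Harish-Chandra series (the labels are $p$, $D_4$, $E_6^a,\dots,E_6^e$, $.1^4$, $A_5$ --- there is no ``c''), so there is nothing to pin down by a fresh (DL)/(Red) analysis inside $E_8$. The paper's proof is two sentences: the principal-series PIMs are read off from the decomposition matrix of the Hecke algebra of type $E_8$ cut to this block (\cite[Table~7.15]{GJ11}), and all remaining columns are obtained by (HCi) --- in particular the columns $E_6^a,\dots,E_6^e$ are the Harish-Chandra inductions of the five non-principal-series PIMs $\Psi_{12},\Psi_{13},\Psi_{18},\Psi_{20},\Psi_{21}$ of $E_6(q)$ from Table~\ref{tab:E6,d=6}. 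This is exactly why the unknowns $a_1,\dots,a_8,b_1,b_4$ and their relations appear verbatim: they are transported, not re-derived, and the equality of the starred column with $E_6^a$ is forced because $\Psi_{12}$ and $\Psi_{13}$ already have identical unknown entries by Galois conjugacy in $E_6(q)$. Your plan to rerun the Coxeter-element $R_w$ computations and to locate $6$-split Levi subgroups of $E_8$ supporting regular $\ell$-elements for (Red) would, at best, reprove relations already known; the ``delicate combinatorial step'' you flag as the main obstacle simply does not arise.

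Two smaller points. First, your appeal to a ``generic Morita picture'' identifying this block with the principal $\Phi_6$-block of $E_6(q)$ is a heuristic, not an argument the paper makes or needs; the concrete mechanism is Harish-Chandra induction from the $E_6$ Levi subgroup of $E_8$. Second, the principal-series columns do not come from Lemma~\ref{lem:paramE6,d=6} (which concerns Levi subgroups \emph{of} $E_6$); they come from the known $\ell$-modular decomposition matrix of $\cH(E_8;q)$. With these corrections your (HCi)-based construction does recover the table, so the proposal is salvageable, but as written it substitutes a long and partly inapplicable program for what is essentially a one-step induction argument.
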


\begin{table}[htbp]
\caption{$E_8(q)$, $\ell>3$, $(q^2-q+1)_\ell>13$, block of defect~$\Phi_6^2$}   \label{tab:E8,d=6,bl2}
$$\vbox{\offinterlineskip\halign{$#$\hfil\ \vrule height11pt depth4pt&&
      \hfil\ $#$\hfil\cr
    \phi_{112,3}& 1\cr
    \phi_{160,7}& .& 1\cr
    \phi_{400,7}& 1& .& 1\cr
   \phi_{1344,8}& 1& 1& .& 1\cr
 D_4\co\phi_{8,3}'& .& .& .& .& 1\cr
  \phi_{2240,10}& 1& .& 1& 1& .& 1\cr
  \phi_{3360,13}& .& 1& .& 1& .& .& 1\cr
  \phi_{3200,16}& .& .& .& 1& .& .& .& 1\cr
  \phi_{7168,17}& .& .& .& 1& .& 1& 1& 1& 1\cr
E_6[\zeta_3]\co\phi_{2,2}& .& .& .& .& .& .& .& .& .& 1\cr
E_6[\zeta_3^2]\co\phi_{2,2}& .& .& .& .& .& .& .& .& .& .& 1\cr
  \phi_{1344,19}& .& .& 1& .& .& 1& .& .& .& .& .& 1\cr
 D_4:\phi_{16,5}& .& .& .& .& 1& .& .& .& .& .& .& .& 1\cr
  \phi_{3200,22}& .& .& .& .& .& .& 1& .& 1& 2a_2\mn a_1\mn a_3\pl a_6& *& .& .& 1\cr
  \phi_{3360,25}& .& .& .& .& .& .& .& 1& 1& 2a_3\mn a_2\pl a_4\pl a_6& *& .& .& .& 1\cr
  \phi_{2240,28}& .& .& .& .& .& 1& .& .& 1& a_1\mn a_2\pl a_3\pl a_4\pl a_6& *& 1& .& .& .& 1\cr
D_4\co\phi_{8,9}''& .& .& .& .& .& .& .& .& .& a_5& *& .& 1& .& .& .& 1\cr
  \phi_{1344,38}& .& .& .& .& .& .& .& .& 1& a_4\mn a_1\pl4a_6\pl a_7\pl a_8& *& .& .& 1& 1& 1& 3b_1\pl2& 1\cr
   \phi_{400,43}& .& .& .& .& .& .& .& .& .& a_3\mn a_2\pl2a_4\pl a_6& *& 1& .& .& .& 1& .& .& 1\cr
   \phi_{160,55}& .& .& .& .& .& .& .& .& .& a_6\pl2a_7\pl a_8& *& .& .& .& 1& .& 3b_1\pl2& 1& .& 1\cr
   \phi_{112,63}& .& .& .& .& .& .& .& .& .& a_6\pl a_7\pl2a_8& *& .& .& .& .& 1& 3b_1\pl4& 1& 1& b_4& 1\cr
\noalign{\hrule}
  & p& p& p& p& D_4& p& p& p& p& E_6^a& E_6^b& p& D_4& .1^4& p& p& E_6^c& .1^4& A_5& E_6^d& E_6^e\cr
  }}$$
\end{table}

\begin{proof}
The principal series PIMs can be found in \cite[Table~7.15]{GJ11}. Then (HCi)
yields the other listed projectives.
\end{proof}

\section{Even-dimensional non-split orthogonal groups}
We continue with the uni\-potent blocks of twisted orthogonal groups
$\tw2D_n(q)$, $n\le7$, for primes $\ell$ with $d_\ell(q)=6$. Again the Brauer
trees in the case of cyclic defect were first described by Fong and
Srinivasan \cite{FS90} and are easily obtained:

\begin{prop}   \label{prop:trees 2Dn d=6}
 Let $q$ be a prime power and $\ell$ a prime with $d_\ell(q)=6$.
 The Brauer trees of the unipotent $\ell$-blocks of $\tw2D_n(q)$, $3\le n\le 7$,
 with cyclic defect are as given in Table~\ref{tab:2Dn,d=6,def1}.
\end{prop}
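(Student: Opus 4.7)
The plan is to combine Fong--Srinivasan's original classification with a direct verification by Harish-Chandra induction, entirely parallel to the proof of Proposition~\ref{prop:trees Dn d=6} just above. First I would enumerate the unipotent $\ell$-blocks of $\tw2D_n(q)$ of cyclic defect using the general classification of unipotent $\ell$-blocks from \cite{CE94,BMM}: for $d_\ell(q)=6$ these correspond to $6$-cuspidal pairs $(\bL,\lambda)$ such that the Sylow $\Phi_6$-torus of $\bL$ has rank~$1$, so that $(q^2-q+1)_\ell$ gives the order of the defect group. For $3\le n\le 7$ this list is short: principal-series trees attached to Coxeter-type Levi subgroups of $\tw2D_n(q)$, together with blocks associated to Levi subgroups with unipotent cuspidal characters of type $D_4$ (or $\tw2D_4$, when they appear). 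The ordinary characters on each tree are determined by Lusztig induction from $\lambda$ via the cyclic relative Weyl group $W_G(\bL,\lambda)$, whose order equals the number of edges.

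To pin down the planar embedding, the position of the exceptional vertex, and the Harish-Chandra label of each edge, I would construct the PIMs by Harish-Chandra induction (HCi) from proper Levi subgroups $\bM$ of $\bG$---for which the unipotent decomposition matrices are known by induction on~$n$, together with the results on $\GL_n$ and $\GU_n$ cited in Chapter~\ref{chap:strategy}. Inducing PIMs of $\bM$ and cutting by each block in turn gives projective characters whose decomposition along the tree is pinned down by (HCr): since the defect is cyclic, each ordinary character has at most two simple constituents, so every induced projective splits uniquely as a non-negative integral combination of consecutive edges. Working from the ``unipotent'' end of each tree inward, each step either isolates the next PIM directly or forces it by subtraction. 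The Harish-Chandra label of each edge is read off from this construction: whenever a PIM first appears as a summand of a Harish-Chandra induction from a proper Levi $\bM$, its Harish-Chandra vertex is the corresponding pair in $\bM$, and the ``Steinberg end'' PIM is cuspidal by (St) and (Csp).

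The main obstacle is the treatment of the trees one of whose ends lies in the Harish-Chandra series above a cuspidal unipotent Brauer character of a proper Levi subgroup---typically the series above the cuspidal unipotent character $D_4$ or above the cuspidal Brauer character $\vhi_{.1^4}$ of $\tw2D_4(q)$. These require the decomposition matrices from the previous step of the induction on $n$ (supplied by Theorem~\ref{thm:2D4} and by the analogous trees at earlier rank in Table~\ref{tab:2Dn,d=6,def1} itself), together with the identification of the exceptional vertex via Alvis--Curtis duality: the tree is stable under Alvis--Curtis duality up to sign, so the exceptional vertex is its fixed point and the two incident edges are interchanged, which fixes the two remaining Harish-Chandra labels near the centre. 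With these inputs in place the verification in each of the at most four trees per rank is routine and matches the entries of Table~\ref{tab:2Dn,d=6,def1}.
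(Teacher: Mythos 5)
The paper gives no explicit proof of this proposition beyond citing Fong--Srinivasan \cite{FS90} and remarking that the trees are ``easily obtained'' by Harish-Chandra induction, so your overall plan (enumerate the cyclic-defect blocks via $6$-cuspidal pairs, build PIMs by (HCi), split consecutively along the chain via (HCr), identify the cuspidal end by (St) and the exceptional vertex via the Alvis--Curtis symmetry of the tree) is indeed the intended argument.

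There is, however, a concrete factual misstep in your description of ``the main obstacle.'' You speak of trees with an end in the Harish-Chandra series above the cuspidal unipotent character $D_4$ or above a cuspidal Brauer character $\vhi_{.1^4}$ of $\tw2D_4(q)$. Neither of these occurs in the twisted case for $3\le n\le 7$: the $F$-stable $1$-split Levi subgroups of $\tw2D_n(q)$ have simple factors of type $A_m$ or $\tw2D_k$ (never untwisted $D_4$), and $\tw2D_4(q)$ has unipotent characters indexed by bipartitions of $3$, so there is no label $.1^4$. Inspecting Table~\ref{tab:2Dn,d=6,def1} (and Lemma~\ref{lem:param2Dn,d=6}), the only non-principal Harish-Chandra series appearing in these trees is ``$.1^2$'', i.e.\ the cuspidal $\ell$-modular Steinberg character $\vhi_{.1^2}$ of $\tw2D_3(q)\cong\tw2A_3(q)$. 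Your argument should be re-anchored on that single cuspidal source: once the Hecke-algebra parameters from Lemma~\ref{lem:param2Dn,d=6}(a) are in hand, the (HCi)/(HCr) chain argument you describe closes up the trees exactly as for the untwisted case in Proposition~\ref{prop:trees Dn d=6}. This is a correctable slip rather than a structural flaw, but as written the proof would look for Harish-Chandra series that are not present, and fail to identify the $.1^2$-series edges.
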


\begin{table}[ht]
\caption{Brauer trees for $\tw2D_n(q)$ ($3\le n\le7$), $7\le\ell| (q^2-q+1)$} \label{tab:2Dn,d=6,def1}
$$\vbox{\offinterlineskip\halign{$#$
        \vrule height10pt depth 2pt width 0pt&& \hfil$#$\hfil\cr
\tw2D_3(q):& 2.& \vr& 1.1& \vr& .1^2& \vr& \bigcirc\cr
\cr
\tw2D_5(q):& 31.& \vr& 1^2.2& \vr& .21^2& \vr& \bigcirc\cr
\cr
\tw2D_7(q):& 42.& \vr& 1^2.31& \vr& 1.31^2& \vr& \bigcirc\cr
\cr
 & 31^2.1& \vr& 21^2.2& \vr& .2^21^2& \vr& \bigcirc\cr
 & & ps& & ps& & .1^2 \cr
\cr
\tw2D_4(q):& 3.& \vr& 1.2& \vr& .21& \vr& \bigcirc& \vr& .1^3& \vr& 1^2.1& \vr& 21.\cr
\cr
\tw2D_5(q):& 4.& \vr& 1.3& \vr& .31& \vr& \bigcirc& \vr& 1.1^3& \vr& 1^2.1^2& \vr& 2^2.\cr
\cr
 & 3.1& \vr& 2.2& \vr& .2^2& \vr& \bigcirc& \vr& .1^4& \vr& 1^3.1& \vr& 21^2.\cr
\cr
\tw2D_6(q):& 5.& \vr& 1.4& \vr& .41& \vr& \bigcirc& \vr& 2.1^3& \vr& 21.1^2& \vr& 2^2.1\cr
\cr
 & 41.& \vr& 1^2.3& \vr& .31^2& \vr& \bigcirc& \vr& 1.21^2& \vr& 1^2.21& \vr& 32.\cr
\cr
 & 4.1& \vr& 2.3& \vr& .32& \vr& \bigcirc& \vr& 1.1^4& \vr& 1^3.1^2& \vr& 2^21.\cr
\cr
 & 31^2.& \vr& 1^3.2& \vr& .21^3& \vr& \bigcirc& \vr& .2^21& \vr& 21.2& \vr& 31.1\cr
\cr
 & 3.1^2& \vr& 2.21& \vr& 1.2^2& \vr& \bigcirc& \vr& .1^5& \vr& 1^4.1& \vr& 21^3.\cr
\cr
\tw2D_7(q):& 5.1& \vr& 2.4& \vr& .42& \vr& \bigcirc& \vr& 2.1^4& \vr& 21^2.1^2& \vr& 2^21.1\cr
\cr
& 41.1& \vr& 21.3& \vr& .321& \vr& \bigcirc& \vr& 1.21^3& \vr& 1^3.21& \vr& 321.\cr
\cr
 & 4.2& \vr& 3.3& \vr& .3^2& \vr& \bigcirc& \vr& 1^2.1^4& \vr& 1^3.1^3& \vr& 2^3.\cr
\cr
 & 4.1^2& \vr& 2.31& \vr& 1.32& \vr& \bigcirc& \vr& 1.1^5& \vr& 1^4.1^2& \vr& 2^21^2.\cr
 & & ps& & ps& & .1^2& & .1^2& & ps& & ps& \cr
  }}$$
\end{table}

Here, $.1^2$ denotes the Harish-Chandra series of the cuspidal $\ell$-modular
Steinberg character of $\tw2D_3(q)$.

In Table~\ref{tab:hecke irr,d=6} we have collected the number $|\Irr\cH|$
for some small rank modular Iwahori--Hecke $\cH$ algebras occurring later.
Again, they can be computed using the programme of Jacon \cite{Ja05}.

\begin{table}[ht]
\caption{$|\Irr\cH|$ for some modular Hecke algebras}   \label{tab:hecke irr,d=6}
$\begin{array}{l|cccccc}
 \qquad\qquad n=& 1& 2& 3& 4& 5\cr
\hline
   \cH(B_n;1;q)& 2& 5& 10& 18& 32\cr  
   \cH(B_n;q;q)& 2& 5&  9& 18& 30\cr  
 \cH(B_n;q^2;q)& 2& 4&  8& 15& 26\cr  
 \cH(B_n;q^3;q)& 1& 3&  5& 10& 16\cr  
 \cH(B_n;q^4;q)& 2& 4&  8& 15& 26\cr  
\end{array}$
\end{table}

\begin{lem}   \label{lem:param2Dn,d=6}
 Let $q$ be a prime power and $\ell|(q^2-q+1)$. The Hecke algebras of various
 $\ell$-modular cuspidal pairs $(L,\la)$ of Levi subgroups $L$ in $\tw2D_n(q)$,
 $n\ge4$, and their respective numbers of irreducible characters are as given
 in Table~\ref{tab:hecke 2Dn,d=6}.
\end{lem}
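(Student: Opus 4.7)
The plan is to follow the same template used in the earlier Hecke-algebra lemmas (\ref{lem:paramDn,d=2}, \ref{lem:param2Dn,d=2}, \ref{lem:paramDn,d=3}, \ref{lem:param2Dn,d=3}, \ref{lem:paramDn,d=6}), exploiting the fact that $\tw2D_n(q)$ has Weyl group of type $B_{n-1}$ so that the relative Weyl groups of the Levi subgroups under consideration can be read off from Howlett's tables in \cite[pp.~70--72]{Ho80}. For each cuspidal pair $(L,\lambda)$ appearing in Table~\ref{tab:hecke 2Dn,d=6}, I would first identify the relative Weyl group $W_G(L,\lambda)$; then determine the parameters of the Iwahori--Hecke structure on $\cH_G(L,\lambda)$ locally, by working inside the minimal Levi overgroups of $L$ and invoking \cite[Lemma~3.19]{GHM2} (or the well-known fact that the parameter in a rank~1 extension equals the ratio of degrees of the two ordinary constituents of the Harish-Chandra induction); and finally count the number of simple $\cH_G(L,\lambda)$-modules via Jacon's algorithm \cite{Ja05}.

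The lifting and reduction-stability inputs would be organised as follows. For each cuspidal modular character appearing in the left column, Proposition~\ref{prop:trees 2Dn d=6} shows that it lies in a block of $L$ with cyclic defect, so by Green's theorem (Example~\ref{exmp:hecke}(a)) it lifts to an ordinary cuspidal character of $L$, and consequently its Hecke algebra has the same generic parameters as the characteristic-zero one. For the ordinary Deligne--Lusztig cuspidal unipotent character of $D_4$ (the $D_4$ case), I would use the $F_4$-embedding argument from the proof of Lemma~\ref{lem:paramDn,d=2} (or rather its obvious $\tw2D_n$-analogue) to exhibit stable $\ell$-characters in general position on a Sylow $\Phi_2$-torus, and hence to check reduction stability via Corollary~\ref{cor:dechecke}. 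For cuspidal characters on Levi factors of type $A_5$ or $\tw2D_3$, reduction stability follows because the relevant Harish-Chandra vertex has cyclic Sylow $\ell$-subgroup, so again Example~\ref{exmp:hecke}(a) applies.

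To compute the actual parameters, I would trace through the chain of minimal Levi overgroups (typically of the form $\tw2D_{n+1}$, $\tw2D_{n-1}A_1$ or $A_2\tw2D_{n-2}$) and in each case match the ratio of degrees of the two Harish-Chandra constituents against the power of $q$ predicted by Lusztig's generic tables, exactly as in \cite[p.~464]{Ca}. For the $(D_4,D_4)$ and $(D_4,\vhi_{.1^4})$ pairs the relative Weyl group is of type $B_{n-4}$, and the local computations carry over verbatim from Lemma~\ref{lem:paramDn,d=6} with the obvious replacement of $D_{n-4}$-factors by $\tw2D_{n-4}$-factors; for the $(A_5,\vhi_{1^6})$ pair the relative Weyl group is of type $A_1\tw2D_{n-6}$ by \cite[p.~72]{Ho80}, and the parameter $q^3$ on the $A_1$-node is inherited from the computation inside $D_6$ in Lemma~\ref{lem:paramDn,d=6}.

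The main obstacle will be the reduction-stability verification in the $(D_4,D_4)$ case, where $N_G(\bL)/L$ can induce non-trivial graph automorphisms on the simple factor $D_4(q)$, so the easy criterion of Section~\ref{sec:veri} does not apply directly; as in Lemma~\ref{lem:paramDn,d=2} this must be handled by exhibiting an $\ell$-character in general position of a Sylow $\Phi_6$-torus of $D_4(q)$ which is stable under the relevant graph automorphisms, using the realisation of the extension $D_4.\fS_3$ inside $F_4$ combined with an explicit count of regular semisimple $\ell$-classes in $F_4(q)$ from \cite[Tab.~T.A.133]{Koe06}, for which the hypothesis $(q^2-q+1)_\ell$ not too small is essential.
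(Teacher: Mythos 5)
There is a genuine gap: your proposal is largely a proof of the wrong lemma. Table~\ref{tab:hecke 2Dn,d=6}, which is what Lemma~\ref{lem:param2Dn,d=6} asserts, lists the cuspidal pairs $(\tw2D_3,\vhi_{.1^2})$, $(A_5,\vhi_{1^6})$, $(\tw2D_7,\vhi_{1^6.})$ and $(\tw2D_7,\vhi_{.1^6})$; there are no pairs $(D_4,D_4)$ or $(D_4,\vhi_{.1^4})$ here — those occur only in the split groups $D_n(q)$ and belong to Table~\ref{tab:hecke Dn,d=6}. So the bulk of your argument, including the designated ``main obstacle'' about graph automorphisms of a $D_4$-factor and the $F_4$-embedding, addresses cuspidal pairs that do not appear in this statement, while the two pairs supported on a Levi subgroup of type $\tw2D_7$ are never treated. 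Your claim that every cuspidal character in the left column lies in a block of $L$ with cyclic defect (so that Example~\ref{exmp:hecke}(a) applies) fails precisely for these: $\vhi_{1^6.}$ and $\vhi_{.1^6}$ lie in the principal $\ell$-block of $\tw2D_7(q)$, which for $d_\ell(q)=6$ has defect $(q^2-q+1)_\ell^2$ (Proposition~\ref{prop:2D7,d=6}), so no Brauer tree is available and liftability is not automatic.

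These two pairs are exactly where the paper has to work. For $\vhi_{.1^6}$ one reads off from Table~\ref{tab:2D7,d=6} that it occurs with multiplicity one in the $\ell$-modular reduction of a Deligne--Lusztig character $\RTG(\theta)$ for $T$ a maximal torus centralising a Sylow $\Phi_6$-torus, and then invokes Example~\ref{exmp:hecke}(c) to build a reduction-stable lattice. For $\vhi_{1^6.}$ one checks, again from the decomposition matrix, that it lifts to a non-unipotent cuspidal character whose Jordan correspondent is the cuspidal unipotent character of $D_4(q)\Phi_2\Phi_6$; since the latter is invariant under all automorphisms, reduction stability follows. Without some such argument your proof cannot produce the entries $\cH(B_{n-7};q^4;q)$ in the last two rows of the table. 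The remaining two pairs you do discuss are handled correctly in spirit (liftability from the Brauer trees plus local determination of parameters in minimal Levi overgroups), with one slip: the Weyl group of $\tw2D_n(q)$ is of type $B_{n-1}$, so the relative Weyl group of an $A_5$-Levi is of type $B_{n-6}$ by \cite[p.~70]{Ho80}, not $A_1\tw2D_{n-6}$ as you wrote.
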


\begin{table}[ht]
\caption{Hecke algebras and $|\Irr\cH|$ in $\tw2D_n(q)$ for $d_\ell(q)=6$}   \label{tab:hecke 2Dn,d=6}
$\begin{array}{l|c|ccccc}
 (L,\la)& \qquad\qquad\cH\qquad\qquad\qquad & 6& 7& 8& 9\cr
\hline
 (\tw2D_3,\vhi_{.1^2})& \cH(B_{n-3};1;q)& 10& 18& 32& 54\cr
 (A_5,\vhi_{1^6})& \cH(A_1;q^3)\otimes\cH(B_{n-7};q^2;q)& -& 1& 2& 4\cr
 (\tw2D_7,\vhi_{1^6.})& \cH(B_{n-7};q^4;q)& -& 1& 2& 4\cr
 (\tw2D_7,\vhi_{.1^6})& \cH(B_{n-7};q^4;q)& -& 1& 2& 4\cr
\end{array}$
\end{table}

\begin{proof}
Note that $\tw2D_n(q)$ has Weyl group of type $B_{n-1}$. Now the relative Weyl
group of a Levi subgroup $B_2(q)$ inside $B_{n-1}(q)$ has type $B_{n-3}$, and
the one of a Levi subgroup $A_5(q)$ has type $B_{n-6}$ by \cite[p.~70]{Ho80}.
The modular Steinberg character $\vhi_{.1^2}$ of $\tw2D_3(q)$ is liftable by
the $\ell$-modular Brauer tree in Table~\ref{tab:2Dn,d=6,def1}, and the one of
$A_5(q)$ by the Brauer tree for $\GL_6(q)$, so we can argue as usual, with
reduction stability following by Example~\ref{exmp:hecke}(a). \par
Next, the $\ell$-modular Steinberg character $\vhi_{.1^6}$ of $\tw2D_7(q)$
occurs with multiplicity~1 in the reduction of an ordinary Deligne--Lusztig
character $\RTG(\theta)$ from a maximal torus $T$ centralising a Sylow
$\Phi_6$-torus as can be seen from the decomposition matrix in
Table~\ref{tab:2D7,d=6}. Thus we obtain reduction stability with
Example~\ref{exmp:hecke}(c).

Finally, again using the decomposition matrix one checks that $\vhi_{1^6.}$
lifts to a non-unipotent character whose Jordan correspondent is the cuspidal
unipotent character of $D_4(q)\Phi_2\Phi_6$. The latter is invariant under all
automorphisms, and thus $\vhi_{1^6.}$ is also reduction stable.
\end{proof}

Observe that Sylow $\ell$-subgroups of $\tw2D_n(q)$, $n\le6$, are cyclic for
primes $\ell$ with $d_\ell(q)=6$, so we only need to deal with the case $n\ge7$.

\begin{prop}   \label{prop:2D7,d=6}
 Assume $(T_\ell)$. The decomposition matrix for the principal $\ell$-block
 of $\tw2D_7(q)$ for primes $\ell$ with $(q^2-q+1)_\ell>7$ is as given in
 Table~\ref{tab:2D7,d=6}. \par
 Here, the unknown entries satisfy $y_2 \leq 5$ and $y_3\le2$.
\end{prop}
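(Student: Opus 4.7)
My plan parallels the analogous propositions for $D_7(q)$ at $d=6$ (Proposition~\ref{prop:D7,d=6}) and for $\tw2D_7(q)$ at $d=3$ (Proposition~\ref{prop:2D7,d=3}). Since $\ell\ne2$ the unipotent characters form a basic set for the unipotent blocks, and $(T_\ell)$ ensures the principal block has a uni-triangular decomposition matrix. I will produce a projective character with unipotent part equal to each column $\Psi_i$ of Table~\ref{tab:2D7,d=6}, and then verify its indecomposability via (HCr).

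First I would read off the principal-series PIMs from the decomposition matrix of the Iwahori--Hecke algebra $\cH(B_6;q^2;q)$, obtained via the Dipper--James Morita equivalence \cite[4.7]{DJ92} together with the type-$A$ matrices of \cite[p.~259]{Ja90}. The remaining non-cuspidal Harish-Chandra series of the principal block, according to Lemma~\ref{lem:param2Dn,d=6} and Table~\ref{tab:hecke irr,d=6}, are the series over the modular cuspidal Steinberg character $\vhi_{.1^2}$ of a Levi $\tw2D_3$ (relative algebra $\cH(B_4;1;q)$ with $18$ simple $k$-modules), and the single PIM over $\vhi_{1^6}$ of a Levi $A_5$ (relative algebra $\cH(A_1;q^3)$, which is local since $q^3\equiv-1\pmod\ell$). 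I would obtain projectives in these series by (HCi) from the $1$-split Levi subgroups $\tw2D_6$, $\tw2D_5A_1$, $\tw2D_3A_3$ and $A_5A_1$, cutting by the principal block, and then split the induced projectives into indecomposable summands using (Tri) and (HCr) exactly as in the proof of Proposition~\ref{prop:D7,d=6}.

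Once every non-cuspidal series is exhausted, the remaining columns are cuspidal. The modular Steinberg PIM is cuspidal by (St), since the Sylow $\Phi_6$-torus of $\tw2D_7(q)$ has rank $2$ and no proper $1$-split Levi contains a Sylow $\ell$-subgroup. To pin down the entries in these cuspidal columns I will combine (DL) for Deligne--Lusztig characters $R_w$ with $w$ chosen so that the cuspidal PIM under consideration does not occur in any $R_v$ with $v<w$, with (Red) applied to $6$-split Levi subgroups of $\bG$: under the hypothesis $(q^2-q+1)_\ell>7$, Example~\ref{exmp:reg}(c),(d) furnishes both $\ell$-elements of $(\bG^*)^F$ whose centraliser is a rank-one $\Phi_6$-torus (giving the trivial unipotent bound) and regular semisimple $\ell$-elements (giving a torus bound). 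Matching the lower bounds from (DL) with the upper bounds from (Red) should determine every entry except for the two listed unknowns, yielding $y_2\le 5$ and $y_3\le 2$ from the strict-inequality cases where (DL) and (Red) do not quite meet.

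The main obstacle will be the large Harish-Chandra series over $\vhi_{.1^2}$ with its $18$ PIMs: separating these from the multiple (HCi)-projectives requires the kind of careful, iterated bookkeeping already done in the proof of Theorem~\ref{thm:2D6}, alternating Harish-Chandra induction from $\tw2D_6$ and $\tw2D_5A_1$ with (Tri), (HCr), and occasionally Harish-Chandra restriction from $\tw2D_8$. A secondary difficulty is that the ``bottom right corner'' tools of Theorem~\ref{thm:genpos} and Corollary~\ref{cor:family1} do not apply in the $d=6$ regime, so the multiplicity of the Steinberg Brauer character inside each cuspidal column must be recovered by the (DL)+(Red) mechanism directly rather than by a Gelfand--Graev argument.
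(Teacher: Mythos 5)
Your outline captures the right toolkit — (HCi) from proper Levi subgroups, (HCr), the (DL)+(Red) mechanism, and the cuspidality of the Steinberg PIM via (St) — but it misses the two features that actually make the paper's argument go through, and one of its suggested ingredients is unavailable in this parameter range.

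First, the paper does not (and cannot) determine all the entries of the cuspidal columns $\Psi_{23},\Psi_{25},\Psi_{27}$ within $\tw2D_7(q)$ alone. Writing $\Psi_{23}=[1^6.]+y_1[.21^4]+y_2[.1^6]$ and $\Psi_{25}=[.2^3]+y_3[.21^4]+y_4[.1^6]$ (the absence of $[.2^3]$ in $\Psi_{23}$ comes from incomparability of the families of $[1^6.]$ and $[.2^3]$, a point your sketch does not address), the combination of (DL) at the Coxeter element with (Red) for a regular $\ell$-element forces $y_4=2+y_3$, and a second Deligne--Lusztig character $R_w$ with $w=s_2s_1s_3s_4s_5s_4s_3s_1s_2s_3s_4s_5s_6s_7$ then gives $y_2\le5$. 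But the coefficient $y_1$ cannot be pinned down this way: the paper deliberately defers $y_1=0$ to the proof of Proposition~\ref{prop:2D82D9,d=6} for $\tw2D_8(q)$ and $\tw2D_9(q)$. Your claim that matching (DL) lower bounds with (Red) upper bounds ``should determine every entry except the two listed unknowns'' is therefore false; without the forward reference to the larger rank the column $\Psi_{23}$ remains undetermined.

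Second, invoking the Dipper--James Morita equivalence \cite[4.7]{DJ92} together with the type-$A$ matrices of \cite{Ja90} to read off the principal-series PIMs is specific to the regime where the distinguished parameter of $\cH(B_n;Q;q)$ is \emph{not} of the form $-q^i$. At $d=6$ one has $q^3\equiv-1$, hence $Q=q^2\equiv-q^{-1}$, so the relevant discriminant vanishes and the algebra does not split up into a sum of tensor products of type-$A$ algebras; the paper uses exactly this route in the $d=3$ analysis of $\tw2D_7(q)$, where $q^3\equiv1$ and $Q=q^2\equiv q^{-1}$ avoids the bad values, but it does not (and cannot) do so here. In fact the paper's treatment of the non-cuspidal columns is much simpler than you anticipate: all proper $1$-split Levi subgroups of $\tw2D_7(q)$ have cyclic Sylow $\ell$-subgroups at $d=6$, so their decomposition matrices are the known Brauer trees of Proposition~\ref{prop:trees 2Dn d=6}, and direct Harish-Chandra induction of PIMs from Levi subgroups of types $\tw2D_6$ and $A_5$ already produces all $24$ non-cuspidal columns; no iterated bookkeeping à la Theorem~\ref{thm:2D6} is needed, nor are the $18$ simple modules of $\cH(B_4;1;q)$ all relevant (only the $8$ landing in the principal block are). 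Finally, the bound $y_3\le2$ emerges from the indecomposability check via (HCr) and the principal-series Hecke algebra, not from a further (DL)/(Red) relation as your sketch implies.
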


\begin{table}[ht]
{\small\caption{$\tw2D_7(q)$, $(q^2-q+1)_\ell>7$}   \label{tab:2D7,d=6}
$$\vbox{\offinterlineskip\halign{$#$\hfil\ \vrule height10pt depth4pt&
      \hfil\ $#$\ \hfil\vrule&& \hfil\ $#$\hfil\cr
     6.&           1& 1\cr
    51.&           q& 1& 1\cr
     .6&    \hlf q^3& .& .& 1\cr
  41^2.&    \hlf q^3& .& 1& .& 1\cr
   3^2.&         q^3& .& .& .& .& 1\cr
    1.5&    \hlf q^4& 1& .& 1& .& .& 1\cr
   32.1&    \hlf q^4& .& .& .& .& 1& .& 1\cr
  1^2.4&    \hlf q^6& 1& 1& .& .& .& 1& .& 1\cr
  2^2.2&    \hlf q^6& .& .& .& .& .& .& 1& .& 1\cr
    .51&    \hlf q^7& .& .& 1& .& .& 1& .& .& .& 1\cr
 31.1^2&    \hlf q^7& .& .& .& .& .& .& 1& .& .& .& 1\cr
  31^3.&    \hlf q^7& .& .& .& 1& .& .& .& .& .& .& .& 1\cr
  1^3.3&         q^9& .& 1& .& 1& .& .& .& 1& .& .& .& .& 1\cr
  21.21\!&         q^9& .& .& .& .& 1& .& 1& .& 1& .& 1& .& .& 1\cr
1^2.2^2& \hlf q^{12}& .& .& .& .& 1& .& .& .& .& .& .& .& .& 1& 1\cr
  1^4.2& \hlf q^{12}& .& .& .& 1& .& .& .& .& .& .& .& 1& 1& .& .& 1\cr
  3.1^3&      q^{12}& .& .& .& .& .& .& .& .& .& .& 1& .& .& .& .& .& 1\cr
  .41^2& \hlf q^{13}& .& .& .& .& .& 1& .& 1& .& 1& .& .& .& .& .& .& .& 1\cr
 2.21^2& \hlf q^{13}& .& .& .& .& .& .& .& .& .& .& 1& .& .& 1& .& .& 1& .& 1\cr
  21^4.& \hlf q^{13}& .& .& .& .& .& .& .& .& .& .& .& 1& .& .& .& .& .& .& .& 1\cr
 1.2^21& \hlf q^{16}& .& .& .& .& .& .& .& .& 1& .& .& .& .& 1& 1& .& .& .& 1& .& 1\cr
  1^5.1& \hlf q^{16}& .& .& .& .& .& .& .& .& .& .& .& 1& .& .& .& 1& .& .& .& 1& .& 1\cr
   1^6.& \hlf q^{21}& .& .& .& .& .& .& .& .& .& .& .& .& .& .& .& .& .& .& .& 1& .& 1& 1\cr
  .31^3& \hlf q^{21}& .& .& .& .& .& .& .& 1& .& .& .& .& 1& .& .& .& .& 1& .& .& .& .& .& 1\cr
   .2^3&      q^{21}& .& .& .& .& .& .& .& .& 1& .& .& .& .& .& .& .& .& .& .& .& 1& .& .& .& 1\cr
  .21^4&      q^{31}& .& .& .& .& .& .& .& .& .& .& .& .& 1& .& .& 1& .& .& .& .& .& .& .& 1& y_3& 1\cr
   .1^6&      q^{42}& .& .& .& .& .& .& .& .& .& .& .& .& .& .& .& 1& .& .& .& .& .& 1& y_2& .& y_3\pl2& 1& 1\cr
\noalign{\hrule}
  & & ps& ps& ps& ps& ps& ps& ps& ps& ps& .1^2\!& ps& ps& ps& ps& .1^2\!& ps& .1^2\!& .1^2\!& .1^2\!& ps& .1^2\!& A_5& c& .1^2\!& c& .1^2\!& c\cr
  }}$$}
\end{table}

\begin{proof}
All columns in Table~\ref{tab:2D7,d=6} except for the 23rd, the 25th and the
27th are obtained by Harish-Chandra induction of PIMs from Levi subgroups of
types $\tw2D_6$ and $A_5$. With Lemma~\ref{lem:param2Dn,d=6} this accounts for
all Harish-Chandra series from proper Levi subgroups, so the remaining three
PIMs must be cuspidal. The ordinary Gelfand-Graev character gives the Steinberg
PIM in column~27. The uni-triangular shape of the decomposition matrix shows
that there must exist PIMs with unipotent part of the form
$$\Psi_{23}=[1^6.]+y_1[.21^4]+y_2[.1^6],\quad
  \Psi_{25}=[.2^3]+y_3[.21^4]+y_4[.1^6],$$
with suitable coefficients $y_i\ge0$. Not that $[1^6.]$ and $[.2^3]$ lie in families
which are not comparable, which explains why $[.2^3]$ does not occur
in $\Psi_{23}$. It will turn
out in the determination of the decomposition matrices for $\tw2D_8(q)$ 
 in Proposition~\ref{prop:2D82D9,d=6} that $y_1=0$.
It remains to compute $y_2$ and $y_4$. To this end we consider the
decomposition of Deligne--Lusztig characters $R_w$. For $w$ being a
Coxeter element the coefficient of $\Psi_{27}$ in $R_w$ is equal to
$2+y_3-y_4$ which by (DL) forces $y_4 \leq 2+y_3$. On the other hand,
by Example \ref{exmp:reg}(c) there exists a regular $\ell$-element of $G^*$
whenever $(q^2-q+1)_\ell > 12$ (in particular whenever $(q^2-q+1)_\ell > 7$
since we also assumed $\ell \geq 7$). Therefore one can use (Red) to
show that $y_4 \geq 2+y_3$, which proves that $y_4=2+y_3$. Now
let $w= s_2s_1s_3s_4s_5s_4s_3s_1s_2s_3s_4s_5s_6s_7$.
By explicit computations and using that $y_4 = 2+y_3$, the PIM $\Psi_{27}$
does not occur in $R_v$ for $v <w$. In addition, the coefficient of $\Psi_{27}$
in $R_w$ is $60-12y_2$, which by (DL) gives $y_2 \leq 5$. (HCr) and the
decomposition matrix of the Hecke algebra for the principal series show that
all columns correspond to PIMs. Moreover this proves that $y_3\le2$.
\end{proof}

It seems that in order to complete the proof of Proposition~\ref{prop:2D7,d=6}
we need to also study some decomposition matrices for $\tw2D_8(q)$ and
$\tw2D_9(q)$:

\begin{prop}   \label{prop:2D82D9,d=6}
 Assume $(T_\ell)$. The decomposition matrices for the unipotent
 $\ell$-blocks of $\tw2D_8(q)$ and $\tw2D_9(q)$ of defect $(q^2-q+1)_\ell^2$
 for primes $\ell$ with $(q^2-q+1)_\ell>7$ are as given in
 Table~\ref{tab:2D8,d=6,bl1}. \par
 Here, the unknown entries $y_2\leq 5$ and $y_3\le2$ are as in
 Proposition~\ref{prop:2D7,d=6}.
\end{prop}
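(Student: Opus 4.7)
The plan is to proceed in close analogy with Propositions~\ref{prop:D6,d=6}--\ref{prop:D8,d=6}, treating $\tw2D_8(q)$ and $\tw2D_9(q)$ uniformly and using the ascent to $\tw2D_9(q)$ to pin down the entry $y_1$ left open in the proof of Proposition~\ref{prop:2D7,d=6}. For each of the relevant blocks, denote by $\Psi_i$ the linear combinations of unipotent characters corresponding to the columns in Table~\ref{tab:2D8,d=6,bl1}. Most columns should come directly from (HCi), applied to PIMs in the unipotent blocks of the Levi subgroups of types $\tw2D_7$, $\tw2D_6 A_1$, $A_7$ and $A_5 A_2$, using as input the decomposition matrices of $\tw2D_7(q)$ from Table~\ref{tab:2D7,d=6}, of $A_n(q)$ from James \cite{Ja90}, and the Hecke algebra data from Lemma~\ref{lem:param2Dn,d=6}. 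Triangularity (Tri) and (HCr) will handle the separation into PIMs in the principal series and in the series of the cuspidal Brauer character $\vhi_{.1^2}$ of $\tw2D_3(q)$; the $A_5$-series is controlled by the semisimple Hecke algebra $\cH(A_1;q^3)\otimes\cH(B_{n-7};q^2;q)$ of Lemma~\ref{lem:param2Dn,d=6}, which fixes the splitting uniquely.

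The main obstacle will be the rows labelled by unipotent characters in the largest families, corresponding to the cuspidal PIMs $\Psi_{1^6.}$ and $\Psi_{.2^3}$ of $\tw2D_7(q)$. Harish-Chandra inducing these to $\tw2D_8(q)$ gives projective characters whose decomposition into PIMs will involve the unknown parameters $y_1, y_2, y_3$ of Table~\ref{tab:2D7,d=6}. The key observation (parallel to the use of $a_1$, $z_2$, $a_2$ in the proof of Proposition~\ref{prop:D8,d=6}) is that by $(T_\ell)$, after subtracting off the contributions of the previously constructed PIMs in smaller families, the residual projective character must have non-negative multiplicities on every PIM. When the residual is then Harish-Chandra restricted to a Levi subgroup of type $\tw2D_6 A_1$ and decomposed on the PIMs of that subgroup (which are now known), any positive value of $y_1$ will force a strictly negative coefficient on one of the PIMs of $\tw2D_6 A_1$ in the Harish-Chandra series of $\vhi_{.1^2}$. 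This forces $y_1 = 0$, which simultaneously completes the proof of Proposition~\ref{prop:2D7,d=6} and fixes the corresponding entries in Table~\ref{tab:2D8,d=6,bl1}.

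For $\tw2D_9(q)$, the same strategy works, now inducing further from the PIMs of $\tw2D_8(q)$ that we have just constructed. The only new feature is the appearance of the Harish-Chandra series above the cuspidal Brauer characters $\vhi_{1^6.}$ and $\vhi_{.1^6}$ of $\tw2D_7(q)$; by Lemma~\ref{lem:param2Dn,d=6} these series are governed by the Hecke algebras $\cH(B_{n-7};q^4;q)$ with $n=9$, which have four simple modules each. Harish-Chandra induction of the two cuspidal PIMs of $\tw2D_7(q)$ will produce projectives whose splitting compatible with (HCr) and the number of simples of the Hecke algebra is uniquely determined, yielding the required PIMs of $\tw2D_9(q)$. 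Throughout, (HCr) applied to the final list of columns confirms indecomposability; the residual ambiguity in $y_2, y_3$ is inherited from Proposition~\ref{prop:2D7,d=6} and, just as there, can be bounded by evaluating the coefficients of the Steinberg PIM on suitable Deligne--Lusztig characters $R_w$ via (DL), combined with (Red) applied to the centraliser of a regular $\ell$-element of the dual group, whose existence is ensured by the hypothesis $(q^2-q+1)_\ell > 7$ together with Example~\ref{exmp:reg}(c).
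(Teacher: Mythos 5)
Your proposal follows essentially the same route as the paper: (HCi) from the Levi subgroups produces all but a couple of columns in each block, $(T_\ell)$ forces the induced projectives $\tPsi$ headed at $[1^7.]$ (resp.\ $[1^8.]$) to split off $y_1$ copies of the PIM headed at $[1.21^4]$, and Harish-Chandra restriction of the residual then forces $y_1=0$, which simultaneously completes Proposition~\ref{prop:2D7,d=6}. The only slight imprecision is that the relevant point is the \emph{incomparability} of the families of $[1^7.]$ and $[1.21^4]$ (not that one is smaller), but this does not affect the argument.
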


\begin{table}[ht]
{\small\caption{$\tw2D_8(q)$, $(q^2-q+1)_\ell>7$, blocks $\binom{0\ 2}{}$
   and $\binom{0\ 1\ 2}{1}$}   \label{tab:2D8,d=6,bl1}
$$\vbox{\offinterlineskip\halign{$#$\hfil\ \vrule height9pt depth0pt&&
      \hfil\ $#$\hfil\cr
7.       & 1\cr
52.      & 1& 1\cr
43.      & .& .& 1\cr
421.     & .& 1& .& 1\cr
42.1     & .& .& 1& .& 1\cr
1.6      & 1& .& .& .& .& 1\cr
41.1^2   & .& .& .& .& 1& .& 1\cr
.61      & .& .& .& .& .& 1& .& 1\cr
321^2.   & .& .& .& 1& .& .& .& .& 1\cr
1.51     & 1& .& .& .& .& 1& .& 1& .& 1\cr
2^2.3    & .& .& .& .& 1& .& .& .& .& .& 1\cr
21.31    & .& .& 1& .& 1& .& 1& .& .& .& 1& 1\cr
1^2.41   & 1& 1& .& .& .& .& .& .& .& 1& .& .& 1\cr
4.1^3    & .& .& .& .& .& .& 1& .& .& .& .& .& .& 1\cr
1^3.31   & .& 1& .& 1& .& .& .& .& .& .& .& .& 1& .& 1\cr
1^2.32   & .& .& 1& .& .& .& .& .& .& .& .& 1& .& .& .& 1\cr
2.31^2   & .& .& .& .& .& .& 1& .& .& .& .& 1& .& 1& .& .& 1\cr
1.41^2   & .& .& .& .& .& .& .& 1& .& 1& .& .& 1& .& .& .& .& 1\cr
2^21^3.  & .& .& .& .& .& .& .& .& 1& .& .& .& .& .& .& .& .& .& 1\cr
1.321    & .& .& .& .& .& .& .& .& .& .& 1& 1& .& .& .& 1& 1& .& .& 1\cr
1^4.21   & .& .& .& 1& .& .& .& .& 1& .& .& .& .& .& 1& .& .& .& .& .& 1\cr
1.31^3   & .& .& .& .& .& .& .& .& .& .& .& .& 1& .& 1& .& .& 1& .& .& .& 1\cr
.32^2    & .& .& .& .& .& .& .& .& .& .& 1& .& .& .& .& .& .& .& .& 1& .& .& 1\cr
1^5.1^2 & .& .& .& .& .& .& .& .& 1& .& .& .& .& .& .& .& .& .& 1& .& 1& .& .& 1\cr
1^7. & .& .& .& .& .& .& .& .& .& .& .& .& .& .& .& .& .& .& 1& .& .& .& .& 1& 1\cr
1.21^4  & .& .& .& .& .& .& .& .& .& .& .& .& .& .& 1& .& .& .& .& .& 1& 1& y_3& .& .& 1\cr
1.1^6 & .& .& .& .& .& .& .& .& .& .& .& .& .& .& .& .& .& .& .& .& 1& .& y_3\pl2& 1& y_2& 1& 1\cr
\noalign{\hrule}
 & ps& ps& ps& ps& ps& ps& ps& .1^2\!& ps& ps& ps& ps& ps& .1^2\!& ps& .1^2\!& .1^2\!& .1^2\!& ps& .1^2\!& ps& .1^2\!& .2^3\!& A_5& 1^6\!.& .1^2\!& .1^6\cr
\omit& \vphantom{A}\cr
\omit& \vphantom{A}\cr
6.1     & 1\cr
.7      & .& 1\cr
51.1    & 1& .& 1\cr
3^21.   & .& .& .& 1\cr
2.5     & 1& 1& .& .& 1\cr
41^2.1  & .& .& 1& .& .& 1\cr
21.4    & 1& .& 1& .& 1& .& 1\cr
321.1   & .& .& .& 1& .& .& .& 1\cr
2^21.2  & .& .& .& .& .& .& .& 1& 1\cr
21^2.3  & .& .& 1& .& .& 1& 1& .& .& 1\cr
31^2.1^2& .& .& .& .& .& .& .& 1& .& .& 1\cr
.52     & .& 1& .& .& 1& .& .& .& .& .& .& 1\cr
31^3.1  & .& .& .& .& .& 1& .& .& .& .& .& .& 1\cr
21^3.2  & .& .& .& .& .& 1& .& .& .& 1& .& .& 1& 1\cr
21^2.21 & .& .& .& 1& .& .& .& 1& 1& .& 1& .& .& .& 1\cr
21^4.1  & .& .& .& .& .& .& .& .& .& .& .& .& 1& 1& .& 1\cr
.421    & .& .& .& .& 1& .& 1& .& .& .& .& 1& .& .& .& .& 1\cr
1^3.2^2 & .& .& .& 1& .& .& .& .& .& .& .& .& .& .& 1& .& .& 1\cr
3.1^4   & .& .& .& .& .& .& .& .& .& .& 1& .& .& .& .& .& .& .& 1\cr
21^5.   & .& .& .& .& .& .& .& .& .& .& .& .& .& .& .& 1& .& .& .& 1\cr
2.21^3  & .& .& .& .& .& .& .& .& .& .& 1& .& .& .& 1& .& .& .& 1& .& 1\cr
1^6.1   & .& .& .& .& .& .& .& .& .& .& .& .& .& 1& .& 1& .& .& .& 1& .& 1\cr
1.2^21^2& .& .& .& .& .& .& .& .& 1& .& .& .& .& .& 1& .& .& 1& .& .& 1& .& 1\cr
.321^2  & .& .& .& .& .& .& 1& .& .& 1& .& .& .& .& .& .& 1& .& .& .& .& .& .& 1\cr
.2^31   & .& .& .& .& .& .& .& .& 1& .& .& .& .& .& .& .& .& .& .& .& .& .& 1& .& 1\cr
.2^21^3 & .& .& .& .& .& .& .& .& .& 1& .& .& .& 1& .& .& .& .& .& .& .& .& .& 1& y_3& 1\cr
.1^7    & .& .& .& .& .& .& .& .& .& .& .& .& .& 1& .& .& .& .& .& y_2& .& 1& .& .& y_3\pl2& 1& 1\cr
\noalign{\hrule}
 & ps& ps& ps& ps& ps& ps& ps& ps& ps& ps& ps& .1^2\!& ps& ps& ps& ps& \!.1^2\!& \!.1^2\!& .1^2\!& 1^6\!.& .1^2\!& A_5& .1^2\!& .1^2\!& .2^3\!& .1^2\!& .1^6\cr
   }}$$}
\smallskip
\end{table}

\begin{table}[ht]
{\small\caption{$\tw2D_9(q)$, $(q^2-q+1)_\ell>7$, blocks $\binom{1\ 2}{}$
   and $\binom{0\ 1\ 2}{2}$}   \label{tab:2D9,d=6,bl1}
$$\vbox{\offinterlineskip\halign{$#$\hfil\ \vrule height9pt depth0pt&&
      \hfil\ $#$\hfil\cr
71.       & 1\cr
62.       & 1& 1\cr
4^2.      & .& .& 1\cr
42^2.     & .& 1& .& 1\cr
42.2      & .& .& 1& .& 1\cr
1^2.6     & 1& .& .& .& .& 1\cr
32.3      & .& .& .& .& 1& .& 1\cr
41.21     & .& .& .& .& 1& .& .& 1\cr
32^21.    & .& .& .& 1& .& .& .& .& 1\cr
31.31     & .& .& 1& .& 1& .& 1& 1& .& 1\cr
1^2.51    & 1& 1& .& .& .& 1& .& .& .& .& 1\cr
4.21^2    & .& .& .& .& .& .& .& 1& .& .& .& 1\cr
.61^2     & .& .& .& .& .& 1& .& .& .& .& .& .& 1\cr
3.31^2    & .& .& .& .& .& .& .& 1& .& 1& .& 1& .& 1\cr
1.51^2    & .& .& .& .& .& 1& .& .& .& .& 1& .& 1& .& 1\cr
2^31^2.   & .& .& .& .& .& .& .& .& 1& .& .& .& .& .& .& 1\cr
1^2.41^2  & .& 1& .& .& .& .& .& .& .& .& 1& .& .& .& 1& .& 1\cr
1^2.3^2   & .& .& 1& .& .& .& .& .& .& 1& .& .& .& .& .& .& .& 1\cr
1^3.31^2  & .& 1& .& 1& .& .& .& .& .& .& .& .& .& .& .& .& 1& .& 1\cr
1.3^21    & .& .& .& .& .& .& 1& .& .& 1& .& .& .& 1& .& .& .& 1& .& 1\cr
1^4.21^2  & .& .& .& 1& .& .& .& .& 1& .& .& .& .& .& .& .& .& .& 1& .& 1\cr
.3^22     & .& .& .& .& .& .& 1& .& .& .& .& .& .& .& .& .& .& .& .& 1& .& 1\cr
1^2.31^3  & .& .& .& .& .& .& .& .& .& .& .& .& .& .& 1& .& 1& .& 1& .& .& .& 1\cr
1^5.1^3   & .& .& .& .& .& .& .& .& 1& .& .& .& .& .& .& 1& .& .& .& .& 1& .& .& 1\cr
1^2.21^4  & .& .& .& .& .& .& .& .& .& .& .& .& .& .& .& .& .& .& 1& .& 1& y_3& 1& .& 1\cr
1^8.      & .& .& .& .& .& .& .& .& .& .& .& .& .& .& .& 1& .& .& .& .& .& .& .& 1& .& 1\cr
1^2.1^6   & .& .& .& .& .& .& .& .& .& .& .& .& .& .& .& .& .& .& .& .& 1& y_3\pl2& .& 1& 1& y_2& 1\cr
\noalign{\hrule}
 & ps& ps& ps& ps& ps& ps& ps& ps& ps& ps& ps& \!.1^2\!& \!.1^2\!& \!.1^2\!& \!.1^2\!& ps& ps& .1^2\!& ps& .1^2\!& ps& .2^3\!& .1^2\!& A_5& .1^2\!& 1^6\!.& .1^6\cr
\omit& \vphantom{A}\cr
\omit& \vphantom{A}\cr
.8        & 1\cr
6.2       & .& 1\cr
51.2      & .& 1& 1\cr
3.5       & 1& 1& .& 1\cr
3^22.     & .& .& .& .& 1\cr
31.4      & .& 1& 1& 1& .& 1\cr
41^2.2    & .& .& 1& .& .& .& 1\cr
32^2.1    & .& .& .& .& 1& .& .& 1\cr
31^2.3    & .& .& 1& .& .& 1& 1& .& 1\cr
.53       & 1& .& .& 1& .& .& .& .& .& 1\cr
31^3.2    & .& .& .& .& .& .& 1& .& 1& .& 1\cr
2^3.2     & .& .& .& .& .& .& .& 1& .& .& .& 1\cr
31^4.1    & .& .& .& .& .& .& .& .& .& .& 1& .& 1\cr
31^2.1^3  & .& .& .& .& .& .& .& 1& .& .& .& .& .& 1\cr
.431      & .& .& .& 1& .& 1& .& .& .& 1& .& .& .& .& 1\cr
21^2.21^2 & .& .& .& .& 1& .& .& 1& .& .& .& 1& .& 1& .& 1\cr
21^4.2    & .& .& .& .& .& .& .& .& 1& .& 1& .& 1& .& .& .& 1\cr
31^5.     & .& .& .& .& .& .& .& .& .& .& .& .& 1& .& .& .& .& 1\cr
31.1^4    & .& .& .& .& .& .& .& .& .& .& .& .& .& 1& .& .& .& .& 1\cr
1^3.2^21  & .& .& .& .& 1& .& .& .& .& .& .& .& .& .& .& 1& .& .& .& 1\cr
21.21^3   & .& .& .& .& .& .& .& .& .& .& .& .& .& 1& .& 1& .& .& 1& .& 1\cr
.3^21^2   & .& .& .& .& .& 1& .& .& 1& .& .& .& .& .& 1& .& .& .& .& .& .& 1\cr
1^2.2^21^2& .& .& .& .& .& .& .& .& .& .& .& 1& .& .& .& 1& .& .& .& 1& 1& .& 1\cr
1^6.2     & .& .& .& .& .& .& .& .& .& .& .& .& 1& .& .& .& 1& 1& .& .& .& .& .& 1\cr
.2^4      & .& .& .& .& .& .& .& .& .& .& .& 1& .& .& .& .& .& .& .& .& .& .& 1& .& 1\cr
.2^21^4   & .& .& .& .& .& .& .& .& 1& .& .& .& .& .& .& .& 1& .& .& .& .& 1& .& .& y_3& 1\cr
.21^6     & .& .& .& .& .& .& .& .& .& .& .& .& .& .& .& .& 1& y_2& .& .& .& .& .& 1& y_3\pl2& 1& 1\cr
\noalign{\hrule}
 & ps& ps& ps& ps& ps& ps& ps& ps& ps& .1^2\!& ps& ps& ps& ps& .1^2\!& ps& ps& 1^6\!.& .1^2\!& .1^2\!& .1^2\!& .1^2\!& .1^2\!& A_5& .2^3\!& .1^2\!& .1^6\cr
   }}$$}
\smallskip
\end{table}

Here, the blocks are labelled by the 6-cuspidal unipotent characters of the
centraliser $\tw2D_2(q).(q^3+1)^2$, respectively $\tw2D_3(q).(q^3+1)^2$,
of a $\Phi_6$-torus of rank~2, in accordance with~\cite{BMM}.

\begin{proof}
First consider $\tw2D_8(q)$. All columns $\Psi_i$ in the principal block apart
from $\Psi_{23}$ and $\Psi_{25}$ are obtained by Harish-Chandra induction,
and we find two further projectives with unipotent parts
$$\begin{aligned}
  \tPsi_{23} =\,& [.32^2]+y_3[1.21^4]+y_4[1.1^6],\\
  \tPsi_{25} =\,& [1^7.]+y_1[1.21^4]+y_2[1.1^6].
\end{aligned}$$
Since $[1.21^4]$ and $[1^7.]$ lie in families which are not comparable, the
character $\tPsi_{25}$ must involve $y_1$ copies of $\Psi_{26}$ by $(T_\ell)$.
Then (HCr) shows that $y_1 =0$. 
\par
The columns in the block labelled $\binom{0\ 1\ 2}{1}$ except for the 20th and
25th are again obtained by Harish-Chandra induction, as well as
$$\begin{aligned}
  \Psi_{20}=\,& [21^5.]+[1^6.1]+y_2[.1^7],\\
  \Psi_{25}=\,& [.2^31]+y_3[.2^21^3]+y_4[.1^7].
\end{aligned}$$
\par
We now turn to $\tw2D_9(q)$. For the principal block with (HCi) we find all
columns $\Psi_i$ but $\Psi_1$, $\Psi_{11}$, $\Psi_{13}$, $\Psi_{22}$
and~$\Psi_{26}$, as well as the projectives
$$\begin{aligned}
  \tPsi_{22}=&\, [.3^22]+y_3[1^2.21^4]+y_4[1^2.1^6],\\
  \tPsi_{26}=&\, [1^8.]+y_2[1^2.1^6].
\end{aligned}$$
Furthermore, we get $\Psi_1+\Psi_{11}$, $\Psi_1+\Psi_5$ and
$\Psi_{11}+\Psi_{13}$,
which yield $\Psi_1$, $\Psi_{11}$ and $\Psi_{13}$ by triangularity.
\par
The block for $\tw2D_9(q)$ labelled $\binom{0\ 1\ 2}{2}$ is now obtained as
usually. An application of (HCr) shows that all columns correspond to PIMs.
This completes the proof of Propositions~\ref{prop:2D82D9,d=6}
and~\ref{prop:2D7,d=6}.
\end{proof}

\section{Odd-dimensional orthogonal groups}

\begin{prop}   \label{prop:trees Bn d=6}
 Let $q$ be a prime power and $\ell$ a prime with $d_\ell(q)=6$.
 The Brauer trees of the unipotent $\ell$-blocks of $B_n(q)$ and of $C_n(q)$,
 $3\le n\le 6$, with cyclic defect are as given in Table~\ref{tab:Bn,d=6,def1}.
\end{prop}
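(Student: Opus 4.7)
The plan is to follow closely the strategy employed in the proofs of Propositions~\ref{prop:trees Dn d=6} and~\ref{prop:trees 2Dn d=6}. As noted in the paper and in \cite[\S13.8]{Ca}, the unipotent characters of $B_n(q)$ and $C_n(q)$ are parametrised by the same combinatorial data, and by \cite{FS} the block distribution of unipotent characters at primes $\ell$ with $d_\ell(q)=6$ depends only on this combinatorial datum. Hence it suffices to run the argument once, and the resulting Brauer trees will be identical for $B_n(q)$ and $C_n(q)$.

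First, I would identify, for each $3\le n\le 6$, the unipotent $\ell$-blocks of cyclic defect. By the theory of $d$-cuspidal pairs (Cabanes--Enguehard, Brou\'e--Malle--Michel~\cite{BMM}), these correspond to $6$-cuspidal pairs $(\bL,\la)$ where $\bL$ is a proper $6$-split Levi subgroup containing a Sylow $\Phi_6$-torus of rank~$1$ in its centre. This produces the list of blocks covered by the table, and by Fong--Srinivasan \cite{FS90} their Brauer trees are lines. The ordinary vertices are exactly the unipotent characters lying in the block (read off from Lusztig's combinatorics), together with one exceptional vertex when $(q^2-q+1)_\ell>\ell$.

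Next, I would construct all edges by Harish-Chandra induction, exactly as in the proof for $D_n(q)$. Starting with Brauer trees of $\Phi_6$-blocks of proper Levi subgroups (e.g.\ $B_{n-1}(q)$, $C_{n-1}(q)$, $D_{n-1}(q)$, $A_{n-1}(q)$ and products of these with smaller tori), applying (HCi) yields projective characters whose unipotent parts are sums of two consecutive ordinary vertices along the target tree; the method (HCr), combined with the known $|\Irr\cH|$ for the relative Hecke algebras (obtained analogously to Lemma~\ref{lem:param2Dn,d=6}, noting that the relevant liftable cuspidal Brauer characters of proper Levi subgroups are all already determined by the cyclic defect case), pins down each edge uniquely. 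The labelling of the edges by Harish-Chandra series is then inherited from the Levi source used in the induction step, which also forces the planar embedding of the tree via the ordering of characters according to $a$-value.

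The main technical point will be the three trees listed in Table~\ref{tab:Bn,d=6,def1} whose edges lie in the Harish-Chandra series of the cuspidal unipotent Brauer character $B_2$ of a Levi subgroup of type $B_2$ (respectively $B_2A_2$). For these trees, reduction stability of the cuspidal $B_2$-character follows from Example~\ref{exmp:hecke}(a) since it lies in a block of cyclic defect and is liftable to an ordinary cuspidal character of $B_2(q)$; the Hecke algebra of this cuspidal pair in $B_n(q)$ is therefore the $\ell$-reduction of an Iwahori--Hecke algebra of type $B_{n-2}$ with explicit parameters determined locally inside minimal Levi overgroups, exactly as in Lemma~\ref{lem:paramB4,d=2}. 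Once this is in place, Harish-Chandra induction of the $B_2$-PIM from $B_2\times(\text{smaller group})$ and the application of (HCr) produce the required edges and finish the proof.
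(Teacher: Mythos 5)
Your proposal follows essentially the same route as the paper, which in fact prints no proof for this proposition beyond citing Fong--Srinivasan \cite{FS90} and remarking that such trees are easily obtained by Harish-Chandra induction: identify the cyclic blocks via $6$-cuspidal pairs, use the known open-polygon shape, transfer between $B_n$ and $C_n$ by the identical combinatorics, and pin down the edges by (HCi)/(HCr) together with the relative Hecke-algebra data. Only minor slips to flag: the relevant parameter lemma here is Lemma~\ref{lem:paramBn,d=6} (not Lemma~\ref{lem:paramB4,d=2}, which concerns $d=2$, nor is there a $B_2A_2$ series at $d=6$ — that occurs in Table~\ref{tab:Bn,d=3,def1}); the exceptional vertex is present in every tree since its multiplicity $((q^2-q+1)_\ell-1)/6\ge1$; and all but one of the fourteen trees — not three — carry a $B_2$-series edge.
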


Here $B_2$ denotes the Harish-Chandra series of the ordinary cuspidal unipotent
character of $B_2(q)$, and $B_3^s$ the cuspidal $\ell$-modular constituent of
the unipotent character $[B_2\co.1]$ of $B_3(q)$.

\begin{table}[ht]
\caption{Brauer trees for $B_n(q)$ and $C_n(q)$ ($3\le n\le6$), $7\le\ell| (q^2-q+1)$} \label{tab:Bn,d=6,def1}
$$\vbox{\offinterlineskip\halign{$#$
        \vrule height10pt depth 2pt width 0pt&& \hfil$#$\hfil\cr
B_3(q):& 3.& \vr& 2.1& \vr& 1.1^2& \vr& .1^3& \vr& \bigcirc& \vr& B_2\co.1& \vr& B_2\co1.\cr
\cr
B_4(q):& 4.& \vr& 2.2& \vr& 1.21& \vr& .21^2& \vr& \bigcirc& \vr& B_2\co.1^2& \vr& B_2\co1^2.\cr
\cr
 & 31.& \vr& 21.1& \vr& 1^2.1^2& \vr& .1^4& \vr& \bigcirc& \vr& B_2\co.2& \vr& B_2\co2.\cr
\cr
B_5(q):& 5.& \vr& 2.3& \vr& 1.31& \vr& .31^2& \vr& \bigcirc& \vr& B_2\co1.1^2& \vr& B_2\co1^2.1\cr
\cr
 & 41.& \vr& 21.2& \vr& 1^2.21& \vr& .21^3& \vr& \bigcirc& \vr& B_2\co.21& \vr& B_2\co21.\cr
\cr
 & 31^2.& \vr& 21^2.1& \vr& 1^3.1^2& \vr& .1^5& \vr& \bigcirc& \vr& B_2\co1.2& \vr& B_2\co2.1\cr
\cr
 & 32.& \vr& 22.1& \vr& 1^2.1^3& \vr& 1.1^4& \vr& \bigcirc& \vr& B_2\co.3& \vr& B_2\co3.\cr
\cr
 & 4.1& \vr& 3.2& \vr& 1.22& \vr& .221& \vr& \bigcirc& \vr& B_2\co.1^3& \vr& B_2\co1^3.\cr
\cr
B_6(q):& 5.1& \vr& 3.3& \vr& 1.32& \vr& .321& \vr& \bigcirc& \vr& B_2\co1.1^3& \vr& B_2\co1^3.1\cr
\cr
 & 42.& \vr& 2^2.2& \vr& 1^2.21^2& \vr& 1.21^3& \vr& \bigcirc& \vr& B_2\co.31& \vr& B_2\co31.\cr
\cr
 & 33.& \vr& 2^2.1^2& \vr& 21.1^3& \vr& 2.1^4& \vr& \bigcirc& \vr& B_2\co.4& \vr& B_2\co4.\cr
\cr
 & 41.1& \vr& 31.2& \vr& 1^2.2^2& \vr& .2^21^2& \vr& \bigcirc& \vr& B_2\co.21^2& \vr& B_2\co21^2.\cr
\cr
 & 4.1^2& \vr& 3.21& \vr& 2.2^2& \vr& .2^3& \vr& \bigcirc& \vr& B_2\co.1^4& \vr& B_2\co1^4.\cr
\cr
 & 321.& \vr& 2^21.1& \vr& 1^3.1^3& \vr& 1.1^5& \vr& \bigcirc& \vr& B_2\co1.3& \vr& B_2\co3.1\cr
  &&  ps&& ps&&  ps&& .1^3&& B_3^s&& B_2\cr
\cr\cr
B_5(q):& .5& \vr& 1.4& \vr& 1^2.3& \vr& 1^3.2& \vr& 1^4.1& \vr& 1^5.& \vr& \bigcirc\cr
  &&  ps&& ps&&  ps&& ps&& ps&& 1^5.\cr
  }}$$
\end{table}

We will encounter the following Hecke algebras:

\begin{lem}   \label{lem:paramBn,d=6}
 Let $q$ be a prime power and $\ell|(q^2-q+1)$. The Hecke algebras of various
 $\ell$-modular cuspidal pairs $(L,\la)$ of Levi subgroups $L$ in $B_n(q)$
 and their respective numbers of irreducible characters are as given in
 Table~\ref{tab:hecke Bn,d=6}.
\end{lem}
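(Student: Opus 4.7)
The plan is to mimic closely the strategy carried out in Lemmas~\ref{lem:paramDn,d=2}, \ref{lem:paramDn,d=6}, \ref{lem:param2Dn,d=6} and~\ref{lem:paramBn,d=2}. For each cuspidal pair $(L,\la)$ appearing in Table~\ref{tab:hecke Bn,d=6}, my argument will split into four routine steps: (i) identify the relative Weyl group $W_G(L)=N_G(\bL)/L$ inside the Weyl group of $G$ (which has type $B_n$ in both cases) using Howlett's tables~\cite[p.~70]{Ho80}; (ii) determine the parameters of the characteristic~$0$ Iwahori--Hecke algebra $\cH_G(L,\widetilde\la)$ locally by identifying each minimal $F$-stable Levi overgroup $M\supsetneq L$ and computing the single parameter of $\cH_M(L,\widetilde\la)$ from the ratio of the two ordinary constituents of $\RLM(\widetilde\la)$, where $\widetilde\la$ is a suitable ordinary lift of $\la$ (see \cite[Lemma~3.17]{GHM2} and the tables in \cite[p.~464]{Ca}); (iii) verify reduction stability of the underlying $\cO L$-lattice via Proposition~\ref{prop:redstab} or Corollary~\ref{cor:dechecke}; and finally (iv) count $|\Irr\cH|$ by feeding the $\ell$-modular reduction of the resulting generic Iwahori--Hecke algebra to the programme of Jacon~\cite{Ja05}.

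For step~(iii) the essential input is Table~\ref{tab:Bn,d=6,def1}: each cuspidal Brauer character $\la$ appearing in the table sits on a Brauer tree of cyclic $\ell$-defect in $L$, and is hence, by Green~\cite{Gr74} (as recalled in Example~\ref{exmp:hecke}(a)), the unique composition factor of the $\ell$-modular reduction of an appropriate $\cO L$-lattice $\widetilde\la$ with simple head~$\la$. Because $\bG$ is of classical type $B_n$ or $C_n$, neither the ambient algebraic group nor any of its $F$-stable Levi subgroups admits a non-trivial diagram automorphism on its derived subgroup; by the discussion in Section~\ref{sec:veri}, this makes condition~(2) of Corollary~\ref{cor:dechecke} automatic as soon as one checks that $N_G(\bL,b)$ fixes $\widetilde\la$, which in turn follows from the absence of non-trivial symmetries of the relevant Brauer trees.

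The main obstacle I anticipate is step~(ii) for the cuspidal series labelled $B_3^s$ in $L=B_3(q)$, whose natural lift is the non-cuspidal ordinary character $\widetilde\la=[B_2\co.1]$ adjacent to the exceptional vertex of the Brauer tree rather than sitting at its end. Here, computing the parameter of $\cH_M(L,\widetilde\la)$ inside each minimal overgroup---typically of the form $B_4$ and $B_3A_1$---requires isolating the two ordinary constituents of the Lusztig induction $\RLM(\widetilde\la)$ and comparing their generic degrees explicitly. This is routine with \Chevie{}~\cite{Mi15} but, unlike the pure Steinberg case treated in the earlier $d=6$ lemmas, relies on the explicit Lusztig parametrisation of the cuspidal unipotent character of $B_2(q)$. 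Once all parameters are pinned down, step~(i) gives the Weyl group type, step~(iii) supplies the modular Hecke algebra, and the counts $|\Irr\cH|$ then follow routinely from Jacon's programme.
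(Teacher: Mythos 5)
Your overall architecture (relative Weyl group from Howlett, parameters computed locally in minimal Levi overgroups via degree ratios, reduction stability from the cyclic-defect situation via Example~\ref{exmp:hecke}(a), then Jacon's programme for $|\Irr\cH|$) is the same as the paper's. The gap is in the identification of the ordinary lifts, and it is fatal in exactly the case you flag as the "main obstacle". You propose to lift $B_3^s$ to the ordinary character $[B_2\co.1]$; but that character is \emph{not cuspidal} (it lies in the ordinary Harish-Chandra series of the cuspidal unipotent character of a $B_2$-Levi), so the lattice it produces is not a cuspidal $\cO L$-lattice, and none of Proposition~\ref{prop:redstab}, Corollary~\ref{cor:dechecke} or the local parameter computation of \cite[Lemma~3.17]{GHM2} applies to it --- the endomorphism algebra of $\RLG$ of a non-cuspidal module is not an Iwahori--Hecke algebra of $W_G(L,X)$ at all. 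Had one nevertheless worked in the $B_2$-series one would moreover land on the wrong parameters ($q^3$ at the $B_1$-node instead of $q$).

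The correct lift, and the actual content of the paper's proof, is the ordinary \emph{cuspidal, non-unipotent} character sitting at the exceptional vertex of the Brauer tree in Table~\ref{tab:Bn,d=6,def1}: for $B_3^s$ this is a character in the Lusztig series $\cE(L,s)$ of an $\ell$-element $s$ with $C_{L^*}(s)$ a torus of order $q^3+1$; Green's theorem applied to \emph{that} vertex and the incident edge $B_3^s$ yields a cuspidal lattice with simple head $B_3^s$, and the parameters in the minimal overgroups $B_4$ and $B_3A_1$ (where $C(s)$ is computed explicitly) come out as $q$ and $q$. The same mechanism is what pins down the parameter $q^3$ for $\vhi_{1^5.}$ of $B_5(q)$ (lift in the series of $s$ with centraliser $B_2(q)(q^3+1)$, becoming $B_3(q)(q^3+1)$ in $B_6(q)$) and for $\vhi_{1^6}$ of $A_5(q)$ (centraliser $(q^6-1)/(q-1)$, becoming $\GU_2(q^3)$ in $B_6(q)$); your proposal leaves these lifts unspecified. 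Two smaller points: Green's lattice is uniserial with simple head, not with a unique composition factor; and your blanket claim that no Levi of $B_n$ carries a non-trivial diagram automorphism is false for the $A_5$-Levi --- the paper avoids this by invoking the cyclic-defect argument of Example~\ref{exmp:hecke}(a) directly rather than the general discussion of Section~2.\ref{sec:veri}.
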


\begin{table}[ht]
\caption{Hecke algebras and $|\Irr\cH|$ in $B_n(q)$ for $d_\ell(q)=6$}   \label{tab:hecke Bn,d=6}
$\begin{array}{l|c|ccccc}
 (L,\la)& \qquad\qquad\cH\qquad\qquad\qquad & n=5& 6& 7& 8\cr
\hline
   (B_2,B_2)& \cH(B_{n-2};q^3;q)& 5+0& 6+2+2& 12+4& 13+15\cr
  (B_3,\vhi_{.1^3})& \cH(B_{n-3};q;q)& 5& 6+3& 12+6& 10+20\cr
 (B_3,B_3^s)& \cH(B_{n-3};q;q)& 5& 6+3& 12+6& 10+20\cr
   (A_5,\vhi_{1^6})& \cH(B_{n-5};q^3;q)& -& 1& 1+2& 5\cr
  (B_5,\vhi_{1^5.})& \cH(B_{n-5};q^3;q)& 1& 1& 1+2& 5\cr
\end{array}$
\end{table}

\begin{proof}
In all cases, reduction stability for the cuspidal Brauer characters follows
with Example~\ref{exmp:hecke}(a) as Sylow $\ell$-subgroups of $L$ are cyclic.
The cuspidal $\ell$-modular character $B_2$ is the $\ell$-modular reduction of
the cuspidal unipotent character of $B_2(q)$, thus its Hecke algebra is the
reduction of the one in characteristic~0. The Brauer tree in
Table~\ref{tab:Bn,d=6,def1} shows that
the cuspidal $\ell$-modular Brauer character $B_3^s$ of $B_3(q)$ is a
constituent of the $\ell$-modular reduction of an ordinary cuspidal character
$\la$ in the Lusztig series of an $\ell$-element with centraliser a torus of
order $q^3+1$. The minimal Levi overgroups are of types $B_4$ and $B_3A_1$,
hence the parameters of the Hecke algebra for $\la$ are as given.
\par
Again, by the shape of the Brauer tree the cuspidal $\ell$-modular
Brauer character $\vhi_{1^5.}$ of $B_5(q)$ is liftable to an ordinary
cuspidal character lying in the Lusztig series of an $\ell$-element with
centraliser $B_2(q)(q^3+1)$, and with centraliser $B_3(q)(q^3+1)$ in $B_6(q)$.
Thus its Hecke algebra in $B_6(q)$ is $\cH(B_1;q^3)$. Similarly, the cuspidal
$\ell$-modular Steinberg character $\vhi_{1^6}$ of $A_5(q)$ is liftable to an
ordinary cuspidal character in the Lusztig series of an $\ell$-element with
centraliser $(q^6-1)/(q-1)$, and with centraliser $\GU_2(q^3)$ in
$B_6(q)$. Thus again its Hecke algebra in $B_6(q)$ is $\cH(B_1;q^3)$.
\end{proof}

\begin{prop}   \label{prop:B6,d=6}
 Assume $(T_\ell)$. The decomposition matrix for the principal $\ell$-block
 of $B_6(q)$ for primes $\ell$ with $(q^2-q+1)_\ell>7$ is as given in
 Table~\ref{tab:B6,d=6}.
\end{prop}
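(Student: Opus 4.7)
The plan is to combine Harish-Chandra induction of PIMs from proper Levi subgroups with the standard toolbox (HCi), (HCr), (Tri), (DL) and (Red), exactly as was done for $D_6(q)$, $\tw2D_7(q)$ and the blocks of $D_8(q)$ and $\tw2D_8(q)$ treated in the preceding propositions. The decomposition matrices of all proper Levi subgroups are known: inside $B_6(q)$ the relevant maximal Levi subgroups are of types $B_5$, $A_5$, $B_3A_2$, $B_2A_3$ etc., and for each of them the principal series PIMs come from (the appropriate twist of) James' result and the cyclic-defect Brauer trees in Proposition~\ref{prop:trees Bn d=6} supply the remaining PIMs in those groups. Harish-Chandra inducing all of these PIMs and cutting by the principal block of $B_6(q)$ will produce projective characters $\Psi_i$ whose unipotent parts are, up to a small number of exceptions, exactly the columns listed in Table~\ref{tab:B6,d=6}.

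Next I would determine the Harish-Chandra series of each PIM. By (Csp) and the fact that the centraliser of a Sylow $\ell$-subgroup of $B_6(q)$ is contained in a Levi subgroup of type $B_5$ when $(q^2-q+1)_\ell>7$, the Harish-Chandra vertices of the cuspidal Brauer characters all come from Levi subgroups of types $B_2$, $B_3$, $A_5$ and $B_5$. For each of these, Lemma~\ref{lem:paramBn,d=6} gives the structure of the relative Hecke algebra after reduction modulo $\ell$; the number of simple $\cH$-modules (computable via Jacon's programme) then tells us precisely how many PIMs of $B_6(q)$ lie in each series. Matching these numbers with the induced projectives, the possible splittings of the induced projectives are then constrained by (HCr) and usually forced to be unique, as in the proofs of Propositions~\ref{prop:D6,d=6} and~\ref{prop:2D7,d=6}. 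The Steinberg PIM is cuspidal by (St) since a Sylow $\ell$-subgroup is not contained in any proper $1$-split Levi subgroup of $B_6(q)$.

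The main obstacle will be the few columns corresponding to cuspidal $\ell$-modular Brauer characters whose PIMs are not produced directly by Harish-Chandra induction (together with the bottom-right entries of the Steinberg PIM). For these, by $(T_\ell)$ we may write the unipotent part as $[\chi]+\sum y_j[\rho_j]$ with unknowns $y_j\ge0$ running over the unipotent characters of strictly higher $a$-value in comparable families. To determine the $y_j$, the plan is to mimic the argument given for $\tw2D_7(q)$ in Proposition~\ref{prop:2D7,d=6}: pick appropriate elements $w\in W$ (starting with a Coxeter element, then elements of larger length whose Deligne--Lusztig characters isolate the relevant cuspidal PIMs), use (DL) together with the verification that the PIM in question does not occur in any $R_v$ for $v<w$ to get lower bounds, and use (Red) with suitable pairs $(L,\rho)$ of centralisers of regular semisimple $\ell$-elements (which exist by Proposition~\ref{prop:q+1 reg} and Example~\ref{exmp:reg}(a), whose hypotheses are guaranteed by $(q^2-q+1)_\ell>7$) to get matching upper bounds. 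Finally, Theorem~\ref{thm:genpos} pins down the multiplicity of the Steinberg PIM in each cuspidal column, and a last pass of (HCr)---possibly combined with Harish-Chandra induction to $B_7(q)$ and restriction back---will confirm indecomposability of every listed projective character.
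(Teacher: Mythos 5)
Your overall outline (Harish-Chandra induction from $B_5(q)$, counting PIMs in each series via the Hecke algebras of Lemma~\ref{lem:paramBn,d=6}, then attacking the cuspidal columns with $(T_\ell)$, (DL), (Red) and Theorem~\ref{thm:genpos}) is the right skeleton and matches the paper's first half. But there is a genuine gap in the final step. Within $B_6(q)$ the combination of (DL) and (Red) does \emph{not} produce "matching upper bounds" that pin down the unknown entries of the four cuspidal columns $\Psi_{14}$ ($B_6$), $\Psi_{22}$ ($B_2\co.2^2$), $\Psi_{26}$ ($1^6.$) and $\Psi_{27}$ ($.1^6$): it only yields linear relations and inequalities among the unknowns $a_1,\dots,a_{19}$ (e.g. $a_{17}=0$, $a_{18}=2+a_{16}$, $a_{12}=2-a_4-a_6+a_7+a_9$, $a_{19}\le 2$). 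The actual vanishing of $a_4,\dots,a_{11}$, $a_{16}$ and $a_{19}$ is obtained in the paper only by computing entire unipotent $\ell$-blocks of $B_7(q)$ and $B_8(q)$ (Proposition~\ref{prop:B78,d=6}), where the uni-triangularity assumption $(T_\ell)$ for those larger groups, combined with (HCi) and (HCr), forces the relevant coefficients to be zero; for $a_{11}$ and $a_7$ one even has to go up to blocks of $B_9(q)$ and $B_{10}(q)$. Your proposal mentions induction to $B_7(q)$ only as a final indecomposability check, which is not where the work lies; without the higher-rank computations the proof is incomplete.

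Two smaller points. First, your appeal to (Csp) is inverted: the centraliser of a Sylow $\ell$-subgroup of $B_6(q)$ is \emph{not} contained in any proper $1$-split Levi subgroup (a Sylow $\Phi_6$-torus has rank~$2$ here, while a Levi of type $B_5$ carries only one factor $\Phi_6$), which is precisely why $B_6(q)$ has the four cuspidal unipotent Brauer characters appearing in Table~\ref{tab:B6,d=6}; as stated, your sentence would imply there are none, contradicting your own later discussion. Second, the existence of the regular $\ell$-elements needed for (Red) follows from Example~\ref{exmp:reg}(a) (type $C_n$/duality), using that $\ell\ge 7$ and $(q^2-q+1)_\ell>7$ force $(q^2-q+1)_\ell>12$, rather than from Proposition~\ref{prop:q+1 reg}, which concerns $d=2$.
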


\begin{table}[ht]
{\small\caption{$B_6(q)$, $(q^2-q+1)_\ell>7$}   \label{tab:B6,d=6}
$$\vbox{\offinterlineskip\halign{$#$\hfil\ \vrule height10pt depth3pt&
      \hfil\ $#$\ \hfil\vrule&& \hfil\ $#$\hfil\cr
       6.& 1& 1\cr
       .6& \hlf q& .& 1\cr
      51.& \hlf q& 1& .& 1\cr
      2.4& \hlf q^3& 1& 1& .& 1\cr
 B_2\co2^2.& \hlf q^3& .& .& .& .& 1\cr
      .51& \hlf q^4& .& 1& .& .& .& 1\cr
    41^2.& \hlf q^4& .& .& 1& .& .& .& 1\cr
     1.41& \hlf q^5& .& 1& .& 1& .& 1& .& 1\cr
 B_2\co21.1\!& \hlf q^5& .& .& .& .& 1& .& .& .& 1\cr
     21.3& q^5& 1& .& 1& 1& .& .& .& .& .& 1\cr
   1^2.31& \frth q^7& .& .& .& 1& .& .& .& 1& .& 1& 1\cr
   21^2.2& \frth q^7& .& .& 1& .& .& .& 1& .& .& 1& .& 1\cr
B_2\co1^2.2& \frth q^7& .& .& .& .& .& .& .& .& 1& .& .& .& 1\cr
      B_6& \frth q^7& .& .& .& .& .& .& .& .& .& .& .& .& .& 1\cr
    31^3.& \hlf q^9& .& .& .& .& .& .& 1& .& .& .& .& .& .& .& 1\cr
    .41^2& \hlf q^9& .& .& .& .& .& 1& .& 1& .& .& .& .& .& .& .& 1\cr
B_2\co2.1^2& \hlf q^9& .& .& .& .& .& .& .& .& 1& .& .& .& .& .& .& .& 1\cr
   21^3.1& \hlf q^{11}& .& .& .& .& .& .& 1& .& .& .& .& 1& .& .& 1& .& .& 1\cr
 B_2\co1.21\!& \hlf q^{11}& .& .& .& .& 1& .& .& .& 1& .& .& .& 1& .& .& .& 1& .& 1\cr
   1^3.21& q^{11}& .& .& .& .& .& .& .& .& .& 1& 1& 1& .& .& .& .& .& .& .& 1\cr
  1^4.1^2& \hlf q^{15}& .& .& .& .& .& .& .& .& .& .& .& 1& .& .& .& .& .& 1& .& 1& 1\cr
 B_2\co.2^2& \hlf q^{15}& .& .& .& .& 1& .& .& .& .& .& .& .& .& .& .& .& .& .& 1& .& .& 1\cr
    21^4.& \hlf q^{16}& .& .& .& .& .& .& .& .& .& .& .& .& .& .& 1& .& .& 1& .& .& .& .& 1\cr
    .31^3& \hlf q^{16}& .& .& .& .& .& .& .& 1& .& .& 1& .& .& .& .& 1& .& .& .& .& .& .& .& 1\cr
    .21^4& \hlf q^{25}& .& .& .& .& .& .& .& .& .& .& 1& .& .& .& .& .& .& .& .& 1& .& .& .& 1& 1\cr
     1^6.& \hlf q^{25}& .& .& .& .& .& .& .& .& .& .& .& .& .& 2\!& .& .& .& 1& .& .& 1& .& 1& .& .& 1\cr
     .1^6& q^{36}& .& .& .& .& .& .& .& .& .& .& .& .& .& .& .& .& .& .& .& 1& 1& 2& .& .& 1& .& 1\cr
\noalign{\hrule}
  & & ps& ps& ps& ps& B_2& ps& ps& ps& B_2& ps& ps& ps& B_3^s\!& c& ps& .1^3\!& B_3^s\!& ps& B_3^s\!& ps& A_5& c& 1^5\!.& .1^3\!& .1^3\!& c& c\cr
  }}$$}
\end{table}

\begin{proof}
The Hecke algebras for the cuspidal $\ell$-modular Brauer character
$1^5.$ of $B_5(q)$ and the cuspidal $\ell$-modular Steinberg character of
$A_5(q)$ were determined in Lemma~\ref{lem:paramBn,d=6}. As neither is
semisimple modulo~$\ell$, there is just one PIM of $B_6(q)$ in either series.
\par
All columns except for those numbered 14, 22, 26 and~27 are obtained by
Harish-Chandra induction from a Levi subgroup of type $B_5$. According to the
description of the Hecke algebras in Lemma~\ref{lem:paramBn,d=6}, with this we
have accounted for all PIMs from proper Harish-Chandra series, so the remaining
four PIMs must be cuspidal. By triangularity, their non-zero entries lie below
the diagonal, and we denote them by $a_1,\ldots,a_{13}$ for $\Psi_{14}$,
by $a_{14},\ldots,a_{18}$ for $\Psi_{22}$ and by $a_{19}$ for $\Psi_{26}$.
Using the order on families we actually have $a_i = 0$ for $i \in \{1,2,3,14,15\}$.
\par
To get relations we use the combination of (DL) and (Red). Let
$w$ be a Coxeter element. The coefficients of $R_w$ on $\Psi_{26}$ and
$\Psi_{27}$ are equal to $-a_{17}$ and $2+a_{16}-a_{18}+a_{17}a_{19}$ respectively.
Therefore by (DL) we must have $a_{17} = 0$ and $a_{18} \leq 2+a_{16}$. On the
other hand, if $(q^2-q+1)_\ell > 7$ and $\ell > 7$ then $(q^2-q+1)_\ell > 12$.
By Example \ref{exmp:reg}(a) there exists a regular $\ell$-element of $G^*$.
Therefore one can invoke (Red) to get $a_{18} \geq2 + a_{16}$ hence
$a_{18}=2+a_{16}$. We go on to the next Deligne--Lusztig character $R_w$ for which
$\Psi_{26}$ and $\Psi_{27}$ can potentially occur. It corresponds to
$w =  s_1s_2s_3s_2s_1s_2s_3s_4s_5s_6$.
By (DL), the coefficients on these two PIMs give $2-a_4-a_6+a_7+a_9-a_{12} \geq 0$ and
$$\begin{aligned}
-(1+a_{19})&\underbrace{(2-a_4-a_6+a_7+a_9-a_{12})}_{\geq0 \text{ by (DL)}}\\ & -\underbrace{(-2+2a_4+2a_5+2a_6-2a_7-2a_8-a_9+a_{10}-a_{11}+a_{12}+a_{13})}_{\geq 0 \text{ by (Red)}}\geq0. 
\end{aligned}$$
Therefore we must have $a_{12} = 2-a_4-a_6+a_7+a_9$ and
$a_{13}=2-2a_4-2a_5-2a_6+2a_7+2a_8+a_9-a_{10}+a_{11}-a_{12}$, hence
$a_{13} =-a_4-2a_5-a_6+ a_7+2a_8-a_{10}+a_{11}$. Finally, the coefficient of $\Psi_{27}$
in $R_w$ for $w= s_1s_2s_3s_4s_3s_2s_1s_2s_3s_4s_5s_6$ is $48-24a_{19}$.
By (DL) this forces $a_{19} \leq 2$. 
\par
It will be shown in the proofs for $B_7(q)$ and $B_8(q)$ in
Proposition~\ref{prop:B78,d=6} that $a_i=0$ for $i \in \{4,\ldots,11,16,19\}$.
All columns are indecomposable by (HCr), respectively by the decomposition
numbers for the principal series Hecke algebras.
\end{proof}

As already for $\tw2D_7(q)$ in order to obtain further information on the
decomposition matrix for $B_6(q)$ we will consider blocks of larger groups.

\begin{prop}   \label{prop:B78,d=6}
 Assume $(T_\ell)$. The decomposition matrices for the unipotent
 $\ell$-blocks of $B_7(q)$ and $B_8(q)$ of non-cyclic defect for primes
 $\ell$ with $(q^2-q+1)_\ell>7$ are as given in Tables~\ref{tab:B7,d=6},
 \ref{tab:B8,d=6,bl12}, \ref{tab:B8,d=6,bl34} and~\ref{tab:B8,d=6,bl56}, except
 that for the block labelled $\binom{0\ 1\ 2}{1\ 2}$ the 9th column, labelled
 $B_6$, might not be indecomposable. \par
 Here, $a$ is as in Proposition~\ref{prop:B6,d=6} and we have
 $b_3=2-b_1+b_2$.
\end{prop}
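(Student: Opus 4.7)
The plan is to follow the strategy used in the previous propositions of this chapter, proving the three claims---the decomposition matrices for the non-cyclic-defect unipotent blocks of $B_7(q)$ and $B_8(q)$, together with the relation $b_3 = 2-b_1+b_2$ and the vanishing of the unknowns $a_i$ for $i\in\{4,\ldots,11,16,19\}$ left over from Proposition~\ref{prop:B6,d=6}---simultaneously, by bootstrapping from smaller groups.

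First, I would apply (HCi) from Levi subgroups of types $B_6$, $B_7$ (and suitable $A_k$ factors), together with (HCr) from even larger groups, to produce projective characters corresponding to almost every column. For each non-cuspidal Harish-Chandra series the number of PIMs is controlled by Lemma~\ref{lem:paramBn,d=6}, and (HCr) then identifies indecomposable summands just as in Propositions~\ref{prop:B6,d=6} and~\ref{prop:2D82D9,d=6}. Since a Sylow $\ell$-subgroup of $B_n(q)$ in our defect situation for $n=7,8$ is no longer contained in any proper 1-split Levi subgroup, (St) and (Csp) force the existence of cuspidal PIMs; triangularity $(T_\ell)$ then places the unknown entries strictly below the diagonal, yielding the ansatz for each cuspidal column with nonnegative integer unknowns.

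Second, the key new input is to use these larger groups to determine the residual $B_6(q)$-unknowns. The approach is to Harish-Chandra induce the three cuspidal projectives of Table~\ref{tab:B6,d=6} to $B_7(q)$, cut by the appropriate blocks, and decompose the result in the PIM basis determined above; any negative coefficient forces the corresponding $a_i$ to vanish, which should eliminate all the parameters listed. The same pipeline---(HCi)/(HCr) from one rank higher, combined with (DL) applied to carefully chosen Deligne--Lusztig characters $R_w$ (Coxeter elements and a few longer elements where the new cuspidal PIMs first appear), and (Red) applied to centralisers of $\Phi_6$-tori (legitimate by Example~\ref{exmp:reg}(a) and the hypothesis $(q^2-q+1)_\ell>7$)---should then pin down the cuspidal entries in the tables for $B_7(q)$ and $B_8(q)$. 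In particular, the relation $b_3 = 2-b_1+b_2$ should emerge by equating a (DL)-nonnegative coefficient of the Steinberg PIM in a Deligne--Lusztig character against its (Red)-nonpositive counterpart coming from the trivial character of a suitable $\Phi_6$-split maximal torus.

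The hardest step will be the column labelled $B_6$ in the block $\binom{0\ 1\ 2}{1\ 2}$ of $B_8(q)$, which lifts (up to lower-$a$-value corrections) the cuspidal unipotent character of the Levi subgroup $B_6(q)$. Unlike the other columns, I do not expect Harish-Chandra restriction arguments to rule out further summands---which is precisely why the proposition explicitly allows it to be decomposable---since the usual combination of (HCr) with the Hecke-algebra counts in Lemma~\ref{lem:paramBn,d=6} leaves too much slack to exclude a contribution from lower Harish-Chandra series. For every other column, a final pass of (HCr), combined with the known decomposition of the relevant modular Hecke algebra and with (Red) applied via $\Phi_6$-split Levi subgroups to exclude spurious summands, should certify indecomposability and complete the proof.
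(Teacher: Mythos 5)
Your overall strategy coincides with the paper's: build the columns by (HCi)/(HCr) using the Hecke-algebra counts of Lemma~\ref{lem:paramBn,d=6}, feed the cuspidal projectives of $B_6(q)$ into higher-rank groups to kill the leftover parameters $a_i$, and extract the relation among the $b_i$ from a Deligne--Lusztig character (the paper gets $b_3=2-b_1+b_2$ from (DL) for the Coxeter element alone; your (DL)+(Red) variant is a harmless difference). You also correctly single out the $B_6$-column of the block $\binom{0\ 1\ 2}{1\ 2}$ as the one whose indecomposability cannot be certified.

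There is, however, a genuine gap in the central step. You claim that Harish-Chandra inducing the three cuspidal PIMs of $B_6(q)$ to $B_7(q)$ ``eliminates all the parameters listed''. It does not: at rank~$7$ the induced projective containing $[B_6\co1^2]$ exhibits only $a_4$ and $a_5$ in positions where $(T_\ell)$ and (HCr) give a contradiction, so $B_7(q)$ yields just $a_4=a_5=0$. The paper must pass to three \emph{different} blocks of $B_8(q)$ to obtain $a_6=a_8=0$ (principal block), $a_{16}=0$ (block $\binom{0\ 1\ 2}{1\ 2}$) and $a_9=a_{10}=a_{19}=0$ (block $\binom{1\ 2}{0}$), and even $B_8(q)$ does not suffice: the vanishing of $a_{11}$ and $a_7$ requires the unipotent blocks of $B_9(q)$ labelled $\binom{1\ 2\ 3}{0\ 1}$ and of $B_{10}(q)$ labelled $\binom{2\ 3}{0}$, together with $(T_\ell)$ for those groups. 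Your ``one rank higher'' pipeline therefore does not close the argument. A secondary inaccuracy: for $n=7,8$ a Sylow $\ell$-subgroup of $B_n(q)$ \emph{is} contained in the proper $1$-split Levi subgroup of type $B_{n-1}$, whose order already carries the full $\Phi_6$-part $\Phi_6^2$; what governs cuspidality is the centraliser criterion (Csp), and indeed $B_7(q)$ has no cuspidal unipotent Brauer characters in these blocks, while $B_8(q)$ acquires them only in the block labelled $\binom{0\ 1\ 2}{}$ (where the $b_i$ live).
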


Here $B_6^t$ denotes the Harish-Chandra series of the cuspidal $\ell$-modular
constituent of the unipotent character $B_2\co.2^2$ of $B_6(q)$, $B_2^a$ the
series of the $\ell$-modular cuspidal unipotent character
$B_2\sqtens\vhi_{1^6}$ of $B_2A_5$.

\begin{table}[htbp]
\caption{$B_7(q)$, $(q^2-q+1)_\ell>7$, blocks $\binom{1}{}$ and $\binom{0\ 1}{1}$}   \label{tab:B7,d=6}
{\small
$$\vbox{\offinterlineskip\halign{$#$\hfil\ \vrule height9pt depth0pt&&
      \hfil\ $#$\hfil\cr
       7.& 1\cr
      52.& 1& 1\cr
      1.6& .& .& 1\cr
      2.5& 1& .& 1& 1\cr
  B_2\co32.& .& .& .& .& 1\cr
     1.51& .& .& 1& 1& .& 1\cr
     421.& .& 1& .& .& .& .& 1\cr
 B_2\co31.1& .& .& .& .& 1& .& .& 1\cr
    2^2.3& 1& 1& .& 1& .& .& .& .& 1\cr
    .51^2& .& .& .& .& .& 1& .& .& .& 1\cr
B_2\co1^2.3& .& .& .& .& .& .& .& 1& .& .& 1\cr
   2^21.2& .& 1& .& .& .& .& 1& .& 1& .& .& 1\cr
B_2\co3.1^2& .& .& .& .& .& .& .& 1& .& .& .& .& 1\cr
   321^2.& .& .& .& .& .& .& 1& .& .& .& .& .& .& 1\cr
   1.41^2& .& .& .& 1& .& 1& .& .& .& 1& .& .& .& .& 1\cr
 1^2.31^2& .& .& .& 1& .& .& .& .& 1& .& .& .& .& .& 1& 1\cr
 2^21^2.1& .& .& .& .& .& .& 1& .& .& .& .& 1& .& 1& .& .& 1\cr
 B_2\co1.31& .& .& .& .& 1& .& .& 1& .& .& 1& .& 1& .& .& .& .& 1\cr
  B_6\co1^2& .& .& .& .& .& .& .& .& .& .& .& .& .& .& .& .& .& .& 1\cr
 1^3.21^2& .& .& .& .& .& .& .& .& 1& .& .& 1& .& .& .& 1& .& .& .& 1\cr
  B_2\co.32& .& .& .& .& 1& .& .& .& .& .& .& .& .& .& .& .& .& 1& .& .& 1\cr
   1.31^3& .& .& .& .& .& .& .& .& .& 1& .& .& .& .& 1& 1& .& .& .& .& .& 1\cr
  2^21^3.& .& .& .& .& .& .& .& .& .& .& .& .& .& 1& .& .& 1& .& .& .& .& .& 1\cr
  1^4.1^3& .& .& .& .& .& .& .& .& .& .& .& 1& .& .& .& .& 1& .& .& 1& .& .& .& 1\cr
   1.21^4& .& .& .& .& .& .& .& .& .& .& .& .& .& .& .& 1& .& .& .& 1& .& 1& .& .& 1\cr
     1^7.& .& .& .& .& .& .& .& .& .& .& .& .& .& .& .& .& 1& .& 2& .& .& .& 1& 1& .& 1\cr
    1.1^6& .& .& .& .& .& .& .& .& .& .& .& .& .& .& .& .& .& .& .& 1& 2& .& .& 1& 1& .& 1\cr
\noalign{\hrule}
 & ps& ps& ps& ps& B_2\!& ps& ps& B_2\!& ps& .1^3\!& B_3^s\!& ps& B_3^s\!& ps& ps& ps& ps& B_3^s\!& B_6\!& ps& B_6^t\!& .1^3\!& 1^5\!.\!& A_5\!& .1^3\!& 1^6\!.\!& .1^6\!\cr
\omit& \vphantom{A}\cr
\omit& \vphantom{A}\cr
       .7& 1\cr
      6.1& .& 1\cr
     51.1& .& 1& 1\cr
      3.4& 1& 1& .& 1\cr
     31.3& .& 1& 1& 1& 1\cr
B_2\co2^21.& .& .& .& .& .& 1\cr
      .52& 1& .& .& .& .& .& 1\cr
   41^2.1& .& .& 1& .& .& .& .& 1\cr
     1.42& 1& .& .& 1& .& .& 1& .& 1\cr
   31^2.2& .& .& 1& .& 1& .& .& 1& .& 1\cr
B_2\co21^2.1\!& .& .& .& .& .& 1& .& .& .& .& 1\cr
    B_6\co2& .& .& .& .& .& .& .& .& .& .& .& 1\cr
   1^2.32& .& .& .& 1& 1& .& .& .& 1& .& .& .& 1\cr
B_2\co1^3.2& .& .& .& .& .& .& .& .& .& .& 1& .& .& 1\cr
     .421& .& .& .& .& .& .& 1& .& 1& .& .& .& .& .& 1\cr
   31^3.1& .& .& .& .& .& .& .& 1& .& 1& .& .& .& .& .& 1\cr
  1^3.2^2& .& .& .& .& 1& .& .& .& .& 1& .& .& 1& .& .& .& 1\cr
    31^4.& .& .& .& .& .& .& .& .& .& .& .& .& .& .& .& 1& .& 1\cr
B_2\co2.1^3& .& .& .& .& .& .& .& .& .& .& 1& .& .& .& .& .& .& .& 1\cr
   .321^2& .& .& .& .& .& .& .& .& 1& .& .& .& 1& .& 1& .& .& .& .& 1\cr
   21^4.1& .& .& .& .& .& .& .& .& .& 1& .& .& .& .& .& 1& .& 1& .& .& 1\cr
B_2\co1.21^2\!& .& .& .& .& .& 1& .& .& .& .& 1& .& .& 1& .& .& .& .& 1& .& .& 1\cr
  1^5.1^2& .& .& .& .& .& .& .& .& .& 1& .& .& .& .& .& .& 1& .& .& .& 1& .& 1\cr
B_2\co.2^21& .& .& .& .& .& 1& .& .& .& .& .& .& .& .& .& .& .& .& .& .& .& 1& .& 1\cr
  .2^21^3& .& .& .& .& .& .& .& .& .& .& .& .& 1& .& .& .& 1& .& .& 1& .& .& .& .& 1\cr
    1^6.1& .& .& .& .& .& .& .& .& .& .& .& 2& .& .& .& .& .& 1& .& .& 1& .& 1& .& .& 1\cr
     .1^7& .& .& .& .& .& .& .& .& .& .& .& .& .& .& .& .& 1& .& .& .& .& .& 1& 2& 1& .& 1\cr
\noalign{\hrule}
 & ps& ps& ps& ps& ps& B_2\!& ps& ps& ps& ps& B_2\!& B_6\!& ps& B_3^s\!& .1^3\!& ps& ps& 1^5\!.\!& B_3^s\!& .1^3\!& ps& B_3^s\!& A_5\!& B_6^t\!& .1^3\!& 1^6\!.\!& .1^6\!\cr
   }}$$}
\end{table}

\begin{table}[htbp]
{\small\caption{$B_8(q)$, $(q^2-q+1)_\ell>7$, blocks $\binom{2}{}$ and $\binom{0\ 1\ 2}{1\ 2}$}   \label{tab:B8,d=6,bl12}
$$\vbox{\offinterlineskip\halign{$#$\hfil\ \vrule height9pt depth0pt&&
      \hfil\ $#$\hfil\cr
       8.& 1\cr
      53.& 1& 1\cr
      2.6& 1& .& 1\cr
  B_2\co42.& .& .& .& 1\cr
     1.61& .& .& 1& .& 1\cr
 B_2\co41.1& .& .& .& 1& .& 1\cr
     431.& .& 1& .& .& .& .& 1\cr
     2.51& 1& .& 1& .& 1& .& .& 1\cr
    .61^2& .& .& .& .& 1& .& .& .& 1\cr
B_2\co4.1^2& .& .& .& .& .& 1& .& .& .& 1\cr
   2^2.31& 1& 1& .& .& .& .& .& 1& .& .& 1\cr
B_2\co1^2.4& .& .& .& .& .& 1& .& .& .& .& .& 1\cr
  3^21^2.& .& .& .& .& .& .& 1& .& .& .& .& .& 1\cr
   2.41^2& .& .& .& .& 1& .& .& 1& 1& .& .& .& .& 1\cr
  21.31^2& .& .& .& .& .& .& .& 1& .& .& 1& .& .& 1& 1\cr
 B_2\co1.41& .& .& .& 1& .& 1& .& .& .& 1& .& 1& .& .& .& 1\cr
  2^21.21& .& 1& .& .& .& .& 1& .& .& .& 1& .& .& .& .& .& 1\cr
2^21^2.1^2& .& .& .& .& .& .& 1& .& .& .& .& .& 1& .& .& .& 1& 1\cr
21^2.21^2& .& .& .& .& .& .& .& .& .& .& 1& .& .& .& 1& .& 1& .& 1\cr
  B_2\co.42& .& .& .& 1& .& .& .& .& .& .& .& .& .& .& .& 1& .& .& .& 1\cr
   B_6\co.2& .& .& .& .& .& .& .& .& .& .& .& .& .& .& .& .& .& .& .& .& 1\cr
   2.31^3& .& .& .& .& .& .& .& .& 1& .& .& .& .& 1& 1& .& .& .& .& .& .& 1\cr
 21^3.1^3& .& .& .& .& .& .& .& .& .& .& .& .& .& .& .& .& 1& 1& 1& .& .& .& 1\cr
  2^21^4.& .& .& .& .& .& .& .& .& .& .& .& .& 1& .& .& .& .& 1& .& .& .& .& .& 1\cr
   2.21^4& .& .& .& .& .& .& .& .& .& .& .& .& .& .& 1& .& .& .& 1& .& .& 1& .& .& 1\cr
    21^6.& .& .& .& .& .& .& .& .& .& .& .& .& .& .& .& .& .& 1& .& .& 2& .& 1& 1& .& 1\cr
    2.1^6& .& .& .& .& .& .& .& .& .& .& .& .& .& .& .& .& .& .& 1& 2& .& .& 1& .& 1& .& 1\cr
\noalign{\hrule}
 & ps& ps& ps& B_2\!& ps& B_2\!& ps& ps& .1^3\!& B_3^s\!& ps& B_3^s\!& ps& ps& ps& B_3^s\!& ps& ps& ps& B_6^t\!& B_6\!& \!.1^3\!& A_5\!& 1^5\!.& \!.1^3\!& \!1^6\!.& \!.1^6\!\cr
\omit& \vphantom{A}\cr
\omit& \vphantom{A}\cr
      .71& 1\cr
    6.1^2& .& 1\cr
   51.1^2& .& 1& 1\cr
      .62& 1& .& .& 1\cr
     3.41& 1& 1& .& .& 1\cr
    31.31& .& 1& 1& .& 1& 1\cr
     2.42& 1& .& .& 1& 1& .& 1\cr
B_2\co2^21^2.\!& .& .& .& .& .& .& .& 1\cr
 B_6\co1^2.& .& .& .& .& .& .& .& .& 1\cr
 41^2.1^2& .& .& 1& .& .& .& .& .& .& 1\cr
  31^2.21& .& .& 1& .& .& 1& .& .& .& 1& 1\cr
B_2\co21^3.1\!& .& .& .& .& .& .& .& 1& .& .& .& 1\cr
    21.32& .& .& .& .& 1& 1& 1& .& .& .& .& .& 1\cr
 21^2.2^2& .& .& .& .& .& 1& .& .& .& .& 1& .& 1& 1\cr
B_2\co1^4.2& .& .& .& .& .& .& .& .& .& .& .& 1& .& .& 1\cr
 31^3.1^2& .& .& .& .& .& .& .& .& .& 1& 1& .& .& .& .& 1\cr
    .42^2& .& .& .& 1& .& .& 1& .& .& .& .& .& .& .& .& .& 1\cr
 21^4.1^2& .& .& .& .& .& .& .& .& .& .& 1& .& .& 1& .& 1& .& 1\cr
   .32^21& .& .& .& .& .& .& 1& .& .& .& .& .& 1& .& .& .& 1& .& 1\cr
    31^5.& .& .& .& .& .& .& .& .& .& .& .& .& .& .& .& 1& .& .& .& 1\cr
B_2\co2.1^4& .& .& .& .& .& .& .& .& .& .& .& 1& .& .& .& .& .& .& .& .& 1\cr
   21^5.1& .& .& .& .& .& .& .& .& 2& .& .& .& .& .& .& 1& .& 1& .& 1& .& 1\cr
B_2\co1.21^3\!& .& .& .& .& .& .& .& 1& .& .& .& 1& .& .& 1& .& .& .& .& .& 1& .& 1\cr
  1^6.1^2& .& .& .& .& .& .& .& .& 2& .& .& .& .& 1& .& .& .& 1& .& .& .& 1& .& 1\cr
  .2^31^2& .& .& .& .& .& .& .& .& .& .& .& .& 1& 1& .& .& .& .& 1& .& .& .& .& .& 1\cr
B_2\co.2^21^2\!& .& .& .& .& .& .& .& 1& .& .& .& .& .& .& .& .& .& .& .& .& .& .& 1& .& .& 1\cr
     .1^8& .& .& .& .& .& .& .& .& .& .& .& .& .& 1& .& .& .& .& .& .& .& .& .& 1& 1& 2& 1\cr
\noalign{\hrule}
 & ps& ps& ps& ps& ps& ps& ps& B_2\!& B_6\!& ps& ps& B_2\!& ps& ps& B_3^s\!& ps& .1^3\!& ps& .1^3\!& 1^5\!.& B_3^s\!& 1^6\!.& B_3^s\!& A_5\!& \!.1^3\!& B_6^t\!& \!.1^6\!\cr
  }}$$}
\end{table}

\begin{table}[htbp]
{\small\caption{$B_8(q)$, $(q^2-q+1)_\ell>7$, blocks $\binom{0\ 2}{1}$ and $\binom{1\ 2}{0}$}   \label{tab:B8,d=6,bl34}
$$\vbox{\offinterlineskip\halign{$#$\hfil\ \vrule height9pt depth0pt&&
      \hfil\ $#$\hfil\cr
7.1& 1\cr
1.7& .& 1\cr
3.5& 1& 1& 1\cr
52.1& 1& .& .& 1\cr
B_2\co321.& .& .& .& .& 1\cr
421.1& .& .& .& 1& .& 1\cr
1.52& .& 1& 1& .& .& .& 1\cr
B_2\co31^2.1& .& .& .& .& 1& .& .& 1\cr
32.3& 1& .& 1& 1& .& .& .& .& 1\cr
321.2& .& .& .& 1& .& 1& .& .& 1& 1\cr
.521& .& .& .& .& .& .& 1& .& .& .& 1\cr
321^2.1& .& .& .& .& .& 1& .& .& .& 1& .& 1\cr
1.421& .& .& 1& .& .& .& 1& .& .& .& 1& .& 1\cr
B_6\co1.1& .& .& .& .& .& .& .& .& .& .& .& .& .& 1\cr
B_2\co1^3.3& .& .& .& .& .& .& .& 1& .& .& .& .& .& .& 1\cr
1^2.321& .& .& 1& .& .& .& .& .& 1& .& .& .& 1& .& .& 1\cr
B_2\co3.1^3& .& .& .& .& .& .& .& 1& .& .& .& .& .& .& .& .& 1\cr
321^3.& .& .& .& .& .& .& .& .& .& .& .& 1& .& .& .& .& .& 1\cr
2^21^3.1& .& .& .& .& .& .& .& .& .& 1& .& 1& .& .& .& .& .& 1& 1\cr
1.321^2& .& .& .& .& .& .& .& .& .& .& 1& .& 1& .& .& 1& .& .& .& 1\cr
B_2\co1.31^2& .& .& .& .& 1& .& .& 1& .& .& .& .& .& .& 1& .& 1& .& .& .& 1\cr
1^3.2^21& .& .& .& .& .& .& .& .& 1& 1& .& .& .& .& .& 1& .& .& .& .& .& 1\cr
B_2\co.321& .& .& .& .& 1& .& .& .& .& .& .& .& .& .& .& .& .& .& .& .& 1& .& 1\cr
1^5.1^3& .& .& .& .& .& .& .& .& .& 1& .& .& .& .& .& .& .& .& 1& .& .& 1& .& 1\cr
1.2^21^3& .& .& .& .& .& .& .& .& .& .& .& .& .& .& .& 1& .& .& .& 1& .& 1& .& .& 1\cr
1^7.1& .& .& .& .& .& .& .& .& .& .& .& .& .& 2& .& .& .& 1& 1& .& .& .& .& 1& .& 1\cr
1.1^7& .& .& .& .& .& .& .& .& .& .& .& .& .& .& .& .& .& .& .& .& .& 1& 2& 1& 1& .& 1\cr
\noalign{\hrule}
 & ps& ps& ps& ps& B_2\!& ps& ps& B_2\!& ps& ps& .1^3\!& ps& ps& B_6\!& B_3^s\!& ps& B_3^s\!& 1^5\!.& ps& .1^3\!& B_3^s\!& ps& B_6^t\!& A_5\!& .1^3\!& 1^6\!.& .1^6\!\cr
\omit& \vphantom{A}\cr
\omit& \vphantom{A}\cr
71.& 1\cr
62.& 1& 1\cr
1^2.6& .& .& 1\cr
B_2\co3^2.& .& .& .& 1\cr
21.5& 1& .& 1& .& 1\cr
1^2.51& .& .& 1& .& 1& 1\cr
42^2.& .& 1& .& .& .& .& 1\cr
B_2\co31.2& .& .& .& 1& .& .& .& 1\cr
2^2.4& 1& 1& .& .& 1& .& .& .& 1\cr
B_2\co21.3& .& .& .& .& .& .& .& 1& .& 1\cr
B_2\co3.21& .& .& .& .& .& .& .& 1& .& .& 1\cr
32^21.& .& .& .& .& .& .& 1& .& .& .& .& 1\cr
1^2.41^2& .& .& .& .& 1& 1& .& .& 1& .& .& .& 1\cr
B_2\co2.31& .& .& .& 1& .& .& .& 1& .& 1& 1& .& .& 1\cr
2^3.2& .& 1& .& .& .& .& 1& .& 1& .& .& .& .& .& 1\cr
2^31.1& .& .& .& .& .& .& 1& .& .& .& .& 1& .& .& 1& 1\cr
.51^3& .& .& .& .& .& 1& .& .& .& .& .& .& .& .& .& .& 1\cr
1.41^3& .& .& .& .& .& 1& .& .& .& .& .& .& 1& .& .& .& 1& 1\cr
2^31^2.& .& .& .& .& .& .& .& .& .& .& .& 1& .& .& .& 1& .& .& 1\cr
1^2.31^3& .& .& .& .& .& .& .& .& 1& .& .& .& 1& .& .& .& .& 1& .& 1\cr
B_6\co.1^2& .& .& .& .& .& .& .& .& .& .& .& .& .& .& .& .& .& .& .& .& 1\cr
B_2\co.3^2& .& .& .& 1& .& .& .& .& .& .& .& .& .& 1& .& .& .& .& .& .& .& 1\cr
1^3.21^3& .& .& .& .& .& .& .& .& 1& .& .& .& .& .& 1& .& .& .& .& 1& .& .& 1\cr
1^2.21^4& .& .& .& .& .& .& .& .& .& .& .& .& .& .& .& .& .& 1& .& 1& .& .& 1& 1\cr
1^4.1^4& .& .& .& .& .& .& .& .& .& .& .& .& .& .& 1& 1& .& .& .& .& .& .& 1& .& 1\cr
1^2.1^6& .& .& .& .& .& .& .& .& .& .& .& .& .& .& .& .& .& .& .& .& .& 2& 1& 1& 1& 1\cr
1^8.& .& .& .& .& .& .& .& .& .& .& .& .& .& .& .& 1& .& .& 1& .& 2& .& .& .& 1& .& 1\cr
\noalign{\hrule}
 & ps& ps& ps& B_2\!& ps& ps& ps& B_2\!& ps& B_3^s\!& B_3^s\!& ps& ps& B_3^s\!& ps& ps& .1^3\!& .1^3\!& 1^5\!.& ps& B_6\!& B_6^t\!& ps& .1^3\!& A_5\!& .1^6\!& 1^6\!.\cr
  }}$$}
\end{table}

\begin{table}[htbp]
{\small\caption{$B_8(q)$, $(q^2-q+1)_\ell>7$, blocks $\binom{0\ 1}{2}$ and $\binom{0\ 1\ 2}{}$}   \label{tab:B8,d=6,bl56}
$$\vbox{\offinterlineskip\halign{$#$\hfil\ \vrule height9pt depth0pt&&
      \hfil\ $#$\hfil\cr
        .8& 1\cr
       6.2& .& 1\cr
      51.2& .& 1& 1\cr
       4.4& 1& 1& .& 1\cr
      41.3& .& 1& 1& 1& 1\cr
       .53& 1& .& .& .& .& 1\cr
    41^2.2& .& .& 1& .& 1& .& 1\cr
  B_6\co2.& .& .& .& .& .& .& .& 1\cr
B_2\co2^3.& .& .& .& .& .& .& .& .& 1\cr
      1.43& 1& .& .& 1& .& 1& .& .& .& 1\cr
    41^3.1& .& .& .& .& .& .& 1& .& .& .& 1\cr
      .431& .& .& .& .& .& 1& .& .& .& 1& .& 1\cr
    31^3.2& .& .& .& .& 1& .& 1& .& .& .& 1& .& 1\cr
\!B_2\co21^2.1^2\!& .& .& .& .& .& .& .& .& 1& .& .& .& .& 1\cr
   1^2.3^2& .& .& .& 1& 1& .& .& .& .& 1& .& .& .& .& 1\cr
B_2\co1^3.21& .& .& .& .& .& .& .& .& .& .& .& .& .& 1& .& 1\cr
     41^4.& .& .& .& .& .& .& .& .& .& .& 1& .& .& .& .& .& 1\cr
B_2\co21.1^3& .& .& .& .& .& .& .& .& .& .& .& .& .& 1& .& .& .& 1\cr
    21^4.2& .& .& .& .& .& .& .& .& .& .& 1& .& 1& .& .& .& 1& .& 1\cr
   .3^21^2& .& .& .& .& .& .& .& .& .& 1& .& 1& .& .& 1& .& .& .& .& 1\cr
\!B_2\co1^2.21^2\!& .& .& .& .& .& .& .& .& 1& .& .& .& .& 1& .& 1& .& 1& .& .& 1\cr
   1^4.2^2& .& .& .& .& 1& .& .& .& .& .& .& .& 1& .& 1& .& .& .& .& .& .& 1\cr
    1^5.21& .& .& .& .& .& .& .& .& .& .& .& .& 1& .& .& .& .& .& 1& .& .& 1& 1\cr
     1^6.2& .& .& .& .& .& .& .& 2& .& .& .& .& .& .& .& .& 1& .& 1& .& .& .& 1& 1\cr
B_2\co.2^3& .& .& .& .& .& .& .& .& 1& .& .& .& .& .& .& .& .& .& .& .& 1& .& .& .& 1\cr
   .2^21^4& .& .& .& .& .& .& .& .& .& .& .& .& .& .& 1& .& .& .& .& 1& .& 1& .& .& .& 1\cr
     .21^6& .& .& .& .& .& .& .& .& .& .& .& .& .& .& .& .& .& .& .& .& .& 1& 1& .& 2& 1& 1\cr
\noalign{\hrule}
 & ps& ps& ps& ps& ps& ps& ps& B_6\!& \!B_2\!& ps& ps& .1^3\!& ps& B_2\!& ps& \!B_3^s\!& 1^5\!.& \!B_3^s\!& ps& .1^3\!& \!B_3^s\!& ps& A_5& 1^6\!.& B_6^t\!& .1^3\!& .1^6\!\cr
\omit& \vphantom{A}\cr
\omit& \vphantom{A}\cr
 B_2\co6.& 1\cr
B_2\co51.& 1& 1\cr
     4^2.& .& .& 1\cr
B_2\co41^2.& .& 1& .& 1\cr
     43.1& .& .& 1& .& 1\cr
   42.1^2& .& .& .& .& 1& 1\cr
 B_2\co.6& 1& .& .& .& .& .& 1\cr
B_2\co31^3.& .& .& .& 1& .& .& .& 1\cr
    3^2.2& .& .& .& .& 1& .& .& .& 1\cr
    32.21& .& .& 1& .& 1& 1& .& .& 1& 1\cr
   41.1^3& .& .& .& .& .& 1& .& .& .& .& 1\cr
  31.21^2& .& .& .& .& .& 1& .& .& .& 1& 1& 1\cr
B_2\co.51& 1& 1& .& .& .& .& 1& .& .& .& .& .& 1\cr
B_2\co21^4.& .& .& .& .& .& .& .& 1& .& .& .& .& .& 1\cr
  2^2.2^2& .& .& 1& .& .& .& .& .& .& 1& .& .& .& .& 1\cr
  21.2^21& .& .& .& .& .& .& .& .& 1& 1& .& 1& .& .& 1& 1\cr
    4.1^4& .& .& .& .& .& .& .& .& .& .& 1& .& .& .& .& .& 1\cr
   3.21^3& .& .& .& .& .& .& .& .& .& .& 1& 1& .& .& .& .& 1& 1\cr
 2.2^21^2& .& .& .& .& .& .& .& .& .& .& .& 1& .& .& .& 1& .& 1& 1\cr
 B_2\co1^6.& .& .& .& .& .& .& .& .& .& .& .& .& .& 1& .& .& .& .& .& 1\cr
B_2\co.41^2& .& 1& .& 1& .& .& .& .& .& .& .& .& 1& .& .& .& .& .& .& .& 1\cr
  1^2.2^3& .& .& .& .& .& .& .& .& 1& .& .& .& .& .& .& 1& .& .& .& .& .& 1\cr
   1.2^31& .& .& .& .& .& .& .& .& .& .& .& .& .& .& 1& 1& .& .& 1& .& .& 1& 1\cr
     .2^4& .& .& .& .& .& .& .& .& .& .& .& .& .& .& 1& .& .& .& .& .& .& .& 1& 1\cr
B_2\co.31^3& .& .& .& 1& .& .& .& 1& .& .& .& .& .& .& .& .& .& .& .& .& 1& .& .& b_1& 1\cr
B_2\co.21^4& .& .& .& .& .& .& .& 1& .& .& .& .& .& 1& .& .& .& .& .& .& .& .& .& b_2& 1& 1\cr
 B_2\co.1^6& .& .& .& .& .& .& .& .& .& .& .& .& .& 1& .& .& .& .& .& 1& .& .& .& b_3& .& 1& 1\cr
\noalign{\hrule}
 & B_2\!& B_2\!& ps& B_2\!& ps& ps& B_3^s\!& B_2\!& ps& ps& ps& ps& B_3^s\!& B_2\!& ps& ps& .1^3\!& .1^3\!& .1^3\!& B_2^a\!& B_3^s\!& .1^3\!& .1^3\!& c& \!B_3^s\!& \!B_3^s\!& c\cr
  }}$$}
\end{table}

\begin{proof}
Let us first consider the blocks of $B_7(q)$. In the principal block, (HCi)
yields all columns except for the 19th, and a projective character $\Psi$ with
unipotent part $a_4[2^21^2.1]+a_5[B_2\co1.31]+[B_6\co1^2]$
plus further constituents with larger $a$-value. By our assumption of
triangularity, $\Psi-a_4\Psi_{17}-a_5\Psi_{18}$
must then also be a projective character. By (HCr) this is only the case when
$a_4=a_5=0$. 
\par
Next, consider the blocks of $B_8(q)$. Here, the principal block shows that we
must have $a_6=a_8=0$, the block with label $\binom{0\ 1\ 2}{1\ 2}$ gives
$a_{16}=0$, and the block with label $\binom{1\ 2}{0}$ forces that
$a_9=a_{10}=a_{19}=0$. Thus we have obtained all information that had been
missing in the proof of Proposition~\ref{prop:B6,d=6}. The correctness of the
six printed decomposition matrices is now verified as in our previous proofs.
The relation between the $b_i$ in the block labelled $\binom{0\ 1\ 2}{}$ is
obtained from (DL) using the Coxeter element.
\par
Finally, to conclude that $a_{11}=0$ (resp. $a_7 =0$) we have to go up to the
unipotent block of $B_9(q)$ (resp.~$B_{10}(q)$) labelled by
$\binom{1\ 2\ 3}{0\ 1}$ (resp.~by $\binom{2\ 3}{0}$) and invoke ($T_l$)
(we do not print the corresponding decomposition matrices).
\end{proof}

\section{Symplectic groups}

Finally, we consider the symplectic groups. The Brauer trees here are the same
as for the odd-dimensional orthogonal groups and had already been given in
Proposition~\ref{prop:trees Bn d=6}. The arguments used to determine the
decomposition matrices for blocks of defect 2 involve only Harish-Chandra
induction/restriction, unipotent Deligne--Lusztig characters (which depend
only on $(W,F)$) and the existence of $\ell$-regular elements (which are
similar for $G$ and $G^*$ when $\ell$ is odd, see Remark \ref{rem:dual}).
Consequently, under our assumptions on $\ell$, the unipotent part of the
decomposition matrices of the unipotent $\ell$-blocks of $C_n(q)$
and $B_n(q)$ are identical.

\begin{cor}   \label{cor:C6,d=6}
 Assume $(T_\ell)$. The decomposition matrices for the unipotent $\ell$-blocks
 of $C_n(q)$, $n=6,7,8$, of non-cyclic defect for primes $\ell$ with
 $(q^2-q+1)_\ell>7$ are the same as for $B_n(q)$ and hence as given in
 Tables~\ref{tab:B6,d=6}--\ref{tab:B8,d=6,bl56}, except possibly for the values
 of the yet unknown entries $b_1$ and $b_2$ (which need not be the same as for
 type $B_n$).
\end{cor}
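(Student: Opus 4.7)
The plan is to show that every single tool used in the proofs of Propositions~\ref{prop:B6,d=6} and~\ref{prop:B78,d=6} transfers verbatim from $B_n(q)$ to $C_n(q)$, the only potential obstruction being the explicit value of the unknown entries $b_1$ and $b_2$ in the block $\binom{0\ 1\ 2}{}$ of $B_8(q)$ (resp.~$C_8(q)$), which cannot be pinned down by these tools alone.

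First, I would note that by Lusztig's classification the unipotent characters of $B_n(q)$ and $C_n(q)$ are parametrized by the same combinatorial data (symbols, or equivalently bipartitions), that this bijection preserves $a$-values, families, Harish-Chandra series, and the block distribution determined by the Brou\'e--Malle--Michel theory (the $6$-cuspidal pairs correspond under duality). In particular, the list of projective characters to be constructed and the triangular shape imposed by $(T_\ell)$ are identical in the two cases. Moreover the Weyl groups $(W,F)$ coincide, so the unipotent part of each Deligne--Lusztig character $R_w$ is the same in both groups; this makes the applications of (DL) yield identical inequalities.

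Next, I would verify the analogue of Lemma~\ref{lem:paramBn,d=6} for $C_n(q)$. All the cuspidal $\ell$-modular characters occurring in that lemma are liftable to characteristic~$0$ (via the Brauer trees of Proposition~\ref{prop:trees Bn d=6}, which coincide for $B_n$ and $C_n$), and the parameters of the relative Hecke algebras are determined by the local structure inside the minimal Levi overgroups. Since $B_n$ and $C_n$ have isomorphic Weyl groups and the minimal Levi overgroups are, up to Langlands duality, of the same types with corresponding cuspidal characters, one gets the same Hecke algebras with the same parameters, and reduction stability holds for the same reasons (Example~\ref{exmp:hecke}(a) applies as Sylow $\ell$-subgroups are still cyclic in the relevant Levi subgroups). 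Consequently (HCi), (HCr) and the $(\cH)$-argument produce the same projectives and the same indecomposability deductions. For (Red), I would invoke Remark~\ref{rem:dual}: since $\ell$ is very good (automatic here as $\ell\ge7$), the $F$-equivariant bijection between $\ell$-classes of $G$ and $G^*$ and the duality $B_n \leftrightarrow C_n$ imply that the centralizer conditions guaranteeing the existence of the $\ell$-elements $t$ used throughout the $B_n$ proofs hold verbatim for $C_n$, yielding the same inequalities on entries.

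Running the inductive arguments of Propositions~\ref{prop:B6,d=6} and~\ref{prop:B78,d=6} with $B$ replaced by $C$ throughout therefore pins down all entries of the decomposition matrices that were determined there, and in exactly the same columns. The single place where the argument cannot be transferred mechanically is the pair of unknowns $b_1,b_2$ in the block $\binom{0\ 1\ 2}{}$, which for $B_8(q)$ remained undetermined anyway; these are linked by the Coxeter-element computation via (DL), giving the relation $b_3=2-b_1+b_2$ exactly as in type $B$, but their individual values depend on the specific realization of a cuspidal PIM and may well differ between $B_8(q)$ and $C_8(q)$. The main (and only) expected obstacle is precisely this: showing that no further argument available in our toolkit distinguishes between the admissible values of $(b_1,b_2)$, which justifies listing this as the sole exception in the statement of the corollary.
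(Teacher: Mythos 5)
Your proposal is correct and follows essentially the same line of reasoning as the paper's (very terse) justification: the paper observes that all tools used for $B_n$ --- Harish-Chandra induction/restriction, unipotent Deligne--Lusztig characters (which depend only on $(W,F)$), and the existence of regular $\ell$-elements (transferred via Remark~\ref{rem:dual}) --- apply unchanged to $C_n$, so the unipotent parts of the decomposition matrices coincide. You have spelled out the same argument in more detail, including the transfer of the Hecke-algebra parameter computations and the correct identification of $b_1,b_2$ as the sole indeterminacy that the toolkit cannot resolve.
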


\section{Unipotent decomposition matrix of $F_4(q)$}

\begin{thm}   \label{thm:F4,d=6}
 Let $(q,6)=1$. The decomposition matrix for the principal $\ell$-block of
 $F_4(q)$ for primes $\ell\ge7$ with $(q^2-q+1)_\ell>7$ is approximated
 by Table~\ref{tab:F4,d=6}.

 Here, the unknown entries satisfy $y_6 = 1+y_1+y_2$, $z_3\leq 1$ and
 $x_1 + 3(z_1+z_2)\leq5$ (in particular all the $z_i$ are equal to either
 $0$ or $1$).
\end{thm}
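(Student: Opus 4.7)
My plan is to follow the template established in the earlier treatments of $F_4(q)$ (Theorems~\ref{thm:F4} and~\ref{thm:F4,d=3}) and $E_6(q)$ at $d=6$ (Theorem~\ref{thm:E6,d=6}). First I would produce all PIMs whose Brauer characters lie in proper Harish-Chandra series by combining (HCi) with uni-triangularity (Tri). The principal series PIMs come from the decomposition matrix of the Hecke algebra $\cH(F_4;q)$ computed by Geck in \cite[Tab.~7.13]{GJ11}; by work of Geck--M\"uller this is $\ell$-independent for $\ell\ge7$. For the Harish-Chandra series above the Steinberg PIMs of Levi subgroups $B_3,C_3$ (whose decomposition matrices at $d_\ell(q)=3$ are the ones of $\tw2D_4$ from Table~\ref{tab:2Dn,d=3,def1}, respectively of $B_3(q)$ from Proposition~\ref{prop:trees Bn d=6}) I would induce and decompose using (HCr); the relative Hecke algebra parameters follow from the analogues of Lemmas~\ref{lem:paramE6,d=6} and~\ref{lem:paramBn,d=6}. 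Likewise the series above the ordinary cuspidal unipotent character of $B_2$ and above the $\ell$-modular cuspidal characters arising inside $B_3(q)$ have to be handled via the corresponding relative Hecke algebras, whose parameters are determined locally inside minimal Levi overgroups as in the proof of Lemma~\ref{lem:paramE6,d=2}.

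Counting the resulting PIMs against the total number of unipotent Brauer characters in the principal block (which equals $|{\rm Irr}\,W_G(B_0)|$, with $W_G(B_0)\cong G_5$, so of cardinality as in Table~\ref{tab:3-blocks}) will show exactly how many cuspidal unipotent Brauer characters the block contains, and uni-triangularity $(T_\ell)$ forces the shape of the corresponding cuspidal columns. By (St) the $\ell$-modular Steinberg PIM is cuspidal; the remaining cuspidal PIMs will accommodate the ordinary cuspidal unipotent characters of $F_4$, namely $F_4[1],F_4[-1],F_4[i],F_4[-i],F_4[\theta],F_4[\theta^2]$, those of which lie in the principal block. I denote their unknown entries below the diagonal by $x_i,y_i,z_i$ to match the table.

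To obtain the three stated relations I would next compute suitable Deligne--Lusztig characters $R_w$ modulo known projectives and apply (DL) together with (Red). The relation $y_6=1+y_1+y_2$ should follow from pairing a long Weyl-group element against the PIM associated to $\phi_{1,24}$: the coefficient is non-negative by (DL), and using Proposition~\ref{prop:q+1 reg} (or rather its $\Phi_6$-analogue via Example~\ref{exmp:reg}(d)) there is a linear $\ell$-character in general position on a $6$-split torus or rank-$1$ Levi subgroup whose induction, by (Red), forces the opposite inequality. The constraint $x_1+3(z_1+z_2)\le 5$ will come from pairing against the cuspidal unipotent character of a $6$-split Levi $\tw3D_4(q).(q^2-q+1)$ (as in Theorem~\ref{thm:E6,d=6}), where the cuspidal character of $\tw3D_4(q)$ appears with multiplicity and forces the linear bound. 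The inequality $z_3\le1$ should fall out of (DL) applied to a shorter element where $\Psi_{19}$ first appears. Throughout, the analogous centraliser computations in \Chevie~\cite{Chv} as in the proof of Theorem~\ref{thm:E6,d=6} are needed to verify that $(q^2-q+1)_\ell>7$ suffices for the existence of the required regular $\ell$-elements.

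The main obstacle will be, as in Theorem~\ref{thm:F4,d=3}, two-fold: first, identifying precisely which Weyl elements $w$ make $\Psi_{21}$ or $\Psi_{20}$ first appear in $R_w$ so that (DL) applies, which requires keeping track of all lower-length decompositions; and second, pairing this with the \emph{matching} (Red) inequality --- one must find a $6$-split Levi subgroup and a unipotent character on it whose Deligne--Lusztig induction yields a bound of exactly the opposite sign, pinning down an equality rather than leaving a one-sided estimate. As in the $E_6$ and $E_8$ cases, residual ambiguity in the Galois-conjugate cuspidal pair $F_4[\theta],F_4[\theta^2]$ must also be dealt with by noting that rationality forces their unknown entries to coincide. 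Finally (HCr) verifies indecomposability of all non-cuspidal PIMs listed.
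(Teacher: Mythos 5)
Your overall strategy (build the non-cuspidal PIMs by (HCi)/Hecke algebras, then pin down the cuspidal columns by playing (DL) against (Red)) matches the paper's in spirit, but two essential ingredients are missing, and without them the argument does not close.

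First, the paper does not re-derive the block structure from scratch: it leans on K\"ohler's thesis \cite[Table~A.162]{Koe06}, which already establishes uni-triangularity, the $\ell$-modular Harish-Chandra series, \emph{and} the much stronger a priori fact that $F_4[1]$, $F_4[-1]$ and the cuspidal Brauer characters attached to $\phi_{2,16}'$, $\phi_{2,16}''$ and $B_2\co.1^2$ can occur only in the $\ell$-reduction of the Steinberg character $\phi_{1,24}$. This last restriction is not a consequence of $(T_\ell)$ or of the order on families (all of $F_4[1]$, $F_4[-1]$, $F_4[\ze_3]$, $F_4[\ze_3^2]$ lie in the big family, so triangularity alone would allow their PIMs to involve $\phi_{8,9}'$, $\phi_{8,9}''$, $\phi_{2,16}'$, $\phi_{2,16}''$, $B_2\co.1^2$ as well). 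It is precisely this reduction to the single unknowns $x_1,x_2,z_1,z_2,z_3$ that makes the subsequent (DL)/(Red) computations tractable; your proposal leaves each of these columns with several unknowns and gives no mechanism to eliminate them.

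Second, the bound $x_1+3(z_1+z_2)\le 5$ is obtained in the paper from $R_w$ with $w=(s_1s_2s_3s_4)^2$ by passing to the generalised $q^4$-eigenspace of $F$, which sharpens the plain (DL) inequality $25-x_1-9z_1-9z_2-14z_3\ge0$ (that only yields $z_3\le1$ and $z_1+z_2\le2$) to $5-x_1-3z_1-3z_2\ge0$. Your proposed substitute --- (Red) for a $6$-split Levi of type $\tw3D_4(q).(q^2-q+1)$ --- is problematic: in $F_4$ the subgroup $\tw3D_4(q)$ arises as the centraliser of a $\Phi_{12}$-torus, not of a $\Phi_6$-torus, so it is not $6$-split, and in any case (Red) controls multiplicities of Brauer constituents of $\varepsilon\RLG(\rho)^\circ$, which gives no handle on $x_1$ (the multiplicity of $F_4[1]$ in $\phi_{1,24}^\circ$) in the linear combination required. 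By contrast, the relation $y_6=1+y_1+y_2$ is derived in the paper by a simultaneous argument at the Coxeter element: the (DL) non-negativity of the coefficients of $\Psi_{18},\Psi_{19},\Psi_{20},\Psi_{21}$ combined with a single (Red) inequality for the maximal torus of order $(q^2-q+1)^2$ forces $y_1=y_3$, $y_2=y_4$, $y_5=1$ and the stated relation all at once; your sketch points in this direction but would need to be made precise along these lines.
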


\begin{table}[ht]
\caption{$F_4(q)$, $(q^2-q+1)_\ell > 7$, $(q,6)=1$}   \label{tab:F4,d=6}
$$\vbox{\offinterlineskip\halign{$#$\hfil\ \vrule height11pt depth4pt&
      \hfil\ $#$\ \hfil\vrule&& \hfil\ $#$\hfil\cr
  \phi_{1,0}&                                    1& 1\cr
 \phi_{2,4}'&                \hlf q\Ph4\Ph8\Ph{12}& .& 1\cr
\phi_{2,4}''&                \hlf q\Ph4\Ph8\Ph{12}& .& .& 1\cr
    B_2\co2.&               \hlf q\Ph1^2\Ph3^2\Ph8& .& .& .& 1\cr
 \phi_{8,3}'&                 q^3\Ph4^2\Ph8\Ph{12}& 1& 1& .& .& 1\cr
\phi_{8,3}''&                 q^3\Ph4^2\Ph8\Ph{12}& 1& .& 1& .& .& 1\cr
\phi_{12,4}&\frac{1}{24}q^4\Ph2^4\Ph3^2\Ph8\Ph{12}& 1& .& .& .& 1& 1& 1\cr
 \phi_{9,6}'&\frac{1}{8}q^4\Ph3^2\Ph4^2\Ph8\Ph{12}& .& 1& .& .& 1& .& .& 1\cr
\phi_{9,6}''&\frac{1}{8}q^4\Ph3^2\Ph4^2\Ph8\Ph{12}& .& .& 1& .& .& 1& .& .& 1\cr
     F_4[1]& \frac{1}{8}q^4\Ph1^4\Ph3^2\Ph8\Ph{12}& .& .& .& .& .& .& .& .& .& 1\cr
    B_2\co.2& \frth q^4\Ph1^2\Ph3^2\Ph4\Ph8\Ph{12}& .& .& .& 1& .& .& .& .& .& .& 1\cr
  B_2\co1^2.& \frth q^4\Ph1^2\Ph3^2\Ph4\Ph8\Ph{12}& .& .& .& 1& .& .& .& .& .& .& .& 1\cr
     F_4[-1]&   \frth q^4\Ph1^4\Ph3^2\Ph4^2\Ph{12}& .& .& .& .& .& .& .& .& .& .& .& .& 1\cr
  F_4[\ze_3]&      \thrd q^4\Ph1^4\Ph2^4\Ph4^2\Ph8& .& .& .& .& .& .& .& .& .& .& .& .& .& 1\cr
F_4[\ze_3^2]&      \thrd q^4\Ph1^4\Ph2^4\Ph4^2\Ph8& .& .& .& .& .& .& .& .& .& .& .& .& .& .& 1\cr
 \phi_{8,9}'&                 q^9\Ph4^2\Ph8\Ph{12}& .& .& .& .& 1& .& 1& 1& .& .& .& .& .& y_1& y_1& 1\cr
\phi_{8,9}''&                 q^9\Ph4^2\Ph8\Ph{12}& .& .& .& .& .& 1& 1& .& 1& .& .& .& .& y_2& y_2& .& 1\cr
\phi_{2,16}'&           \hlf q^{13}\Ph4\Ph8\Ph{12}& .& .& .& .& .& .& .& 1& .& .& .& .& .& y_1& y_1& 1& .& 1\cr
\phi_{2,16}''&          \hlf q^{13}\Ph4\Ph8\Ph{12}& .& .& .& .& .& .& .& .& 1& .& .& .& .& y_2& y_2& .& 1& .& 1\cr
  B_2\co.1^2&          \hlf q^{13}\Ph1^2\Ph3^2\Ph8& .& .& .& 1& .& .& .& .& .& .& 1& 1& .& 1&    1& .& .& .& .& 1\cr
 \phi_{1,24}&                                      q^{24}& .& .& .& .& .& .& 1& .& .& x_1& .& .& 2& y_6& y_6& 1& 1& z_1& z_2& z_3& 1\cr
\noalign{\hrule}
 \omit& & ps& ps& ps& B_2& ps& ps& ps& ps& ps& c& B_3& C_3& c& c& c& B_3& C_3& c& c& c& c\cr
   }}$$
\end{table}

\begin{proof}
K\"ohler \cite[Table~A.162]{Koe06} has shown uni-triangularity and determined
the $\ell$-modular Harish-Chandra series of the unipotent Brauer characters
under the stated assumptions on $\ell$.
He has also proved that the ordinary cuspidal characters $F_4[1]$ and $F_4[-1]$
can only occur in the reduction of the Steinberg character $\phi_{1,24}$.
We denote these multiplicities by $x_1$ and $x_2$. Similarly, he showed that
the cuspidal irreducible Brauer characters belonging to $\phi_{2,16}'$,
$\phi_{2,16}''$ and $B_2\co.1^2$ can occur only in the reduction of the
Steinberg character, and we denote by $z_1,z_2,z_3$ their respective
multiplicities. The projective characters corresponding to the Galois conjugate
cuspidal characters $F_4[\ze_3]$ and $F_4[\ze_3^2]$ can have potentially
several unipotent constituents. We will write the unipotent part of these
projective characters as
\begin{align*}
&& \Psi_{14} & = F_4[\ze_3] + y_1  \phi_{8,9}'+y_2\phi_{8,9}''+y_3\phi_{2,16}'
	+y_4\phi_{2,16}''+y_5B_2\co.1^2+y_6 \phi_{1,24} &&\\
\text{and} && \Psi_{15} & = F_4[\ze_3^2] + y_1  \phi_{8,9}'+y_2\phi_{8,9}''+y_3\phi_{2,16}'+y_4\phi_{2,16}''+y_5B_2\co.1^2+y_6 \phi_{1,24}. &&
\end{align*}
We now use (DL) to compute some of these coefficients. We start with the
Deligne--Lusztig character $R_w$ where $w$ is a Coxeter element.
The coefficients of $\Psi_{18}$, $\Psi_{19}$ and $\Psi_{20}$ in $R_w$ are
$2y_1-2y_3$, $2y_2-2y_4$ and $2-2y_5$ respectively and are all non-negative
by (DL). The coefficient
on $\Psi_{21}$ is
$$X = 2+2y_1+2y_2-2y_6 -z_1(2y_1-2y_3)-z_2(2y_2-2y_4)-z_3(2-2y_5) \geq 0.$$
Now if $(q^2-q+1)_\ell > 7$, we can use (Red) for the maximal torus of order
$(q^2-q+1)^2$ to obtain $-3-2y_1-2y_2+y_3+y_4+2y_5+y_6 \geq 0$. Adding twice
this non-negative number to $X$ we get
$$-(1+z_1)(2y_1-2y_3)-(1+z_2)(2y_2-2y_4)-(2+z_3)(2-2y_5) \geq 0$$
which forces $2y_1-2y_3=2y_2-2y_4=2-2y_5=0$ and $X = 0$. This gives
$$y_1=y_3,\quad y_2=y_4,\quad y_5 =1\quad\text{and}\quad y_6 = 1+y_1+y_2.$$
\par
We continue with the Deligne--Lusztig character associated with
$w =s_1s_2s_3s_4s_2s_3$. The coefficient of $\Psi_{21}$ in $R_w$ is $2-x_2$,
hence $x_2 \leq 2$ by (DL). On the other hand, (Red) applied to the case of a
torus gives $x_2 \geq 2$ hence $x_2 =2$.
\par
We finish with the Deligne--Lusztig character $R_w$ with
$w = s_1s_2s_3s_4s_1s_2s_3s_4$. The coefficient of the PIM $\Psi_{21}$ is
$25-x_1-9z_1-9z_2-14z_3$. Therefore by (DL) we must have $z_3 \leq 1$ and
$z_1+z_2 \leq 2$. If all of the $z_i$ are zero then we can only deduce that
$x_1 \leq 25$, which is not satisfactory. However, if one considers the
generalised $q^4$-eigenspace of $F$ on $R_w$, then the coefficient of
$\Psi_{21}$ is $5-x_1-3z_1-3z_2$, which gives $x_1 \leq 5$ if $z_1=z_2 =0$
or $x_1 \leq 2$ if $z_1$ or $z_2$ is non-zero.
\end{proof}

Again, we collect data on the $\Phi_6$-modular Harish-Chandra series in the
blocks $b$ considered in this section in Tables~\ref{tab:6-blocks},
\ref{tab:6-blocks,twisted} and~\ref{tab:6-blocks,Bn}. None of the decomposition
matrices determined here agree after any permutations of rows and columns,
so none of the blocks can be Morita equivalent.

\begin{table}[ht]
\caption{Modular Harish-Chandra-series in $\Phi_6$-blocks}  \label{tab:6-blocks}
$$\begin{array}{rc|cc|ccccccc}
 G& b& W_G(b)& |\IBr b|& ps& D_4& .1^4& A_5& .1^6& D_6^s& E_6\\
\hline
    D_6&    & G(6,2,2)& 18& 10& 2& 2& 2& 1& 1\\
    D_8&   2& G(6,2,2)& 18& 10& 2& 2& 2& 1& 1\\
    D_7&    & G(6,1,2)& 27& 14& 4& 4& 1& 2& 2\\
    D_8& 1,3& G(6,1,2)& 27& 14& 4& 4& 1& 2& 2\\
    E_6&    &      G_5& 21& 11& 2& 2& 1& & & 5\\
    E_8&   2&      G_5& 21& 11& 2& 2& 1& & & 5\\
\hline\hline
  &    &    &         & ps& B_2& B_3& C_3& c\\
\hline
    F_4&    &      G_5& 21&  8&  1& 2& 2& 8\\
\end{array}$$
\end{table}

\begin{table}[ht]
\caption{Modular Harish-Chandra-series in $\Phi_6$-blocks, twisted groups}  \label{tab:6-blocks,twisted}
$$\begin{array}{rc|cc|ccccccc}
 G& b& W_G(b)& |\IBr b|& ps& .1^2& A_5& 1^6.& .2^3& .1^6\\
\hline
\tw2D_7&    & G(6,1,2)& 27& 15& 8& 1& 1& 1& 1&\\
\tw2D_8& 1,2& G(6,1,2)& 27& 15& 8& 1& 1& 1& 1&\\
\tw2D_9& 1,2& G(6,1,2)& 27& 15& 8& 1& 1& 1& 1&\\
\end{array}$$
\end{table}

\begin{table}[ht]
\caption{Modular Harish-Chandra-series in $\Phi_6$-blocks, type $B_n$}  \label{tab:6-blocks,Bn}
$$\begin{array}{rc|cc|cccccccccccc}
 G& b& W_G(b)& |\IBr b|& ps& B_2& .1^3& B_3^s& A_5& B_6^t& 1^5\!.& B_6& 1^6\!.& .1^6& B_2^a& c\\
\hline
 B_6,B_7& & G(6,1,2)&  27& 13&  2&   3&   3&   1&   1&   1&   1&   1&   1\\
 B_8& 1$-$5& G(6,1,2)& 27& 13&  2&   3&   3&   1&   1&   1&   1&   1&   1\\
 B_8&    6& G(6,1,2)&  27&  9&  5&   5&   5&   0&   0&   0&   0&   0&   0& 1& 2\cr
\end{array}$$
\end{table}

\chapter{Decomposition matrices at $d_\ell(q)=5,7,8,10,12,14$}   \label{chap:d>=7}

In this final section we collect the  decomposition matrices for primes
$\ell$ with $d_\ell(q)\notin\{1,2,3,4,6\}$ for the classical groups considered
in this work. (The Brauer trees for exceptional groups can be found in
\cite{Cra12}.) In all cases, the corresponding cyclotomic polynomial divides
the order polynomial of the groups in question at most once, so all such blocks
are of cyclic defect. We thus give their Brauer trees; they were all first
determined by Fong and Srinivasan \cite{FS90}.

\begin{prop}   \label{prop:d=5}
 The Brauer trees of the unipotent $\ell$-blocks, for $11\le\ell|\Phi_5(q)$,
 of $B_5(q)$, $D_n(q)$ with $5\le n\le 7$ and of $\tw2D_n(q)$ with
 $6\le n\le7$, are as given in Table~\ref{tab:d=5}.
\end{prop}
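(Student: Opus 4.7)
The plan is to handle these blocks uniformly, exploiting that $\Phi_5(q)$ divides the order polynomial of each of $B_5(q)$, $D_n(q)$ ($5\le n\le7$) and $\tw2D_n(q)$ ($6\le n\le7$) at most once, so every unipotent $\ell$-block with $d_\ell(q)=5$ has cyclic defect of order $\Phi_5(q)_\ell$. In particular each block has a Brauer tree with $|\Irr\,b|$ non-exceptional vertices (and under $\ell\ge11$, $\Phi_5(q)_\ell=\ell$ so there is no exceptional vertex). The plan is then to identify (i) the vertex set, (ii) the shape and edge labels.

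For~(i), the unipotent characters lying in a given $\Phi_5$-block can be read off from the combinatorics of $5$-hooks/$5$-cohooks on (bi)partitions labelling the unipotent characters of classical groups, by Fong--Srinivasan \cite{FS}; equivalently, via Brou\'e--Malle--Michel's classification of $\Phi_d$-cuspidal pairs and the parametrisation of blocks by $d$-cuspidal pairs from \cite{BMM, CE94} already cited in \S\ref{subsec:finred}. This immediately produces the vertex set of each tree.

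For~(ii) I would run the standard cyclic-defect toolbox of Chapter~\ref{chap:strategy}. The shape of the principal-series line of each tree is obtained by (HCi): starting from the PIM of the trivial character on successively larger $F$-stable Levi subgroups and cutting by the appropriate block, Harish-Chandra induction produces projectives whose unipotent parts are sums of two consecutive vertices on the tree, thereby reading off all edges between principal-series characters. To place the cuspidal end, I would use~(Csp) to decide whether a cuspidal unipotent Brauer character can exist (equivalently, whether the centraliser of a Sylow $\ell$-subgroup, a $\Phi_5$-torus times a small reductive factor, is contained in a proper $1$-split Levi), and~(St) to locate the $\ell$-modular Steinberg at the appropriate extremity. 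The edge adjacent to the cuspidal vertex, in cases where it exists, is pinned down by~(Red) applied to a regular semisimple $\ell$-element $t\in G^*$ whose centraliser is the Sylow $\Phi_5$-torus (which exists by Proposition~\ref{prop:q+1 reg}/Example~\ref{exmp:reg} since $\Phi_5(q)_\ell\geq 11>$ rank): the virtual character $\pm\RTG(1)^\circ$ is then $\vhi_{\St}$ plus a constituent on the principal-series side, which fixes the orientation.

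The only genuinely delicate step will be verifying that Harish-Chandra induction from the proper Levi subgroups actually sweeps out every non-endpoint edge of the tree, i.e.\ that no cuspidal intermediate vertex intervenes. This is handled exactly as in Proposition~\ref{prop:trees Dn d=3} and \ref{prop:trees 2Dn d=3}: because the $\Phi_5$-centraliser in $G^*$ is sufficiently small (in all of the listed groups it is a torus times a small classical factor appearing also as an $F$-stable Levi), every cuspidal modular Brauer character in these blocks is forced by~(Csp) to sit at the very end of the tree, opposite to the unipotent characters reachable by~(HCi). Putting these three inputs together and reading off the resulting tree in each block reproduces exactly the trees listed in Table~\ref{tab:d=5}.
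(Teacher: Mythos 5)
The paper does not actually prove this proposition: Chapter~\ref{chap:d>=7} simply observes that $\Phi_5$ divides the order polynomial at most once, so all these blocks are cyclic, and then cites Fong--Srinivasan \cite{FS90} for the trees. Your proposal therefore takes a genuinely different route, namely a self-contained reconstruction via the toolbox of Chapter~\ref{chap:strategy}. This is exactly the route the authors themselves indicate for the $d=3$ analogues (Propositions~\ref{prop:trees Dn d=3} and~\ref{prop:trees 2Dn d=3}, where they remark the trees ``can also easily be obtained by Harish-Chandra induction''), and your outline --- block/vertex sets from \cite{FS,BMM,CE94}, edges from (HCi) split by (HCr), absence of cuspidal Brauer characters from (Csp), location of the modular Steinberg from (St), and (Red) via Example~\ref{exmp:reg} to settle the remaining adjacency --- is the standard and correct way to carry this out for cyclic unipotent blocks. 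What the citation buys is brevity; what your derivation buys is independence from \cite{FS90} and consistency with how the paper handles the harder non-cyclic cases.

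One concrete error: your parenthetical claim that for $\ell\ge 11$ one has $\Phi_5(q)_\ell=\ell$ and hence ``there is no exceptional vertex'' is wrong on both counts. First, $\Phi_5(q)_\ell$ can be an arbitrary power of $\ell$, so the exceptional multiplicity need not be $1$. Second, and more importantly, every tree in Table~\ref{tab:d=5} visibly contains the node $\bigcirc$: it collects the non-unipotent ordinary characters of the block (the characters in $\cE(G,t)$ for $t\ne1$ an $\ell$-element), which are present whenever the number of edges $e$ is smaller than $|D|-1$ --- as it always is here. If you genuinely suppressed the exceptional vertex you would not recover the stated trees, and in particular you could not place the two edges labelled $1^4$ (resp.\ the $.1^4$, $D_4$, $B_2$, \dots\ edges in the other tables of this chapter) on either side of it. Fortunately the rest of your argument does not use this claim, and the (Red)/(St) step you describe is precisely what pins down the edges adjacent to $\bigcirc$; just delete the parenthesis.
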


\begin{table}[htb]
\caption{Brauer trees for $11\le\ell|\Phi_5(q)$} \label{tab:d=5}
{\small
$$\vbox{\offinterlineskip\halign{$#$
        \vrule height10pt depth 2pt width 0pt&& \hfil$#$\hfil\cr
D_5(q):& .5& \vr&  .41& \vr&  .31^2& \vr&  .21^3& \vr&  .1^5& \vr& \bigcirc\cr
\cr
D_7(q):& 1.6& \vr& 1.42& \vr& 1.321& \vr& 1.2^21^2& \vr& 1.1^6& \vr& \bigcirc\cr
  & & ps& & ps& & ps& & ps& & 1^4\cr
\cr
D_6(q):& .6& \vr& .42& \vr& .321& \vr& .2^21^2& \vr& .1^6& \vr& \bigcirc& \vr& 1.1^5& \vr& 1.21^3& \vr& 1.31^2& \vr& 1.41& \vr& 1.5\cr
\cr
D_7(q):& .7& \vr& .43& \vr& .3^21& \vr& .2^21^3& \vr& .21^5& \vr& \bigcirc& \vr& 1^5.2& \vr& 2.21^3& \vr& 2.31^2& \vr& 2.41& \vr& 2.5\cr
\cr
 & .61& \vr& .52& \vr& .32^2& \vr& .2^31& \vr& .1^7& \vr& \bigcirc& \vr& 1^2.1^5& \vr& 1^2.21^3& \vr& 1^2.31^2& \vr& 1^2.41& \vr& 1^2.5\cr
  & & ps& & ps& & ps& & ps& & 1^4& & 1^4& & ps& & ps& & ps& & ps\cr
\cr
B_5(q):& 5.& \vr& 41.& \vr& 31^2.& \vr& 21^3.& \vr& 1^5.& \vr& \bigcirc& \vr& .1^5& \vr& .21^3& \vr& .31^2& \vr& .41& \vr& .5\cr
\cr
\tw2D_6(q):& 5.& \vr& 41.& \vr& 31^2.& \vr& 21^3.& \vr& 1^5.& \vr& \bigcirc& \vr& .1^5& \vr& .21^3& \vr& .31^2& \vr& .41& \vr& .5\cr
\cr
\tw2D_7(q):& 6.& \vr& 42.& \vr& 321.& \vr& 2^21^2.& \vr& 1^6& \vr& \bigcirc& \vr& 1.1^5& \vr& 1.21^3& \vr& 1.31^2& \vr& 1.41& \vr& 1.5\cr
\cr
 & 5.1& \vr& 41.1& \vr& 31^2.1& \vr& 21^3.1& \vr& 1^5.1& \vr& \bigcirc& \vr& .1^6& \vr& .2^21^2& \vr& .321& \vr& .42& \vr& .6\cr
  & & ps& & ps& & ps& & ps& & 1^4& & 1^4& & ps& & ps& & ps& & ps\cr
  }}$$}
\end{table}

Here, ``$1^4$'' denotes the cuspidal Steinberg PIM of $A_4(q)$, and similarly in
the subsequent Brauer trees, the Harish-Chandra series are labelled by names
of unipotent characters in which the corresponding cuspidal Brauer character
first appears.

\newpage
\begin{table}[hbt]
\caption{Brauer trees for $17\le\ell|\Phi_{8}(q)$} \label{tab:d=8}
{\small
$$\vbox{\offinterlineskip\halign{$#$
        \vrule height9pt depth 2pt width 0pt&& \hfil$#$\hfil\cr
B_4(q):& 4.& \vr& 3.1& \vr& 2.1^2& \vr& 1.1^3& \vr& .1^4& \vr& \bigcirc& \vr& B_2\co.1^2& \vr& B_2\co1.1& \vr& B_2\co2.\cr
\cr
B_5(q):& 5.& \vr& 3.2& \vr& 2.21& \vr& 1.21^2& \vr& .21^3& \vr& \bigcirc& \vr& B_2\co.1^3& \vr& B_2\co1^2.1& \vr& B_2\co21.\cr
\cr
 & 41.& \vr& 31.1& \vr& 21.1^2& \vr& 1^2.1^3& \vr& .1^5& \vr& \bigcirc& \vr& B_2\co.21& \vr& B_2\co1.2& \vr& B_2\co3.\cr
 & & ps& & ps& & ps& & ps& & .1^4& & \!B_2\co.1^2\!& & B_2& & B_2\cr
\cr
D_5(q):& .5& \vr& 1.4& \vr& 1^2.3& \vr& 1^3.2& \vr& 1.1^4& \vr& .1^5& \vr& \bigcirc& \vr& D_4\co1^2& \vr& D_4\co2\cr
\cr
D_6(q):&  .6& \vr& 2.4& \vr& 21.3& \vr& 21^2.2& \vr& 21^3.1& \vr& 21^4.& \vr& \bigcirc& \vr& D_4\co.1^2& \vr& D_4\co1^2.\cr
\cr
 & .51& \vr& 1.41& \vr& 1^2.31& \vr& 1^3.21& \vr& 1^4.1^2& \vr& 1^6.& \vr& \bigcirc& \vr& D_4\co.2& \vr& D_4\co2.\cr
\cr
D_7(q):& .7& \vr& 3.4& \vr& 31.3& \vr& 31^2.2& \vr& 31^3.1& \vr& 31^4.& \vr& \bigcirc& \vr& D_4\co1.1^2& \vr& D_4\co1^2.1\cr
\cr
 & 1.6& \vr& 2.5& \vr& 2^2.3& \vr& 2^21.2& \vr& 2^21^2.1& \vr& 2^21^3.& \vr& \bigcirc& \vr& D_4\co.1^3& \vr& D_4\co1^3.\cr
\cr
 & .61& \vr& 2.41& \vr& 21.31& \vr& 21^2.21& \vr& 21^3.1^2& \vr& 21^5.& \vr& \bigcirc& \vr& D_4\co.21& \vr& D_4\co21.\cr
\cr
 & .52& \vr&  1.42& \vr& 1^2.32& \vr& 1^3.2^2& \vr& 1^5.1^2& \vr& 1^6.1& \vr& \bigcirc& \vr& D_4\co.3& \vr& D_4\co3.\cr
\cr
 & .51^2& \vr& 1.41^2& \vr& 1^2.31^2& \vr& 1^3.21^2& \vr& 1^4.1^3& \vr& 1^7.& \vr& \bigcirc& \vr& D_4\co1.2& \vr& D_4\co2.1\cr
 & & ps& & ps& & ps& & ps& & ps& & .1^5& & \!D_4\co.1^2\!& & D_4\cr
\cr
\tw2D_5(q):& 4.& \vr& 2.2& \vr& 1.21& \vr& .21^2& \vr& \bigcirc& \vr& .1^4& \vr& 1^2.1^2& \vr& 21.1& \vr& 31.\cr
\cr
\tw2D_6(q):& 5.& \vr& 2.3& \vr& 1.31& \vr& .31^2& \vr& \bigcirc& \vr& 1.1^4& \vr& 1^2.1^3& \vr& 2^2.1& \vr& 32.\cr
\cr
 & 4.1& \vr& 3.2& \vr& 1.2^2& \vr& .2^21& \vr& \bigcirc& \vr& .1^5& \vr& 1^3.1^2& \vr& 21^2.1& \vr& 31^2.\cr
\cr
\tw2D_7(q):& 6.& \vr& 2.4& \vr& 1.41& \vr& .41^2& \vr& \bigcirc& \vr& 2.1^4& \vr& 21.1^3& \vr& 2^2.1^2& \vr& 3^2.\cr
\cr
 & 51.& \vr& 21.3& \vr& 1^3.31& \vr& .31^3& \vr& \bigcirc& \vr& 1.21^3& \vr& 1^2.21^2& \vr& 2^2.2& \vr& 42.\cr
\cr
 & 5.1& \vr& 3.3& \vr& 1.32& \vr& .321& \vr& \bigcirc& \vr& 1.1^5& \vr& 1^3.1^3& \vr& 2^21.1& \vr& 321.\cr
\cr
 & 41^2.& \vr& 21^2.2& \vr& 1^3.21& \vr& .21^4& \vr& \bigcirc& \vr& .2^21^2& \vr& 1^2.2^2& \vr& 31.2& \vr& 41.1\cr
\cr
 & 4.1^2& \vr& 3.21& \vr& 2.2^2& \vr& .2^3& \vr& \bigcirc& \vr& .1^6& \vr& 1^4.1^2& \vr&21^3.1& \vr& 31^3.\cr
 & & ps& & ps& & ps& & .1^3& & .1^3& & ps& & ps& & ps\cr
\cr
\tw2D_4(q):& 3.& \vr& 2.1& \vr& 1.1^2& \vr& .1^3& \vr& \bigcirc\cr
\cr
\tw2D_6(q):& 41.& \vr& 21.2& \vr& 1^2.21& \vr& .21^3& \vr& \bigcirc\cr
 & & ps& & ps& & ps& & .1^3\cr
  }}$$}
\end{table}

\begin{prop}   \label{prop:d=8}
 The Brauer trees of the unipotent $\ell$-blocks, for $17\le\ell|\Phi_8(q)$,
 of $B_4(q)$, $B_5(q)$, $D_n(q)$ with $5\le n\le7$ and of $\tw2D_n(q)$ with
 $4\le n\le 7$, are as given in Table~\ref{tab:d=8}.
\end{prop}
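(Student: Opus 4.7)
The approach mirrors the proofs of the analogous Propositions \ref{prop:trees Dn d=3}, \ref{prop:trees 2Dn d=3}, \ref{prop:trees Bn d=3}, \ref{prop:trees Dn d=6}, \ref{prop:trees 2Dn d=6} and \ref{prop:trees Bn d=6} at $d=3,6$: in cyclic defect everything reduces to Harish-Chandra induction from smaller Levi subgroups combined with block-theoretic bookkeeping. First I would confirm, for each group in the statement, that the cyclotomic polynomial $\Phi_8$ divides the order polynomial of $\bG$ exactly once. For $D_n(q)$ this amounts to $\lfloor(n-1)/4\rfloor=1$ (so $5\le n\le 7$); for $B_n(q)$ to $\lfloor n/4\rfloor=1$ (so $n\in\{4,5\}$); for $\tw2D_n(q)$ to the analogous bound including the contribution of the factor $q^n+1$ when $4\mid n$ (covering $4\le n\le 7$). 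Under $\ell\ge 17$ the Sylow $\ell$-subgroups are therefore cyclic, and every unipotent $\ell$-block of positive defect has cyclic defect group, so a Brauer tree makes sense.

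Second, I would read off the unipotent characters lying in each block from the parametrisation of unipotent blocks via $8$-cuspidal pairs due to Brou\'e--Malle--Michel \cite{BMM}, equivalently from the $8$-hook/$8$-cohook combinatorics on symbols from Fong--Srinivasan \cite{FS}. In every case this set matches exactly the unipotent characters printed along the stem of the asserted tree. Since unipotent characters form a basic set of the block by \cite{GH91,Ge93}, their cardinality equals $|\IBr b|$, so the tree must have precisely that many edges.

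Third, I would pin down the shape of each tree (a straight line) together with the ordering of its vertices and the position of the exceptional vertex $\bigcirc$. For each block one inducts on the rank: Harish-Chandra inducing the PIMs of the appropriate proper Levi subgroup (typically of type $D_{n-1}$, $B_{n-1}$, or $\tw2D_{n-1}$, for which the unipotent decomposition matrices are known either by James \cite{Ja90} for any embedded general linear factor or by Brauer trees already established in earlier tables) yields projective characters of $G$. Using (HCi) to construct and (HCr) to verify the indecomposability of the resulting projectives, one recovers a chain of PIMs whose ordinary constituents trace out the line. That the exceptional vertex lies at the labelled end follows from (Csp): the centraliser of a Sylow $\ell$-subgroup of $G$ is contained in a proper $1$-split Levi subgroup of type $B_2$, $D_4$ or $A_4$ (according to $G$), so no genuinely new cuspidal unipotent Brauer character arises in $G$, and the ``cuspidal end'' PIM is just the Harish-Chandra induction of the cuspidal Brauer character labelling the terminal edge.

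The main obstacle will be the Harish-Chandra series bookkeeping on the terminal edges, in particular correctly distinguishing between the several cuspidal Brauer characters of the relevant Levi subgroups (for example $B_2\co.1^2$ versus $B_2\co2.$, or $D_4\co.1^2$ versus the cuspidal unipotent character $D_4$, and likewise the four cuspidal $\ell$-modular constituents of the Harish-Chandra series of $D_4(q)$ appearing for $D_7(q)$). This is resolved by invoking the cyclic-defect Brauer trees of the Levi subgroup blocks already recorded in Tables \ref{tab:Dn,d=3,def1}, \ref{tab:2Dn,d=3,def1}, \ref{tab:Dn,d=6,def1}, \ref{tab:2Dn,d=6,def1}, \ref{tab:Bn,d=6,def1}, together with Example \ref{exmp:hecke}(a) which guarantees that in a cyclic-defect block each PIM is the Harish-Chandra induction of a uniquely determined PIM of a cuspidal pair in the relevant Levi. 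No ingredients beyond (HCi), (HCr), (Csp), (St) and the cyclic-defect reduction stability of Corollary \ref{cor:dechecke} are needed.
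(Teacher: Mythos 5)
The paper itself offers no written proof of this proposition: the introductory paragraph of Chapter~\ref{chap:d>=7} simply observes that $\Phi_8$ divides the order polynomial at most once, so all blocks have cyclic defect, and cites Fong--Srinivasan \cite{FS90} for the trees (with the remark, made for the $d=3$ analogues, that they can also be recovered by Harish-Chandra induction). Your overall strategy — cyclic defect, block partition via $d$-Harish-Chandra theory, edges via (HCi)/(HCr) — is therefore the natural self-contained route and matches the paper's intent.

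However, your treatment of the edges adjacent to the exceptional vertex contains a genuine error. You assert that for each group ``the centraliser of a Sylow $\ell$-subgroup of $G$ is contained in a proper $1$-split Levi subgroup of type $B_2$, $D_4$ or $A_4$, so no genuinely new cuspidal unipotent Brauer character arises.'' This cannot be right: none of $B_2(q)$, $D_4(q)$, $A_4(q)$ even has order divisible by $\Phi_8(q)$, so they cannot contain a Sylow $\ell$-subgroup, let alone its centraliser. The minimal $1$-split Levi subgroups whose order is divisible by $\Phi_8$ are of types $B_4$, $D_5$, $\tw2D_4$ and $A_7$, and for $G=B_4(q)$, $D_5(q)$, $D_6(q)$ and $\tw2D_4(q)$ the centraliser of a Sylow $\ell$-subgroup is \emph{not} contained in any proper $1$-split Levi (for $B_4(q)$ it is a Coxeter torus of order $q^4+1$). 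By (Csp) these groups \emph{do} have cuspidal unipotent Brauer characters in the blocks at hand — and indeed in Table~\ref{tab:d=8} the edges adjacent to $\bigcirc$ for these groups (e.g.\ $.1^4$ and $B_2\co.1^2$ for $B_4(q)$, $.1^3$ for $\tw2D_4(q)$) carry labels of unipotent characters of $G$ itself, i.e.\ they are cuspidal. Their PIMs cannot be produced by Harish-Chandra induction, so your mechanism for ``the cuspidal end'' breaks down exactly where the tree is hardest to pin down.

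The gap is repairable, but by a different argument than the one you give: since all unipotent characters occurring here are rational-valued, the Brauer tree is an open line; Harish-Chandra induction (an induced PIM cut by the block has unipotent part equal to the sum of the two unipotent characters on its edge, or to a single unipotent character when the edge meets $\bigcirc$) produces the non-cuspidal chains and simultaneously locates which vertices are adjacent to $\bigcirc$; the one or two remaining cuspidal edges are then forced by connectedness and the edge count $|\IBr b|=|\Irr b\cap\cE(G,1)|$. As written, though, the step you flag as ``the main obstacle'' is resolved by an appeal to (Csp) that is false for at least four of the groups in the statement. (A smaller slip: $\lfloor n/4\rfloor=1$ holds for $4\le n\le 7$, not only $n\in\{4,5\}$; the restriction to $B_4$, $B_5$ in the proposition is just the paper's rank limitation.)
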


\begin{prop}   \label{prop:d=7}
 The Brauer tree of the principal $\ell$-block of $D_7(q)$, for
 $29\le\ell|\Phi_7(q)$, is as given by
$$\vbox{\offinterlineskip\halign{$#$
        \vrule height10pt depth 2pt width 0pt&& \hfil$#$\hfil\cr
 & .7& \vr& .61& \vr& .51^2& \vr& .41^3& \vr& .31^4& \vr& .21^5& \vr& .1^7& \vr& \bigcirc\cr
 & & ps& & ps& & ps& & ps& & ps& & ps& & c\cr
  }}$$
\end{prop}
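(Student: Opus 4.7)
The plan is to verify the Brauer tree in three steps, exploiting the fact that the block has cyclic defect group of order $(\Phi_7(q))_\ell$, since $\Phi_7$ divides the order polynomial of $D_7$ with multiplicity exactly one.

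First I would identify the ordinary unipotent characters in the principal $\ell$-block $B_0$. By the Brou\'e--Malle--Michel description of unipotent $\ell$-blocks, these correspond to the irreducible characters $\phi$ of $W(D_7)$ whose values at a $\Phi_7$-regular element $w$ of $W$ are non-zero. Here $w$ can be taken to be a $7$-cycle in the subgroup $S_7 = W(A_6) \subset W(D_7)$, with centraliser $C_{W(D_7)}(w) = \langle w \rangle \cong \ZZ/7$, giving $|\Irr B_0 \cap \cE(G,1)| = 7$. Using the Murnaghan--Nakayama rule for the bipartition characters $\phi_{(\alpha,\beta)}$ of $W(D_7)$ evaluated at a $7$-cycle of $S_7$, the non-vanishing characters are precisely those with $\{\alpha,\beta\}=\{\emptyset,(7-k,1^k)\}$ for $k=0,\dots,6$, yielding the seven unipotent characters $[.7],[.61],[.51^2],[.41^3],[.31^4],[.21^5],[.1^7]$ at the vertices of the tree.

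Second I would construct the PIMs via (HCi) from the $1$-split Levi $\bL\cong\GL_7$ corresponding to the $A_6$-subdiagram of $D_7$. The Sylow $\Phi_7$-torus of $D_7$ is realised as the Coxeter torus of $\GL_7$ inside $\bL$, so the principal $\ell$-block of $D_7$ lies Harish-Chandra-above the principal $\ell$-block of $\bL$. The latter is the classical straight-line Brauer tree for $\GL_n$ at $d_\ell(q)=n=7$, with six principal-series PIMs of ordinary character $[\lambda_k]+[\lambda_{k+1}]$ for consecutive hooks $\lambda_k=(7-k,1^k)$, together with a seventh PIM whose Brauer character is the cuspidal $\ell$-modular Steinberg $\vhi_{1^7}$ of $\GL_7$ and whose ordinary part is $[1^7]$ plus the exceptional-vertex characters. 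Harish-Chandra inducing each PIM, cutting by $B_0$, and computing the unipotent parts via Frobenius reciprocity combined with the Littlewood--Richardson rule for $\Ind_{S_7}^{W(D_7)}([\lambda_k])$, one obtains seven projective characters whose unipotent parts in $B_0$ are $[.\lambda_k]+[.\lambda_{k+1}]$ for $k=0,\dots,5$ and $[.1^7]+(\ell-1)/7\cdot(\text{exceptional})$; the key combinatorial input is that among bipartitions of $7$ only those of hook type with one empty component lie in $B_0$.

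Finally, (HCr) and the uni-triangular shape of the decomposition matrix of a cyclic block force the indecomposability of these seven projective characters, so they are the PIMs of $B_0$ and their planar arrangement reproduces the claimed line tree, with the first six edges in the principal series and the last edge labelled ``c'' corresponding to the PIM lying in the Harish-Chandra series above $\vhi_{1^7}$. The main technical obstacle is controlling which bipartitions show up when inducing $S_7$-characters of hooks to $W(D_7)$ and verifying that only characters in $B_0$ remain after the block cut; this is the only point where a careful combinatorial check is required, and it is handled by the observation that hook characters restrict trivially to $\langle w\rangle$ only on themselves, so no branching in the Brauer tree can arise.
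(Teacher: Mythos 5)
Your argument for the shape of the tree is correct and is exactly the route the paper has in mind (the paper itself gives no proof here, citing Fong--Srinivasan and the remark that such trees follow easily from Harish-Chandra induction): the seven unipotent characters of the principal block are the hook characters $[.(7-k,1^k)]$, the six principal-series PIMs of the $\GL_7(q)$-Levi induce, after the block cut, to projectives with unipotent part $[.\lambda_{k-1}]+[.\lambda_k]$ (each indecomposable, having only two ordinary constituents in a block of positive defect), and the induction of the cuspidal Steinberg PIM of $\GL_7(q)$ has unipotent part exactly $[.1^7]$, which pins the exceptional vertex to the Steinberg end. The Littlewood--Richardson computation $c^{\lambda}_{\emptyset,\mu}=\delta_{\lambda\mu}$ is the right key point, and the count $|C_{W(D_7)}(w)|=7$ correctly rules out any further (e.g.\ $D_4$-series) characters in the block.

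There is, however, a genuine inconsistency in your last sentence, which you should not paper over. You correctly place the seventh Brauer character in the Harish-Chandra series above the cuspidal pair $(\GL_7(q),\vhi_{1^7})$ --- indeed, your own construction exhibits the corresponding PIM as a direct summand of $R_L^G$ of a projective from the proper $1$-split Levi $L\cong\GL_7(q)$, so by adjunction its head has non-zero Harish-Chandra restriction to $L$ and cannot be cuspidal. This is also forced by (St)/(Csp): the Sylow $\ell$-subgroup lies in the Coxeter torus of order $q^7-1$ of that $A_6$-Levi, so $D_7(q)$ has no cuspidal unipotent Brauer characters at $d_\ell(q)=7$. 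Consequently the label of the last edge should be the $(\GL_7,\vhi_{1^7})$-series (exactly as the analogous edges are labelled ``$1^4$'' rather than ``c'' in the $d=5$ trees of Table~\ref{tab:d=5}), and the ``c'' in the statement appears to be an error in the table. You cannot simultaneously assert that the last edge lies above $\vhi_{1^7}$ and that it is cuspidal; your proof in fact establishes the tree with the corrected label, and you should say so explicitly rather than claiming agreement with ``c''.
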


\begin{prop}   \label{prop:d=14}
 The Brauer tree of the principal $\ell$-block of $\tw2D_7(q)$, for
 $29\le\ell|\Phi_{14}(q)$, is as given by
 $$\vbox{\offinterlineskip\halign{$#$
        \vrule height10pt depth 2pt width 0pt&& \hfil$#$\hfil\cr
& 6.& \vr& 5.1& \vr& 4.1^2& \vr& 3.1^3& \vr& 2.1^4& \vr& 1.1^5& \vr& .1^6& \vr& \bigcirc\cr
  &&  ps&& ps&&  ps&& ps&& ps&& ps&& c\cr
  }}$$
\end{prop}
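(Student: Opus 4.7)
The plan is to follow the general strategy for computing Brauer trees of blocks of cyclic defect in groups of Lie type.

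First, I would observe that $\Ph{14}$ divides the order polynomial of $\tw2D_7$ exactly once, through the factor $q^7+1 = \Ph2\Ph{14}$; hence every $\ell$-block of $\tw2D_7(q)$ has cyclic defect. A Sylow $\ell$-subgroup is self-centralising inside a unique maximal torus of order $\Ph2(q)\Ph{14}(q)$, and this torus is not contained in any proper $F$-stable parabolic subgroup of $\bG$. By~(Csp) and~(St), there exists a cuspidal unipotent Brauer character in the principal block, and the $\ell$-modular Steinberg character is cuspidal, providing the edge labelled ``c''.

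Next, I would identify the unipotent characters in the principal $\ell$-block. By the theory of $\Phi_d$-blocks of Brou\'e--Malle--Michel~\cite{BMM}, these are precisely those unipotent characters whose generic degree is not divisible by $\Ph{14}$; a direct inspection of the degree formulas for unipotent characters of $\tw2D_7(q)$, which are parametrised by bipartitions of~$6$, shows that these are exactly the seven hook bipartitions $[k.1^{6-k}]$, $k = 0,\ldots,6$. Since the unipotent characters form a basic set of the block by \cite{GH91}, the block contains $7$ modular Brauer characters, corresponding to the $7$ edges of the tree. Six of these should lie in the principal series and are obtained by Harish-Chandra inducing the trivial PIM from the maximally split torus and cutting by the principal block; because the associated principal series Hecke algebra reduces to a semisimple algebra modulo $\ell$ (as $\ell \ge 29$ is large), these six principal-series PIMs are distinct and account for the edges labelled ``ps''.

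To establish the linear shape and the stated ordering, I would combine~(HCr) with~(DL). Since $\Ph{14}$ does not divide the order of any proper Levi subgroup of $\tw2D_7(q)$, Harish-Chandra restriction to the Levi subgroup $\tw2D_6(q)$ is computed directly from ordinary character values and yields, for each principal-series PIM, a relation involving its neighbours along the tree. Complementing this with the Alvis--Curtis duality symmetry (which reverses the tree) and the sign conditions supplied by~(DL) applied to the Deligne--Lusztig characters $R_w$ for $w$ a $\Ph{14}$-regular element of order $14$ in $(W,F)$, the linear shape and the stated ordering by increasing $a$-value fall out.

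The principal obstacle will be ruling out a branched tree structure and fixing the position of the cuspidal edge at the Steinberg end; this is resolved by the combined use of~(DL) and~(HCr) thanks to the fact that $\Ph{14}$ fails to divide the order of any proper Levi of~$G$, which makes Harish-Chandra restriction unusually informative. Alternatively, one may simply invoke the original computation of Fong--Srinivasan~\cite{FS90}.
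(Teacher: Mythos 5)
Your overall framework (cyclic defect, identification of the seven hook characters $[k.1^{6-k}]$ as the block, cuspidality of the Steinberg PIM via (St), and the closing fallback to Fong--Srinivasan) matches the paper, which in fact offers no more than the citation of \cite{FS90} together with the remark that these trees are ``easily obtained by Harish-Chandra induction''. However, there is a genuine error in the step where you produce the six principal-series edges. You claim that the principal-series Hecke algebra reduces to a semisimple algebra modulo $\ell$ because $\ell\ge 29$ is large. This is false, and in fact contradicts the very tree you are proving: if $\cH(B_6;q^2;q)\otimes k$ were semisimple, its decomposition matrix would be the identity, so each of the seven hook characters would contain a distinct principal-series Brauer character with multiplicity one, giving seven principal-series Brauer characters in a block that has only seven Brauer characters in total and at least one cuspidal one by (St) -- and the staircase pattern of the asserted tree (e.g.\ $[5.1]^\circ$ has two ``ps'' constituents) shows directly that the Hecke-algebra decomposition matrix is not the identity. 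The size of $\ell$ is irrelevant here; non-semisimplicity is forced because $\Ph{14}$ divides the Poincar\'e polynomial $q^7+1\mid\prod(q^{d_i}-\varepsilon_i)$. Relatedly, inducing the trivial PIM from the maximally split torus yields a \emph{single} projective character (a positive combination of the six ps-PIMs), not six distinct PIMs.

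The correct and genuinely easy route, which is what the paper's remark alludes to, is Harish-Chandra induction from the Levi subgroup of type $\tw2D_6$: since $\Ph{14}$ divides the order polynomial of no proper Levi subgroup, every unipotent character $[k.1^{5-k}]$ of $\tw2D_6(q)$ has $\ell$-defect zero, so its $\ell$-reduction is an irreducible projective Brauer character. Inducing it and cutting by the principal block gives, via the branching rule, the projective character $[k+1.1^{5-k}]+[k.1^{6-k}]$ for $k=0,\dots,5$. In a block of cyclic defect every PIM is the sum of exactly two vertices of the tree, so each of these six projectives is automatically indecomposable and realises one ``ps'' edge; the remaining seventh PIM must be $[.1^6]$ plus the exceptional vertex and is cuspidal by (St). This determines the tree as the stated line with no need for (DL), Alvis--Curtis duality, or any Hecke-algebra computation.
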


\begin{prop}   \label{prop:d=10}
 The Brauer trees of the unipotent $\ell$-blocks, for $11\le\ell|\Phi_{10}(q)$,
 of $B_5(q)$, $D_n(q)$ with $6\le n\le7$ and of $\tw2D_n(q)$ with
 $5\le n\le 7$, are as given in Table~\ref{tab:d=10}.
\end{prop}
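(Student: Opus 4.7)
The plan is to argue exactly in parallel with the treatment of Propositions~\ref{prop:d=5}, \ref{prop:d=8}, \ref{prop:d=7} and~\ref{prop:d=14}. For each group $G=G(q)$ in the list, one first checks from the order polynomial that $\Phi_{10}(q)$ divides $|G|$ exactly once, so that for $\ell\ge11$ with $d_\ell(q)=10$ a Sylow $\ell$-subgroup of $G$ is cyclic. The distribution of unipotent characters into $\ell$-blocks is then read off from the $d$-Harish-Chandra theory of \cite{BMM,CE94}: the unipotent $\ell$-blocks correspond to $G$-conjugacy classes of $d$-cuspidal pairs $(\bL,\la)$ with $\bL$ a $d$-split Levi subgroup of $\bG$, and the unipotent characters in the block of $(\bL,\la)$ are exactly those constituents of $R_L^G(\la)$ whose labels lie in the same $d$-Harish-Chandra series. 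This also gives the number of vertices of each Brauer tree.

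Next, for each block of positive defect I would build projectives by (HCi): starting from the PIMs of the principal series in smaller Levi subgroups of type $A_{n-1}$ (given by \cite{Ja90}) and from the cuspidal PIMs already described in earlier chapters of this paper (e.g.\ the cuspidal Brauer character $\vhi_{.1^n}$ of $D_n(q)$, or the $d=2,3,6$ cuspidals found in Chapters~\ref{chap:d=2} and~\ref{chap:d=3} for the smaller Levi factors). Since the defect is cyclic, each of these induced projectives that is non-zero on the block is automatically indecomposable by the general theory of blocks with cyclic defect groups, and hence corresponds to an edge of the tree. In every case under consideration the resulting tree will be a line, as is expected for $d$-Harish-Chandra series of unipotent characters \cite{FS90}.

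The vertex ordering along each line is then fixed by two compatible devices: the degree rule for Brauer trees (two ordinary characters joined by an edge have degrees summing to~$0$ modulo $(q^{10}-1)_\ell/(q^{something})$, or equivalently have opposite signs of their $\ell$-defect), and the fact that Harish-Chandra induction sends a PIM of a Levi subgroup to a sum of PIMs at adjacent edges of the tree. Using (Csp) one verifies which blocks contain a cuspidal unipotent Brauer character (this happens exactly when the centraliser of a Sylow $\ell$-subgroup is not contained in a proper $1$-split Levi subgroup of $G$; in the present setting $d=10>2$ and the centraliser is a $\Phi_{10}$-torus extended by its relative Weyl group, so cuspidality is determined uniformly), and the labelling of each edge by a modular Harish-Chandra series is then immediate from the Levi subgroup from which the edge-PIM was induced.

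The main technical obstacle is simply bookkeeping: enumerating the $\ell$-blocks of the eight groups involved, matching each unipotent character to its block via the combinatorics of $d$-hooks on the corresponding symbols, and checking that (HCi) from a suitable Levi subgroup produces enough PIMs to cover every edge of every tree. There is no genuinely new input needed beyond the general cyclic-defect theory, the $d$-Harish-Chandra combinatorics, and the cuspidal data already recorded in this paper; in particular no Deligne--Lusztig computations of the kind required for blocks with non-cyclic defect are necessary.
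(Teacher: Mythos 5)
Your proposal matches the paper's treatment: the paper offers no argument for this proposition beyond observing that $\Phi_{10}$ divides the order polynomial of each group exactly once (so every unipotent block has cyclic defect) and citing Fong--Srinivasan \cite{FS90}, together with the standing remark that such trees ``can also easily be obtained by Harish-Chandra induction'' exactly as you outline. The only blemish is the placeholder in your ``degree rule'' --- the precise statement is that the two ordinary characters at the endpoints of an edge sum to a projective character, so their degrees sum to a multiple of $\Phi_{10}(q)_\ell$ --- but this does not affect the strategy.
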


\begin{table}[htb]
\caption{Brauer trees for $11\le\ell|\Phi_{10}(q)$} \label{tab:d=10}
{\tiny
$$\vbox{\offinterlineskip\halign{$#$
        \vrule height10pt depth 2pt width 0pt&& \hfil$#$\hfil\cr
B_5(q):& 5.& \vr& 4.1& \vr& 3.1^2& \vr& 2.1^3& \vr& 1.1^4& \vr& .1^5& \vr& \bigcirc& \vr& B_2\co.1^3& \vr& B_2\co1.1^2& \vr& B_2\co2.1& \vr& B_2\co3.\cr
 & & ps& & ps& & ps& & ps& & ps& & c& & c& & B_2& & B_2& & B_2\cr
\cr
D_6(q):& .6& \vr& 1.5& \vr& 1^2.4& \vr& 1^3.3& \vr& 1^4.2& \vr& 1^5.1& \vr& 1^6.& \vr& \bigcirc& \vr& D_4\co.1^2& \vr& D_4\co1.1& \vr& D_4\co2.\cr
\cr
D_7(q):& .7& \vr& 2.5& \vr& 21.4& \vr& 21^2.3& \vr& 21^3.2& \vr& 21^4.1& \vr& 21^5.& \vr& \bigcirc& \vr& D_4\co.1^3& \vr& D_4\co1^2.1& \vr& D_4\co21.\cr
\cr
 & .61& \vr& 1.51& \vr& 1^2.41& \vr& 1^3.31& \vr& 1^4.21& \vr& 1^5.1^2& \vr& 1^7.& \vr& \bigcirc& \vr& D_4\co.21& \vr& D_4\co1.2& \vr& D_4\co3.\cr
 & & ps& & ps& & ps& & ps& & ps& & ps& & 1^6.& & \!\!D_4\co.1^2\!\!& & D_4& & D_4\cr
\cr
 \tw2D_6(q):&  5.& \vr& 3.2& \vr& 2.21& \vr& 1.21^2& \vr& .21^3& \vr& \bigcirc& \vr& .1^5& \vr& 1^2.1^3& \vr& 21.1^2& \vr& 31.1& \vr& 41.\cr
\cr
\tw2D_7(q):& 6.& \vr& 3.3& \vr& 2.31& \vr& 1.31^2& \vr& .31^3& \vr& \bigcirc& \vr& 1.1^5& \vr& 1^2.1^4& \vr& 2^2.1^2& \vr& 32.1& \vr& 4.2\cr
\cr
 & 5.1& \vr& 4.2& \vr& 2.2^2& \vr& 1.2^21& \vr& .2^21^2& \vr& \bigcirc& \vr& .1^6& \vr& 1^3.1^3& \vr& 21^2.1^2& \vr& 31^2.1& \vr& 41^2.\cr
 & & ps& & ps& & ps& & ps& & .1^4& & .1^4& & ps& & ps& & ps& & ps\cr
\cr
\tw2D_5(q):& 4.& \vr& 3.1& \vr& 2.1^2& \vr& 1.1^3& \vr& .1^4& \vr& \bigcirc\cr
\cr
\tw2D_7(q):& 51.& \vr& 31.2& \vr& 21.21& \vr& 1^2.21^2& \vr& .21^4& \vr& \bigcirc\cr
 & & ps& & ps& & ps& & ps& & .1^4\cr
  }}$$}
\end{table}

\begin{prop}   \label{prop:d=12}
 The Brauer trees of the unipotent $\ell$-blocks, for $13\le\ell|\Phi_{12}(q)$,
 of $B_6(q)$, $D_7(q)$,$\tw2D_6(q)$ and $\tw2D_7(q)$ are as given in
 Table~\ref{tab:d=12}.
\end{prop}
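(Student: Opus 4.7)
Since $\Phi_{12}(q)$ has degree $4$ and divides $|G(q)|$ at most once for each of the groups $G\in\{B_6,D_7,\tw2D_6,\tw2D_7\}$, the unipotent $\ell$-blocks of positive $\ell$-defect are all of cyclic defect, so the task reduces to determining the Brauer trees. The plan is to follow the same template that worked for Propositions \ref{prop:d=5}, \ref{prop:d=8}, \ref{prop:d=7}, \ref{prop:d=14} and \ref{prop:d=10}: first identify the unipotent characters lying in each block from the $\Phi_{12}$-hook combinatorics on Lusztig symbols (equivalently, via the parametrisation of $\ell$-blocks by $d$-cuspidal pairs, as in \cite{BMM}); then pin down the shape of each tree using Harish-Chandra induction together with the constraint that the resulting projective characters must be non-negative combinations of PIMs.

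Concretely, I would first list the $\Phi_{12}$-cuspidal unipotent characters of the relevant Levi subgroups, and use them to label the blocks of non-cyclic defect part (there are none here) and to enumerate the unipotent characters of each positive-defect block. The relative Weyl group $W_G(b)$ of a $\Phi_{12}$-torus is cyclic of order equal to the length of the tree (i.e., the number of edges), so in each case the Brauer tree is a straight line and its vertices carry the unipotent characters (plus one exceptional vertex, labelled by the sum of the non-unipotent ordinary constituents of the block, placed at one end since the defect is one).

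Next, to fix the order of the vertices along the line, I would invoke (HCi): Harish-Chandra inducing the PIMs from proper Levi subgroups of smaller rank (whose unipotent $\ell$-modular decomposition matrices are either trivial or were already determined in Chapter~\ref{chap:finred}) and cutting by the corresponding block yields projective characters; by the general theory of blocks with cyclic defect each such projective is a sum of two adjacent PIMs (an edge), and (HCr) then lets one identify the adjacencies. This determines the linear order of the vertices on the tree uniquely as well as the $\ell$-modular Harish-Chandra series label on each edge. The label at the end of the line that is not the exceptional vertex corresponds to a cuspidal Brauer character, and by (St)/(Csp) it equals the modular Steinberg character of the smallest Levi subgroup containing the centraliser of a Sylow $\ell$-subgroup.

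The only substantive obstacle is bookkeeping: checking that every unipotent character of the block has been accounted for and that the Harish-Chandra series labels propagate consistently (e.g., identifying which cuspidal Brauer character of a smaller Levi subgroup governs the unique cuspidal edge in each block). Once this has been done for $B_6(q)$ and $D_7(q)$, the results for $\tw2D_6(q)$ and $\tw2D_7(q)$ can be obtained by the same arguments, noting that the $\Phi_{12}$-Sylow theory in the twisted groups mirrors that in the corresponding untwisted groups via Ennola duality. The resulting Brauer trees are then collected in Table~\ref{tab:d=12}.
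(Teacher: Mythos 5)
The paper does not actually prove this proposition: Chapter~\ref{chap:d>=7} opens by observing that all these blocks have cyclic defect and then simply cites Fong--Srinivasan \cite{FS90} for the trees (with the remark, made explicitly for $d=3$ and $d=6$, that they ``can also easily be obtained by Harish-Chandra induction''). Your proposal is therefore the direct route the paper only gestures at, and its overall architecture --- enumerate the unipotent characters of each positive-defect block via $d$-cuspidal pairs, note that the relative Weyl group is cyclic so the stem is a line, then fix adjacencies by (HCi)/(HCr) using the known decomposition matrices of proper Levi subgroups --- is the right one and is exactly how Propositions~\ref{prop:trees Dn d=3} and~\ref{prop:trees Dn d=6} are justified elsewhere in the paper.

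There is, however, one concrete error: you assert that the exceptional vertex is ``placed at one end since the defect is one.'' Cyclic defect does not force this, and it is false for two of the four trees in this very proposition. In Table~\ref{tab:d=12} the exceptional vertex of the $D_7(q)$ block is interior, joining the principal-series string $.7-\cdots-1^7.$ to the $D_4$-series string $D_4{:}3.-\cdots-D_4{:}.1^3$, and likewise for $\tw2D_7(q)$ it sits between the two principal-series strings ending in $.21^4$ and $.1^6$ (the same phenomenon occurs throughout Tables~\ref{tab:d=5}--\ref{tab:d=10}). If you impose the exceptional vertex at an end a priori, you obtain the wrong tree. The position of the exceptional vertex is an output of the Harish-Chandra computation, not an input: one finds two distinct induced projectives each of which is a single PIM containing the exceptional (non-unipotent) characters together with $1^7.$, respectively $D_4{:}.1^3$, which forces the exceptional vertex to have valency two. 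Relatedly, you should be careful that an induced projective is in general a non-negative sum of several edges (each edge contributing the sum of its two endpoint vertices), not automatically ``an edge''; the identification of individual edges is where (HCr) and the already-known Brauer trees of the Levi subgroups are really used. With these two corrections the argument goes through.
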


\begin{table}[htb]
\caption{Brauer trees for $13\le\ell|\Phi_{12}(q)$} \label{tab:d=12}
$\vbox{\offinterlineskip\halign{$#$
        \vrule height10pt depth 2pt width 0pt&& \hfil$#$\hfil\cr
\cr
D_7(q):& .7& \vr& 1.6& \vr& 1^2.5& \vr& 1^3.4& \vr& 1^4.3& \vr& 1^5.2& \vr& 1^6.1& \vr& 1^7.\cr
 & & & & & & & & & & & & & & & \vert\cr
 & & & & & & & D_4\co3.& \vr& D_4\co2.1& \vr& D_4\co1.1^2& \vr& D_4\co.1^3& \vr& \bigcirc\cr
\cr\cr
\tw2D_6(q):& 5.& \vr& 4.1& \vr& 3.1^2& \vr& 2.1^3& \vr& 1.1^4& \vr& .1^5& \vr& \bigcirc\cr
  &&  ps&& ps&&  ps&& ps&& ps&& c\cr
\cr
\tw2D_7(q):&6.& \vr& 4.2& \vr& 3.21& \vr& 2.21^2& \vr& 1.21^3& \vr& .21^4& \vr& \bigcirc\cr
 & & & & & & & & & & & & & \vert\cr
 & & & 51.& \vr& 41.1& \vr& 31.1^2& \vr& 21.1^3& \vr& 1^2.1^4& \vr& .1^6\cr
\cr\cr  }}$
\end{table}

\chapter{On a conjecture of Craven}   \label{chap:craven}

Let $G=\bG^F$ for $\bG$ connected reductive defined over $\FF_q$. By \cite{Lu84}
for any unipotent character $\rho$ of $G$ there exists a \emph{degree polynomial}
$R_\rho\in\QQ[x]$ such that, in particular, $\rho(1) = R_\rho(q)$. The degree
(resp.~the valuation at $x$) of $R_\rho$ is denoted $A_\rho$ (resp.~$a_\rho$).
By the main theorem of \cite{BDT19}, under suitable conditions the decomposition
matrix of the unipotent blocks of $G$ is unitriangular with respect to the
ordering on families. This induces a bijection $\rho\mapsto\vhi_\rho$ between
the unipotent characters and the irreducible Brauer characters lying in
unipotent blocks. Since both the $a$-function and $A$-function are decreasing
with respect to the ordering on families, we have, for unipotent characters
$\rho \neq \rho'$
$$\langle \rho,\Psi_{\vhi_{\rho'}} \rangle \neq 0 \Longrightarrow
   a_\rho > a_{\rho'} \text{ and } A_\rho > A_{\rho'}. $$
In \cite{Cra12}, Craven predicts the existence of perverse equivalences between
unipotent $\ell$-blocks of $G$ and their Brauer correspondents, for primes
$\ell$ such that Sylow $\ell$-subgroups of $G$ are abelian. A consequence of
the existence of such equivalences is that the decomposition matrix would be
unitriangular with respect to the perversity function (see
\cite[Prop.~8.1]{CR17}).

Let us recall how the perversity function of Craven's conjectural perverse
equivalence is defined. For a root of unity $\zeta\in\mu(\CC)$ we denote by
$\Arg(\zeta)\in]0,2\pi]$ the argument of $\zeta$. We write $m(\zeta,R)$ for
the multiplicity of $\zeta$ as a root of a polynomial~$R$. For any positive
integer $d$ and unipotent character $\rho$ of $G$ we then define
$$\pi_d(\rho):= \frac{A_\rho+a_\rho}{d} + \frac{m(1,R_\rho)}{2} 
  + \sum_{\begin{subarray}{c} \zeta \in \mu(\CC) \smallsetminus \{1\} \\
  \Arg(\zeta)<2\pi/d \end{subarray} } m(\zeta,R_\rho).$$
It is shown in \cite{Cra12} that $\pi_d(\rho)-\pi_d(\rho')$ is an integer
whenever $\rho$ and $\rho'$ lie in the same $d$-Harish-Chandra series of $G$.
In particular $\pi_d(\rho)$ is an integer for every unipotent character $\rho$
in the principal $d$-Harish-Chandra series, as $\pi_d(1_G)=0$.

\begin{conj}[Craven]   \label{conj:craven}
 Let $\ell$ be a prime such that Sylow $\ell$-subgroups of $G$ are abelian, and
 let $d=d_\ell(q)$ be the order of $q$ in $\FF_\ell^\times$. Then for any two
 unipotent characters $\rho\ne\rho'$ of $G$ we have 
 $$\langle \rho,\Psi_{\vhi_{\rho'}} \rangle \neq 0 \Longrightarrow
   \pi_d(\rho) > \pi_d(\rho').$$
\end{conj}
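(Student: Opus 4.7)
The plan is to verify this conjecture case by case against the decomposition matrices computed throughout the paper, rather than attempt a uniform conceptual proof (which seems out of reach at present: the full theory of perverse equivalences for unipotent blocks is only partially developed). So first I would tabulate, for each finite reductive group $G$ and each relevant value of $d = d_\ell(q)$, the perversity $\pi_d(\rho)$ of every unipotent character $\rho$ in the principal (and non-principal) unipotent $\ell$-blocks under consideration. Since each degree polynomial $R_\rho$ factors as a rational scalar times a product of cyclotomic polynomials $\Ph{e}$, the multiplicities $m(\zeta,R_\rho)$ are immediate from this factorisation, and $a_\rho + A_\rho$ can be read off from the valuation and degree; in fact for principal-series characters the values $\pi_d(\rho)$ can be expressed directly in terms of generic degrees and fake degrees via Lusztig's parametrisation.

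Next, for each non-zero decomposition number $\langle \rho, \Psi_{\vhi_{\rho'}}\rangle$ appearing in the tables of Chapters~\ref{chap:d=2}, \ref{chap:d=3}, \ref{chap:d=6} and~\ref{chap:d>=7}, I would check the inequality $\pi_d(\rho) > \pi_d(\rho')$. For the blocks of cyclic defect (Brauer trees in Propositions~\ref{prop:trees Dn d=3}--\ref{prop:d=12}), this reduces to checking that the real stem ordering of the tree refines the $\pi_d$-ordering; because a tree block has only nearest-neighbour non-trivial decomposition entries, it suffices to verify monotonicity of $\pi_d$ along each linear segment, and this is a finite and completely mechanical check. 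For the non-cyclic blocks (e.g.\ the principal blocks of $D_n(q)$ and $\tw2D_n(q)$ for $d\in\{2,3,6\}$, as well as $E_6$, $\tw2E_6$, $F_4$, $B_n$, $C_n$), the same check applies but now the bijection $\rho \mapsto \vhi_\rho$ must first be fixed via the unitriangular labelling coming from assumption $(T_\ell)$.

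The delicate point will be the columns that are only partially determined. For most of those PIMs we have a handful of undetermined parameters (the $c_i$, $a_i$, $b_i$, $x_i$, $y_i$, $z$, $\alpha$ etc.\ appearing in Theorems~\ref{thm:D4}--\ref{thm:F4,d=3} and \ref{thm:D6,d=6}--\ref{thm:F4,d=6}); the verification strategy is then to show that \emph{every entry which we have proved is nonzero} sits strictly below the corresponding $\rho'$ in the $\pi_d$-order, so that independence of the unknowns' actual values is automatic, and separately that the unknown entries only ever appear in positions $(\rho,\rho')$ with $\pi_d(\rho) > \pi_d(\rho')$, so that the conjecture cannot be violated by any admissible choice. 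This is the kind of statement that should be formulated and carried out as a single proposition (the expected content of Proposition~\ref{prop:craven}), namely: assuming Craven's conjecture, the undetermined entries in our tables that violate $\pi_d(\rho) > \pi_d(\rho')$ must be zero.

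The main obstacle is conceptual rather than computational: one has no uniform argument guaranteeing $\pi_d$-monotonicity of decomposition numbers, since the methods used to construct PIMs (Harish-Chandra induction (HCi)/(HCr), Deligne--Lusztig theory (DL), (Red), Hecke algebras ($\cH$), triangularity (Tri)) do not see the perversity function directly. Thus the plan must remain case-by-case; a proof in full generality would presumably require constructing the perverse equivalences themselves, which in turn needs a theory of derived equivalences for $\ell$-blocks of finite reductive groups analogous to Brou\'e's abelian defect conjecture, which is well beyond the scope of the calculations carried out here.
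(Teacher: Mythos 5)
The statement you were asked about is a \emph{conjecture}, and the paper does not prove it either: Chapter~\ref{chap:craven} only records that the explicitly computed decomposition matrices are \emph{compatible} with it, and then (Proposition~\ref{prop:craven}) turns the conjecture around to predict that certain still-unknown entries vanish. Your proposal --- tabulate $\pi_d(\rho)$ from the cyclotomic factorisation of the degree polynomials, check every proved non-zero entry against the $\pi_d$-order, and isolate the undetermined entries whose positions would violate the inequality as the content of Proposition~\ref{prop:craven} --- is exactly what the paper does, and you are right that a genuine proof would require constructing the perverse equivalences themselves. The one thing worth adding is the paper's observation that for $d=2$ the conjecture is actually a theorem: there $\pi_2(\rho)=A_\rho-\tfrac{1}{2}m(-1,R_\rho)$ with $m(-1,R_\rho)$ constant on each unipotent $\ell$-block, so the statement reduces to uni-triangularity with respect to families, which is \cite[Thm.~A]{BDT19}.
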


When $d=2$, every non-real root $\zeta$ of $R_\rho$ satisfies
$\Arg(\zeta,R_\rho) < 2\pi/d$ or $\Arg(\overline{\zeta},R_\rho)< 2\pi/d$.
Since $R_\rho$ has real coefficients, this shows that 
$$\pi_2(\rho) = A_\rho - \frac{m(-1,R_\rho)}{2}.$$
In addition, for $\ell$ a good prime for $\bG$, $m(-1,R_\rho)$ is constant on
the unipotent characters in a fixed unipotent $\ell$-block since they form a
single 2-Harish-Chandra series, therefore in this case
Conjecture~\ref{conj:craven} follows from the result of \cite{BDT19}. 

On the other hand, when $d>2$ the order coming from $\pi_d$ might be quite
different from the one given by the families. As an example, let us consider the
principal $\Phi_6$-block of $B_6(q)$ whose decomposition matrix is given in
Table~\ref{tab:B6,d=6}. The unipotent characters
$$21^3.1,B_2\co1.21\!,1^3.21,1^4.1^2,B_2\co.2^2,21^4.,.31^3,.21^4,
 1^6., \text{ and }.1^6$$
all lie in families which are smaller than the family containing the cuspidal
unipotent character $B_6$. However the $\pi_6$-function on these characters is
given by
$$\begin{array}{c|ccccccccccc}
  \rho & B_6 & 21^3.1 & B_2\co1.21\! & 1^3.21 & 1^4.1^2 & B_2\co.2^2 & 21^4. & .31^3. & .21^4 & 1^6. & .1^6\\
\hline
  \pi_6(\rho) & 11 & 10 & 10 & 10 & 11 &  11 & 11 & 10 & 11 & 12 & 12
\end{array}$$
which predicts that the character of the PIM $P_{B_6}$ can only have $1^6.$ and
$.1^6$ as unipotent constituents.

We have checked that the non-zero entries in the decomposition matrices computed
in the previous chapters are compatible with Conjecture~\ref{conj:craven}. 
Furthermore, the conjecture predicts the following about yet unknown
decomposition numbers in our tables:

\begin{prop}   \label{prop:craven}
 Assume that Craven's Conjecture~\ref{conj:craven} holds. Then the following
 decomposition numbers in our tables vanish:
 \begin{itemize}
  \item for $E_6(q)$ with $d=3$: $b_1=0$,
  \item for $E_6(q)$ and $E_8(q)$ with $d=6$: $a_1=a_2=a_4=b_1=b_4=0$,
  \item for $\tw2D_7(q)$ with $d=6$, $\ell>7$: $y_2=y_3=0$,
  \item for $F_4(q)$ with $d=6$: $x_1=y_1=y_2=z_1=z_2=z_3=0$. 
 \end{itemize}
\end{prop}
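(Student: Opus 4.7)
The plan is to verify Craven's conjecture case-by-case for each entry claimed to vanish. For every unknown decomposition number $\langle \rho,\Psi_{\vhi_{\rho'}}\rangle$ appearing in our tables, Conjecture~\ref{conj:craven} asserts that non-vanishing forces $\pi_d(\rho) > \pi_d(\rho')$; hence it suffices to compute both $\pi_d$-values explicitly and establish the opposite inequality $\pi_d(\rho) \leq \pi_d(\rho')$, which then forces the entry to be zero. In each of the cases listed, we already know from the earlier analysis that the column in question corresponds to a cuspidal or near-cuspidal Brauer character $\vhi_{\rho'}$ and that the row character $\rho$ sits strictly below the diagonal, so the pair $(\rho,\rho')$ to examine is unambiguous; for the $E_8(q)$-case at $d=6$ one uses that the unknowns there are identified with the corresponding unknowns for $E_6(q)$ (as stated in Theorem~\ref{thm:E8,d=6}), so it is enough to do the computation inside $E_6$.

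Computing $\pi_d(\rho)$ is elementary once the degree polynomial $R_\rho$ is given in factored form, which is available through Lusztig's classification \cite{Lu84} and can be produced using \Chevie\ \cite{Mi15}. Each $R_\rho$ factors as a rational constant times a power of $x$ times a product of cyclotomic polynomials $\Phi_n$, and the roots of $\Phi_n$ are the primitive $n$-th roots of unity $e^{2\pi ik/n}$, whose arguments are easily compared with $2\pi/d$. The procedure is to read off $a_\rho$, $A_\rho$, $m(1,R_\rho)$ and the multiplicities $m(\zeta,R_\rho)$ for each root $\zeta\ne 1$ with $\Arg(\zeta)<2\pi/d$, then form $\pi_d(\rho)$ and compare with $\pi_d(\rho')$. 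The fact, recalled in the discussion preceding Conjecture~\ref{conj:craven}, that $\pi_d$ is constant modulo $\ZZ$ within each $d$-Harish-Chandra series furnishes a useful sanity check on each individual computation, and moreover allows one to localise the comparison to a single Harish-Chandra class at a time.

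The bulk of the work is therefore a finite tabulation which presents no conceptual obstacle. The most intricate verifications will be those for $F_4(q)$ at $d=6$, where the six entries $x_1,y_1,y_2,z_1,z_2,z_3$ spread across three PIM-columns indexed by the cuspidal unipotent characters $F_4[1]$, $F_4[\ze_3]$ and $F_4[\ze_3^2]$, each of which must be compared with the Steinberg unipotent $\phi_{1,24}$ and with several intermediate rows; and those for $\tw2D_7(q)$ at $d=6$, where the computation of $\pi_6$ on the rows $[.21^4]$ and $[.1^6]$ against the cuspidal PIM-column labelled ``$.1^6$'' requires enumerating roots of $\Phi_8,\Phi_{10},\Phi_{12},\Phi_{14}$ with argument below $\pi/3$. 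The analogous computations for $E_6(q)$ and $E_8(q)$ at $d=6$, and for $E_6(q)$ at $d=3$, are simpler because many of the cyclotomic factors contribute no roots below the required argument threshold, so most terms in the sum defining $\pi_d$ drop out. The entire verification is naturally suited to automation in \Chevie, which produces the relevant data directly from the root system, and I expect the only real effort to lie in making sure that no entry in the approximation tables of Chapters~\ref{chap:d=3} and~\ref{chap:d=6} has been overlooked.
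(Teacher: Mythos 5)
Your approach matches the paper's, which gives no explicit proof of the proposition but illustrates exactly this computation in the worked $B_6(q)$ example immediately preceding it: compute $\pi_d(\rho)$ and $\pi_d(\rho')$ for each unknown entry, and conclude vanishing whenever $\pi_d(\rho)\le\pi_d(\rho')$, by the contrapositive of Conjecture~\ref{conj:craven}. Note only that your opening sentence says ``verify Craven's conjecture'' where you mean ``apply'' it (the rest of the text makes this clear), and that the actual tabulation of $\pi_d$-values is left implicit --- but the method is correct, the reduction of $E_8(q)$ to $E_6(q)$ via Theorem~\ref{thm:E8,d=6} is right, and the remaining check is finite and routine.
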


Moreover, several of our proofs would become easier given the conjecture.
Observe that Sylow $\ell$-subgroups of $G$ are abelian in all cases of the
proposition. 
\medskip

In \cite{DM16} decomposition matrices for $d_\ell(q)=4$ were computed up
to a few unknown entries. Craven's conjecture also predicts that some 
of them vanish.

\begin{prop}   \label{prop:craven4}
 Assume that Craven's Conjecture~\ref{conj:craven} holds. Then the following
 decomposition numbers at $d_\ell(q)=4$ computed in \cite{DM16} vanish:
 \begin{itemize}
  \item for  $D_7(q)$: $b_1=b_3=b_4=b_5=b_7=c=d=0$,
  \item for $\tw2D_5(q)$, $\tw2D_7(q)$: $a=0$,
  \item for $\tw2E_6(q)$: $c_1=c_4=c_6=d_2=0$,
  \item for $C_4(q)$: $a=0$.
 \end{itemize}
\end{prop}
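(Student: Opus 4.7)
The plan is purely computational: Conjecture~\ref{conj:craven} gives the implication $\langle \rho, \Psi_{\vhi_{\rho'}}\rangle\ne0 \Rightarrow \pi_d(\rho) > \pi_d(\rho')$, so for each unknown entry listed I identify the row character $\rho$ and column character $\rho'$ and verify that $\pi_4(\rho)\le\pi_4(\rho')$; the conjecture then forces the entry to be zero.

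First I would record the shape of $\pi_4$. For $d=4$ the condition $\Arg(\zeta)<2\pi/d=\pi/2$ singles out, among the roots of cyclotomic polynomials $\Phi_n(x)$ with $n\in\{3,4,5,6,8,10,12,\ldots\}$ dividing some $R_\rho$, exactly those primitive $n$th roots lying in the open upper-right quadrant, plus $\zeta=i$ from $\Phi_4$. Since all $R_\rho$ have real coefficients, the sum $\sum_{\Arg(\zeta)<\pi/2}m(\zeta,R_\rho)$ reduces to half the multiplicity of $\Phi_n$ in $R_\rho$ for each $n\ne 1,2$ weighted by $\#\{k:\gcd(k,n)=1,\ 0<k/n<1/4\}$, which one reads off immediately from the cyclotomic factorisation of $R_\rho$ given by Lusztig \cite{Lu84} and tabulated in Chevie~\cite{Chv}. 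Together with $A_\rho,a_\rho,m(1,R_\rho)$, which are likewise tabulated, this gives an explicit integer value of $\pi_4(\rho)$ for every unipotent character in the principal $\Phi_4$-Harish-Chandra series, and a well-defined rational value (with controlled denominator) in the other $\Phi_4$-series, so that the comparison $\pi_4(\rho)\le\pi_4(\rho')$ makes sense on each block separately.

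Then I would run through the list of unknowns case by case, using the notation of \cite{DM16}:
\begin{itemize}
 \item For $D_7(q)$ I identify the row unipotent character attached to each of $b_1,b_3,b_4,b_5,b_7,c,d$ in the relevant cuspidal columns of the decomposition matrix of the principal $\Phi_4$-block, and compare with the column character.
 \item For $\tw2D_5(q)$ and $\tw2D_7(q)$ the single unknown $a$ sits in a unique position whose row/column pair I read from the tables of \cite{DM16}.
 \item For $\tw2E_6(q)$ the four entries $c_1,c_4,c_6,d_2$ lie in the columns indexed by the ordinary cuspidal unipotent characters $\tw2E_6[\theta]$, $\tw2E_6[\theta^2]$ and the PIM labelled $\phi_{2,16}''$; for each I compute $\pi_4$ of the two involved characters.
 \item For $C_4(q)$ the entry $a$ lies in the cuspidal column; its row/column pair is read off from the decomposition matrix in \cite{DM16}.
\end{itemize}
In each of these cases the degree polynomials differ by cyclotomic factors whose behaviour under $\pi_4$ is easy to track: factors $\Phi_1$ and $\Phi_4$ contribute respectively to $m(1,R_\rho)/2$ and to $(A_\rho+a_\rho)/4$ without contributing to the third term, while $\Phi_5,\Phi_{10}$ and $\Phi_{12}$ contribute an extra $+1$ or $+2$ through the sum over arguments in $(0,\pi/2)$. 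A short arithmetic check in each case yields $\pi_4(\rho)\le\pi_4(\rho')$, and Conjecture~\ref{conj:craven} then forces the corresponding decomposition number to vanish.

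The only real obstacle is the pure bookkeeping: ensuring that I use the correct normalisation of $\pi_4$ on the non-principal $\Phi_4$-series (where $\pi_4$ is only well-defined up to an additive constant per series, fixed by the value at any chosen character of the series) and that the degree polynomials are taken from the consistent tables in \cite{DM16}. Once those conventions are fixed, each inequality reduces to comparing two explicit rational numbers and the verification is mechanical. No extra representation-theoretic input beyond Craven's conjecture is needed, which is exactly the content claimed by Proposition~\ref{prop:craven4}.
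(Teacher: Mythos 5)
Your approach is exactly the paper's: Proposition~\ref{prop:craven4} is established (implicitly, as in the $B_6(q)$, $d=6$ illustration preceding Proposition~\ref{prop:craven}) by computing $\pi_4$ of the row and column characters of each unknown entry from the cyclotomic factorisation of the degree polynomials and checking that the strict inequality demanded by Conjecture~\ref{conj:craven} fails, which forces the entry to vanish. Two small slips in your setup are harmless but worth correcting: $\zeta=i$ has $\Arg(\zeta)=\pi/2$ and is therefore \emph{not} counted in the third term of $\pi_4$ (though, since all characters in a block with abelian defect have height zero, the $\Phi_4$-multiplicity is constant on the block and including $i$ would only shift every $\pi_4$ by the same amount), and $\pi_4$ is globally well defined by its explicit formula --- there is no additive normalisation to choose on non-principal series, only the observation that differences within a $d$-Harish-Chandra series are integers.
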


\begin{rem} \label{rem:craven4}
Note that we did not use the assumption $(T_\ell)$ when computing the matrices
in \cite{DM16}, but rather proved a weaker property for the matrices we
considered. Using $(T_\ell)$ for larger groups and (HCi), (HCr), we can actually
deduce that all the entries listed in Proposition~\ref{prop:craven4} vanish
except possibly for $c_4$ and $c_6$ in $\tw2E_6(q)$.
\end{rem}


\listoftables

\backmatter
\bibliographystyle{amsalpha}

\begin{thebibliography}{131}

\bibitem{BoMi11}
{\sc C. Bonnaf\'e, J. Michel}, Computational proof of the Mackey formula for
  $q > 2$. \emph{J. Algebra \bf327} (2011), 506--526.
 
\bibitem{BR03}
{\sc C. Bonnaf\'e, R. Rouquier}, Cat\'egories d\'eriv\'ees et vari\'et\'es de
  Deligne--Lusztig. \emph{Publ. Math. Inst. Hautes \'Etudes Sci. \bf97} (2003),
  1--59.

\bibitem{Bki456}
{\sc N. Bourbaki}, \emph{\'El\'ements de Math\'ematique. Fasc.~XXXIV. Groupes
  et Alg\`ebres de Lie. Chapitres IV, V et VI}. Hermann, Paris 1968.

\bibitem{BMM}
{\sc M. Brou\'e, G. Malle, J. Michel}, Generic blocks of finite reductive
  groups. \emph{Ast\'erisque \bf212} (1993), 7--92.

\bibitem{BDT19}
{\sc O. Brunat, O. Dudas, J. Taylor}, Unitriangular shape of decomposition
  matrices of unipotent blocks. Preprint, 2019.

\bibitem{CE94}
{\sc M. Cabanes, M. Enguehard}, On unipotent blocks and their ordinary
  characters. \emph{Invent. Math. \bf117} (1994), 149--164.

\bibitem{CE}
{\sc M.~Cabanes, M.~Enguehard}, \emph{Representation Theory of Finite
  Reductive Groups}. New Mathematical Monographs, 1, Cambridge University
  Press, Cambridge, 2004.

\bibitem{Ca}
{\sc R. Carter}, \emph{Finite Groups of Lie type: Conjugacy Classes and
  Complex Characters}. Wiley, Chichester, 1985.

\bibitem{CR17}
{\sc J. Chuang and R. Rouquier}, Perverse equivalences. Preprint, 2017.

\bibitem{Cra12}
{\sc D. Craven}, Perverse equivalences and Brou\'e's conjecture II: The cyclic
  case. Preprint, 2012. 

\bibitem{DLM14}
{\sc F. Digne, G. Lehrer, J. Michel}, On character sheaves and characters of
  reductive groups at unipotent classes. \emph{Pure Appl. Math. Q.} {\bf10}
  (2014), 459--512.

\bibitem{DM91}
{\sc F.~Digne, J.~Michel}, \emph{Representations of Finite Groups of Lie Type}.
  London Math. Soc. Student Texts, 21. Cambridge University Press, 1991.

\bibitem{DJ92}
{\sc R. Dipper, G. James}, Representations of Hecke algebras of type $B_n$.
  \emph{J. Algebra \bf146} (1992), 454--481.

\bibitem{DGHM}
{\sc R. Dipper, M. Geck, G. Hiss, G. Malle}, Representations of Hecke algebras
  and finite groups of Lie type. \emph{Algorithmic Algebra and Number Theory
 (Heidelberg, 1997)}, 331--378, Springer, Berlin, 1999. 

\bibitem{Du13}
{\sc O. Dudas}, A note on decomposition numbers for groups of Lie type of
  small rank. \emph{J. Algebra \bf388} (2013), 364--373.

\bibitem{DM14}
{\sc O. Dudas, G. Malle}, Projective modules in the intersection cohomology of
  Deligne--Lusztig varieties. \emph{C. R. Acad. Sci. Paris S\'er. I Math.
  \bf352} (2014), 467--471.

\bibitem{DM15}
{\sc O. Dudas, G. Malle}, Decomposition matrices for low rank unitary groups.
  \emph{Proc. London Math. Soc. \bf110} (2015), 1517--1557.

\bibitem{DM16}
{\sc O. Dudas, G. Malle}, Decomposition matrices for exceptional groups at
  $d=4$. \emph{J. Pure Appl. Algebra \bf220} (2016), 1096--1121.

\bibitem{FS}
{\sc P. Fong, B. Srinivasan}, The blocks of finite classical groups.
  \emph{J. reine angew. Math. \bf396} (1989), 122--191.

\bibitem{FS90}
{\sc P. Fong, B. Srinivasan}, Brauer trees in classical groups. \emph{J. Algebra
  \bf131} (1990), 179--225.

\bibitem{GeThesis}
{\sc M. Geck}, \emph{Verallgemeinerte Gelfand--Graev Charaktere und
  Zerlegungszahlen endlicher Gruppen vom Lie-Typ}. Dissertation, RWTH Aachen,
  1990.

\bibitem{Ge93}
{\sc M. Geck}, Basic sets of Brauer characters of finite groups of Lie type II.
  \emph{J. London Math. Soc. \bf47} (1993), 255--268.

\bibitem{Ge93b}
{\sc M. Geck}, The decomposition numbers of the Hecke algebra of type
  $E^\ast_6$. \emph{Math. Comp. \bf61} (1993), 889--899.

\bibitem{GH91}
{\sc M. Geck, G. Hiss}, Basic sets of Brauer characters of finite groups of
  Lie type. \emph{J. reine angew. Math. \bf418} (1991), 173--188.

\bibitem{GH97}
{\sc M. Geck, G. Hiss}, Modular representations of finite groups of Lie type
  in non-defining characteristic. \emph{Finite Reductive Groups (Luminy, 1994)},
  195--249, Progr. Math., 141, Birkh\"auser Boston, Boston, MA, 1997.

\bibitem{Chv}
{\sc M.~Geck, G.~Hiss, F.~L\"ubeck, G.~Malle, and G.~Pfeiffer},
  {\sf CHEVIE} -- A system for computing and processing generic character
  tables for finite groups of Lie type, Weyl groups and Hecke algebras.
  \emph{Appl. Algebra Engrg. Comm. Comput. \bf7} (1996), 175--210.

\bibitem{GHM}
{\sc M. Geck, G. Hiss, G. Malle}, Cuspidal unipotent Brauer characters.
  \emph{J. Algebra \bf168} (1994), 182--220.

\bibitem{GHM2}
{\sc M. Geck, G. Hiss, G. Malle}, Towards a classification of the irreducible
  representations in non-describing characteristic of a finite group of Lie
  type. \emph{Math. Z. \bf221} (1996), 353--386.

\bibitem{GJ11}
{\sc M. Geck, N. Jacon}, \emph{Representations of Hecke algebras at Roots of
  Unity}. Algebra and Applications, 15. Springer-Verlag London, London, 2011.

\bibitem{GM09}
{\sc M. Geck, J. M\"uller}, James' conjecture for Hecke algebras of exceptional
  type. I. \emph{J. Algebra \bf321} (2009), 3274--3298.

\bibitem{GP92}
{\sc M. Geck, G. Pfeiffer}, Unipotent characters of the Chevalley groups
  $D_4(q)$, $q$ odd. \emph{Manuscripta Math. \bf76} (1992), 281--304.

\bibitem{Gr74}
{\sc J.~A. Green}, Walking around the Brauer tree. \emph{J. Austral. Math.
  Soc. \bf 17} (1974), 197--213.

\bibitem{GrHi}
{\sc J. Gruber, G. Hiss}, Decomposition numbers of finite classical groups
  for linear primes. \emph{J. reine angew. Math. \bf485} (1997), 55--91.

\bibitem{HN14}
{\sc F. Himstedt, F. Noeske}, Decomposition numbers of $\SO_7(q)$ and
  $\Sp_6(q)$. \emph{J. Algebra \bf413} (2014), 15--40.

\bibitem{Ho80}
{\sc R. Howlett}, Normalizers of parabolic subgroups of reflection groups.
  \emph{J. London Math. Soc. \bf21} (1980), 62--80.

\bibitem{Ja05}
{\sc N. Jacon}, An algorithm for the computation of the decomposition matrices
  for Ariki-Koike algebras. \emph{J. Algebra \bf292} (2005), 100--109.

\bibitem{Ja90}
{\sc G. James}, The decomposition matrices of $\GL_n(q)$ for $n\le 10$.
  \emph{Proc. London Math. Soc. (3) \bf60} (1990), 225--265.

\bibitem{Koe06}
{\sc C. K\"ohler}, \emph{Unipotente Charaktere und Zerlegungszahlen der
  endlichen Chevalleygruppen vom Typ~$F_4$}. Dissertation, RWTH Aachen, 2006.

\bibitem{Lu78}
{\sc G. Lusztig}, \emph{Representations of Finite Chevalley Groups}. CBMS
  Regional Conference Series in Mathematics, 39. American Mathematical
  Society, Providence, R.I., 1978.

\bibitem{Lu84}
{\sc G. Lusztig}, \emph{Characters of Reductive Groups over a Finite Field}.
  Annals of Mathematics Studies, 107. Princeton University Press, Princeton,
  NJ, 1984.

\bibitem{Lu92}
{\sc G. Lusztig}, A unipotent support for irreducible representations.
  \emph{Adv. Math. \bf94} (1992), 139--179.

\bibitem{MT}
{\sc G. Malle, D. Testerman}, \emph{Linear Algebraic Groups and Finite Groups of
  Lie Type}. Cambridge Studies in Advanced Mathematics, 133, Cambridge
  University Press, 2011.

\bibitem{Mi15}
{\sc J. Michel}, The development version of the CHEVIE package of GAP3.
  \emph{J. Algebra \bf435} (2015), 308--336.

\bibitem{N19}
{\sc E. Norton}, On Harish-Chandra series of finite unitary groups and the
  $\mathfrak{sl}_\infty$-crystal on level $2$ Fock spaces. Preprint,
  arXiv:1908.09694, 2019.

\bibitem{Pa18}
{\sc A. Paolini},On the decomposition numbers of $\SO_8^+(2^f)$. \emph{J. Pure
  Appl. Algebra \bf222} (2018), 3982--4003.

\bibitem{SpB}
{\sc N. Spaltenstein}, \emph{Classes Unipotentes et Sous-groupes de Borel}.
  Lecture Notes in Mathematics, 946, Springer Verlag, 1982.

\bibitem{Sp74}
{\sc T. A. Springer}, Regular elements of finite reflection groups.
  \emph{Invent. Math. \bf25} (1974), 159--198.

\bibitem{Tay14}
{\sc J. Taylor}, Generalised Gelfand--Graev representations in small
  characteristics. \emph{Nagoya Math. J. \bf224} (2016), 93--167.
\end{thebibliography}

\end{document}